\newtheorem{theorem}{Theorem}[section]
\newtheorem{lemma}[theorem]{Lemma}
\newtheorem{proposition}[theorem]{Proposition}
\newtheorem{conjecture}[theorem]{Conjecture}
\newtheorem{corollary}[theorem]{Corollary}
\theoremstyle{definition}
\newtheorem{definition}[theorem]{Definition}
\theoremstyle{remark}
\newtheorem*{remark}{Remark}
\def\paragraph#1{\noindent \textbf{#1}}
\numberwithin{equation}{section}
 \newcommand{\be}{\begin{equation}}
 \newcommand{\ee}{\end{equation}}
\newcommand{\bea}{\begin{eqnarray}}
 \newcommand{\eea}{\end{eqnarray}}
\def\TH(#1){\label{#1}}
\def\thv(#1){\ref{#1}}
\def\Eq(#1){\label{#1}}
\def\eqv(#1){(\ref{#1})}
\def\sfrac#1#2{{\textstyle{#1\over #2}}}
\def \1{\mathbbm{1}}
\def\wh{\widehat}
\def\sign{\mathop{\rm sign}\nolimits}
\def\a{\alpha}%
\def\b{\beta}%
\def\d{\delta}%
\newcommand\g{\gamma}
\def\l{\lambda}%
\def\s{\sigma}
\def\L{\Lambda}
\def\G{\Gamma}
\def\O{\Omega}
\def\S{\Sigma}
\def\R{{\Bbb R}}
\def\N{{\Bbb N}} %
\def\P{{\Bbb P}}  
\def\Z{{\Bbb Z}}
\def\E{{\Bbb E}}
\let\cal=\mathcal
\def\AA{{\cal A}}
\def\BB{{\cal B}}
\def\EE{{\cal E}}
\def\FF{{\cal F}}
\def\MM{{\cal M}}
\def\OO{{\cal O}}
\def\SS{{\cal S}}
\def\TT{{\cal T}}
\def\VV{{\cal V}}
\def\UU{{\cal U}}
\def\VV{{\cal V}}
\def\XX{{\cal X}}
\def\YY{{\cal Y}}
\def\ZZ{{\cal Z}}
\begin{document}

\title[Mixed memories in Hopfield networks]{Mixed memories in Hopfield networks}

\author[V. Gayrard]{V\'eronique Gayrard}
 \address{
V. Gayrard\\ Aix Marseille Univ, CNRS, I2M, Marseille, France
}
\email{veronique.gayrard@math.cnrs.fr}

\subjclass[2020]{60G50, 82D30, 68T07, 68T10} 
\keywords{Hopfield model, dense Hopfield model, modern Hopfield model, spurious patterns, spurious states, memory capacity, exact retrieval.}
\date{\today}

\begin{abstract} 
We consider the class of Hopfield models of associative memory with activation function $F$ and state space $\{-1,1\}^N$, where each vertex of the cube describes a configuration of $N$ binary neurons. $M$ randomly chosen configurations, called patterns, are stored using an energy function designed to make them local minima. If they are, which is known to depend on how $M$ scales with $N$, then they can  be retrieved using a dynamics that decreases the energy.  However, storing the patterns in the energy function also creates unintended local minima, and thus false memories. Although this has been known since the earliest work on the subject, it has only been supported by numerical simulations and non-rigorous calculations, except in elementary cases.

Our results are twofold. For a generic function $F$, we explicitly construct a set of configurations, called mixed memories,  whose properties are intended to characterise the local minima of the energy function.  For three prominent models, namely the classical, the dense and the modern Hopfield models, obtained for quadratic, polynomial and exponential functions $F$ respectively, we give conditions on the growth rate of $M$ which guarantee that, as $N$ diverges, mixed memories are fixed points of the retrieval dynamics and thus minima of the energy. We conjecture that in this regime, all local minima are mixed memories.
\end{abstract}

\thanks{V.~Gayrard would like to thank the Institute for Applied Mathematics at the University of Bonn for their kind hospitality during the writing of this paper. Funding for her stay was provided by the Deutsche Forschungsgemeinschaft (DFG, German Research Foundation) under Germany's Excellence Strategy -- EXC-2047/1 -- 390685813.
}

\maketitle


\section{Introduction}
    \TH(S1)

\subsection{Hopfield models} 
    \TH(S1.1)     
    
Written in the mathematical framework of statistical mechanics, Hopfield networks are a family of dynamical models of associative memory that find their origin in the Hebbian theory of learning. Pioneered over forty years ago \cite{Hop82}, \cite{Hop84} they have had a profound influence on several scientific disciplines, from physics to contemporary machine learning, as has recently been recognised \cite{NPP24}.

In their simplest form, such models consist of $N$ binary neurons taking values in $\{-1,1\}$. Possible  \emph{configurations} of the memory then are vertices $\s=(\s_i)_{1\leq i\leq N}\in\S_N$ of the $N$-dimensional discrete cube  $\S_N=\{-1,1\}^N$, and the objects to be memorised are $M$ specific configurations $\xi^1,\dots, \xi^M$ in $\S_N$, called \emph{patterns}.
Given a smooth function $F: \R\rightarrow \R$,  the patterns are stored through an \emph{energy function} $E_{N,M}$, defined on $\S_N$ by
\be
E_{N,M}(\s)=-\sum_{\mu=1}^M F\left(\frac{1}{N} \sum_{i=1}^N\xi^{\mu}_i\s_i\right), \quad \s\in\S_N.
\Eq(1.1.2)
\ee
Thus $E_{N,M}(\s)$ depends on the patterns only through their \emph{overlap} with $\s$, $N^{-1}\sum_{i}\xi^{\mu}_i\s_i$.
The guiding idea  is to choose the \emph{activation function} $F$ in such a way that each pattern lies in a deep (ideally global) minimum of an energy valley of $E_{N,M}$ and is surrounded by high energy barriers. An associative memory is then obtained by setting up a dynamics that decreases the energy so that, starting from a configuration that resembles a given pattern,
the dynamics converges to that pattern. One cannot hope that such properties will hold for every possible choice of patterns, but rather for a choice that is considered typical. This can be achieved by selecting the patterns at random. For this purpose,  we let $(\O,\FF,\P)$ be a probability space  on which is defined a doubly infinite sequence 
$
(\xi^{\mu}_i)_{i\in\N, \mu\in\N}
$ 
of  independent and identically distributed Bernoulli random variables, which satisfy
\be
\P\left(\xi^{\mu}_i=1\right)=1-\P\left(\xi^{\mu}_i=-1\right)=1/2.
\Eq(1.1.2')
\ee
The patterns are thus uncorrelated and unbiased.
Note that the energy function $E_{N,M}$ now is  a random function defined on $(\O,\FF,\P)$.

Several choices of the function $F$ have been considered in the literature. In the 1982 paper introducing the now \emph{classical} Hopfield model  \cite{Hop82}, $F(x)=\frac{1}{2}x^2$. This  choice was soon generalised to any polynomial of degree $p$ in analogy to the $p$-spin models of statistical mechanics \cite{PN86}, \cite{N88}. Three decades later, a series of papers \cite{KH18}, \cite{KH16} brought the case $F(x)=\frac{1}{p}x^p$ back into the limelight under the name of  \emph{dense} Hopfield model. Concomitantly, a \emph{modern}  Hopfield model was proposed in   \cite{DHLUV17}, where  $F(x)=e^{Nx}$.

Each of these models is effectively a whole class of models parametrised by $M$.  An issue of great practical and theoretical importance is that of \emph{memory (or storage) capacity}, \emph{i.e.}~the maximum number $M$ of patterns that a model can store and reliably retrieve.  Using computer simulations, it was found in \cite{Hop82}  that the memory capacity of the classical Hopfield model scales linearly with the number of neurons, $N$, and that memory is completely lost beyond  $\sim 0.15 N$.
Recent interest in dense and modern models stems from the fact that their memory capacity grows much faster, polynomially and exponentially in $N$ respectively. We return to this topic in Section \thv(S1.2.2).

This intriguing transition, together with the strong similarity between these models and mean-field models of disordered systems has sparked great interest among theoretical physicists and mathematicians working in statistical mechanics, \cite{MPV87}, \cite{Far23}, \cite{BP-book}, \cite{Ta1}, \cite{Ta2} (see also the references therein). In this context, the key object of interest is the Gibbs measure associated to $E_{N,M}$. This is the random probability measure defined on $\S_N$ by
\be
G_{\b,N}(\s)=\frac{1}{Z_{\b,N}}e^{\b NE_{N,M}(\s)}, \quad \s\in\S_N,
\nonumber
\ee
where  $\b>0$ is a parameter that physically represents the inverse of a temperature and 
$Z_{\b,n}=\sum_{\s\in\S_N}e^{\b NE_{N,M}(\s)}$. An important first step towards understanding the asymptotic properties of $G_{\b,N}$ for large $N$  is to compute the limiting free energy
\be
f_{\b}=\lim_{N\rightarrow\infty}\frac{1}{\b N}\log Z_{\b,N}.
\nonumber
\ee
In a seminal paper, Amit \emph{et al.}~\cite{AGS85b}  obtained a complete picture of the phase diagram of the classical Hopfield model at all temperatures using the non rigorous replica trick to compute the free energy. The loss of memory occurs at $M\sim 0.138 N$ and is linked to the appearance of a spin glass phase of the same nature as that found by Parisi  \cite{P79} in the paradigmatic Sherrington and Kirkpatrick (SK) model of a mean-field spin glass  \cite{SK}. Several aspects of their results have been understood with mathematical rigour (see \cite{BGP-PTRF94}, \cite{BG-PTRF97}, \cite{BG98}, \cite{TaHop98}, Chap.~4 in \cite{Ta1} and Chap.~10 in \cite{Ta2}  for the most advanced results, and references therein).  All these results concern  the static properties of the models and are governed, more or less explicitly, by the property that the global minima of the energy $E_{N,M}$ lie at or near the patterns. 

Clearly, the effective functioning of an associative memory depends not only on the properties of the deepest minima of its energy landscape, but more generally on the entire structure of its critical points, with local minima being fixed points of deterministic gradient descent dynamics or giving rise to metastable states in random dynamics \cite{BEGK-PTRF01}. In particular, it has been known since the first work on the subject  that the process of storing the patterns in the energy function also creates spurious, unintended memories that overlap with multiple patterns and can be retrieved by the dynamics just like the patterns themselves \cite{Hop83}. Several strategies have been developed to ``unlearn" \cite{Hop83}, \cite{Unlearn87}, \cite{BBF18} (see also the references therein) or to  mitigate the effects of these spurious memories \cite{KH16}, \cite{KH18}, mostly by modifying the energy function. However, except for the simplest cases \cite{KP89}, \cite{BR90}, the literature offers only an empirical understanding of these extraneous memories based on numerical simulations  and non-rigorous calculations \cite{AGS85a}.

In this paper, we explicitly construct a class of spurious memories of the  energy function \eqv(1.1.2) arising from mixtures of the initial patterns. These so-called \emph{mixed memories} are configurations of the form
\be
\xi_i(m)=\sign\left(\sum_{\mu=1}^{M}\xi^{\mu}_i F'(m_\mu)\1_{m_\mu\neq 0}\right), \quad 1\leq i\leq N,
\Eq(1.1.3)
\ee
where the \emph{mixture coefficients},  $m=(m_{\mu})_{1\leq \mu\leq M}\in [-1,1]^{M}$, form a deterministic vector with finitely many non-zero components,  $n\in\N$, and where $\1_{m_\mu\neq 0}=1$ if $m_\mu\neq 0$ and is zero else.
Furthermore, with a $\P$-probability that tends to one as $N$ diverges, for all $1\leq \mu\leq M$  such that 
$m_{\mu}\neq 0$, mixed memories have overlap $m_{\mu}$ with the pattern $\xi^{\mu}$,
\be
\lim_{N\rightarrow\infty} N^{-1}\sum_{i=1}^N\xi_i(m)\xi^{\mu}_i=m_{\mu}.
\Eq(1.1.4)
\ee
Our results are twofold: \hfill\break\null
\hskip.35truecm (i) First, we construct solutions to the system of equations defined through \eqv(1.1.3) and \eqv(1.1.4), whose unknowns are the $n$ non-zero components of $m$. More precisely, we construct a class of 
admissible mixture coefficients, $\MM_n^{all}$, which does not depend on $F$ and is thus common to  all models of Hopfield type. The solutions of  \eqv(1.1.4) for a given energy function \eqv(1.1.2) are then obtained as the subset $\MM_{n,F}\subset \MM_n^{all}$ of the mixture coefficients  that satisfy a particular system of inequalities, $\SS_{n,F}$, that depend on $F$.
\hfill\break\null
\hskip.35truecm (ii) We then give conditions on the growth rate of $M$ as a function of $N$ which guarantee that mixed memories
$(\xi_i(m))_{1\leq i\leq N}$ with $m\in\MM_{n,F}$ are local minima of the energy function \eqv(1.1.2) for the main models of interest, \emph{i.e.}~the classical, the dense and the modern Hopfield models. We ask two questions: whether this is true for each mixed memory or for all of them simultaneously, with $\P$-probability one as $N$ diverges.

The set $\MM_{n,F}$ contains all the local minima found numerically. This supports the conjecture that we have obtained the complete set of all local minima.

The existence of a common set $\MM_n^{all}$ of mixture coefficients from which mixed memories are constructed appears to confirm recent numerical findings by Hopfield \emph{et al.}~\cite{KH18}, \cite{KH16}, that spurious states can be ``transported'' within the class of dense models, from one model to another. 

These results provide a first insight into mixed memories in Hopfield models with analogue (or continuous) neurons. Indeed, it has been known since \cite{Hop84} that there is a correspondence between the sets of local minima of the continuous and binary neuron models, with the two sets coinciding in the so-called high-gain limit, an analogue of the zero-temperature limit in statistical mechanics.  This is of practical relevance  both in computer science, where Hopfield models are commonly integrated into deep learning architectures \cite{K23}, \cite{AllYouNeed}, and in statistical mechanics.

\subsection{Main results} 
    \TH(S1.2)   
    
In the classical Hopfield model, the existence of local minima corresponding to mixtures of a finite number of original patterns  was fully formalised by Amit \emph{et al.}~\cite{AGS85a}  through an extensive numerical study of the critical points of the free energy associated with the model at all temperatures. They discovered that these mixed memories are not random, but are given by well-defined, deterministic mixtures of patterns, and  classified them into three main groups: \emph{symmetric},  \emph{continuous asymmetric} and  \emph{discontinuous asymmetric} memories (referring to the way they emerge when the temperature is varied). The class of continuous asymmetric memories appears to be the largest. Together with the symmetric memories, it is expected to contain all local minima.  In contrast, the discontinuous asymmetric memories found were much rarer, with none being a local minimum.  Symmetric memories were mathematically understood a few years later \cite{KP89, BR90}.  In this paper, we construct a class of mixed memories  that encompasses  all examples of the symmetric and continuous asymmetric memories obtained in \cite{AGS85a}  at zero temperature.   We simply call this class \emph{mixed memories}.

\subsubsection{Mixed memories} 
   \TH(S1.2.1)      
   
Throughout the paper, $n\in\N$ is chosen to be independent of $N$ and $M$ is chosen to be a non-decreasing function of $N$. We begin with a formal definition of mixed memories for energy functions of the form  \eqv(1.1.2).

\begin{definition}[Mixed memories of type $F$]
    \TH(1.def1.2)
Let $F$ be a smooth function whose derivative satisfies $F'(x)>0$, for all $x>0$. Given $n\in\N$ independent of $N$, $n$-mixed memories of type $F$ are configurations in $\S_N$ denoted by $\xi^{(N)}(m)=\left(\xi_i(m)\right)_{1\leq i\leq N}$ and defined as
\be
\xi_i(m)=\sign\left(\sum_{\mu=1}^{M}\xi^{\mu}_i F'(m_\mu)\1_{m_\mu\neq 0}\right), \quad 1\leq i\leq N,
\Eq(1.2.1.11)
\ee
where $m=(m_{\mu})_{1\leq \mu\leq M}\in [-1,1]^{M}$  is a deterministic vector with the following properties:
\begin{itemize}
\item[(i)] $m$ has exactly $n$ non-zero components, \emph{i.e.}~there exists a subset $V\subset\{1,\dots,M\}$ of cardinality $|V|=n$ such that $m_{\mu}\neq 0$ if and only if $\mu\in V$.

\item[(ii)] 
Let $\{\mu_1,\dots,\mu_n\}$ be an enumeration of the elements of $V$ and,  for each $1\leq \nu\leq n$, set $m_{\mu_\nu}=\hat m_{\nu}$.
Then, for each $1\leq \nu\leq n$, the normalised overlap of $\xi^{(N)}(m)$  with the pattern $\xi^{\mu_{\nu}}$ converges to $\hat m_{\nu}$ as $N$ diverges,
\be
\P\left(\lim_{N\rightarrow\infty} N^{-1}\left(\xi^{(N)}(m),\xi^{\mu_{\nu}}\right)=\hat m_{\nu}\right)=1, \quad \forall1\leq \nu\leq n,
\Eq(1.2.1.2)
\ee
and it converges to zero else,
\be
\P\left(\lim_{N\rightarrow\infty} N^{-1}\left(\xi^{(N)}(m),\xi^{\mu}\right)=0\right)=1, \quad \forall\mu\in \{1,\dots,M\}\setminus V.
\Eq(1.2.1.3)
\ee
\end{itemize}
\end{definition}

\begin{remark}  Since $n$  is  independent of $N$, the set $V$ is countable and \eqv(1.2.1.2) implies
\be
\P\left(\bigcap_{1\leq \nu\leq n}\left\{\lim_{N\rightarrow\infty} N^{-1}\left(\xi^{(N)}(m),\xi^{\mu_{\nu}}\right)=\hat m_{\nu}\right\}\right)=1.
\Eq(1.theo1.mix8')
\ee
\end{remark}

\begin{remark}
The techniques developed in this paper \emph{a priori} allow us to treat cases where $n$ is a growing function of $N$,  albeit with the important limitation that this growth is logarithmic at most. See also the remark at the end of the proof of Proposition 3.1.
\end{remark}

Let us now describe the set that we will prove to be a set of $n$-mixed memories. Recall that a \emph{composition} of the integer $n$ into $\ell$ summands, or $\ell$-composition, is any solution $(n_1,\dots,n_{\ell})$ of $n=n_1+\dots+n_{\ell}$
with $n_k\geq 1$ for each $1\leq k\leq \ell$. Note that unlike integer partitions, the order of the summands counts. We call an $\ell$-composition $(n_1,\dots,n_{\ell})$ \emph{allowable} if $n_k\geq 2$ is even for all  $1\leq k\leq \ell-1$ and $n_{\ell}\geq 1$ is odd.  Thus, there is no allowable composition of an even integer $n$. Given an allowable $\ell$-composition set
\be
\a^{(n_{k})}=2^{-n_{k}+1}{{n_{k}-1}\choose{\lfloor(n_{k}-1)/2\rfloor}}
\Eq(1.2.1.4)
\ee
for each $1\leq k\leq \ell$, and let $\g_n=\left(\g^{(k)}\right)_{1\leq k\leq \ell}$ be the vector of components
\be
\g^{(k)}
\equiv \prod_{l=1}^{k}\a^{(n_{l})}.
\Eq(1.2.1.5)
\ee
Denote by $\G^{all}_n\subset [-1,1]^{n}$ the set of all such vectors, one for each allowable $\ell$-composition
\be
\G^{all}_n=\bigcup_{1\leq \ell\leq n}\left\{\g_n \mid 
(n_1,\dots,n_{\ell}) \text{ is an allowable $\ell$-composition of $n$}\right\},
\Eq(1.2.1.6)
\ee
and  define the subset 
\be
\G_{n,F}=\left\{\g_n\in\G^{all}_n : \g_n \text{ is a solution of }  \SS_{n,F} \right\},
\Eq(1.2.1.12)
\ee
where  $\SS_{n,F}$ is the system of $\ell -1$ inequalities
\be
2F'\left(\g^{(k)}\right)>n_{k+1}F'\left(\g^{(k+1)}\right)+\dots+n_{\ell}F'\left(\g^{(\ell)}\right) \,\text{ for all } 1\leq k\leq \ell-1. 
\Eq(1.2.1.12bis)
\ee
Given $\g_n\in\G_{n,F}$, let $m(\g_n)=(m_{\mu}(\g_n))_{1\leq \mu\leq M}$ be the vector whose components are constant and equal to $\g^{(k)}$ on consecutive blocks of length $n_k$,  $1\leq k\leq \ell$, and are $0$ beyond,
\be
m_{\mu}(\g_n) = 
\begin{cases}
\g^{(k)} & \text{if\, $1\leq \mu-(n_0+\dots+n_{k-1})\leq n_k$ \,for some $1\leq k\leq \ell$,} \\
0 &  \text{if\, $\mu>n$,}
\end{cases}
\Eq(1.2.1.8)
\ee
where by convention $n_0\equiv 0$. With this, define 
\be
\MM^{\circ}_{n,F}=\left\{m(\g_n) : \g_n\in\G_{n,F}\right\}.
\Eq(1.2.1.14)
\ee
Finally, the above set is extended to include all possible permutations and (depending on the parity of $F$) all possible signs of the coordinates of each of its elements. More precisely, setting $a=1$ if $F'$ is an odd function and $a=2$ otherwise,
\be
\MM_{n,F}=\bigcup_{\g_n\in\G_{n,F}}\bigcup_{\pi\in\Pi_M}\bigcup_{\varepsilon\in\{-1,1\}^M}
\left\{m' : m'_{\mu}= \left(\varepsilon_{\mu}\right)^a m_{\pi(\mu)}(\g_n), {1\leq \mu\leq M}\right\},
\Eq(1.2.1.15)
\ee
where $\Pi_M$ denotes the set of all permutations of $\{1,\dots,M\}$ and 
$\varepsilon=(\varepsilon_\mu)_{1\leq \mu\leq M}\in \{-1,1\}^M$ is a sequence of signs. 
Therefore,  if $F'$ is not an odd function, the coordinates of the vectors $m$ in $\MM_{n,F}$ are all non-negative.  
The statement made in point (i) of Section \thv(S1.1) can now be put into concrete form.
Define 
\be
\MM^{all}_{n}=\bigcup_{\g_n\in\G^{all}_n}\bigcup_{\pi\in\Pi_M}\bigcup_{\varepsilon\in\{-1,1\}^M}
\left\{m' : m'_{\mu}= \varepsilon_{\mu} m_{\pi(\mu)}(\g_n), {1\leq \mu\leq M}\right\}.
\ee
Then, for each $F$, $\MM_{n,F}$ is the subset of $\MM^{all}_{n}$  constructed from the sequences $\g_n\in\G_{n,F}\subseteq\G^{all}_n$ satisfying the system of inequalities $\SS_{n,F}$ and restricted to the positive orthant in $\R^n$ unless $F'$ is odd.

We are now ready to state our first theorem. 

\begin{theorem}[Mixed memories of type $F$]
    \TH(1.theo2.mix)
Let $F$ be a smooth function whose derivative $F'$ satisfies $F'(x)>0$, for all $x>0$. 
For any odd $n\in\N$, if $m\in\MM_{n,F}$ then $\xi^{(N)}(m)$ is an $n$-mixed memory of type $F$.
\end{theorem}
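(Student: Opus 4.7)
The plan is to reduce to a canonical form for $m$, compute $\E[\xi^{\nu}_i\xi_i(m)]$ via the combinatorial structure of allowable compositions, and then conclude almost-sure convergence of the empirical overlaps by the strong law of large numbers applied row-wise in $i$.

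First I would use symmetry to reduce to the canonical case $m=m(\gamma_n)$ for some $\gamma_n\in\G_{n,F}$. By the definition \eqv(1.2.1.15) of $\MM_{n,F}$, any element arises from $m(\gamma_n)$ via a permutation $\pi\in\Pi_M$ and, when $F'$ is odd, a sign vector $\varepsilon\in\{-1,1\}^M$. Since the $(\xi^\mu_i)$ are i.i.d.\ Rademacher, relabelling by $\pi$ preserves the joint law; and when $F'$ is odd the identity $F'(\varepsilon_\mu x)=\varepsilon_\mu F'(x)$ lets one absorb $\varepsilon$ into the patterns via $\tilde\xi^\mu_i:=\varepsilon_\mu\xi^\mu_i$, which is again i.i.d.\ Rademacher. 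It therefore suffices to establish \eqv(1.2.1.2) and \eqv(1.2.1.3) when $m=m(\gamma_n)$, so that $V=\{1,\dots,n\}$ and the block structure \eqv(1.2.1.8) is explicit.

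Next, fixing $\nu$ in the $j$-th block, one sets $\eta^\mu_i:=\xi^\nu_i\xi^\mu_i$ for $\mu\neq\nu$ (i.i.d.\ Rademacher, independent of $\xi^\nu_i$) and uses $(\xi^\nu_i)^2=1$ to write
\[
\xi^\nu_i\,\xi_i(m)=\sign\Big(\sum_{k=1}^{\ell} F'(\gamma^{(k)})\,S_k\Big),
\]
with $S_j:=1+\sum_{\mu\in\mathrm{block}_j\setminus\{\nu\}}\eta^\mu_i$ and $S_k:=\sum_{\mu\in\mathrm{block}_k}\eta^\mu_i$ for $k\neq j$. The parity rules of an allowable composition ($n_k$ even for $k<\ell$, $n_\ell$ odd) imply that $|S_k|\in\{0\}\cup[2,n_k]$ for $k<\ell$ while $S_\ell$ is always non-zero. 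The system $\SS_{n,F}$ then guarantees that $2F'(\gamma^{(k)})>\sum_{k'>k}n_{k'}F'(\gamma^{(k')})\ge|\sum_{k'>k}F'(\gamma^{(k')})S_{k'}|$, so the sign of the total sum equals $\sign(S_{k^\ast})$ with $k^\ast:=\min\{k:S_k\neq 0\}$, which is well-defined since $S_\ell\neq 0$. For $k^\ast\neq j$ the random variable $S_{k^\ast}$ is a symmetric Rademacher sum conditional on non-vanishing, so its contribution vanishes; only $k^\ast=j$ survives and
\[
\E[\xi^\nu_i\,\xi_i(m)]=\Big(\prod_{k<j}\P(S_k=0)\Big)\cdot\E\bigl[\sign(S_j)\,\mathbbm{1}_{\{S_j\neq 0\}}\bigr].
\]
A direct binomial computation, based on $\binom{n-1}{(n-2)/2}=\tfrac12\binom{n}{n/2}$ for even $n$ and its odd counterpart for the shifted sum $S_j$, identifies $\P(S_k=0)=\alpha^{(n_k)}$ and $\E[\sign(S_j)\,\mathbbm{1}_{\{S_j\neq 0\}}]=\alpha^{(n_j)}$, whence $\E[\xi^\nu_i\,\xi_i(m)]=\prod_{k\leq j}\alpha^{(n_k)}=\gamma^{(j)}=m_\nu$. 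A fully analogous symmetric calculation yields $\E[\xi^\mu_i\,\xi_i(m)]=0$ for $\mu\notin V$. Since the triples $(\xi^\mu_i,\xi^\nu_i,\xi_i(m))_{i\ge1}$ are i.i.d.\ in $i$ (all functions of the $i$-th row of the pattern matrix), the strong law of large numbers applied to the bounded variables $\xi_i(m)\xi^\nu_i$ delivers \eqv(1.2.1.2) and \eqv(1.2.1.3).

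The main technical obstacle I anticipate is the ``shift-by-$1$'' evaluation of $\E[\sign(S_j);\,S_j\neq 0]$: the symmetry that annihilates every other contribution is broken precisely by the deterministic $+1$ arising from the self-term $(\xi^\nu_i)^2=1$, and one must check that this asymmetry produces exactly the constant $\alpha^{(n_j)}$ of \eqv(1.2.1.4), while simultaneously re-expressing $\P(S_k=0)$ as the same $\alpha^{(n_k)}$. A secondary concern is the $F'(0)$-background $F'(0)\sum_{\mu\notin V}\eta^\mu_i$ that appears when $F'(0)\neq0$ and $M>n$: it is identically zero for the classical and dense Hopfield models ($F'(0)=0$), but for the modern model ($F'(0)=N$) it must be controlled via the exponential gap between $F'(\gamma^{(k)})=Ne^{N\gamma^{(k)}}$ and $F'(0)\sqrt{M}$, which implicitly requires an auxiliary bound on the growth of $M$.
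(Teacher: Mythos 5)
Your proof is correct (in the case $F'(0)=0$, which covers the classical and dense models) and follows a genuinely different route from the paper's. The paper partitions $\{1,\dots,N\}$ by the column vectors $(\xi^{\mu_1}_i,\dots,\xi^{\mu_n}_i)$, invokes the concentration estimate of Proposition \thv(3.prop1) to control the cell sizes, and then plugs into the deterministic mixed-memory equation \eqv(2.prop2.0) over the Rademacher system, which it has solved in Propositions \thv(2.prop1) and \thv(2.prop2) via the tree-based recursive decomposition of $R^{(n)}$. You instead observe that $(\xi^\nu_i\xi_i(m))_{i\geq 1}$ is an i.i.d.\ sequence of bounded variables, appeal directly to the strong law of large numbers, and evaluate the single expectation $\E[\xi^\nu_1\xi_1(m)]$ by conditioning on the first non-vanishing block $S_{k^\ast}$. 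Since $\{r^{(n)}_j\}_{j=1}^{2^n}$ is a complete enumeration of $\{-1,1\}^n$, your expectation is exactly the uniform hypercube average on the right-hand side of \eqv(2.prop2.0), and your block-by-block symmetry-and-vanishing argument is a probabilistic restatement of the paper's evaluation of the terms $\SS^{(n),\nu}_{n_1,\dots,n_k}$; the two routes prove the same combinatorial identity, but yours is considerably more compact and elementary as a standalone proof of this theorem. What the paper's heavier setup buys is the quantitative Proposition \thv(3.prop1), which is then re-used throughout the proofs of Theorems \thv(1.theo1)--\thv(1.theo3), where explicit rates rather than bare almost-sure convergence are required.

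You are also right to flag that when $F'(0)\neq 0$ (the modern model) the argument needs the signal $\sum_{\nu\in V}\xi^\nu_iF'(m_\nu)$ to dominate the background $F'(0)\sum_{\mu\notin V}\xi^\mu_i$, which silently requires a growth bound on $M(N)$. Theorem \thv(1.theo2.mix) is stated without any such bound, so this is a real gap; but it is equally present in the paper's own proof, which carries out the reduction to the $n$-term sign only for $F(x)=\tfrac12 x^2$ and disposes of the general-$F$ case with ``we omit the details''. The needed control on $M$ appears only later, inside the proof of Theorem \thv(1.theo3). So your remark points to a genuine (though benign, since every application in the paper supplies the missing bound) imprecision in the theorem's stated hypotheses rather than a defect specific to your argument.
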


The set $\MM_{n,F}$ decomposes into the disjoint union of two subsets containing  the symmetric and asymmetric mixed memories, respectively.  We call symmetric the memories resulting from the trivial $\ell$-compositions given by $\ell=1$ and $n_{1}=n$. In this case, \eqv(1.2.1.8) becomes $m(\g_n)=(\g^{(1)},\dots,\g^{(1)}, 0,\dots, 0)$, where $\g^{(1)}$ is repeated $n$ times.  Note that this set contains the $M$ original  patterns themselves: these are obtained for $n=1$.
All other memories are called asymmetric.

It is clearly of interest to know how many of these mixed  memories are present in a given model.  The next proposition gives estimates of their growth rate in $M$.

\begin{proposition}[Bounds on the number of mixed memories]
    \TH(1.prop.mix)
Let $F$ be a smooth function whose derivative $F'$ satisfies $F'(x)>0$, for all $x>0$. 
For each $n\in\N$ odd
\be
A_{n,F}M^n\left(1-\frac{n}{M}\right)^{n-1}
\leq 
\left|\MM_{n,F}\right|
\leq 
A_{n,F}M^n,
\Eq(1.prop.mix.1)
\ee
where $A_{n,F}$ only depends on $n$ and $F$ and not on $N$.
\end{proposition}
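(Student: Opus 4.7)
The plan is a direct combinatorial count based on the $\Pi_M$-symmetry built into the definition \eqv(1.2.1.15) of $\MM_{n,F}$. First I would observe that every vector $m'\in\MM_{n,F}$ has exactly $n$ non-zero coordinates. Indeed this is true of every template $m(\g_n)$ for $\g_n\in\G_{n,F}$, since the entries $\g^{(k)}$ defined by \eqv(1.2.1.4)--\eqv(1.2.1.5) are strictly positive, and neither coordinate permutations nor the sign operations in \eqv(1.2.1.15) alter the number of non-zero entries.

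Next I would exploit the $\Pi_M$-action. By construction, $\MM_{n,F}$ is invariant under arbitrary coordinate permutations, and this action carries the support of $m'$ to its image under the permutation. Since $\Pi_M$ acts transitively on the $\binom{M}{n}$ subsets of $\{1,\dots,M\}$ of cardinality $n$, the number of vectors in $\MM_{n,F}$ with a prescribed support is the same for every support of size $n$. Calling this common number $B_{n,F}$, one obtains
\be
\left|\MM_{n,F}\right|=B_{n,F}\binom{M}{n}.
\ee
The constant $B_{n,F}$ is built from $\G_{n,F}$ and from the parity flag $a\in\{1,2\}$, both of which are determined by $n$ and $F$ alone; in particular $B_{n,F}$ is independent of $N$ (hence of $M$). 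Note that possible coincidences between different $\g_n$'s producing the same template $m(\g_n)$ are irrelevant here, since one is only counting \emph{distinct} vectors in $\MM_{n,F}$.

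It then remains to bound the binomial coefficient. The upper bound $\binom{M}{n}\leq M^n/n!$ is immediate. For the lower bound I write
\be
\binom{M}{n}=\frac{1}{n!}\prod_{k=0}^{n-1}(M-k)=\frac{M^{n}}{n!}\prod_{k=1}^{n-1}\left(1-\tfrac{k}{M}\right),
\ee
and use that each of the $n-1$ factors satisfies $1-k/M\geq 1-n/M$. Setting $A_{n,F}=B_{n,F}/n!$ then yields both inequalities of \eqv(1.prop.mix.1). The only mildly non-routine ingredient is the reduction $|\MM_{n,F}|=B_{n,F}\binom{M}{n}$, which I expect to be the main obstacle; once one notes that the double union over $\pi\in\Pi_M$ in \eqv(1.2.1.15) makes $\MM_{n,F}$ fully symmetric in the index $\mu\in\{1,\dots,M\}$, the equal-fiber argument over supports is immediate, and the binomial estimates are standard.
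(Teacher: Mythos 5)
Your proof is correct, and it is essentially the paper's argument reorganized. The paper decomposes $\left|\MM_{n,F}\right|$ as a sum over templates $m(\g_n)\in\MM^{\circ}_{n,F}$ of the orbit sizes $q(\g_n)=\binom{M}{n_1}\binom{M-n_1}{n_2}\cdots\binom{M-(n_1+\dots+n_{\ell-1})}{n_\ell}$, times the sign factor, and then bounds each $q(\g_n)$ by $M^n/(n_1!\cdots n_\ell!)$ from above and by $M^n(1-n/M)^{n-1}/(n_1!\cdots n_\ell!)$ from below. Your version instead factors $\binom{M}{n}$ out globally by grouping vectors according to their support and invoking the transitivity of $\Pi_M$ on $n$-element subsets of $\{1,\dots,M\}$, packaging all the $F$- and $n$-dependent combinatorics into the single constant $B_{n,F}$. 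Since $q(\g_n)=\binom{M}{n}\,n!/(n_1!\cdots n_\ell!)$, the two proofs yield exactly the same $A_{n,F}$, so this is a reorganization rather than a new idea; still, yours is the slightly more robust phrasing. First, the identity $\left|\MM_{n,F}\right|=B_{n,F}\binom{M}{n}$ is exact and needs no implicit assumption that distinct templates have disjoint permutation-orbits, whereas the paper's per-template sum silently requires this (and, as the appendix notes, compositions ending in $(n_{\ell-1},1)$ versus $(n_{\ell-1}+1)$ produce the same vector because $\a^{(1)}=1$, so the assumption needs a word of justification). Second, your binomial bounds are uniform in the block structure, so you never need to carry $\ell$ or the $n_k$'s through the inequality. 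The one place you are a touch informal is the bare assertion that $B_{n,F}$ is independent of $N$; cleaner is to observe that $B_{n,F}$ is the cardinality of the finite set of restrictions $m'|_{\{1,\dots,n\}}\in\R^n$ of elements $m'\in\MM_{n,F}$ supported on $\{1,\dots,n\}$, a set built from $\G_{n,F}$, the permutations of $\{1,\dots,n\}$, and the parity flag $a$ alone (for any $M\geq n$), and hence manifestly free of $N$ and $M$.
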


We conclude this subsection by noting that the numerical work of Amit \emph{et al.}~\cite{AGS85a} for the classical Hopfield model strongly suggests that if $\xi^{(N)}(m)$ is an $n$-mixed memory of type $F(x)=\frac{1}{2}x^2$, then $m\in\MM_{n,F}$. Extrapolating to the case of general type $F$ memories suggests that the converse of Theorem \thv(1.theo2.mix) is true. This leads to the following conjecture.

\begin{conjecture}
    \TH(1.conj.2)
Given any $n\in\N$ odd, $\xi^{(N)}(m)$ is an $n$-mixed memory of type $F$ if and only if $m\in\MM_{n,F}$.
\end{conjecture}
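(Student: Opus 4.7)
The plan is to take the forward (``if'') direction for granted from Theorem \thv(1.theo2.mix) and focus the argument on the converse: if $\xi^{(N)}(m)$ is an $n$-mixed memory of type $F$, then $m\in\MM_{n,F}$. Two preliminary reductions are natural. First, by relabelling patterns we may assume the $n$ nonzero components of $m$ occupy indices $1,\dots,n$; if $F'$ is not odd, the sign symmetry $\xi^\mu\to -\xi^\mu$ lets us further restrict to $m\in[0,1]^n$. After sorting, assume $m_1\geq m_2\geq\cdots\geq m_n>0$. These are exactly the orbits used to build $\MM_{n,F}$ from $\MM^{\circ}_{n,F}$ in \eqv(1.2.1.15), so it suffices to place the canonicalised $m$ in $\MM^{\circ}_{n,F}$.

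The second step converts the overlap constraints \eqv(1.2.1.2)--\eqv(1.2.1.3) into a finite-dimensional fixed-point system. Applying the strong law of large numbers to the i.i.d.\ rows $(\xi_i^1,\dots,\xi_i^M)$ yields, for each $\nu\leq n$,
\be
\frac{1}{N}\sum_{i=1}^N\xi_i^\nu\,\sign\!\left(\sum_{\mu=1}^M\xi_i^\mu F'(m_\mu)\right)\longrightarrow \E\!\left[\xi^\nu\,\sign\!\left(\sum_{\mu=1}^n\xi^\mu F'(m_\mu)+R\right)\right],
\ee
where $R=F'(0)\sum_{\mu>n}\xi^\mu$ is the contribution of the $M-n$ zero-overlap patterns. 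A concentration argument (feasible under the same growth restrictions on $M$ used elsewhere in the paper) should show that $R$ affects the sign of the leading $n$-term sum only on an event of vanishing probability, so that \eqv(1.2.1.2) forces the finite self-consistency system
\be
m_\nu=\E\!\left[\xi^\nu\,\sign\!\left(\sum_{\mu=1}^n\xi^\mu F'(m_\mu)\right)\right],\qquad 1\leq \nu\leq n.
\ee

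Next, I would exploit the level-set structure of $m$. Partitioning $\{1,\dots,n\}$ into the maximal blocks on which $m$ is constant produces a composition $(n_1,\dots,n_\ell)$ and values $\g^{(1)}>\g^{(2)}>\cdots>\g^{(\ell)}>0$. Letting $S_k=\sum_{\mu\in B_k}\xi^\mu$, the argument of the sign becomes $\sum_k F'(\g^{(k)})S_k$, and within-block exchangeability guarantees that the right-hand side of the self-consistency system depends on $\nu$ only through its block, consistent with $m$ being constant there. Conditioning recursively on $(S_1,\dots,S_{k-1})$, the dominant contribution comes from the event $\{S_1=\cdots=S_{k-1}=0\}$, on which the sign reduces to $\sign(S_k)$ provided the inequality of \eqv(1.2.1.12bis) holds; the binomial identity $\E[\xi^\nu\sign(S_k)]=2^{-n_k+1}\binom{n_k-1}{\lfloor(n_k-1)/2\rfloor}=\a^{(n_k)}$ then yields $\g^{(k)}=\a^{(n_k)}\g^{(k-1)}$, matching \eqv(1.2.1.5). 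The event $\{S_1=\cdots=S_{k-1}=0\}$ has positive probability only when each $n_j$ with $j<k$ is even, which recovers the allowability of the composition and odd parity of $n_\ell$.

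The main obstacle will be converting this heuristic into a rigorous necessity statement, and this is presumably why the claim is posed as a conjecture rather than a theorem. Two points are delicate. First, necessity of \eqv(1.2.1.12bis): if the inequality fails at some index $k$, then on the event $\{S_1=\cdots=S_{k-1}=0,\, |S_k|=1\}$ the sign of the total sum is governed by some later block, producing a value of $\g^{(k)}$ strictly different from $\a^{(n_k)}\g^{(k-1)}$; turning this into a sharp enumeration of sign patterns weighted by $F'(\g^{(\cdot)})$ is combinatorially subtle. Second, and harder, is ruling out non-block-constant solutions, where $m$ has strictly decreasing, non-degenerate components not compatible with the recursion. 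This would require a uniqueness or monotonicity analysis of the map $m\mapsto \left(\E[\xi^\nu\sign(\sum_\mu\xi^\mu F'(m_\mu))]\right)_{\nu}$ in the positive orthant, for which no convexity or contraction property is manifest. Without such a rigidity result the classification cannot be closed, and this step is where I expect the real difficulty to lie.
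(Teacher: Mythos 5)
The statement you were asked to prove is labelled a \emph{conjecture} in the paper: the authors explicitly state that it is motivated by numerical evidence from \cite{AGS85a} and by analogy, and they give no proof of the converse direction. There is therefore no hidden paper argument against which your proposal can be measured, and your candid acknowledgement that the argument stops short of a proof is the honest and correct conclusion. Your reduction to the finite self-consistency system is consistent with the paper's own framework: the equation $m_\nu=\E\bigl[\xi^\nu\sign\bigl(\sum_{\mu\le n}\xi^\mu F'(m_\mu)\bigr)\bigr]$ is precisely \eqv(2.prop2.0) with $f=F'$, since the column vectors $r^{(n)}_j$ of the Rademacher matrix enumerate $\S_n$ uniformly. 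Your block decomposition, recursive conditioning on $\{S_1=\cdots=S_{k-1}=0\}$, and the binomial identity $\E|S_k|/n_k=\a^{(n_k)}$ all match the mechanism that drives the paper's Proposition \thv(2.prop1). The two obstacles you single out (necessity of the inequality system $\SS_{n,F}$ and rigidity of block-constant solutions) are indeed the points on which the classification is open.

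One refinement is worth adding to your necessity argument. The discontinuous asymmetric example \eqv(2.discont.1), $m=(\tfrac{3}{8},\tfrac{3}{8},\tfrac14,\tfrac14,\tfrac12,0,\dots)$, \emph{does} satisfy the fixed-point system \eqv(2.2.1) while lying outside $\MM_{n,F}$, and your chain of deductions from the self-consistency equation alone cannot exclude it: the level-set partition yields blocks of sizes $2,2,1$ but the resulting $\g$ does not satisfy the recursion $\g^{(k)}=\a^{(n_k)}\g^{(k-1)}$, and nothing in your conditioning scheme rules this out. What saves the conjecture is a requirement you did not invoke: Definition \thv(1.def1.2) demands that $\xi^{(N)}(m)$ be a genuine configuration in $\S_N$, i.e.\ $\sum_{\mu}\xi_i^\mu F'(m_\mu)\neq 0$ for every $i$. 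For the discontinuous solution the argument of $\sign$ vanishes on a $2^{-n}\cdot(\text{const})$ fraction of sites (the paper exhibits $r^{(5)}_i=(+,-,+,+,-)$ as such a zero), so the associated $\xi^{(N)}(m)$ has a positive density of zero coordinates and is not in $\S_N$. The nonvanishing requirement is also what forces $n_\ell$ odd (otherwise $\{S_1=\cdots=S_\ell=0\}$ has positive probability) and is the structural origin of the system $\SS_{n,F}$, via Proposition \thv(2.prop2), (iii). Your argument should make this constraint explicit, because it is the only thing distinguishing $\G_{n,F}$ from the larger family of solutions of \eqv(2.prop2.0). The remaining gap — a uniqueness or rigidity theorem classifying all solutions of \eqv(2.prop2.0) with the nonvanishing constraint, in particular excluding non-block-constant ones — is exactly why this remains a conjecture, and your assessment that no contraction or convexity structure is visible is accurate.
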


Implications of this conjecture are discussed at the end of this section.

In Section \thv(S5), we solve the system of inequalities $\SS_{n,F}$ defined in \eqv(1.2.1.12bis), 
and explicitly enumerate the set of $n$-mixed memories of type $F$ for the classical and the dense Hopfield models. We then draw consequences for their energy functions \eqv(1.1.2). 
In particular, we prove (see Lemma \thv(5.lem2)) that, for $F(x)=\frac{1}{2}x^2$, if $M\ll N$, then, with $\P$-probability one,  the energy of the corresponding $n$-mixed memories becomes confined  to the strip
$(-\frac{1}{2}, -\frac{1}{\pi}]$ as $N\rightarrow\infty$ if $n>1$, while the energy of the original patterns converges to $-\frac{1}{2}$.

\subsubsection{Mixed memories are local minima} 
    \TH(S1.2.2) 
    
We are interested in finding the local minima of $E_{N,M}$. This can be achieved by setting up a deterministic dynamics $\s(t)$ on  $\S_N$ as follows: at time step $t+1$, an index $i$ is selected at random from $\{1,\dots,N\}$ (\emph{e.g.}~uniformly and independently from previous selections), and $\s(t)$ is updated to
\be
\s_j(t+1) =
\begin{cases}
T_i(\s(t))& \text{if $j=i$,} \\
\s_j(t) &  \text{if $j\neq i,$}
\end{cases}
\Eq(1.2.2)
\ee
where $T=(T_i)_{1\leq i\leq N}: \S_N\rightarrow\S_N$ is some map that decreases the energy.
Then $\s$ is a minimum of $E_{N,M}(\s)$ if and only if it is a fixed point of this dynamics, \emph{i.e.}~if and only if
\be
\s_i=T_i(\s)\quad\forall 1\leq i\leq N.
\Eq(1.2.3)
\ee
Two types of maps $T_i$ (or update rules) have been considered in the literature: the so-called gradient map,
\be
T^{\text{G}}_i(\s)=\sign\left\{\sum_{\mu=1}^M \xi^{\mu}_iF' \left(\frac{1}{N}\sum_{1\leq j\neq i\leq N}\xi^{\mu}_j\s_j\right)\right\},
\Eq(1.2.1)
\ee
and, more recently,  the Hopfield-Krotov map \cite{KH16}.
\be
T^{\text{HK}}_i(\s)=
\sign\left\{\sum_{\mu=1}^M\left[F\left(\frac{\xi^{\mu}_i}{N}+\frac{1}{N}\sum_{1\leq j\neq i\leq N}\xi^{\mu}_j\s_j\right)-F\left(-\frac{\xi^{\mu}_i}{N}+\frac{1}{N}\sum_{1\leq j\neq i\leq N}\xi^{\mu}_j\s_j\right)\right]\right\}.
\Eq(1.2.4)
\ee
It is clear that the dynamics $T^{\text{HK}}$ decreases the energy \eqv(1.1.2) and therefore its fixed points are local minima. 
$T^{\text{G}}$ was Hopfield's original choice \cite{Hop82}.  In statistical mechanics, a variant is considered in which the sum \eqv(1.2.1) is over all $1\leq j\leq N$. It is obtained by taking the zero temperature limit of a gradient dynamics minimising the free energy functional associated with the energy \eqv(1.1.2). Although there is little difference between this definition and $T^{\text{G}}$, we cannot claim that $T^{\text{G}}$ decreases the energy for all $F$. One of the motivations for studying both $T^{\text{HK}}$ and $T^{\text{G}}$ is to prove that for the main models of interest, \emph{i.e.}~the classical, the dense and the modern models, their fixed points coincide. As we will see, the question is only open for the dense model, since it turns out that $T^{\text{G}}=T^{\text{HK}}$ for the classical and modern Hopfield models.

The theorems below give sufficient conditions on the growth rate of $M\equiv M(N)$ for mixed memories to be  local minima of the energy function, asymptotically, for three models: the classical, the dense and the modern Hopfield models. 
Having stated these results, the obtained conditions are then compared to the known conditions for original patterns to be local minima.

Remember that $n$ is independent of $N$. Furthermore, a vector $m\in \MM_{n,F}$ is an $M$-dimensional vector with $n$ non-zero components (see \eqv(1.2.1.8) and \eqv(1.2.1.15)).

\begin{theorem}[Classical Hopfield network]
    \TH(1.theo1)
Take $F(x)=\frac{1}{2}x^2$ in  \eqv(1.1.2). For this model $T^{\text{HK}}=T^{\text{G}}\equiv T$.  
Given $n\in\N$ odd and independent of $N$, the following holds.
\item{(i)}
For every $m\in\MM_{n,F}$ there exists a constant $C(m)>0$ that depends only on the $n$ non-zero components of $m$ such that, if
\be
M(N)
\leq 
\frac{C^2(m) N}{2(2+\varepsilon)\ln N}
\Eq(1.theo1.1)
\ee
for arbitrary $\varepsilon> 0$,  then 
\be
\P\left[
\bigcup_{N_0}\bigcap_{N>N_0}\left\{\xi^{(N)}(m)=T(\xi^{(N)}(m))\right\}
\right]
=1.
\Eq(1.theo1.2)
\ee

\item{(ii)}
If
\be
M(N)
\leq 
\frac{\inf\{C^2(m) : m\in\MM_{n,F}\}N}{2(2+n+\varepsilon)\ln N}
\Eq(1.theo1.3)
\ee
for arbitrary $\varepsilon> 0$, where the infimum is strictly positive,  then 
\be
\P\left[
\bigcup_{N_0}\bigcap_{N>N_0}\left(\bigcap_{m\in\MM_{n,F}}\left\{\xi^{(N)}(m)=T(\xi^{(N)}(m))\right\}\right)
\right]
=1.
\Eq(1.theo1.4)
\ee
\end{theorem}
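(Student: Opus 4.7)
The plan is to verify the fixed-point equation $\xi^{(N)}(m) = T(\xi^{(N)}(m))$ coordinate by coordinate. A short algebraic calculation using $F(x) = \tfrac12 x^2$ gives $T^{\mathrm{G}} = T^{\mathrm{HK}}$, so I work with $T = T^{\mathrm{G}}$. Substituting $F'(x) = x$ in \eqv(1.2.1) and evaluating at $\s = \xi^{(N)}(m)$ yields
\[
T_i(\xi^{(N)}(m)) = \sign\Bigl(\sum_{\mu=1}^M \xi_i^\mu\, m_\mu^{(N,i)}\Bigr), \qquad m_\mu^{(N,i)} := \frac{1}{N}\sum_{j\neq i}\xi_j^\mu \xi_j(m),
\]
whereas \eqv(1.2.1.11) gives $\xi_i(m) = \sign\bigl(\sum_{\mu \in V}\xi_i^\mu m_\mu\bigr)$. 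Splitting the inner sum as $S_i + E_i^{(1)} + E_i^{(2)}$ with signal $S_i := \sum_{\mu \in V}\xi_i^\mu m_\mu$, pattern fluctuation $E_i^{(1)} := \sum_{\mu \in V}\xi_i^\mu(m_\mu^{(N,i)} - m_\mu)$ and noise $E_i^{(2)} := \sum_{\mu \notin V}\xi_i^\mu m_\mu^{(N,i)}$, the fixed-point identity reduces to proving $|E_i^{(1)} + E_i^{(2)}| < |S_i|$ simultaneously for all $i \leq N$, eventually in $N$, $\P$-almost surely.

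The first key step is a deterministic lower bound $|S_i| \geq C(m) > 0$, valid for every realisation of $(\xi_i^\mu)_{\mu \in V}$. Since $m$ is piecewise constant on $\ell$ consecutive blocks with values $\gamma^{(1)}, \ldots, \gamma^{(\ell)}$, one has $S_i = \sum_{k=1}^\ell \gamma^{(k)} s_k(i)$ with $s_k(i)$ an integer of the same parity as $n_k$. The inequalities $\SS_{n,F}$ specialised to $F'(x) = x$ read $2\gamma^{(k)} > \sum_{l>k} n_l \gamma^{(l)}$; this forces the earliest non-vanishing level of $(s_k(i))_k$ to dominate all subsequent contributions strictly, and combined with the oddness of $n_\ell$ (guaranteeing $|s_\ell(i)| \geq 1$) it produces a positive constant $C(m)$. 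This combinatorial step is where the admissibility conditions defining $\MM_{n,F}$ are used, and it is the main technical obstacle, especially because part~(ii) requires a positive lower bound uniform over $m \in \MM_{n,F}$.

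For the noise, the crucial observation is that $\xi_j(m)$ is a function of $(\xi_j^\nu)_{\nu \in V}$ alone, so for $\mu \notin V$ the Rademacher $\xi_j^\mu$ is independent of $\xi_j(m)$. Conditional on $(\xi_k(m))_{k \neq i}$ and $\xi_i^\mu$, the variables $\xi_i^\mu \xi_j^\mu \xi_j(m)$ indexed by $j \neq i$ and $\mu \notin V$ are independent Rademachers, so a subgaussian/Hoeffding estimate yields
\[
\P\bigl(|E_i^{(2)}| > s\bigr) \leq 2 \exp\!\Bigl(-\frac{s^2 N^2}{2(M-n)(N-1)}\Bigr).
\]
The fluctuation $E_i^{(1)}$ is a sum of only $n$ terms, each of order $N^{-1/2}\sqrt{\log N}$ after a parallel Hoeffding estimate with a union bound over $\mu \in V$ and $i$; it is therefore negligible compared to $C(m)$ for $N$ large.

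To conclude part~(i), I union-bound over $i \in \{1, \ldots, N\}$: the failure probability at scale $N$ is at most $2N \exp\bigl(-(1+o(1)) C^2(m) N / (2M)\bigr)$, which under the hypothesis \eqv(1.theo1.1) is bounded by $2 N^{-(1+\varepsilon/2)}$ for $N$ large, summable; Borel--Cantelli then delivers \eqv(1.theo1.2). For part~(ii) I apply an additional union bound over $m \in \MM_{n,F}$: by Proposition~\thv(1.prop.mix) the cardinality is at most $A_{n,F} M^n \leq A_{n,F} N^n$, so the failure probability acquires a multiplicative factor $N^n$. The extra $n\log N$ in the exponent is exactly what the threshold \eqv(1.theo1.3) is designed to absorb, yielding summability and hence \eqv(1.theo1.4).
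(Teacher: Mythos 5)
Your proposal is correct and follows the same overall skeleton as the paper's proof: show $T^{\text{HK}}=T^{\text{G}}$, reduce to a per-coordinate fixed-point check, isolate a deterministic signal bounded below by $C(m)$, bound the orthogonal noise by a sub-Gaussian tail estimate of size $\exp\{-\frac12 C^2(m) N/M\}$, and finish by union bound over $i$ (and over $m$ for part~(ii)) plus Borel--Cantelli, with $|\MM_{n,F}|\leq A_{n,F}M^n$ absorbing the extra $n\ln N$ in part~(ii). The one place where you diverge is in the treatment of the contribution of the $n$ condensed patterns: the paper keeps $I^{(1)}_{N,i}(m)=S_i+E_i^{(1)}$ as a single object and controls it via the Rademacher block-size concentration of Proposition~\thv(3.prop1) (which asserts that all block sizes $L_{j'}(\xi^{(n)})$ are simultaneously close to $N/2^n$, yielding a deterministic $O(\delta_N n)$ error term on the event $\O^{[n]}$, resp.\ $\O^{(n)}$ for part~(ii)), whereas you separate off the purely deterministic identity $S_i=(r^{(n)}_{i'},m)$ with $|S_i|\geq C(m)$ given by Proposition~\thv(2.prop1)(iii), and then bound the fluctuation $E_i^{(1)}$ by Hoeffding on each $m_\nu^{(N,i)}-m_\nu$ plus a union bound. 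Both give the same $O(\sqrt{\ln N/N})$ error, so your route is a legitimate, slightly more elementary alternative; the paper's route is tied to the machinery it already developed in Section~\thv(S3), and in particular Proposition~\thv(3.prop1)(ii) packages the uniformity over all subsets $V$ needed for part~(ii), which in your version must be re-derived via an extra union bound over $V\subset\{1,\dots,M\}$ of size $n$ inside the Hoeffding step (a factor $\binom{M}{n}\leq M^n$ that the threshold \eqv(1.theo1.3) still absorbs). Finally, your claim that $|S_i|\geq C(m)$ is a deterministic fact, with $C(m)=\min\bigl\{\min_{k<\ell}\bigl[2\gamma^{(k)}-\sum_{l>k}n_l\gamma^{(l)}\bigr],\gamma^{(\ell)}\bigr\}$, is exactly the content of \eqv(2.prop1.iii3)--\eqv(2.prop1.iii4), so you need not regard this combinatorial step as an obstacle to be overcome anew.
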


\begin{theorem}[Dense Hopfield network]
    \TH(1.theo2)
Given an integer $p\geq 3$, consider the model defined by \eqv(1.1.2) with $F(x)=\frac{1}{p}x^p$.
Let $T$ denote either $T^{\text{HK}}$ or $T^{\text{G}}$.  The following holds for all odd $n\in\N$ independent of $N$.
\item{(i)}
For each $m\in\MM_{n,F}$ there exists a constant $C_p(m)>0$  that depends only on the $n$ non-zero components of $m$ and  on $p$ such that, if
\be
M(N)
\leq 
\frac{C^2_p(m) N^{p-1}}{2(2+\varepsilon)[(2p-3)!!]\ln N}
\Eq(1.theo2.1)
\ee
for arbitrary $\varepsilon> 0$,  then 
\be
\P\left[
\bigcup_{N_0}\bigcap_{N>N_0}\left\{\xi^{(N)}(m)=T(\xi^{(N)}(m))\right\}
\right]
=1.
\Eq(1.theo2.2)
\ee
\item{(ii)}
If
\be
M(N)
\leq 
\frac{\inf\{C^2_p(m) : m\in\MM_{n,F}\}N^{p-1}}{2(2+n(p-1)+\varepsilon)[(2p-3)!!]\ln N}
\Eq(1.theo2.3)
\ee
for arbitrary $\varepsilon> 0$, where the infimum is strictly positive, then 
\be
\P\left[
\bigcup_{N_0}\bigcap_{N>N_0}\left(\bigcap_{m\in\MM_{n,F}}\left\{\xi^{(N)}(m)=T(\xi^{(N)}(m))\right\}\right)
\right]
=1.
\Eq(1.theo2.4)
\ee
\end{theorem}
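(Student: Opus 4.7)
The fixed-point condition $\xi^{(N)}_i(m) = T_i(\xi^{(N)}(m))$ for $F(x) = x^p/p$ is equivalent, for $T=T^{\text{G}}$, to
\be
\xi_i(m)\sum_{\mu=1}^M \xi^\mu_i \bigl(h^{(i)}_\mu\bigr)^{p-1} > 0, \quad h^{(i)}_\mu := \frac{1}{N}\sum_{j\neq i}\xi^\mu_j \xi_j(m).
\ee
For $T=T^{\text{HK}}$, a Taylor expansion of $F(h^{(i)}_\mu \pm \xi^\mu_i/N)$ shows that the sign argument agrees with the displayed expression up to a relative error of order $1/N^2$, negligible once the latter is order one. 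I would therefore handle both update rules simultaneously, the target being to prove that the quantity above is bounded below by a strictly positive constant, uniformly in $i$, with probability high enough for Borel--Cantelli.

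Next I would split each sum into a \emph{signal} contribution from $\mu \in V$ and a \emph{noise} contribution from $\mu \notin V$. Since $F'(0) = 0$ for $p\ge 2$, the memory $\xi_j(m)$ depends only on $(\xi^\nu_j)_{\nu\in V}$, so the patterns $(\xi^\mu)_{\mu\notin V}$ are independent of $\xi^{(N)}(m)$---a structural point used repeatedly below. For the signal, inspecting the possible values of the block sums $\sum_{\mu\in\text{block }k}\xi^\mu_i$ (which are even for $k<\ell$ and odd for $k=\ell$) and invoking the inequalities \eqv(1.2.1.12bis) yields a deterministic lower bound
\be
\xi_i(m)\sum_{\mu\in V}\xi^\mu_i F'(m_\mu) \geq C_p(m) > 0,
\ee
where $C_p(m)$ is extracted as the minimum over $k$ of the strict gap appearing in $\SS_{n,F}$. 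Replacing $F'(m_\mu)$ by $F'(h^{(i)}_\mu)$ introduces an error controlled by Hoeffding's inequality applied to $h^{(i)}_\mu$, whose expectation equals $m_\mu$ by Definition \thv(1.def1.2)(ii); this perturbation is exponentially small in $N$, uniformly over $1\le i\le N$ and $\mu \in V$ after a trivial union bound.

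The main effort is to control the noise $Z_i := \xi_i(m)\sum_{\mu\notin V}\xi^\mu_i (h^{(i)}_\mu)^{p-1}$. Conditionally on $\xi^{(N)}(m)$, the summands indexed by distinct $\mu\notin V$ are independent and mean-zero (as $\xi^\mu_i$ is Rademacher and independent of $h^{(i)}_\mu$). The conditional second moment of each summand is, to leading order,
\be
\E\bigl[(h^{(i)}_\mu)^{2(p-1)} \,\big|\, \xi^{(N)}(m)\bigr] = \frac{(2p-3)!!}{N^{p-1}}(1+o(1)),
\ee
obtained by expanding the $2(p-1)$-th power as a sum over tuples of indices and retaining only the fully-paired Rademacher contributions, which reproduce the Gaussian even moment. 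A Bernstein-type tail bound adapted to this conditional variance then gives
\be
\P\bigl(|Z_i| > C_p(m)/2 \,\big|\, \xi^{(N)}(m)\bigr) \leq 2\exp\!\Bigl(-(1-o(1))\tfrac{C_p^2(m) N^{p-1}}{8 M (2p-3)!!}\Bigr).
\ee
Under hypothesis \eqv(1.theo2.1), this bound is $O(N^{-(2+\varepsilon')})$ for some $\varepsilon'>0$; a union bound over $1\le i\le N$ keeps it summable in $N$, and Borel--Cantelli delivers part (i). For (ii), Proposition \thv(1.prop.mix) gives $|\MM_{n,F}| \le A_{n,F} M^n$, so a further union bound over $m\in\MM_{n,F}$ costs at most a factor $N^{n(p-1)}$ (up to constants), shifting the required exponent from $2+\varepsilon$ to $2+n(p-1)+\varepsilon$ and matching \eqv(1.theo2.3).

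The main obstacle will be extracting the sharp prefactor $(2p-3)!!$ in the conditional variance. A na\"ive sub-Gaussian estimate using only $|h^{(i)}_\mu|\le 1$ gives the correct scaling $N^{p-1}/M$ but a loose prefactor; recovering the Gaussian double-factorial constant requires either a careful combinatorial analysis of the mixed moments of $h^{(i)}_\mu$ via the pairing structure of products of Rademachers, or a conditional central limit theorem for $h^{(i)}_\mu$ together with uniform integrability of its $(p-1)$-th power. The independence of $\xi^\mu$ and $\xi^{(N)}(m)$ for $\mu\notin V$ is what makes these computations tractable, and the proof for the classical model ($p=2$) should arise as the special case in which the variance computation becomes trivial.
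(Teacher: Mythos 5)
Your overall strategy matches the paper's: decompose the sign argument into a signal term from $\mu\in V$ and a noise term from $\mu\notin V$, show the signal is bounded below by a strictly positive constant coming from the gap inequalities $\SS_{n,F}$, control the noise with a tail bound, and close with a union bound over $i$ (and over $m$ for part (ii)) plus Borel--Cantelli. Your derivation of the signal lower bound via Hoeffding applied to $h^{(i)}_\nu$ around its exact mean $m_\nu$ is a clean alternative to the paper's Rademacher-partition argument (Proposition 3.1 and Lemma 4.3): both routes reduce to the same object, namely $\bigl|(r^{(n)}_{i'},(m)^{p-1})\bigr|\ge C_p(m)$ from Proposition 2.8(iii).

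The genuine gap is in the noise control. Writing $X^\mu_i=\sqrt{N}\,h^{(i)}_\mu$, the noise summands are $\xi^\mu_i (X^\mu_i)^{p-1}N^{-(p-1)/2}$, and for $p\ge 3$ the variable $(X^\mu_i)^{p-1}$ has tails like $\exp(-c\,t^{2/(p-1)})$ --- \emph{heavier than sub-exponential}, so its moment generating function diverges. Consequently the phrase ``a Bernstein-type tail bound adapted to this conditional variance'' cannot be taken literally: if you invoke Bernstein with the deterministic bound $|h^{(i)}_\mu|^{p-1}\le 1$, the additive correction term in the Bernstein denominator swamps the variance term precisely in the regime $M=o(N^{p-1})$, and the resulting bound does not even tend to zero. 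What is actually needed --- and what the paper's Lemma 4.7 carries out --- is a truncation at threshold $\gamma(N)^{p-1}$ with $\gamma(N)\sim\sqrt{\ln N}$: the tail beyond the truncation is killed by a Curie--Weiss-type partition-function estimate ($\P[|X^\mu_i|>\gamma(N)]\le c_0 N e^{-t_0\gamma^2(N)}$), and on the truncated event the exponential Chebyshev inequality gives the sharp $(2p-3)!!$ prefactor via a local CLT / Riemann-sum argument. Your closing remark hints at the right tools (combinatorics of mixed moments, or CLT with uniform integrability), but neither by itself yields an exponential tail; the truncation is what converts the distributional information into a large-deviation estimate.

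A second, smaller gap is your one-sentence dismissal of $T^{\text{HK}}$. Unlike the classical and modern cases, $T^{\text{HK}}\neq T^{\text{G}}$ when $F(x)=x^p/p$, and the binomial expansion of $F(h^{(i)}_\mu\pm\xi^\mu_i/N)$ produces $\lfloor p/2\rfloor$ correction terms of the form $N^{-(k-1)}\sum_\mu\xi^\mu_i(h^{(i)}_\mu)^{p-k}$ for odd $3\le k\le p$. Each such term has its own $N$-scaling and requires its own tail bound (the paper's Lemma 4.9), because $(h^{(i)}_\mu)^{p-k}$ has different moments than $(h^{(i)}_\mu)^{p-1}$ and the case $k=p$ (when $p$ is odd) degenerates to a plain Rademacher sum. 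Calling this ``a relative error of order $1/N^2$'' understates what must be verified. The rest of your argument --- the Borel--Cantelli closing, the $|\MM_{n,F}|\le A_{n,F}M^n\lesssim N^{n(p-1)}$ count for part (ii), and the resulting shift in the exponent --- is correct.
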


\begin{remark}
Explicit bounds on $C(m)$ and $C_p(m)$ are given, respectively, in \eqv(2.prop1.iii4) and in \eqv(2.prop2.iii4) with $f(x)=\frac{1}{p}x^p$.\end{remark}

We now turn to the modern Hopfield model. For $|x|\leq 1$ set
\be
I(x)=\frac{1+x}{2}\ln(1+x)+\frac{1-x}{2}\ln(1-x).
\Eq(1.theo3.0)
\ee

\begin{theorem}[Modern Hopfield network]
    \TH(1.theo3)
    
Given $\b>0$, take $F(x)=\exp(N\b x)$ in \eqv(1.1.2).  For this model $T^{\text{HK}}=T^{\text{G}}\equiv T$.  
The following holds for all odd $n\in\N$ independent of $N$, all $\b>0$ and all $m\in\MM_{n,F}$.
Set
\be
\b_c(m)\equiv\inf\left\{m_{\mu}>0 : 1\leq \mu\leq M(N)\right\}.
\Eq(1.theo3'.1)
\ee
If for arbitrary $\varepsilon>0$
\be
M(N)
\leq e^{N\inf\left\{\b,\frac{1}{2}\right\}\left[I(\b_c(m))-\varepsilon\right]},
\Eq(1.theo3'.2bis)
\ee
then
\be
\P\left[
\bigcup_{N_0}\bigcap_{N>N_0}\left\{\xi^{(N)}(m)=T(\xi^{(N)}(m))\right\}
\right]
=1.
\Eq(1.theo3'.3)
\ee
\end{theorem}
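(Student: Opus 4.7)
The plan is to verify the fixed-point condition $\xi_i(m)=T_i(\xi^{(N)}(m))$ uniformly in $1\leq i\leq N$ on an event of probability tending to one, and then apply Borel--Cantelli to upgrade to \eqv(1.theo3'.3). First I would check that for $F(x)=e^{N\b x}$ the two update rules coincide: the identity
$$F(y+1/N)-F(y-1/N)=2\sinh(\b)\,e^{N\b y}=\frac{2\sinh\b}{N\b}\,F'(y)$$
makes the bracket in \eqv(1.2.4) a strictly positive scalar multiple of $F'(y)$, so $T^{\text{HK}}=T^{\text{G}}$. The fixed-point equation then amounts to
$$\sign\!\Bigl(\sum_{\mu=1}^M \xi^{\mu}_i\,e^{N\b \wh m^{(i)}_\mu}\Bigr)=\xi_i(m),\qquad \wh m^{(i)}_\mu:=\frac{1}{N}\sum_{j\neq i}\xi^{\mu}_j\xi_j(m).$$

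The key structural observation is that, since $m_\mu=0$ for $\mu\notin V$ and $|V|=n$ is finite, the mixed memory $\xi_j(m)=\sign\bigl(\sum_{\nu\in V}\xi^{\nu}_j F'(m_\nu)\bigr)$ depends only on the $n$ patterns indexed by $V$. Consequently, for every $\mu\notin V$ the family $\{\xi^{\mu}_j\}_j$ is independent of $\{\xi_j(m)\}_j$, and conditionally on the latter the products $\xi^{\mu}_j\xi_j(m)$ are i.i.d.\ Rademacher. This turns each $\wh m^{(i)}_\mu$ ($\mu\notin V$) into a normalised Bernoulli-type sum satisfying the sharp Chernoff bound $\P\bigl(\wh m^{(i)}_\mu>t\mid\xi^{(N)}(m)\bigr)\leq e^{-(N-1)I(t)}$ and makes the sign $\xi^{\mu}_i$ independent of the magnitude $e^{N\b\wh m^{(i)}_\mu}$, which is crucial for controlling the noise.

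I would then decompose $S_i=\sum_\mu \xi^{\mu}_i\,e^{N\b\wh m^{(i)}_\mu}$ into a signal over $\mu\in V$ and a noise over $\mu\notin V$. The signal is handled by Hoeffding applied uniformly to the $n$ overlaps: with probability at least $1-2nN\exp(-\delta^2 N/2)$ one has $|\wh m^{(i)}_\mu-m_\mu|<\delta$ for every $\mu\in V$ and $i$. Combined with the hierarchical block structure of $m\in\MM_{n,F}$---in particular the odd length $n_\ell$ of the terminal block, which forbids full cancellation---this yields a uniform lower bound $|\text{signal}|\geq c\,e^{N\b \b_c(m)}$ with the correct sign $\xi_i(m)$, where $\b_c(m)=\g^{(\ell)}$ is understood as the smallest positive mixture coefficient. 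Observe also that for $F'(x)=N\b e^{N\b x}$ the system $\SS_{n,F}$ is trivially satisfied because successive ratios $F'(\g^{(k+1)})/F'(\g^{(k)})$ are exponentially small in $N$.

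The principal obstacle is the noise estimate, which governs the admissible growth rate of $M$. I would combine (a) a Chernoff union bound controlling $\max_{\mu\notin V,\,i}\wh m^{(i)}_\mu$---giving probability at most $2NM\exp(-NI(\b_c(m)-\eta))$ that this maximum exceeds $\b_c(m)-\eta$---with (b) a conditional second-moment estimate on the signed sum $\sum_{\mu\notin V}\xi^{\mu}_i\,e^{N\b\wh m^{(i)}_\mu}$, whose conditional variance is $\sum_{\mu\notin V}e^{2N\b\wh m^{(i)}_\mu}$ with mean $M(\cosh(2\b))^{N-1}$. Balancing these against the signal lower bound $\sim e^{N\b\b_c(m)}$ and optimising the auxiliary parameter $\eta\in(0,\b_c(m))$ produces a regime-dependent exponent: the pathwise signal--noise comparison yields the $\b$-linear branch, whereas the Chebyshev bound combined with the Legendre duality $\sup_\b[\b x-\ln\cosh\b]=I(x)$ yields the $1/2$-branch, the infimum of the two giving the threshold \eqv(1.theo3'.2bis). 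A union bound over $i$ followed by a summable Borel--Cantelli argument then promotes the high-probability statement to the almost-sure conclusion \eqv(1.theo3'.3).
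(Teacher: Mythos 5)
Your skeleton is right --- verify $T^{\text{HK}}=T^{\text{G}}$, reduce the argument of $\sign$ to a leading term over $V$ plus a noise term over $V^c$, bound the signal from below by $e^{N\b\b_c(m)}$ via the Rademacher block structure, and then control the noise conditionally on $\{\xi_j(m)\}_j$ using the independence of $\{\xi^\mu\}_{\mu\notin V}$ from $\xi(m)$ --- and this is essentially the route the paper takes. However there are two genuine gaps.

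\textbf{The reduction of $\xi(m)$ to a mixture of the $n$ patterns in $V$ is not automatic.} You write $\xi_j(m)=\sign\bigl(\sum_{\nu\in V}\xi^{\nu}_j F'(m_\nu)\bigr)$ "since $m_\mu=0$ for $\mu\notin V$". That is correct for the dense model where $F'(0)=0$, but here $F'(0)=N\b\neq 0$, so Definition~\thv(1.def1.2) actually gives $\xi_j(m)=\sign\bigl(N\b[\sum_{\nu\in V}\xi^{\nu}_j e^{N\b m_\nu}+\sum_{\mu\notin V}\xi^{\mu}_j]\bigr)$. Dropping the second sum requires the hypothesis on $M(N)$ together with the elementary inequality $I(x)<x$: since $n$ is odd, $\bigl|\sum_{\nu\in V}\xi^{\nu}_je^{N\b m_\nu}\bigr|\geq e^{N\b\b_c(m)}$, while $\bigl|\sum_{\mu\notin V}\xi^{\mu}_j\bigr|\leq M-n\leq e^{N\b I(\b_c(m))}<e^{N\b\b_c(m)}$. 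Without this step, $\xi(m)$ depends on \emph{all} the patterns, your independence claim for $\mu\notin V$ fails, and the entire conditional noise calculation collapses. This is done at the start of the paper's proof (see \eqv(1.3.01) and the paragraph that follows).

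\textbf{The noise estimate is the hard part and your account of it does not clearly produce the theorem's threshold.} The paper first applies the Ledoux--Talagrand sub-Gaussian inequality conditionally on $\{\xi^\mu_j\}_{j\neq i}$, giving $\P(\text{noise}>\l)\leq\E\bigl[\exp(-\l^2/(2Z_{\b,N,M}))\bigr]$ with $Z_{\b,N,M}=\sum_{\mu>n}e^{2\b\sum_{j\neq i}\xi^\mu_j}$, then splits on the size of $Z_{\b,N,M}$, and the decisive input is a tail bound on this REM-type partition function (Lemma~\thv(4.lem9), taken from \cite{DHLUV17}). The $\inf\{\b,\frac{1}{2}\}$ in \eqv(1.theo3'.2bis) comes from the truncation step in that lemma's proof (the factors $\inf\{1/2,\cdot\}$ and $\inf\{2\b,1\}$ in the hypothesis of Lemma~\thv(4.lem9)(i)), not from a dichotomy between a pathwise comparison and a variance bound. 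Your proposal --- a Chernoff bound on $\max_{\mu\notin V}\wh m^{(i)}_\mu$ combined with a Chebyshev (second moment) bound --- is a different, and weaker, combination: the pure Chebyshev bound gives the condition $\a<2\b\b_c(m)-\ln\cosh(2\b)$, which is \emph{vacuous} for large $\b$ whenever $\b_c(m)<1$, so it cannot by itself cover the $\b>1/2$ regime for $n>1$; and the max-plus-Chebyshev combination gives $\sup_\eta\min\{2\b\eta,\,I(\b_c(m)-\eta)\}$, which for $\b=1/2$ and $\b_c=1$ evaluates to roughly $0.28$, strictly below the theorem's $\frac{1}{2}\ln 2\approx 0.347$. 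Finally, the appeal to the Legendre duality $\sup_\b[\b x-\ln\cosh\b]=I(x)$ is misplaced: it would require optimizing over $\b$, but $\b$ is fixed by the model. Some Chernoff-type tail estimate on $Z_{\b,N,M}$ in the spirit of Lemma~\thv(4.lem9) is unavoidable to reach the stated threshold.

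Two minor points: the paper controls the signal via the Rademacher partition of $\{1,\dots,N\}$ (Proposition~\thv(3.prop1) and Lemma~\thv(4.lem6)) rather than a direct Hoeffding bound on the overlaps, but your Hoeffding-based alternative is workable since, as you note, the odd terminal block of $m$ precludes full cancellation; and the constant $\b_c(m)$ is the smallest positive entry $\g^{(\ell)}$ of $m$, as you correctly read it (the notation in \eqv(1.theo3'.1) is elliptical).
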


\begin{remark}

We stress that in the modern model with $n>1$, we cannot prove that \eqv(1.theo3'.3) still holds if we replace the event that 
\emph{$\xi^{(N)}(m)$ is a fixed point for a given $m\in\MM_{n,F}$} with  the event that \emph{this is true for all $m\in\MM_{n,F}$} if $M(N)$ grows exponentially fast with $N$. The reason for this is not model-dependent, but stems from the fact that a central tool in our  strategy of proof (namely, Proposition 3.1) requires that  $N^{-1}\ln M(N)\rightarrow 0$  as $N\rightarrow\infty$. Thus, to deal with the second event, the best we can hope for is to allow the number of patterns to grow sub-exponentially with  $N$.
\end{remark}

Now, let us specialise  the conditions on the growth of $M(N)$ obtained in the three theorems above to the case $n=1$.
In this case, the set $\{\xi^{(N)}(m), m\in\MM_{1,F}\}$ is reduced to the set of the original  $M$ patterns, regardless of the function $F$, and the $m$-dependent constants appearing in the three theorems are easily determined (see the remark below \eqv(1.theo2.4)). Specifically, for all  $m\in\MM_{1,F}$,  $C(m)=1$ in Theorem \eqv(1.theo1),  $C_p(m)=1$ in Theorem \thv(1.theo2), and  $\b_c(m)=1$  in Theorem \thv(1.theo3). 
To compare the resulting bounds on  $M(N)$ with those in the literature, it is important to note that  most papers do not make $\P$-almost sure statements, such as  \eqv(1.theo1.2) or \eqv(1.theo1.4), but rather statements that are only valid in $\P$-probability\footnote{This means that the union of intersections (\emph{i.e.}, the 
$\liminf_{N_0}$) in the events in \eqv(1.theo1.2) and \eqv(1.theo1.4) are removed, and that the limit $N\rightarrow\infty$ of the resulting probability is taken. In that case, it is clear from the proofs that the bounds on $M$ of Theorems \thv(1.theo1) and \thv(1.theo2) must be multiplied by two, whereas the bound in \eqv(1.theo3'.2bis) of Theorems \thv(1.theo3)  is left unchanged.}. Taking this into account, Theorems  \thv(1.theo1),  \thv(1.theo2) and  \thv(1.theo3) precisely reproduce the classical conditions on $M(N)$, which can be found in Theorem 4.1 of \cite{DP96} and the early paper \cite{McEPRV87} for the classical Hopfield model, and in part $2$ of Theorem 2 in \cite{DHLUV17}  for the dense Hopfield model. The latter result was initially derived through non-rigorous analysis in  \cite{PN86}. A sharpness result was also obtained for the classical Hopfield model in  \cite{Bo99}.  Finally, Condition \eqv(1.theo3'.2bis) of Theorem \thv(1.theo3) reduces to that mentioned in the remark below Theorem 3 of \cite{DHLUV17}, where the modern Hopfield model was first introduced for the $\b=1$ case only. However, the next lemma shows that  Theorem \eqv(1.theo3) can be refined when $n = 1$.
\begin{lemma}
    \TH(1.theo3.n=1)
When $n=1$, Condition \eqv(1.theo3'.2bis) of Theorem \eqv(1.theo3) can be improved to 
\be
M(N)\leq 2e^{(N-1)\left[f(\b)-(1+\d)\frac{\ln N}{N-1}\right]},
\Eq(1.theo3'.2ter)
\ee
for all $\d>0$ and all $\b>0$, where $f(\b)=2\b-\ln\cosh(2\b)$ is a strictly increasing function satisfying  
$\lim_{\b\rightarrow\infty}f(\b)=\ln 2$ and $f(\b)>\inf\{\b,\frac{1}{2}\}I(1)$ for all $\b>0$.
\end{lemma}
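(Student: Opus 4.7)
When $n=1$, $\MM_{1,F}$ is just the set of the $M$ original patterns, so it suffices, by Borel--Cantelli, to fix a pattern $\xi^{\nu}$ and show that $\P(\xi^{\nu}\neq T(\xi^{\nu}))$ is summable in $N$ under the improved hypothesis \eqv(1.theo3'.2ter). The fixed-point condition at coordinate $i$, after multiplying by $\xi^{\nu}_i$ and isolating the $\mu=\nu$ contribution, reads $e^{\b(N-1)}+W_{i,\nu}>0$ with
\be
W_{i,\nu}:=\sum_{\mu\neq\nu}\eta^{\mu}_i\,e^{\b S^{\mu}_i},\qquad \eta^{\mu}_j:=\xi^{\nu}_j\xi^{\mu}_j,\qquad S^{\mu}_i:=\sum_{j\neq i}\eta^{\mu}_j,
\ee
where $(\eta^{\mu}_j)_{\mu\neq\nu,\,j}$ are i.i.d.~Rademacher variables. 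Since the law of $W_{i,\nu}$ is symmetric, my plan is to bound $\P(|W_{1,\nu}|\geq e^{\b(N-1)})$ sharply and then union-bound over the $N$ coordinates.

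The key observation is that $\eta^{\mu}_i$ is independent of $S^{\mu}_i$: conditionally on $\FF:=\sigma((\eta^{\mu}_j)_{\mu\neq\nu,\,j\neq i})$, the sum $W_{i,\nu}$ is a linear combination of $M-1$ independent symmetric $\pm 1$ random variables with weights $e^{\b S^{\mu}_i}$, so Hoeffding's inequality yields
\be
\P\bigl(|W_{i,\nu}|\geq c\mid\FF\bigr)\leq 2\exp\!\bigl(-c^{2}/(2V)\bigr),\qquad V:=\sum_{\mu\neq\nu}e^{2\b S^{\mu}_i}.
\ee
Since $\E V=(M-1)\cosh(2\b)^{N-1}$, the choice $c=e^{\b(N-1)}$ gives $c^{2}/(2\E V)=e^{(N-1)f(\b)}/(2(M-1))$ with $f(\b)=2\b-\ln\cosh(2\b)$, and under the hypothesis on $M(N)$ this ratio is of order $e^{\b\sqrt{2N}}$. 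Thus, provided $V$ is concentrated around $\E V$ with comparable probability, the conditional Hoeffding bound is super-polynomially small in $N$, and a union bound over $i$ followed by Borel--Cantelli delivers \eqv(1.theo3'.3).

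The sharp control of $V$ is the technical heart of the argument and is where the $\b\sqrt{2N}$ correction enters. I would proceed by truncating $|S^{\mu}_i|$ at the typical Gaussian scale $s_N\simeq\sqrt{2N}$: on the event $\{\max_{\mu}|S^{\mu}_i|\leq s_N\}$, each summand of $V$ is bounded by $e^{2\b s_N}$ and a Bernstein-type bound concentrates the truncated $V$ around its mean, while the complementary event is handled by a Hoeffding bound on $S^{\mu}_i$ together with a union bound over $\mu$. The $\b s_N=\b\sqrt{2N}$ loss from this truncation is precisely the correction appearing in the exponent of \eqv(1.theo3'.2ter); the $C\ln N$ piece then absorbs the logarithmic factors from the conditional Hoeffding step and the union bound over $i$.

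The stated analytic properties of $f$ are immediate: $f'(\b)=2(1-\tanh(2\b))>0$ for $\b>0$, $f(0)=0$, and $\lim_{\b\to\infty}f(\b)=\ln 2$ follows from $\ln\cosh(2\b)=2\b-\ln 2+o(1)$; the comparison with $I(1)/2=(\ln 2)/2$ above the stated threshold is a direct numerical check, since $I(1)=\ln 2$. The main obstacle of the proof is the delicate control of $V$: Chebyshev is insufficient because $\mathrm{Var}(V)\asymp M\cosh(4\b)^{N-1}$ yields only constant-probability concentration, and the log-normal tail of the summands $e^{2\b S^{\mu}_i}$ obstructs a direct exponential-moment bound on $V$. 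Calibrating the truncation scale $s_N$ so that it is large enough to make the excluded tail negligible, yet small enough to keep the Bernstein bound on the truncated $V$ quantitatively sharp, is exactly what produces the specific form $\b\sqrt{2N}$ of the correction in the statement.
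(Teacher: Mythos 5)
Your reduction to a conditional Hoeffding bound on the weighted Rademacher sum $W_{i,\nu}$ is exactly the device the paper uses (its Lemma \thv(4.lem7)), so steps one and two are on track. The divergence --- and the gap --- is in how you control $V=\sum_{\mu\neq\nu}e^{2\b S^\mu_i}$.

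First, your truncation scale is untenable. When $M$ grows exponentially, say $M\sim e^{\a N}$, a Hoeffding bound plus a union bound over $\mu$ shows $\P\bigl(\max_\mu|S^\mu_i|>s\bigr)\lesssim Me^{-s^2/2N}=e^{\a N-s^2/2N}$, which is $o(1)$ only when $s\gtrsim N\sqrt{2\a}$; truncating at $s_N\simeq\sqrt{2N}$ leaves a complementary event of probability close to one, not negligible, so the decomposition collapses. Second, you dismiss first-order Chebyshev too quickly: you do not need $V$ concentrated around $\E V$ from both sides, only a bound on its \emph{upper} tail, and plain Markov $\P(V>z)\leq \E V/z=(M-1)[\cosh(2\b)]^{N-1}/z$ is already enough. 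This is precisely the paper's Lemma \thv(4.lem9), part (ii): take $z$ a polynomial factor above $\E V$, so $\P(V>z)$ beats the union bound over $i$, and on the complement the conditional Hoeffding exponent $c^2/(2V)\geq c^2/(2z)$ is of the order $e^{\b\sqrt{2N}}$, doubly-exponentially small --- no Bernstein, no truncation. Third, your attribution of the $\b\sqrt{2N}$ term to a truncation scale is wrong: in the paper that term is the Rademacher-reduction error $N\b\d_N$ with $\d_N=\sqrt{2^n/N}$ coming from Proposition \thv(3.prop1) via Lemma \thv(4.lem6); it has nothing to do with controlling $V$ and, for $n=1$ where $\xi_i(m)I^{(1)}_{N,\b,i}(m)=e^{\b(N-1)}$ exactly, it is really an artifact of applying the general-$n$ machinery rather than a sharp loss. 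So the analytic facts about $f$ at the end are fine, but the middle of the argument both proposes a step that would fail and misidentifies the source of the stated correction.
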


Based on the above observations, we conclude that the conditions obtained on the growth of $M(N)$ in the general $n> 1$ case differ from those in the $n=1$ case by at most a multiplicative constant  in the classical and dense Hopfield models, and by the constant governing the exponential growth in the modern model.

Before closing this section, let us recall that the effective functioning of a memory based on the energy function $E_{N,M}$, as  defined in \eqv(1.1.2), depends critically on the set of local minima of $E_{N,M}$. These are either fixed points of deterministic gradient descent dynamics or metastable states in stochastic dynamics. As such, they are unintended, spurious memories that often exhibit the same convergence properties as the patterns themselves. Since there are far more spurious memories than original patterns, this has a significant impact on the memory's ability to retrieve the original patterns. Conjecture \thv(1.conj.2) has clear implications in this respect. If it holds true, then under a condition on $M$ that depends on the chosen function $F$ (\emph{e.g.}~ \eqv(1.theo1.3) for the classical Hopfield model), we will have found and explicitly constructed the complete set of local minima that have non-zero overlap with finitely many patterns. When $M$ is a constant, all local minima belong to this set and we obtain the complete set of local minima of $E_{N,M}$. This would pave the way for a mathematically rigorous analysis of strategies such as ``unlearning'' and ``dreaming'', which are devised to eliminate spurious memories and improve convergence but which, thus far, rely on theoretical and numerical methods (see \cite{Hop83, BBF18, Enzo22, Enzo26} and references therein).

The tools used to prove the results of Section \thv(S1.2)  are of two types, purely analytical on the one hand, and probabilistic on the other. The strategy behind the proof of Theorem \thv(1.theo2.mix) consists in reducing the system of random equations \eqv(1.1.3)-\eqv(1.1.4) to a deterministic system in which the patterns $\xi^{\mu}$ are replaced by configurations of the Rademacher system on the hypercube.   In Section 2, exploiting the remarkable properties of the Rademacher system, summarised in Section \thv(S2.1), this purely deterministic system is introduced. Its solutions are are presented in Section \thv(S2.2), specifically  in Proposition \thv(2.prop1) (for the case $F(x)=\frac{1}{2}x^2$) and Proposition \thv(2.prop2) (for the general case). Section \thv(S6) contains the proofs of the results of Section \thv(S2.2). Section \thv(S3) provides  the key probabilistic tool (Proposition \thv(3.prop1)) that allows us to reduce  \eqv(1.1.4) to deterministic equations in the Rademacher system. It also contains the proofs of the results of Section \thv(S1.1). Those of Section \thv(S1.2) are presented in Section  \thv(S4). They are based on the results of the previous two sections and on probabilistic techniques.  Finally, in Section \thv(S5), we solve the system $\SS_{n,F}$ (see \eqv(1.2.1.12bis)) for the classical and dense Hopfield models and briefly discuss the energy of their mixed memories.


\section{Preparatory tools}
    \TH(S2)

\subsection{The Rademacher system} 
    \TH(S2.1)     

Rademacher's orthonormal system is classically a system of functions defined on $\R$ \cite{Pal}, \cite{Rad}. Transposed to the discrete hypercube, which is the space of interest here, it becomes a system of orthogonal configurations. More precisely, given an integer $n$ and setting $d\equiv d(n) =2^n$, Rademacher's system on $\S_{d}=\{-1,1\}^d$ is a collection of  $n$ configurations $r^{(n),1}, r^{(n), 2}\dots, r^{(n), n}$ in $\S_{d}$  defined  as follows. Let $\rho:\R\mapsto\{-1,1\}$ be the function
\be
\rho(t) = 
\begin{cases}
1 & \text{if $t-\lfloor t\rfloor\in [0, \frac{1}{2})$}, \\
-1 &  \text{if $ t-\lfloor t\rfloor\in [\frac{1}{2},1)$},
\end{cases}
\Eq(2.1.1)
\ee
where $\lfloor t\rfloor=\max\{k\in\Z\mid k\leq t\}$.
Then, for each $1\leq \nu\leq n$,
$r^{(n),\nu}=\bigl(r^{(n),\nu}_j\bigr)_{1\leq j\leq d}$ where
\be
r^{(n),\nu}_{j}=
\rho\left(2^{\nu-(n+1)}(j-1)\right)\,, \quad 1\leq j\leq d.
\Eq(2.1.2)
\ee
We call also Rademacher matrix the $n\times d$ matrix $R^{(n)}=\left(R^{(n)}_{\nu,j}\right)_{1\leq \nu\leq n, 1\leq j\leq d}$ of entries 
\be
R^{(n)}_{\nu,j}=r^{(n),\nu}_j, \quad 1\leq \nu\leq n, 1\leq j\leq d.
\Eq(2.1.3)
\ee 
Therefore, the rows of this matrix are given by the $n$ Rademacher configurations
\be
r^{(n),\nu}=\bigl(r^{(n),\nu}_j\bigr)_{1\leq j\leq d}\in\S_d, \quad 1\leq \nu\leq n,
\Eq(2.1.4)
\ee
and its columns, denoted by
\be
r^{(n)}_j=\bigl(r^{(n),\nu}_j\bigr)_{1\leq \nu\leq n}\in\S_n, \quad 1\leq j\leq d,
\Eq(2.1.5)
\ee  
are $d$ configurations in $\S_n$. 

As follows from definitions \eqv(2.1.1)-\eqv(2.1.2), each configuration $r^{(n),\nu}$ is piecewise constant over intervals of length $2^{n-\nu}$, and there are $2^{\nu}$ such intervals, of alternating signs, with the leftmost interval consisting of $+1$'s. Below is an example of a Rademacher matrix with $n=5$. For simplicity, we write $+$ and $-$ instead of $+1$ and $-1$.

\bigskip
\centerline{{\textsc{Figure 1.}} $R^{(n)}$, $n=5$, $d=2^5$}
\vskip.5truecm
{\centering{
\begin{tabular}{c c} 
$r^{(5),1}$ & $+ + + + + + + + + + + + + + + + - - - - - - - - - - - - - - - - $
\\ 
$r^{(5),2}$ & $+ + + + + + + +  - - - - - - - - + + + + + + + + - - - - - - - - $
\\
\bf{$r^{(5),3}$} & \bf{$+ + + + - - - - + + + + - - - - + + + + - - - -  + + + + - - - - $}
\\
$r^{(5),4}$ & $+ +  - - + +  - - + + - - + + - - + +  - -  + + - -  + +  - - + + - - $
\\
$r^{(5),5}$ & $+  - +  - +  - +  - +  - +  - +  - + - +  - +  - +  - +  - +  - +  - +  - +  -  $
\\
 & $r^{(5)}_1,\dots\hskip4.25truecm \dots, r^{(5)}_{16},\dots\hskip4.55truecm \dots, r^{(5)}_{32}$ 
\\
\end{tabular}
}}
\bigskip
\smallskip

The Rademacher system and matrix have the following fundamental properties. Set
\be
r^{(n),0}=\bigl(r^{(n),0}_j\bigr)_{1\leq j\leq d},\quad r^{(n),0}_j=1 \quad\text{for all } 1\leq j\leq d.
\Eq(2.1.6)
\ee

\begin{lemma}
    \TH(2.lem1)
\item{(i) (Orthogonality) } The $n+1$ configurations $r^{(n),\nu}$, $0\leq \nu\leq d$, form an orthogonal system:  
\be
d^{-1}\left(r^{(n),\nu}, r^{(n),\nu'}\right)=\d_{\nu,\nu'}\quad \text{for all } 0\leq \nu, \nu'\leq d,
\Eq(1.lem1.1)
\ee
where $\d_{\nu,\nu'}$ is the Kronecker delta.
\item{(ii) (Axial symmetry)} For all $1\leq \nu\leq n, 1\leq j\leq d$, the matrix elements of $R^{(n)}$ obey 
\be
r^{(n),\nu}_j=-r^{(n),\nu}_{2^n-j+1}.
\Eq(1.lem1.2)
\ee
\item{(iii)} The collection $\bigl\{r^{(n)}_j\bigr\}_{1\leq j\leq d}$ forms a complete enumeration of the $d$ elements of $\S_n$.
\end{lemma}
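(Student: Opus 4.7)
\medskip
\noindent\textbf{Proof plan.} The whole lemma will follow from a single algebraic reformulation of the Rademacher entries \eqv(2.1.2) in terms of the binary expansion of $j-1$. The plan is to first observe that $2^{\nu-(n+1)}(j-1) = (j-1)/2^{n+1-\nu}$, so that $\rho\bigl(2^{\nu-(n+1)}(j-1)\bigr) = +1$ or $-1$ according to whether $(j-1)\bmod 2^{n+1-\nu}$ is smaller or not smaller than $2^{n-\nu}$. Writing $j-1=\sum_{k=0}^{n-1} b_k(j)\,2^k$ with $b_k(j)\in\{0,1\}$, this is controlled by the single bit $b_{n-\nu}(j)$, and one obtains the compact formula
\be
r^{(n),\nu}_j = (-1)^{b_{n-\nu}(j)}, \qquad 1\leq \nu\leq n,\ 1\leq j\leq d.
\Eq(plan.1)
\ee

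With \eqv(plan.1) in hand, part (iii) is immediate: the map $j\mapsto (b_{n-1}(j),\dots,b_0(j))$ is a bijection between $\{1,\dots,2^n\}$ and $\{0,1\}^n$, so the columns $r^{(n)}_j=\bigl((-1)^{b_{n-\nu}(j)}\bigr)_{1\leq \nu\leq n}$ range over all $2^n$ sign patterns in $\S_n$ without repetition. For part (ii), note that $2^n-j = (2^n-1)-(j-1)$, and since $2^n-1$ has all bits equal to one, the binary digits of $2^n-j$ are the bitwise complement of those of $j-1$, i.e.\ $b_{n-\nu}(2^n-j+1)=1-b_{n-\nu}(j)$. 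Substituting into \eqv(plan.1) flips every sign and gives \eqv(1.lem1.2).

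For part (i), the case $\nu=\nu'$ is trivial since $\bigl(r^{(n),\nu}_j\bigr)^2=1$. For $0\leq\nu<\nu'\leq n$ the strategy I would use is to group the index $j$ according to the pair of bits $\bigl(b_{n-\nu}(j),b_{n-\nu'}(j)\bigr)$: each of the four combinations occurs exactly $2^{n-2}$ times as $j-1$ ranges over $\{0,\dots,2^n-1\}$ (with the convention $b_{n}(j)\equiv 0$ handling the case $\nu=0$). Summing $(-1)^{b_{n-\nu}(j)+b_{n-\nu'}(j)}$ over these four equally represented combinations yields zero, establishing orthogonality. Alternatively one can argue directly from the block structure: on any dyadic block on which $r^{(n),\nu}$ is constant (length $2^{n-\nu}$), the configuration $r^{(n),\nu'}$ consists of $2^{\nu'-\nu-1}$ full periods of length $2^{n-\nu'+1}$ with equal numbers of $\pm 1$, hence has zero sum.

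The main obstacle is purely notational rather than conceptual: one must be careful with the off-by-one between the ``$j-1$'' inside $\rho$ and the indexing $1\leq j\leq d$, and with the convention for the $\nu=0$ row in order to treat (i) uniformly for all pairs $0\leq \nu,\nu'\leq n$. Once the bit-expansion formula \eqv(plan.1) is in place these three statements become essentially combinatorial identities about bit vectors, and no genuine analysis is involved.
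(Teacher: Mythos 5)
Your proof is correct, and the key reduction to the bit-expansion formula \eqv(plan.1) is a clean way to make the three statements transparent. The paper itself omits the proof of this lemma (calling it elementary), so there is no "paper's approach" to compare against; your argument is exactly the kind of elementary verification the author had in mind.

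One small imprecision worth flagging in part (i): the claim that ``each of the four combinations $(b_{n-\nu}(j),b_{n-\nu'}(j))$ occurs exactly $2^{n-2}$ times'' is only literally true when $1\leq\nu<\nu'\leq n$. With the convention $b_n(j)\equiv 0$, the case $\nu=0<\nu'$ has only the two combinations $(0,0)$ and $(0,1)$, each occurring $2^{n-1}$ times — the signed sum is still zero, so the conclusion holds, but the counting statement as written does not. Your alternative block-structure argument handles all pairs including $\nu=0$ uniformly and is the cleaner route; if you wanted a single bit-counting statement, the right phrasing is that the parity $b_{n-\nu}(j)+b_{n-\nu'}(j)\bmod 2$ is $0$ for exactly half the indices $j$ and $1$ for the other half, which holds for any two distinct rows including the all-ones row $\nu=0$. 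Also note the paper's range ``$0\leq\nu,\nu'\leq d$'' in \eqv(1.lem1.1) is a typo for $0\leq\nu,\nu'\leq n$, as you implicitly assumed.
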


As a consequence of Lemma \thv(2.lem1), (iii), we have the following two properties. If $x$ and $y$ are vectors in $\R^n$, 
we denote by $x\odot y=(x_\nu y_\nu)_{1\leq \nu\leq n}$, their Hadamard product.
\begin{corollary}[Permutations]
    \TH(1.cor1)
\item{(i)}  
Given $1\leq i\leq d$, there exists a unique permutation $\pi:\{1,\dots,d\}\mapsto\{1,\dots,d\}$ of the columns of $R^{(n)}$ such that for all $1\leq j\leq d$
\be
r^{(n)}_j\odot r^{(n)}_i=r^{(n)}_{\pi(j)}, 
\Eq(1.cor1.1)
\ee
and
\be 
\left(r^{(n)}_{\pi(j)}\right)_{1\leq j\leq d}
= \left(r^{(n)}_{j'}\right)_{1\leq j'\leq d}.
\Eq(1.cor1.2)
\ee

\item{(ii)} If $\pi_1:\{1,\dots,n\}\mapsto\{1,\dots,n\}$ is a permutation of the rows of $R^{(n)}$ then there exists a unique permutation 
$\pi_2:\{1,\dots,d\}\mapsto\{1,\dots,d\}$ of its columns such that for all $1\leq \nu\leq n$ and all $1\leq i\leq d$
\be
r^{(n),\pi_1(\nu)}_i = r^{(n),\nu}_{\pi_2(i)},
\Eq(1.cor1.3)
\ee
and
\be
\left(r^{(n)}_{\pi_2(i)}\right)_{1\leq i\leq d}
= \left(r^{(n)}_{j}\right)_{1\leq j\leq d}.
\Eq(1.cor1.4)
\ee
\end{corollary}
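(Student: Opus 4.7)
The approach is to use Lemma 2.1(iii) as the central engine: the columns $r^{(n)}_j$, $1\leq j\leq d$, exhaust every element of $\S_n$ exactly once, so any map from $\{1,\dots,d\}$ defined by reading off which column coincides with a given sign vector in $\S_n$ is automatically well-defined and at most one-to-one. The two assertions will then be proved by two applications of this principle, with injectivity upgraded to bijectivity thanks to the finite cardinality of $\{1,\dots,d\}$.

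For assertion (i), I would fix $i\in\{1,\dots,d\}$ and, for each $j$, observe that the Hadamard product $r^{(n)}_j\odot r^{(n)}_i$ is a vector in $\S_n$. By Lemma 2.1(iii) there is a unique index $\pi(j)\in\{1,\dots,d\}$ with $r^{(n)}_{\pi(j)}=r^{(n)}_j\odot r^{(n)}_i$, which simultaneously defines the map $\pi$ and gives \eqv(1.cor1.1). To prove $\pi$ is a permutation, I would note that Hadamard multiplication by $r^{(n)}_i$ is an involution on $\S_n$ (since its entries are $\pm 1$); hence if $\pi(j)=\pi(j')$, multiplying \eqv(1.cor1.1) for $j$ and $j'$ by $r^{(n)}_i$ yields $r^{(n)}_j=r^{(n)}_{j'}$, and Lemma 2.1(iii) forces $j=j'$. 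Injectivity on the finite set $\{1,\dots,d\}$ gives surjectivity, which is exactly \eqv(1.cor1.2). Uniqueness of $\pi$ is immediate from the uniqueness of the index in Lemma 2.1(iii).

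For assertion (ii), given a row permutation $\pi_1$, I would look at the vector $w_i:=\bigl(r^{(n),\pi_1(\nu)}_i\bigr)_{1\leq\nu\leq n}\in\S_n$ for each fixed $i$. By Lemma 2.1(iii), there is a unique $\pi_2(i)\in\{1,\dots,d\}$ with $r^{(n)}_{\pi_2(i)}=w_i$, i.e.\ $r^{(n),\nu}_{\pi_2(i)}=r^{(n),\pi_1(\nu)}_i$ for every $\nu$, which is \eqv(1.cor1.3). For injectivity, if $\pi_2(i_1)=\pi_2(i_2)$ then $r^{(n),\pi_1(\nu)}_{i_1}=r^{(n),\pi_1(\nu)}_{i_2}$ for all $\nu$; since $\pi_1$ is a bijection of $\{1,\dots,n\}$, this is equivalent to $r^{(n),\nu'}_{i_1}=r^{(n),\nu'}_{i_2}$ for all $\nu'$, so the columns $r^{(n)}_{i_1}$ and $r^{(n)}_{i_2}$ coincide and Lemma 2.1(iii) gives $i_1=i_2$. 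As before, injectivity on a finite set yields surjectivity, and \eqv(1.cor1.4) follows.

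I do not anticipate a genuine obstacle: the whole corollary is a bookkeeping consequence of the fact that the columns of $R^{(n)}$ enumerate $\S_n$ together with the group structure of $(\S_n,\odot)$. The only mild subtlety is making sure that the argument for uniqueness of $\pi$ (resp.\ $\pi_2$) is cleanly separated from the surjectivity argument, so that no circularity arises; I would therefore first define the map via Lemma 2.1(iii), then verify injectivity using the involution property of $\odot$ in (i) and the bijectivity of $\pi_1$ in (ii), and only then invoke finiteness to obtain the permutation property.
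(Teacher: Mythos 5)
Your proof is correct. The paper omits the proof of this corollary, noting only that it is ``elementary,'' and your argument is exactly the natural one: using Lemma \thv(2.lem1), (iii) to define each map, the involution property of Hadamard multiplication by a $\pm 1$ vector in part (i) and the bijectivity of $\pi_1$ in part (ii) to get injectivity, and finiteness to upgrade to bijectivity.
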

The elementary proofs of Lemma \thv(2.lem1) and  Corollary \thv(1.cor1) are omitted.

The next simple lemma is a direct consequence of the definition of Rademacher configurations, whose proof we omit. 
(Its claim can also be easily visualised using Figure 1.)  It provides insight into the more elaborate formulation introduced in the rest of this Section, which ultimately leads to Lemma \thv(2.lem2). 
\begin{lemma}
    \TH(2.lem1bis)
Given $n\geq 2$, let $(n_1,n_2)$ be  a $2$-composition of $n$.
For each $1\leq\nu\leq n_1$, $r^{(n),\nu}$ is piecewise constant over $d(n_1)$ consecutive intervals of length $d(n_2)$, 
namely, for $1\leq j\leq d(n_1)$
\be
r^{(n),\nu}_i=r^{(n_1),\nu}_j \,\,\text{ for all }\,\, (j-1)d(n_2)+1\leq i\leq j d(n_2).
\Eq(2.lem1bis.1)
\ee
\end{lemma}

Alternatively, the matrix $R^{(n)}$ can be constructed using rooted plane trees. Such trees are embedded in the plane, with one of their vertices marked as the root, and are equipped with a left-to-right order starting from the root, which determines a total ordering of the nodes. 

The simplest of these constructions uses a (strictly) binary tree. Call the root $\emptyset$ and label the levels of the tree $1,2,\dots,n$,  where $1$ is the level descending from $\emptyset$ and the leaves (or dangling nodes) are at level $n$. Each node branches into exactly two children nodes, the node descending from the left branch carrying a $+$ sign and the node descending from the right branch carrying a $-$ sign. The resulting tree completely determines $R^{(n)}$. To see this, consider level $1\leq \nu\leq n$ of the tree. It has exactly $2^{\nu}$ nodes. Then, $r^{(n),\nu}$ is the configuration consisting of $2^{\nu}$ intervals of length $2^{n-\nu}$, one associated with each node, which is constant on each interval and equal to $+1$ if the associated node
has a $+$  sign and $-1$ if it has a $-$ sign. This construction is illustrated in Figure 2. Compare with Figure 1.

\bigskip
\centerline{{\textsc{Figure 2.}} $R^{(n)}$, $n=5$, $d=2^5$}
\smallskip
\centerline{
\begin{minipage}[b]{17.3cm}
\includegraphics[width=17.3cm]
{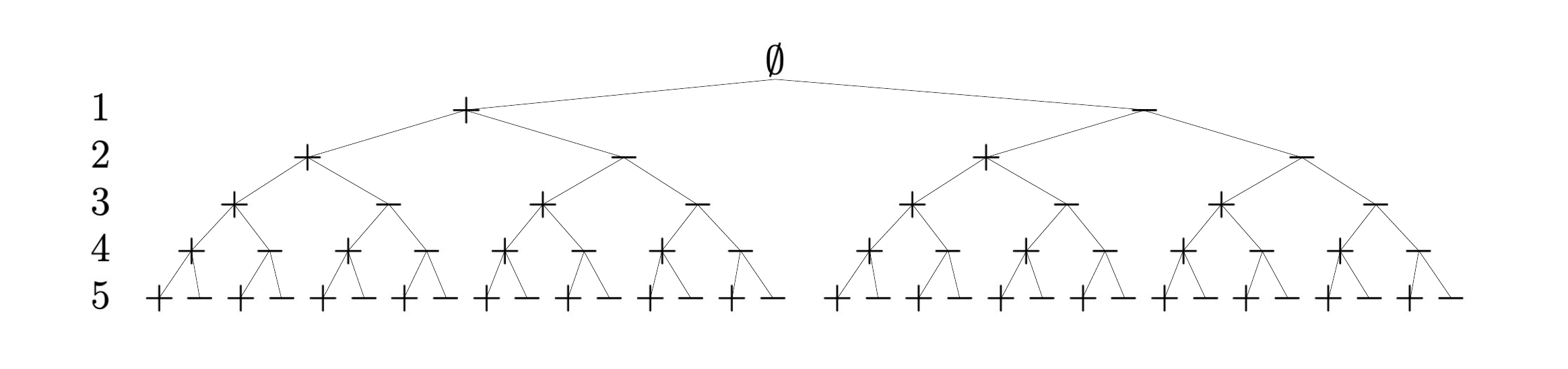}
\end{minipage}
}

To make this construction formal, we need some notation. 
Let $\s=(\s_1,\dots,\s_d)\in\S_{d}$ and $\s'=(\s'_1,\dots,\s'_{d'})\in\S_{d'}$ be two configurations, where as before
$d\equiv d(n)= 2^{n}$ and where we wrote $d'\equiv d(n')$ for simplicity. Given an integer $k\geq 1$, we denote by $k\otimes\s$ and call \emph{dilated configuration} the configuration 
\be
\begin{split}
k\otimes\s&=(
\underbrace{\s_1,\dots,\s_1},\dots\underbrace{\s_d,\dots,\s_d})\in\S_{kd},
\\
& \hskip1.5truecm k \hskip2.2truecm k
\end{split}
\Eq(2.1.7)
\ee
in which each coordinate $\s_i$ is duplicated in $k$ identical copies. We denote by $\s\oplus\s'$ and call \emph{concatenated configuration} the configuration
\be
\s\oplus\s'=(\s_1,\dots,\s_d,\s'_1,\dots,\s'_{d'})\in\S_{d+d'}.
\Eq(2.1.8)
\ee
When applied to matrices, the dilation and concatenation operators act on their rows. 
Specifically, the matrix  
$
k\otimes R^{(n)}
$
is the $n\times kd$ matrix whose rows are the dilated configurations (see \eqv(2.1.4))
\be
k\otimes r^{(n),\nu}\in\S_{kd}, \quad 1\leq \nu\leq n.
\Eq(2.1.9)
\ee
Thus, each column vector $r^{(n)}_j$ of $R^{(n)}$ (see \eqv(2.1.5)) is duplicated $k$ times.
Similarly, given $k$ copies of the Rademacher matrix $R^{(n)}$, the sum 
$
\oplus_{s=1}^{k}R^{(n)}
$ 
is the $n\times kd$ matrix whose rows are the concatenated configurations
\be
\begin{split}
&
\oplus_{s=1}^{k}r^{(n),\nu}=\underbrace{r^{(n),\nu}\oplus\dots\oplus r^{(n),\nu}}\in\S_{kd}, \quad 1\leq \nu\leq n.
\\
& \hskip4truecm k
\end{split}
\Eq(2.1.10)
\ee

Next, we introduce the specific trees we are interested in. Recall the definition of the composition of an integer $n$ into $\ell$ summands from the paragraph above \eqv(1.2.1.4).  (Also recall that each of these $\ell$ summands are strictly positive.)

\begin{definition}
    \TH(2.def.1)
Given an integer $\ell\leq n$ and an $\ell$-composition $(n_1,\dots,n_{\ell})$ of $n$, we denote by 
$\TT^{(n)}_{\ell}(n_1,n_2,\dots,n_\ell)$ the $\ell$-level rooted plane tree defined as follows. Let $0$ be the root level and number the subsequent levels $1,\dots,\ell$, where $\ell$ is the leaf level. Set $n_0=0$. Then, for each $0\leq k\leq \ell-1$, each node at level $k$ has  $2^{n_{k+1}}$ children labelled from left to right with the column configurations of $R^{(n_{k+1})}$, 
\be
r^{(n_{k+1})}_{j_{k+1}}=\bigl(r^{(n_{k+1}),\nu_{k+1}}_{j_{k+1}}\bigr)_{1\leq \nu_{k+1}\leq n_{k+1}}\in\S_{n_{k+1}}, \quad 1\leq j_{k+1}\leq d(n_{k+1}).
\Eq(2.1.11)
\ee
\end{definition}

As an example, the tree in Figure 2 is the $5$-level tree $\TT^{(5)}_{5}(1,1,\dots,1)$. Indeed $R^{(1)}$ is the $1\times 2$ matrix of column configurations $r^{(1)}_{1}=+1$ and $r^{(1)}_{2}=-1$. The next picture illustrates the tree $\TT^{(5)}_{2}(2,3)$ in two different ways: with the labels $r^{(n_k)}_i$ (Figure 3a) and replacing these labels by their actual column vectors (Figure 3b).

\bigskip
\centerline{{\textsc{Figure 3a.}}  $\TT^{(5)}_{2}(2,3)$}
\smallskip
\centerline{
\begin{minipage}[b]{12.5cm}
\includegraphics[width=12.5cm]
{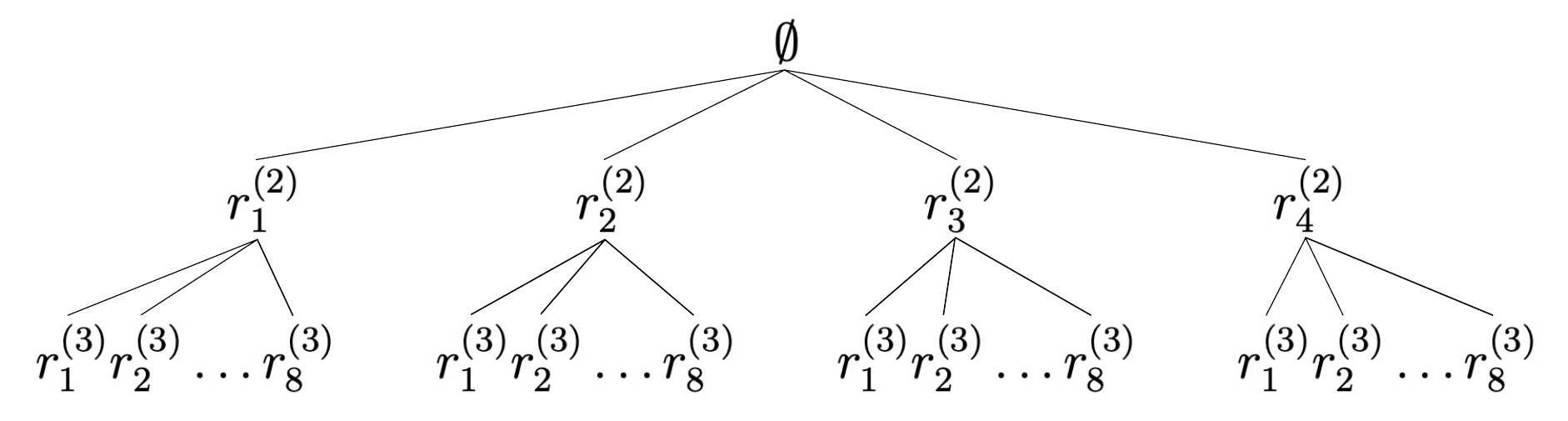}
\end{minipage}
}
\bigskip
\centerline{{\textsc{Figure 3b.}} $\TT^{(5)}_{2}(2,3)$}
\smallskip
\centerline{
\begin{minipage}[b]{15cm}
\includegraphics[width=15cm]
{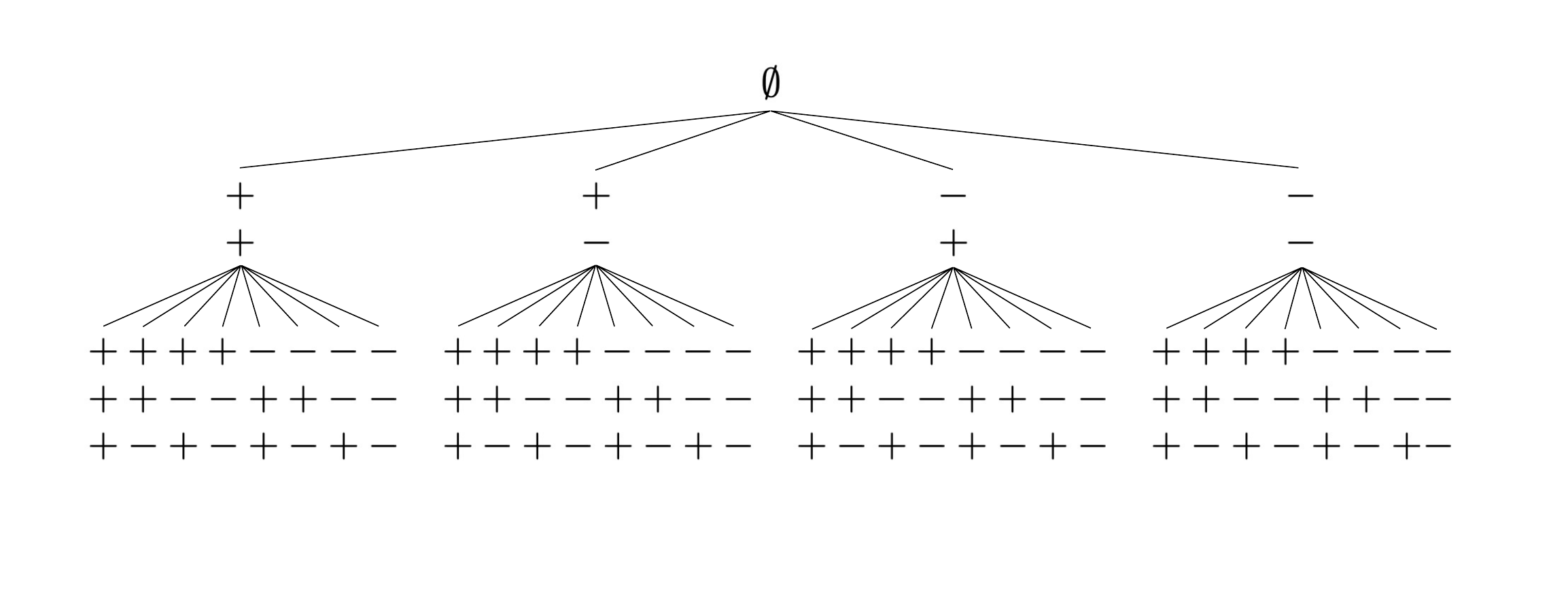}
\end{minipage}
}

As the next lemma shows, for any given $\ell$-composition $(n_1,\dots,n_{\ell})$ of $n$, $R^{(n)}$ is completely determined 
by the pair formed by the tree $\TT^{(n)}_{\ell}(n_1,n_2,\dots,n_\ell)$ and a specific sequence of dilation coefficients that we now define.

\begin{lemma}[Tree-based representation of $R^{(n)}$]
    \TH(2.lem2)
For any given tree $\TT^{(n)}_{\ell}(n_1,n_2,\dots,n_\ell)$, setting $n_0\equiv 0$, the $n$ Rademacher configurations $r^{(n),\nu}\in\S_d$, $1\leq \nu\leq n$, can be constructed as follows: for each $1\leq k\leq \ell$ and each 
$
n_0
+\dots+n_{k-1}+1
\leq 
\nu
\leq 
n_1+\dots+n_k
$, 
setting  $\nu_k= \nu-(n_0+\dots+n_{k-1})$,
\be
r^{(n),\nu}=2^{n-(n_1+\dots+n_k)}\otimes\left(r^{(n_k),\nu_k}\oplus\dots\oplus r^{(n_k),\nu_k}\right),
\Eq(2.1.12)
\ee
where the concatenation is over $2^{n_0+\dots+n_{k-1}}$ vectors $r^{(n_k),\nu_k}$.
Synthetically, the matrix $R^{(n)}$ can be written as
\be
R^{(n)}=
\begin{pmatrix} 
2^{n-n_1}\otimes R^{(n_1)}
\\
\vdots 
\\ 
2^{n-(n_1+\dots+n_k)}\otimes\left(R^{(n_k)}\oplus\dots\oplus R^{(n_k)}\right) 
\\ 
\vdots 
\\
2^{n-(n_1+\dots+n_\ell)}\otimes\left(R^{(n_\ell)}\oplus\dots\oplus R^{(n_\ell)}\right)
\end{pmatrix},
\Eq(2.1.14)
\ee
where for each $1\leq k\leq \ell$ the concatenation is over $2^{n_0+\dots+n_{k-1}}$ matrices $R^{(n_k)}$.
\end{lemma}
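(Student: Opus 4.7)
The plan is to verify \eqv(2.1.12) by direct inspection of the block structure on both sides, using only the elementary description of $r^{(n),\nu}$ recalled just before Lemma \thv(2.lem1): for each $1\leq\nu\leq n$, the configuration $r^{(n),\nu}$ consists of $2^{\nu}$ alternating blocks of common length $2^{n-\nu}$, the leftmost being $+1$. Because the range of $\nu$ ensures $\nu_k\in\{1,\dots,n_k\}$, this description implies that $r^{(n_k),\nu_k}$ begins with a block of $+1$ and ends with a block of $-1$. The synthetic matrix identity \eqv(2.1.14) is merely a repackaging of the $n$ row identities \eqv(2.1.12), so it suffices to establish \eqv(2.1.12).

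First I describe the block structure of the right-hand side of \eqv(2.1.12), starting from the innermost factor and working outwards. Applying the block description to $r^{(n_k),\nu_k}$ gives a vector in $\S_{2^{n_k}}$ with $2^{\nu_k}$ alternating blocks of length $2^{n_k-\nu_k}$, starting at $+1$ and ending at $-1$. Concatenating $q:=2^{n_0+\cdots+n_{k-1}}$ such identical copies yields a vector in $\S_{q\cdot 2^{n_k}}$; because the terminal block $-1$ of each copy abuts the initial block $+1$ of the next, the alternation survives the joins, producing a single vector with $q\cdot 2^{\nu_k}=2^{\nu}$ alternating blocks, still of length $2^{n_k-\nu_k}$ and still starting with $+1$.

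Second, the dilation by the factor $2^{n-(n_1+\cdots+n_k)}$ multiplies every block length by this factor while leaving both the block count and the starting sign unchanged, so the resulting vector has $2^{\nu}$ alternating blocks of length $2^{n_k-\nu_k}\cdot 2^{n-(n_1+\cdots+n_k)}=2^{n-(n_0+\cdots+n_{k-1})-\nu_k}=2^{n-\nu}$, starting with $+1$. Its total length is $2^{\nu}\cdot 2^{n-\nu}=2^n=d$, so it lies in $\S_d$ and matches exactly the block profile of $r^{(n),\nu}$ recalled above. Equality of the two configurations follows, which proves \eqv(2.1.12); stacking the $n$ identities in the order dictated by the composition $(n_1,\dots,n_\ell)$ yields \eqv(2.1.14).

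The only step that requires genuine care is the concatenation in the second paragraph: one must check that the sign alternation is preserved across every join, which rests on the parity observation that $r^{(n_k),\nu_k}$ ends with $-1$, itself a consequence of $\nu_k\geq 1$. Beyond that, the argument is pure bookkeeping in powers of $2$, so I do not expect any real obstacle.
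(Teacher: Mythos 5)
Your proof is correct, and it takes a genuinely different route from the paper's. The paper proceeds in three moves: it verifies \eqv(2.1.12) in the two extreme cases $\ell=n$ (the fully binary tree $\TT^{(n)}_n$, with dilation coefficients $2^{n-\nu}$) and $\ell=1$ (the trivial tree $\TT^{(n)}_1(n)$), and then observes that any composition tree $\TT^{(n)}_\ell(n_1,\dots,n_\ell)$ arises from $\TT^{(n)}_n$ by recursively grafting binary subtrees $\TT^{(n_k)}_{n_k}$ onto the leaves, so that each such subtree can be collapsed to its one-level version $\TT^{(n_k)}_1(n_k)$ together with the appropriate dilation factor. Your argument bypasses this recursion entirely: you read off the block profile (number of alternating intervals, their common length, the starting sign) on both sides of \eqv(2.1.12) directly from \eqv(2.1.1)--\eqv(2.1.2) and check they agree, with the only nontrivial point being that concatenating copies of $r^{(n_k),\nu_k}$ preserves the alternation because $r^{(n_k),\nu_k}$ has an even number $2^{\nu_k}$ of blocks and hence ends with a $-1$ block. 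Your computation of the block length $2^{n_k-\nu_k}\cdot 2^{n-(n_1+\cdots+n_k)}=2^{n-\nu}$ and block count $2^{n_0+\cdots+n_{k-1}}\cdot 2^{\nu_k}=2^{\nu}$ is exact. The paper's route emphasizes the tree-recursion viewpoint (which serves as conceptual scaffolding elsewhere in Section 2), while yours is more elementary and self-contained since it never needs to appeal to the recursive structure of rooted plane trees; it also quietly sidesteps a small typo in the paper's proof (where the number of copies in \eqv(2.1.15) is stated as $2^{n-(\nu-1)}$ instead of the correct $2^{\nu-1}$ dictated by \eqv(2.1.12)).
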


\noindent{\bf Examples:} 
Using the  $5$-level tree $\TT^{(5)}_{2}(2,3)$ of Figure 3a, we have
\be
R^{(5)}=
\begin{pmatrix} 
2^{3}\otimes R^{(2)}
\\
2^{0}\otimes\left(R^{(3)}\oplus\dots\oplus R^{(3)}\right)
\end{pmatrix}.
\Eq(2.ex.2)
\ee
Compare with  the representations of the  Rademacher matrix $R^{(5)}$ obtained using the $1$-level tree $\TT^{(5)}_{1}(5)$ of Figure 1 and the $5$-level tree $\TT^{(5)}_{5}(1,1,\dots,1)$ of Figure 2.

\begin{proof}[Proof of Lemma \thv(2.lem2)]  
First, taking $\ell=n$ in Definition \thv(2.def.1), it is readily verified using \eqv(2.1.1)-\eqv(2.1.2) that
the Rademacher system in $\S_d$  can be expressed using the pair 
\be
\{\TT^{(n)}_{n}(1,1,\dots,1), (2^{n-\nu})_{1\leq \nu\leq n}\}
\ee
consisting of the $n$-level tree $\TT^{(n)}_{n}(1,1,\dots,1)$ and the sequence $(2^{n-\nu})_{1\leq \nu\leq n}$ of dilatation coefficients. Indeed for $\ell=n$, $n_k=1$ and $\nu_k=1$  for all $1\leq k\leq n$,  the Rademacher configurations $r^{(n),\nu}\in\S_d$, $1\leq \nu\leq n$, can be written as
\be
r^{(n),\nu}=2^{n-\nu}\otimes\left(r^{(1),1}\oplus\dots\oplus r^{(1),1}\right),
\Eq(2.1.15)
\ee
where the concatenation is over $2^{n-(\nu-1)}$ copies of $r^{(1),1}$ and $r^{(1),1}=(r^{(1)}_{1},r^{(1)}_{2})=(1,-1)$. Since there is exactly one composition of $n$ into $n$  summands there is no ambiguity in writing  $\TT^{(n)}_{n}\equiv\TT^{(n)}_{n}(1,1,\dots,1)$ from now on. Note that $\TT^{(n)}_{n}$ is exactly  the  $n$-level  strictly binary tree described in the paragraph preceding Figure 2 (and illustrated in this figure for $n=5$).

Next, taking $\ell=1$ in Definition \eqv(2.def.1), it is straightforward that the Rademacher system in $\S_d$
can be identified with the $1$-level tree $\TT^{(n)}_{1}(n)$. In this case the dilatation sequence reduces to $1$.
From the previous two facts we deduce that the two pairs $\{\TT^{(n)}_{n}, (2^{n-\nu})_{1\leq \nu\leq n}\}$
and $\{\TT^{(n)}_{1}(n), 1\}$ completely determine one and the same  the Rademacher matrix $R^{(n)}$.

The conclusion of the lemma will now follow from the fact that  $\TT^{(n)}_{n}$ is a binary tree, and that such a tree can be constructed recursively from any sequence of $\ell\leq n$ binary trees $\TT^{(n_k)}_{n_k}$, $1\leq k\leq \ell$, where
$(n_1,\dots,n_{\ell})$ is an $\ell$-composition of $n$.  Simply replace each leaf of $\TT^{(n_1)}_{n_1}$ with a copy of  
$\TT^{(n_2)}_{n_2}$, then replace each leaf of each tree  $\TT^{(n_2)}_{n_2}$ with a copy of $\TT^{(n_3)}_{n_3}$, and so 
on  $\ell-1$ times. It remains to use the fact established above that, for each $1\leq k\leq \ell$, the pair 
$
\bigl\{\TT^{(n_k)}_{n_k}, (2^{n-(n_0+\dots + n_{k-1}+l)})_{1\leq l\leq n_k}\bigr\}
$,
where $1\leq l\leq n_k$ labels the levels of the tree $\TT^{(n_k)}_{n_k}$, can be replaced with the pair 
$
\bigl\{\TT^{(n_k)}_{1}(n_k), (2^{n-(n_1+\dots+n_k)})\bigr\}
$.
This gives the claim of the lemma.
\end{proof}

\subsection{Solutions of the mixed-memory equations in the Rademacher system}
    \TH(S2.2) 

Given an integer $n$ and a vector $m=(m_{\nu})_{1\leq\nu\leq n}\in\R^n$, consider the system of $n$ equations in $d(n)=2^n$ variables
\be
m_{\nu}
=
\frac{1}{d(n)}\sum_{1\leq i\leq d(n)}r^{(n),\nu}_i
\sign\left[
\left(r^{(n)}_i,m\right)
\right],
\quad 1\leq\nu\leq n,
\Eq(2.2.1)
\ee
where $(\cdot, \cdot)$ denotes the inner product in $\R^n$,  $r^{(n)}_j=(r^{(n),\nu}_j)_{1\leq \nu\leq n}\in\S_n$, $1\leq j\leq d(n)$, are the column vectors of the $n\times d(n)$ Rademacher matrix  $R^{(n)}$ (see \eqv(2.1.3)-(2.1.5)), and 
\be
\sign(t)=
\begin{cases}
+1 & \text{if $t>0$}, \\
-1 & \text{if $t<0$}, \\
0 & \text{if $t=0$}. \\
\end{cases}
\Eq(2.2.2)
\ee

In Section \thv(S3), we will prove that for each $1\leq\nu\leq n$, choosing $F(x)=\frac{1}{2}x^2$ in  \eqv(1.2.1.11), the event appearing in  \eqv(1.2.1.2) of Definition  \thv(1.def1.2) 
reduces to \eqv(2.2.1) with $\P$-probability $1$ (see in particular \eqv(1.theo1.mix3) in the proof of Theorem \thv(1.theo2.mix)). 
A similar result holds in the general case of mixed memories of type $F$, which will be introduced in \eqv(2.prop2.0).
Since the  r\^ole of the patterns in \eqv(1.2.1.2) is now played by the deterministic Rademacher row vectors, we will refer to the system of equations  \eqv(2.2.1) and  \eqv(2.prop2.0) as the \emph{mixed-memory equations} in the Rademacher system. The connection between the two sets of vectors is made explicit in \eqv(3.1.6'). For the sake of simplicity, we treat the case $F(x)=\frac{1}{2}x^2$ and the general case separately.

The aim of this section is to construct explicit solutions to these systems.  They will in turn be used in Section \thv(S3) to construct the mixed memories $\xi^{(N)}(m)$,  $m=(m_{\mu})_{1\leq \mu\leq M}\in \R^M$, of Definition \thv(1.def1.2). Note that  \emph{a priori} we are looking for all solutions, not just those that lead to local minima of the energy function of the Hopfield model under consideration.

As  mentioned in Section \thv(S1.2), several classes of solutions have been identified in \cite{AGS85a}, referred to as symmetric and asymmetric solutions, the latter  class being itself divided into continuous and discontinuous asymmetric solutions. 

Symmetric solutions were first discovered by Amit \emph{et al.}~\cite{AGS85a} and later rigorously and independently established in \cite{KP89} and \cite{BR90}. Given $1\leq s\leq n$, we call $s$-symmetric a solution 
$
m=(m_{\nu})_{1\leq \nu\leq n}
$  
of \eqv(2.2.1) with exactly $s$ non-zero components of equal absolute value. 
Throughout this section
\be
\boldsymbol{1}_{k}=(1,\dots,1)
\quad\text{and}\quad
\boldsymbol{0}_{k}=(0,\dots,0)
\ee
denote the vectors in $\R^k$ whose $k$ components are all equal to $1$ or $0$, respectively.

\begin{lemma}[$s$-symmetric solutions] 
    \TH(2.lem3)
Given $1\leq s\leq n$,  set
\be
\a^{(s)}=2^{-s+1}{{s-1}\choose{\lfloor(s-1)/2\rfloor}}.
\Eq(2.lem1.1)
\ee

\item{(i)} The vector 
$
m=\a^{(s)}\boldsymbol{1}_{s}\oplus \boldsymbol{0}_{n-s}
$
verifies \eqv(2.2.1).

\item{(ii)} In addition, if  $\pi$ is an arbitrary permutation of $\{1,\dots,n\}$ and  $(\varepsilon_{\nu})_{1\leq \nu\leq n}$,  
$\varepsilon_{\nu}\in\{-1,1\}$, is an arbitrary sequence of signs, then $(\varepsilon_{\nu}m_{\pi(\nu)})_{1\leq \nu\leq n}$  
verifies \eqv(2.2.1). 
\end{lemma}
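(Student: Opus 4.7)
The plan is to exploit Lemma \thv(2.lem1)(iii), which guarantees that as $i$ ranges over $\{1,\ldots,d(n)\}$, the column vectors $r^{(n)}_i$ form a complete enumeration of $\S_n$. Consequently, I can rewrite the right-hand side of \eqv(2.2.1) as a uniform hypercube average,
\be
\frac{1}{2^n}\sum_{\e\in\{-1,1\}^n}\e_\nu\sign[(\e,m)],
\ee
which transforms \eqv(2.2.1) into an elementary combinatorial identity that no longer depends on the specific structure of the Rademacher matrix.

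For part (i), I substitute $m=\a^{(s)}\boldsymbol{1}_{s}\oplus \boldsymbol{0}_{n-s}$. Since $\a^{(s)}>0$, the sign factor reduces to $\sign\bigl[\sum_{\mu=1}^s\e_\mu\bigr]$, which depends only on $(\e_1,\ldots,\e_s)$. For $\nu>s$, summing $\e_\nu$ over $\{-1,+1\}$ against a quantity independent of it kills the expression. For $\nu\leq s$, the permutation symmetry of the first $s$ coordinates makes the value of $m_\nu$ independent of $\nu\in\{1,\ldots,s\}$, so averaging over such $\nu$ yields
\be
m_\nu=\frac{1}{s\,2^s}\sum_{\e\in\{-1,1\}^s}\bigg|\sum_{\mu=1}^s\e_\mu\bigg|=\frac{\E|S_s|}{s},
\ee
where $S_s$ is a simple symmetric random walk after $s$ steps (the identity $x\sign(x)=|x|$ remains valid at $x=0$ under the convention $\sign(0)=0$, so both parities of $s$ are handled simultaneously). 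The identification with $\a^{(s)}$ then follows from the classical identity $\E|S_s|=s\cdot 2^{-s+1}\binom{s-1}{\lfloor(s-1)/2\rfloor}$, which I would prove by splitting $\E|S_s|=2\cdot 2^{-s}\sum_{2k>s}(2k-s)\binom{s}{k}$ and collapsing the resulting sums via $k\binom{s}{k}=s\binom{s-1}{k-1}$.

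For part (ii), given a permutation $\pi$ and a sign sequence $(\varepsilon_\nu)$, I set $m'_\nu=\varepsilon_\nu m_{\pi(\nu)}$ and perform the change of variables $\tilde\e_\mu=\varepsilon_{\pi^{-1}(\mu)}\e_{\pi^{-1}(\mu)}$, which is a bijection of $\{-1,1\}^n$ onto itself. A brief reindexing gives $(\e,m')=(\tilde\e,m)$ and $\e_\nu=\varepsilon_\nu\tilde\e_{\pi(\nu)}$, so the hypercube average for $m'$ becomes $\varepsilon_\nu\cdot\bigl(2^{-n}\sum_{\tilde\e}\tilde\e_{\pi(\nu)}\sign[(\tilde\e,m)]\bigr)$, which equals $\varepsilon_\nu m_{\pi(\nu)}=m'_\nu$ by part (i). I do not anticipate any substantial obstacle: beyond the hypercube reformulation, only a classical combinatorial identity for $\E|S_s|$ and a change-of-variables argument are needed, both of which are routine. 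If there is a delicate point, it is simply the bookkeeping for (ii) — one must verify the bijectivity of $\e\mapsto\tilde\e$ and track the sign factor $\varepsilon_\nu$ cleanly through the reindexing.
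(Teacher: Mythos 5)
Your proof is correct and takes a genuinely different route. The paper's own proof uses the tree-based representation of $R^{(n)}$ (Lemma \thv(2.lem2)) to peel off blocks, fixes $\nu=1$, decomposes $R^{(s)}$ once more, and then counts zeros of $\bigl(r^{(s-1)}_i,\boldsymbol{1}_{s-1}\bigr)$ separately in the cases $s-1$ even and $s-1$ odd, before handing part (ii) to Lemma \thv(2.lem4) and Corollary \thv(1.cor1). You instead use only Lemma \thv(2.lem1)(iii) — that the columns $\{r^{(n)}_i\}_{1\leq i\leq d(n)}$ enumerate $\S_n$ exactly once — to turn the right-hand side of \eqv(2.2.1) into a uniform hypercube average. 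From there, everything follows from two elementary observations: for $\nu>s$ the factor $\e_\nu$ averages to zero, and for $\nu\leq s$ permutation symmetry plus $x\,\sign(x)=|x|$ reduce the verification to the classical identity $\E|S_s|=s\cdot 2^{-s+1}\binom{s-1}{\lfloor(s-1)/2\rfloor}$, which handles both parities of $s$ at once. Your treatment of (ii) is likewise a direct change of variables $\tilde\e_\mu=\varepsilon_{\pi^{-1}(\mu)}\e_{\pi^{-1}(\mu)}$ on $\{-1,1\}^n$, which is the hypercube-level analogue of the paper's appeal to Corollary \thv(1.cor1). What the paper's heavier machinery buys, and yours does not, is a template that transfers to the asymmetric solutions of Proposition \thv(2.prop1): there the mixture vector $m$ is not permutation-invariant in all $s$ coordinates but only block-wise, and the tree decomposition of $R^{(n)}$ is precisely the tool that makes the block structure tractable. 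The paper's proof of Lemma \thv(2.lem3) is deliberately written as a first illustration of that framework. For the lemma in isolation, however, your argument is shorter and cleaner. One small caveat: if you were to write this up formally, you should include the one-line telescoping derivation of $\E|S_s|$ (via $(2k-s)\binom{s}{k}=s[\binom{s-1}{k-1}-\binom{s-1}{k}]$) rather than simply citing it as classical, since the identity is the entire content of part (i) in your reformulation.
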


The knowledge of symmetric solutions is the building block for constructing more complicated  \emph{asymmetric solutions}.
The asymmetric solutions we construct include all examples of continuous asymmetric solutions obtained numerically in \cite{AGS85a}.

\begin{proposition}[Asymmetric solutions]
    \TH(2.prop1)

Given $1\leq s\leq n$, let  $(s_1,\dots,s_{\ell})$ be an $\ell$-composition of $s$ such that $s_k\geq 2$ is even for all 
$1\leq k\leq \ell-1$ and $s_{\ell}\geq 1$ has arbitrary parity.  Set
\be
\g^{(k)}
\equiv \prod_{l=1}^{k}\a^{(s_{l})}, \quad 1\leq k\leq \ell,
\Eq(2.prop1.1)
\ee
where $\a^{(s_l)}$ is defined as in \eqv(2.lem1.1), and let $m=(m_{\nu})_{1\leq \nu\leq n}$ be the vector
\be
m=\g^{(1)}\boldsymbol{1}_{s_1}\oplus \g^{(2)}\boldsymbol{1}_{s_2}\oplus\dots
\oplus \g^{(\ell)}\boldsymbol{1}_{s_{\ell}}\oplus \boldsymbol{0}_{n-s}.
\Eq(2.prop1.2)
\ee
If the sequence $\left(\g^{(k)}\right)_{1\leq k\leq \ell}$ satisfies the conditions
\be
2\g^{(k)}>s_{k+1}\g^{(k+1)}+\dots+s_{\ell}\g^{(\ell)}, \quad 1\leq k\leq \ell-1,
\Eq(2.prop1.2')
\ee 
then the following holds.
\item{(i)} The vector  $m=(m_{\nu})_{1\leq \nu\leq n}$ verifies \eqv(2.2.1). 
\item{(ii)}  Given any permutation $\pi$  of $\{1,\dots,n\}$ and 
any  sequence $(\varepsilon_{\nu})_{1\leq \nu\leq n}$,  $\varepsilon_{\nu}\in\{-1,1\}$, 
the vector $(\varepsilon_{\nu}m_{\pi(\nu)})_{1\leq \nu\leq n}$  verifies  \eqv(2.2.1).
\item{(iii)} If $s_\ell$ is even then 
\be
\left(
\sign\left[
\left(r^{(n)}_i,m\right)
\right]
\right)_{\{1\leq i\leq d(n)\}}
\in\{-1,0,1\}^{d(n)},
\Eq(2.prop1.iii1)
\ee
and there exists $1\leq i\leq d(n)$ such that
$
\bigl(r^{(n)}_i,m\bigr)=0
$.
If $s_\ell$ is odd then 
\be
\left(
\sign\left[
\left(r^{(n)}_i,m\right)
\right]
\right)_{\{1\leq i\leq d(n)\}}
\in\{-1,1\}^{d(n)},
\Eq(2.prop1.iii2)
\ee
and
\be
\inf_{1\leq i\leq d(n)}
\left|\left(r^{(n)}_i,m\right)\right|\geq C(m)>0,
\Eq(2.prop1.iii3)
\ee
where 
\be
C(m)\geq\min\left\{ 
\min_{1\leq k\leq \ell-1}\left[2\g^{(k)}-\left(s_{k+1}\g^{(k+1)}+\dots+s_{\ell}\g^{(\ell)}\right)\right],  \g^{(\ell)}\right\}>0.
\Eq(2.prop1.iii4)
\ee
\end{proposition}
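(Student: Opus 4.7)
The strategy is to reduce \eqv(2.2.1) to a tree-structured sum via Lemma~\thv(2.lem2), then exploit a \emph{first-nonzero} argument to identify the sign. Apply Lemma~\thv(2.lem2) with the $(\ell+1)$-composition $(s_1,\dots,s_\ell,n-s)$ of $n$ (omit the last block if $s=n$): each column $i\in\{1,\dots,d(n)\}$ is then parametrized by a tuple $(j_1,\dots,j_{\ell+1})$ with $j_k\in\{1,\dots,d(s_k)\}$, and for $\nu$ in block $k$ with local index $\nu_k$ one has $r^{(n),\nu}_i=r^{(s_k),\nu_k}_{j_k}$. Setting $\phi_k(j):=\sum_{\nu_k=1}^{s_k}r^{(s_k),\nu_k}_j$ for the column sums of $R^{(s_k)}$, and using that $m_\nu=0$ for $\nu>s$, the inner product becomes
$$\bigl(r^{(n)}_i,m\bigr)=\sum_{k=1}^{\ell}\g^{(k)}\phi_k(j_k).$$

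\textbf{Proof of (iii).} Since $\phi_k(j)\equiv s_k\pmod 2$ with $|\phi_k(j)|\le s_k$, the value $\phi_k(j)=0$ is possible only for $s_k$ even, and then $|\phi_k(j)|\ge 2$ when nonzero. In particular, for $s_\ell$ odd, $\phi_\ell(j_\ell)\ne 0$ always. Define $k^*(i):=\min\{k\colon\phi_k(j_k)\ne 0\}$ (or $\infty$). The case $k^*=\infty$, corresponding to $(r^{(n)}_i,m)=0$, occurs precisely when $s_\ell$ is even and one chooses $j_k$ with $\phi_k(j_k)=0$ in every block, which is possible since then every $s_k$ is even. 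When $k^*$ exists, hypothesis \eqv(2.prop1.2') combined with $|\phi_{k^*}|\ge 2$ (for $k^*<\ell$) or $|\phi_{k^*}|\ge 1$ (for $k^*=\ell$) yields
$$\bigl|\g^{(k^*)}\phi_{k^*}(j_{k^*})\bigr|\;>\;\Bigl|\sum_{k>k^*}\g^{(k)}\phi_k(j_k)\Bigr|,$$
so $\sign[(r^{(n)}_i,m)]=\sign[\phi_{k^*}(j_{k^*})]\in\{-1,+1\}$ with the lower bound \eqv(2.prop1.iii4) following from $|\g^{(k^*)}\phi_{k^*}|-\text{tail}\ge 2\g^{(k^*)}-\sum_{l>k^*}s_l\g^{(l)}$ (resp.\ $\ge\g^{(\ell)}$ for $k^*=\ell$).

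\textbf{Proof of (i).} For $\nu>s$ the free sum over $j_{\ell+1}$ of $r^{(n-s),\nu_{\ell+1}}_{j_{\ell+1}}$ vanishes by orthogonality (Lemma~\thv(2.lem1)(i)), so $m_\nu=0$ automatically. For $\nu$ in block $k\le\ell$, I partition the sum over $i$ by $k^*$ and isolate three cases. \emph{(a)} If $k^*<k$, the sum over $j_k$ is unconstrained and vanishes by orthogonality. \emph{(b)} If $k^*>k$, the constraint is $\phi_k(j_k)=0$; the involution $j\mapsto j'$ with $r^{(s_k)}_{j'}=-r^{(s_k)}_j$ (well-defined and fixed-point-free by Lemma~\thv(2.lem1)(iii), preserving $\phi_k=0$ and flipping $r^{(s_k),\nu_k}_j$) pairs the sum into cancelling pairs. \emph{(c)} Only $k^*=k$ contributes; by Lemma~\thv(2.lem3) applied in $\S_{d(s_k)}$ at size $s_k$, $\sum_{j_k\colon\phi_k(j_k)\ne 0}r^{(s_k),\nu_k}_{j_k}\sign[\phi_k(j_k)]=d(s_k)\a^{(s_k)}$. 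The remaining factors are the counts $\#\{j\colon\phi_{k'}(j)=0\}=\binom{s_{k'}}{s_{k'}/2}$ for $k'<k$ and the free sums $d(s_{k'})$ for $k'>k$ (including $k'=\ell+1$). After dividing by $d(n)=\prod_{k'}d(s_{k'})$ and invoking the identity $\binom{s}{s/2}/2^s=\a^{(s)}$ (valid for $s$ even, from $\binom{s}{s/2}=2\binom{s-1}{s/2-1}$), the product telescopes to $\prod_{k'\le k}\a^{(s_{k'})}=\g^{(k)}=m_\nu$.

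\textbf{Proof of (ii) and main obstacle.} Part (ii) is a change-of-variable argument: Corollary~\thv(1.cor1)(i),(ii) implies that row permutations and entrywise sign flips of $R^{(n)}$ merely reindex the column multiset $\{r^{(n)}_i\}_{i=1}^{d(n)}$. Writing $T$ for the right-hand side of \eqv(2.2.1), a direct computation then gives $T(m')_\nu=\varepsilon_\nu T(m)_{\pi(\nu)}$ for $m'=(\varepsilon_\nu m_{\pi(\nu)})_\nu$, so that $T(m)=m$ forces $T(m')=m'$. The main obstacle is the combinatorial bookkeeping in case (c): keeping track of which $j_{k'}$ are constrained as a function of $k^*$ and verifying that the product of counting factors and the fractional contribution from Lemma~\thv(2.lem3) collapses cleanly. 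The identity $\binom{s}{s/2}/2^s=\a^{(s)}$ is the small combinatorial miracle that enables the telescoping, while hypothesis \eqv(2.prop1.2') is tight, since the $\g^{(k)}$ are in general only weakly decreasing and a naive domination of the tail would fail.
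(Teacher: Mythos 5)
Your proof is correct and uses the same underlying mechanism as the paper: decompose $\{1,\dots,d(n)\}$ by the first block $k$ whose local column sum $\phi_k$ is nonzero (in the paper this is encoded by the nested sets $Z^n_{n_1}\cap\dots\cap Z^n_{n_{k-1}}\cap(Z^n_{n_k})^c$), use orthogonality to kill the contribution when the target block lies below $k^*$, use the axial-symmetry/involution pairing when it lies above, and invoke Lemma~\thv(2.lem3) for the pivotal block itself. The only real difference is presentational: you apply Lemma~\thv(2.lem2) once with the full $(\ell+1)$-level tree and exploit the resulting mixed-radix parametrization $i\leftrightarrow(j_1,\dots,j_{\ell+1})$ so that all $\ell$ cases are dispatched in one pass, whereas the paper works iteratively with nested $2$-level trees (first step for $\ell=2$, then a general recursive step, then the embedding $s<n$), which makes the bookkeeping longer but the intermediate quantities $\SS^{(n),\nu}_{n_1,\dots,n_k}$ and $\overline\SS^{(n),\nu}_{n_1,\dots,n_{\ell-1}}$ explicit and reusable in later sections; your version is more compact for proving the proposition as stated.
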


\begin{remark}
The order of the summands of the chosen $\ell$-composition $(s_1,\dots,s_{\ell})$ of $s$ is important: permuting distinct summands leads to a different sequence \eqv(2.prop1.1) and to a different set of conditions \eqv(2.prop1.2').
\end{remark}

The system of equations \eqv(2.2.1) will be used to construct the mixed memories of the classical Hopfield model. The next proposition will allow us to construct the general type-$F$ mixed memories of Theorem \thv(1.theo2.mix). Given a function 
$f:\R\rightarrow \R$ satisfying $f(x)>0$ for all $x>0$ and $f(0)=0$, let $\boldsymbol{f}:\R^n\rightarrow\R^n$ be the function that assigns to $m$ the vector $\boldsymbol{f}(m)=(f(m_1),\dots,f(m_n))$. The fixed point equation \eqv(2.2.1) is now replaced by
\be
m_{\nu}
=
\frac{1}{d(n)}\sum_{1\leq i\leq d(n)}r^{(n),\nu}_i
\sign\left[
\left(r^{(n)}_i, \boldsymbol{f}(m)\right)
\right],
\quad 1\leq\nu\leq n.
\Eq(2.prop2.0)
\ee

\begin{proposition}[Asymmetric solutions of type $f$] 
    \TH(2.prop2)

Given $1\leq s\leq n$, let  $(s_1,\dots,s_{\ell})$ be an $\ell$-composition of $s$ such that $s_k\geq 2$ is even for all 
$1\leq k\leq \ell-1$ and $s_{\ell}\geq 1$ has arbitrary parity.  Let $m=(m_{\nu})_{1\leq \nu\leq n}$ be the vector
\be
m=\g^{(1)}\boldsymbol{1}_{s_1}\oplus \g^{(2)}\boldsymbol{1}_{s_2}\oplus\dots
\oplus \g^{(\ell)}\boldsymbol{1}_{s_{\ell}}\oplus \boldsymbol{0}_{n-s},
\Eq(2.prop2.2)
\ee
where the sequence $\left(\g^{(k)}\right)_{1\leq k\leq \ell}$ is defined as in \eqv(2.prop1.1).
If the sequence $\left(\g^{(k)}\right)_{1\leq k\leq \ell}$ satisfies the conditions
\be
2f\left(\g^{(k)}\right)>s_{k+1}f\left(\g^{(k+1)}\right)+\dots+s_{\ell}f\left(\g^{(\ell)}\right), \quad 1\leq k\leq \ell-1,
\Eq(2.prop2.3)
\ee 
then the following holds.
\item{(i)}  
The vector $m=(m_{\nu})_{1\leq \nu\leq n}$ verifies \eqv(2.prop2.0). 
\item{(ii)} Given any permutation $\pi$  of $\{1,\dots,n\}$, the vector $(m_{\pi(\nu)})_{1\leq \nu\leq n}$  verifies \eqv(2.prop2.0).
If in addition the function $f$ is odd then, given any permutation $\pi$  of $\{1,\dots,n\}$ and 
any  sequence $(\varepsilon_{\nu})_{1\leq \nu\leq n}$,  $\varepsilon_{\nu}\in\{-1,1\}$, 
the vector $(\varepsilon_{\nu}m_{\pi(\nu)})_{1\leq \nu\leq n}$  verifies \eqv(2.prop2.0).
\item{(iii)} If $s_\ell$ is even then 
\be
\left(
\sign\left[
\left(r^{(n)}_i, \boldsymbol{f}(m)\right)
\right]
\right)_{\{1\leq i\leq d(n)\}}
\in\{-1,0,1\}^{d(n)},
\Eq(2.prop2.iii1)
\ee
and there exists $1\leq i\leq d(n)$ such that
$
\bigl(r^{(n)}_i,\boldsymbol{f}(m)\bigr)=0
$.
If $s_\ell$ is odd then 
\be
\left(
\sign\left[
\left(r^{(n)}_i, \boldsymbol{f}(m)\right)
\right]
\right)_{\{1\leq i\leq d(n)\}}
\in\{-1,1\}^{d(n)},
\Eq(2.prop2.iii2)
\ee
and
\be
\inf_{1\leq i\leq d(n)}
\left|\left(r^{(n)}_i,\boldsymbol{f}(m)\right)\right|\geq C_f(m)>0,
\Eq(2.prop2.iii3)
\ee
where 
\be
\begin{split}
&
C_f(m)
\\
&\geq\min\Bigl\{
\min_{1\leq k\leq \ell-1}\left\{2f\left(\g^{(k)}\right)-\left[s_{k+1}f\left(\g^{(k+1)}\right)+\dots+s_{\ell}f\left(\g^{(\ell)}\right)\right]\right\}, f\left( \g^{(\ell)}\right)\Bigr\}
\\
&>0.
\end{split}
\Eq(2.prop2.iii4)
\ee
\end{proposition}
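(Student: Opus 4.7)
The plan is to leverage the tree-based decomposition of $R^{(n)}$ (Lemma \thv(2.lem2)) associated to the chosen $\ell$-composition $(s_1,\dots,s_\ell)$ of $s$. This splits the first $s$ rows of $R^{(n)}$ into $\ell$ blocks $B_1,\dots,B_\ell$, where block $k$ consists of $s_k$ rows of the form $2^{n-(s_1+\cdots+s_k)}\otimes(R^{(s_k)}\oplus\cdots\oplus R^{(s_k)})$. Since $\boldsymbol{f}(m)$ is constant equal to $f(\gamma^{(k)})$ on $B_k$ (and zero outside, to which case we may reduce by working with the effective dimension $s$), the inner product rewrites as
\be
\bigl(r^{(n)}_i,\boldsymbol{f}(m)\bigr)=\sum_{k=1}^\ell f\bigl(\gamma^{(k)}\bigr)S_k(i), \quad S_k(i)=\sum_{\nu\in B_k}r^{(n),\nu}_i,
\ee
where each $S_k(i)$ is an integer of the same parity as $s_k$, bounded in absolute value by $s_k$.

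The core analytic step is a cascading identity for the signs. For $k\leq\ell-1$ the parity of $s_k$ forces $|S_k(i)|\geq 2$ whenever $S_k(i)\neq 0$, so the hypothesis \eqv(2.prop2.3) implies that if $S_1(i)=\cdots=S_{k-1}(i)=0$ and $S_k(i)\neq 0$, then $\sign\bigl[(r^{(n)}_i,\boldsymbol{f}(m))\bigr]=\sign[S_k(i)]$. When $s_\ell$ is odd the constraint $|S_\ell(i)|\geq 1$ guarantees that the cascade always produces a definite sign; when $s_\ell$ is even an additional possibility arises where every $S_k(i)$ vanishes and the inner product is exactly zero. This immediately yields assertion (iii), with the lower bound \eqv(2.prop2.iii4) obtained by taking the minimum of the gaps $2f(\gamma^{(k)})-\sum_{j>k}s_jf(\gamma^{(j)})$ for $k\leq\ell-1$ together with the residual contribution $f(\gamma^{(\ell)})$.

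To prove (i), I would partition the $2^n$ columns of $R^{(n)}$ according to the tree decomposition and evaluate the RHS of \eqv(2.prop2.0) block by block. For $\nu$ in block $k$, the cascade rule reduces the sum to those columns where $S_1,\dots,S_{k-1}$ vanish, contributions from later blocks cancelling by the orthogonality of $r^{(n),\nu}$ against characters supported strictly beyond $B_k$. Within this sub-ensemble, the block-$k$ slice is a full copy of $R^{(s_k)}$, so the conditional average of $r^{(n),\nu}_i\sign[S_k(i)]$ reduces to the $s_k$-symmetric solution $\alpha^{(s_k)}$ of Lemma \thv(2.lem3); meanwhile each vanishing prior block $l<k$ contributes a multiplicative weight $2^{-s_l}\binom{s_l}{s_l/2}$, which via the Pascal identity equals precisely $\alpha^{(s_l)}$. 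Multiplying these factors reproduces $m_\nu=\prod_{l=1}^k\alpha^{(s_l)}=\gamma^{(k)}$. The case $\nu>s$ is settled by the same block structure: the row $r^{(n),\nu}$ lies outside $B_1\cup\cdots\cup B_\ell$ and is therefore orthogonal to every sign function supported by those blocks, so the corresponding average vanishes.

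Assertion (ii) follows from Corollary \thv(1.cor1): permuting the rows of $R^{(n)}$ is absorbed by a permutation of its columns and leaves the averaged sum invariant; when $f$ is odd, arbitrary sign changes on the coordinates of $m$ can be pushed through $f$ and likewise absorbed by a column permutation, since $\sign[(r^{(n)}_i,\boldsymbol{\varepsilon}\odot\boldsymbol{f}(m))]=\sign[(\boldsymbol{\varepsilon}\odot r^{(n)}_i,\boldsymbol{f}(m))]$. The main obstacle I expect is the bookkeeping in step (i): cleanly identifying the nested Rademacher subsystems within the tree decomposition and showing that each vanishing block contributes exactly the factor $\alpha^{(s_l)}$. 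Once this combinatorial identification is made precise, the product structure \eqv(2.prop1.1) falls out of an induction on $\ell$, and the same scheme simultaneously recovers Proposition \thv(2.prop1) as the special case $f=\mathrm{id}$.
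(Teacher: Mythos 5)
Your proposal is correct and follows essentially the same strategy as the paper. The paper reduces the proof of Proposition~\thv(2.prop2) to that of Proposition~\thv(2.prop1) \emph{mutatis mutandis} (replacing the thresholds $2a_k>\sum_{j>k}n_j a_j$ by $2f(a_k)>\sum_{j>k}n_j f(a_j)$ and removing the strictly positive factor $f(a_k)$ from inside the sign), and the proof of \thv(2.prop1) is exactly the tree-based block decomposition of $R^{(n)}$ via Lemma~\thv(2.lem2), the partition of columns by the sets $Z^n_{n_k}$ (your cascading sign rule), the reduction of each block to the $s_k$-symmetric solution of Lemma~\thv(2.lem3), the multiplicative weight $\a^{(s_l+1)}=\a^{(s_l)}$ from each vanishing prior block, and Lemma~\thv(2.lem4)/Corollary~\thv(1.cor1) for part (ii) — all of which you identify. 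The one small imprecision is in the sentence justifying why, for $\nu$ in block $k$, contributions outside the set $\{S_1=\dots=S_{k-1}=0,\ S_k\neq 0\}$ vanish: columns whose first nonzero block sum $S_j$ has $j<k$ are killed by the orthogonality argument you cite (a block-$k$ row integrates to zero over constancy intervals of the block-$j$ sign), but columns with $S_1=\dots=S_k=0$ and first nonzero $S_j$ with $j>k$ are killed instead by the axial symmetry \eqv(1.lem1.2), as in the paper's argument around \eqv(2.prop1.73)--\eqv(2.prop1.75); your wording conflates these two distinct mechanisms.
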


The proof of assertions (ii) of Lemma \thv(2.lem3) and Propositions \thv(2.prop1) and \thv(2.prop2) follow from the same arguments. In order to avoid repetition of the proofs, we  gather these statements in the lemma below. 

\begin{lemma}
    \TH(2.lem4)
Let $\pi$ be is an arbitrary permutation of $\{1,\dots,n\}$ and let $\varepsilon=(\varepsilon_{\nu})_{1\leq \nu\leq n}\in\{-1,1\}^n$ be an arbitrary sequence of signs. 
\item{(i)} If $m=(m_{\nu})_{1\leq \nu\leq n}$  verifies  \eqv(2.2.1), then $(\varepsilon_{\nu}m_{\pi(\nu)})_{1\leq \nu\leq n}$  verifies \eqv(2.2.1). 

\item{(ii)} If $m=(m_{\nu})_{1\leq \nu\leq n}$  verifies  \eqv(2.prop2.0) for an arbitrary function $f$ satisfying $f(x)>0$ for all $x>0$ and $f(0)=0$, then
$(m_{\pi(\nu)})_{1\leq \nu\leq n}$ verifies \eqv(2.prop2.0). In addition, if $f$ is an odd function, then $(\varepsilon_{\nu}m_{\pi(\nu)})_{1\leq \nu\leq n}$ verifies \eqv(2.prop2.0).
\end{lemma}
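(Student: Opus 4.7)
The plan is to reduce both assertions to the two structural properties of the Rademacher matrix already established: Lemma \thv(2.lem1)(iii) (the columns of $R^{(n)}$ enumerate $\S_n$) and Corollary \thv(1.cor1) (column-to-column permutations induced by Hadamard multiplication and by row permutations). Throughout, I work with the identity
$$m_{\nu}=\frac{1}{d(n)}\sum_{i=1}^{d(n)}r^{(n),\nu}_i\sign\bigl[(r^{(n)}_i,w)\bigr],$$
where $w=m$ for \eqv(2.2.1) and $w=\boldsymbol{f}(m)$ for \eqv(2.prop2.0); the point is that the structure is the same, and the distinction between (i) and (ii) is only how $\boldsymbol{f}$ interacts with sign changes of the components of $m$.

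First I would handle the permutation statement. Let $\tilde m_\nu=m_{\pi(\nu)}$. In the inner product $(r^{(n)}_i,\tilde m)=\sum_\mu r^{(n),\mu}_i m_{\pi(\mu)}$ I reindex $\mu'=\pi(\mu)$ and then apply Corollary \thv(1.cor1)(ii) to the row-permutation $\pi^{-1}$, which yields a column-permutation $\pi_2$ with $r^{(n),\pi^{-1}(\mu')}_i=r^{(n),\mu'}_{\pi_2(i)}$. Consequently $(r^{(n)}_i,\tilde m)=(r^{(n)}_{\pi_2(i)},m)$. Substituting $j=\pi_2(i)$ in the right-hand side of the fixed-point equation and using the identity $r^{(n),\nu}_{\pi_2^{-1}(j)}=r^{(n),\pi(\nu)}_j$ (the same identity read with $\mu'=\pi(\nu)$), the sum collapses to $m_{\pi(\nu)}$ by the assumption on $m$. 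For the $\boldsymbol{f}$-version, the same argument goes through verbatim once one observes that $\boldsymbol{f}(\tilde m)_\mu=f(m_{\pi(\mu)})=\boldsymbol{f}(m)_{\pi(\mu)}$, so $\boldsymbol{f}$ and the permutation commute without any hypothesis on $f$.

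Next I would do the sign-change part of (i), and then adapt it to (ii). Let $\hat m_\nu=\varepsilon_\nu m_\nu$. By Lemma \thv(2.lem1)(iii) there exists $i^\ast$ such that $r^{(n)}_{i^\ast}=\varepsilon$ in $\S_n$, i.e.\ $r^{(n),\mu}_{i^\ast}=\varepsilon_\mu$ for every $\mu$. Then $(r^{(n)}_i,\hat m)=\sum_\mu(r^{(n),\mu}_i\odot r^{(n),\mu}_{i^\ast})m_\mu$, and Corollary \thv(1.cor1)(i) gives a column-permutation $\pi^\ast$ with $r^{(n)}_i\odot r^{(n)}_{i^\ast}=r^{(n)}_{\pi^\ast(i)}$, hence $(r^{(n)}_i,\hat m)=(r^{(n)}_{\pi^\ast(i)},m)$. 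The change of variable $j=\pi^\ast(i)$ combined with the elementary identity $r^{(n)}_{(\pi^\ast)^{-1}(j)}=r^{(n)}_j\odot r^{(n)}_{i^\ast}$ factors out the scalar $\varepsilon_\nu$ from the $\nu$-th component, and the remaining sum equals $m_\nu$ by hypothesis, so the right-hand side is $\varepsilon_\nu m_\nu=\hat m_\nu$. For (ii) with $f$ odd, the same proof works after replacing $m$ by $\boldsymbol{f}(m)$, because oddness guarantees $\boldsymbol{f}(\hat m)_\nu=f(\varepsilon_\nu m_\nu)=\varepsilon_\nu f(m_\nu)$, so the sign vector $\varepsilon$ pulls out of $\boldsymbol{f}(\hat m)$ just as it did for $\hat m$. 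Finally, the full statement (arbitrary $\pi$ together with arbitrary $\varepsilon$) follows by composing the two invariances.

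The only genuinely delicate point is the necessity of oddness in (ii): if $f$ is not odd, the identity $\boldsymbol{f}(\varepsilon\odot m)=\varepsilon\odot\boldsymbol{f}(m)$ fails, and the reduction to Corollary \thv(1.cor1)(i) breaks down; the permutation invariance, by contrast, survives because $\boldsymbol{f}$ commutes with coordinate permutations unconditionally. No other subtlety is expected; everything else is careful bookkeeping of indices and a systematic use of the two column-permutation corollaries.
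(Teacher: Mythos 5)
Your proof is correct and uses exactly the same structural ingredients as the paper's argument: Lemma \thv(2.lem1)(iii) to realise $\varepsilon$ as a column $r^{(n)}_{i^\ast}$, and the two parts of Corollary \thv(1.cor1) to turn a Hadamard multiplication (resp.\ a row permutation) into a column permutation that can be absorbed by a change of summation variable. The only presentational difference is directional: the paper starts from the assumption that the permuted/sign-flipped vector satisfies the equation and rewrites it back into the equation for $m$, whereas you start from $m$ and directly derive the equation for the transformed vector; since the transformation group is closed under inversion the two routes are equivalent, and yours is arguably the cleaner forward direction, with the index identities $r^{(n),\nu}_{\pi_2^{-1}(j)}=r^{(n),\pi(\nu)}_j$ and $r^{(n)}_{(\pi^\ast)^{-1}(j)}=r^{(n)}_j\odot r^{(n)}_{i^\ast}$ made explicit. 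Your closing observation about why oddness is needed in (ii) is precisely the point the paper makes as well.
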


Finally, we come to the discontinuous asymmetric solutions.  We can think of these as all asymmetric solutions that are not of the type defined in Proposition \thv(2.prop1). We have little to say about them, since we could not find a discontinuous solution other than the single example proposed  in \cite{AGS85a}, namely,  $s=5$, $\ell=3$, $s_1=s_2=2$,  $s_3=1$ and
\be
m=\left(
\sfrac{3}{8}, \sfrac{3}{8}, \sfrac{1}{4}, \sfrac{1}{4}, \sfrac{1}{2}, 0\dots,0
\right).
\Eq(2.discont.1)
\ee
One can indeed check that \eqv(2.discont.1) verifies  \eqv(2.2.1), but one also sees that  for this solution
\be
\left(
\sign\left[
\left(r^{(5)}_i,m\right)
\right]
\right)_{\{1\leq i\leq d(5)\}}
\in\{-1,0,1\}^{d(5)},
\Eq(2.discont.2)
\ee
where zero is achieved, \emph{e.g.},~for $r^{(5)}_i=(+1,-1,+1,+1,-1)$.  Since $s_{\ell}$ is odd, this contradicts \eqv(2.prop1.iii2) of Proposition \thv(2.prop1). This type of solution thus seems to obey a completely different logic to that which governs the emergence of the asymmetric solutions of Proposition \thv(2.prop1).


\section{Proofs of the results of Section \thv(S2.2)}
    \TH(S6) 

In this section,  we successively prove Lemma \thv(2.lem3), Proposition \thv(2.prop1), Proposition \thv(2.prop2) and Lemma \thv(2.lem4). The claim of Lemma \thv(2.lem3) is contained in \cite{KP89} (see Theorem 1.3 and Proposition 3.4 (b)) and in a less direct way in \cite{BR90} (see Theorem 1 and its proof). Nevertheless, we present a simple proof  within the technical framework of Section \thv(S2.1), both for the sake of completeness and to provide a first illustration of our approach. Recall the notation  $d(n)=2^n$.

\subsection{Proof of Lemma \thv(2.lem3)} 
    \TH(S6.1)    
We start by proving assertion (i), namely, we  look for solutions $m$  of \eqv(2.2.1) of the form 
$
m=a\boldsymbol{1}_{s}\oplus \boldsymbol{0}_{n-s}
$
where $a>0$ is to be determined.
The proof hinges on the key fact that, by  Lemma \thv(2.lem2) with $n=n_1+n_2$, $n_1=s$ and $n_2=n-s$,
\be
R^{(n)}=
\begin{pmatrix} 
2^{n-s}\otimes R^{(s)}
\\
2^{0}\otimes\left(R^{(n-s)}\oplus\dots\oplus R^{(n-s)}\right)
\end{pmatrix},
\Eq(2.lem1.8)
\ee
where the concatenation is over $2^{s}$ matrices $R^{(n-s)}$. 

First assume that  $1\leq\nu\leq s$. By \eqv(2.lem1.8) and the above choice of $m$,  \eqv(2.2.1) becomes
\bea
m_{\nu}
=
a
&=&
\frac{1}{d(n)}\sum_{1\leq i\leq d(n)}r^{(n),\nu}_i
\sign\left[
\left(r^{(n)}_i,\boldsymbol{1}_{s}\oplus \boldsymbol{0}_{n-s}\right)
\right]
\Eq(2.lem1.2ter)
\\
&=&
\frac{1}{d(s)}\sum_{1\leq i\leq d(s)}r^{(s),\nu}_i
\sign\left[
\left(r^{(s)}_i,\boldsymbol{1}_{s}\right)
\right].
\Eq(2.lem1.2bis)
\eea
Note that since $a$ is positive, we have removed it from the right-hand side of \eqv(2.lem1.2ter). 
Note also that \eqv(2.lem1.2bis) now  only depends on the matrix $R^{(s)}$. For simplicity, fix $\nu=1$ in \eqv(2.lem1.2bis). By Lemma \thv(2.lem2) with $s=s_1+s_2$, $s_1=1$ and $s_2=s-1$, we have  $r^{(s),1}=2^{s-1}\otimes R^{(1)}$ and
\be
R^{(s)}=
\begin{pmatrix} 
2^{s-1}\otimes R^{(1)}
\\
2^{0}\otimes\left(R^{(s-1)}\oplus R^{(s-1)}\right)
\end{pmatrix}.
\Eq(2.lem1.2)
\ee
Using \eqv(2.lem1.2) and the fact that, by \eqv(1.lem1.2) of Lemma \thv(2.lem1), multiplying the column vectors of $R(s-1)$ by $-1$ only induces a permutation of the columns of $R(s-1)$,  \eqv(2.lem1.2bis)  reduces to
\be
a
=
\frac{2}{d(s)}\sum_{1\leq i\leq d(s)/2}
\sign\left[
\left(1+\bigl(r^{(s-1)}_i,\boldsymbol{1}_{s-1}\bigr)\right)
\right].
\Eq(2.lem1.3)
\ee
Now we distinguish two cases.  First, if $s-1$ is even, then
\be
\bigl(r^{(s-1)}_i,\boldsymbol{1}_{s-1}\bigr) \in\left\{-(s-1), \dots,-2, 0, 2,\dots, (s-1)\right\}.
\Eq(2.lem1.4)
\ee
Again by \eqv(1.lem1.2) of Lemma \thv(2.lem1), 
\be
\left|\left\{1\leq i\leq \frac{d}{2} : \bigl(r^{(s-1)}_i,\boldsymbol{1}_{s-1}\bigr)\geq 2\right\}\right|
=
\left|\left\{1\leq i\leq \frac{d}{2} : \bigl(r^{(s-1)}_i,\boldsymbol{1}_{s-1}\bigr)\leq -2\right\}\right|.
\Eq(2.lem1.5)
\ee
Since the sign function in \eqv(2.lem1.3) has opposite values on these two sets, \eqv(2.lem1.3) becomes
\be
a
=
\frac{2}{d(s)}\sum_{1\leq i\leq d(s)/2}\1_{\left\{\left(r^{(s-1)}_i,\boldsymbol{1}_{s-1}\right)=0\right\}}
=
2^{-(s-1)}{{s-1}\choose{(s-1)/2}}
=
\a^{(s)},
\Eq(2.lem1.6)
\ee
where$\1_{A}$ denotes the indicator function of the set $A$ and $\a^{(s)}$ in defined in \eqv(2.lem1.1).
If on the other hand $s-1$ is odd, \eqv(2.lem1.4) is replaced by
\be
\bigl(r^{(s-1)}_i,\boldsymbol{1}_{s-1}\bigr) \in\left\{-(s-1), \dots, -3,-1, 1, 3,\dots, (s-1)\right\},
\Eq(2.lem1.7)
\ee
and the indicator function appearing in the first equality in \eqv(2.lem1.6) must be replaced by the indicator function of
$\bigl\{\bigl(r^{(s-1)}_i,\boldsymbol{1}_{s-1}\bigr)=1\bigr\}$. This again gives $a=\a^{(s)}$. 
Thus, $m_1=\a^{(s)}$ and the proof in the case $\nu=1$ is complete.

The case $2\leq \nu\leq s$ can be reduced to the case $\nu=1$ by permutation of the rows and columns of $R^{(s)}$, using Corollary \thv(1.cor1), (ii). 

In the case $s+1\leq \nu\leq n$, we deduce from \eqv(2.lem1.8)  that the quantities
$
\sign\bigl[
\bigl(r^{(n)}_i,m\bigr)
\bigr]
$ 
in \eqv(2.2.1) are  constant on intervals of length $2^{n-s}$, whereas for each $s+1\leq \nu\leq n$, the restriction of $r^{(n),\nu}$ to any such constancy interval is the Rademacher vector $r^{(n-s),\nu-s}$. By \eqv(2.1.6) and \eqv(1.lem1.1) of Lemma \thv(2.lem1) (i), this implies that the right-hand side of \eqv(2.2.1) is zero. 
(This last argument will be used many times in the proof of Proposition \thv(2.prop1). Given here without detail, it is presented in extenso in the proof of \eqv(2.prop1.12).) The proof of assertion (i) of Lemma \thv(2.lem3) is now complete. 
Assertion (ii) is a special case of Lemma \thv(2.lem4), (i).

We now turn to the construction of asymmetric solutions.

\subsection{Proof of Proposition \thv(2.prop1)} 
    \TH(S6.2)

The proof of assertion (i) is divided into three main steps.

\vspace{.3truecm}
\paragraph{First step.} 
Take $s=n$, $\ell=2$ and let  $(n_1,n_2)$ be a $2$-composition of $n$ such that $n_1$ is even and $n_2$  has arbitrary parity.  
We look for solutions of the system \eqv(2.2.1) of the form 
\be
m=a_1\boldsymbol{1}_{n_1}\oplus a_2\boldsymbol{1}_{n_2},
\Eq(2.prop1.3)
\ee
for some strictly positive numbers $a_1, a_2>0$. By Lemma \thv(2.lem2),
\be
R^{(n)}
=
\begin{pmatrix} 
2^{n_2}\otimes R^{(n_1)}
\\ 
R^{(n_2)}\oplus\dots\oplus R^{(n_2)}
\end{pmatrix},
\Eq(2.prop1.4)
\ee
where the concatenation is over $2^{n_1}$ matrices $R^{(n_2)}$. The Rademacher configurations (or rows) of $R^{(n)}$ 
thus fall into two groups: the first $n_1$, $\{r^{(n),\nu}, 1\leq \nu\leq n_1\}$, are piecewise constant over intervals of length $2^{n_2}$, while the remaining $n_2$, $\{r^{(n),\nu}, n_1+1\leq \nu\leq n_1+n_2\}$, have the property that, restricted to any such constancy interval, they reduce to the $n_2$ Rademacher configurations $\{r^{(n_2),\nu-n_1}, n_1+1\leq \nu\leq n_1+n_2\}$ of $R^{(n_2)}$.

Recall that $d(n)=2^n$. Let $Z^{n}_{n_1}$ be the set defined, using the first $n_1$ configurations of $R^{(n)}$, through
\be
Z^{n}_{n_1}
=
\left\{
1\leq i\leq d(n) : \sum_{1\leq \nu\leq n_1}r^{(n),\nu}_i=0,
\right\},
\Eq(2.prop1.5)
\ee
and let $(Z^{n}_{n_1})^c$ denote its complement,
\be
\{1,\dots,d(n)\}=(Z^{n}_{n_1})^c\cup Z^{n}_{n_1}.
\Eq(2.prop1.6)
\ee
Since $n_1$ is even, the set $Z^{n}_{n_1}$ is non-empty. In view of \eqv(2.prop1.4), it decomposes into ${{n_1}\choose{n_1/2}}$ intervals of constancy of length $2^{n_2}$ and has total lenght
\be
\left|Z^{n}_{n_1}\right|
=2^{n_2}{{n_1}\choose{n_1/2}}.
\Eq(2.prop1.7)
\ee
For each $1\leq\nu\leq n$, we use $Z^{n}_{n_1}$ to split the sum in the right-hand side of \eqv(2.2.1) into two terms,
\be
\begin{split}
\SS^{(n),\nu}_{n_1}&\equiv\frac{1}{d(n)}\sum_{1\leq i\leq d(n)}\1_{\left\{i\in (Z^{n}_{n_1})^c\right\}} r^{(n),\nu}_i
\sign\left[
\left(r^{(n)}_i,m\right)
\right],
\\
\overline\SS^{(n),\nu}_{n_1}
&\equiv\frac{1}{d(n)}\sum_{1\leq i\leq d(n)}\1_{\left\{i\in Z^{n}_{n_1}\right\}} r^{(n),\nu}_i
\sign\left[
\left(r^{(n)}_i,m\right)
\right],
\end{split}
\Eq(2.prop1.7bis)
\ee
and treat two these terms separately. 

Consider first $\SS^{(n),\nu}_{n_1}$. Let $m$ is given by \eqv(2.prop1.3) and assume that 
\be
2a_1>n_2a_2.
\Eq(2.prop1.8)
\ee
This implies that for all $i\in (Z^{n}_{n_1})^c$
\be
\textstyle
a_1\left|\sum_{1\leq \nu\leq n_1}r^{(n),\nu}_i\right|\geq 2a_1>n_2a_2
\geq a_2\left|\sum_{n_1+1\leq \nu\leq n_1+n_2}r^{(n),\nu}_i\right|.
\Eq(2.prop1.8')
\ee
Note that the first inequality of \eqv(2.prop1.8') relies on the fact that  $n_1$ is even.
Thus, the sign of $\bigl(r^{(n)}_i,m\bigr)$ is determined by the first $n_1$ Rademacher configurations, namely, for all 
$i\in (Z^{n}_{n_1})^c$
\be
\sign\left[
\left(r^{(n)}_i,m\right)
\right]
=
\sign\left[
\left(r^{(n),\nu}_i, a_1\boldsymbol{1}_{n_1}\oplus \boldsymbol{0}_{n_2}\right)
\right]
=
\sign\left(
a_1 {
\sum_{1\leq\nu\leq n_1}}r^{(n),\nu}_i
\right).
\Eq(2.prop1.9)
\ee
We now distinguish two cases, $1\leq \nu\leq n_1$ and $n_1+1\leq \nu\leq n_1+n_2$. 
If $1\leq \nu\leq n_1$, then we have
\be
\begin{split}
\SS^{(n),\nu}_{n_1}
=&
\frac{1}{d(n)}\sum_{1\leq i\leq d(n)}r^{(n),\nu}_i\1_{\left\{i\in (Z^{n}_{n_1})^c\right\}}
\sign\left(
a_1 { 
\sum_{1\leq\nu\leq n_1}}r^{(n),\nu}_i
\right)
\\
=&
\frac{1}{d(n)}\sum_{1\leq i\leq d(n)}r^{(n),\nu}_i
\sign\left(
a_1 {
\sum_{1\leq\nu\leq n_1}}r^{(n),\nu}_i
\right)
\\
=&\frac{1}{d(n_1)}\sum_{1\leq i\leq d(n_1)}r^{(n_1),\nu}_i
\sign\left[a_1
\left(r^{(n_1)}_i,\boldsymbol{1}_{n_1}\right)
\right].
\end{split}
\Eq(2.prop1.10)
\ee
The first equality in \eqv(2.prop1.10) follows from the definition of $\SS^{(n),\nu}_{n_1}$ and \eqv(2.prop1.9).  To go from the first to the second, we used  that by \eqv(2.prop1.5) and \eqv(2.prop1.6), the sum of the terms satisfying  the condition 
$\left\{i\in Z^{n}_{n_1}\right\}$ vanishes. The last equality then follows from \eqv(2.prop1.4). Note that since $a_1>0$, it can be removed. Doing so, the last line of \eqv(2.prop1.10) reduces to the right-hand side of the mixed-memory equation \eqv(2.2.1) for the $n_1$-symmetric solution (see the more explicit formula \eqv(2.lem1.2bis) in the proof of Lemma \thv(2.lem3)).
Thus, by Lemma \thv(2.lem3), (i), for all  even integer $n_1\geq 2$, all $a_1>0$  satisfying \eqv(2.prop1.8) and all 
$1\leq \nu\leq n_1$, we have
\be
\SS^{(n),\nu}_{n_1}
=
\a^{(n_1)}.
\Eq(2.prop1.11bis)
\ee

Now, let us check that if $n_1+1\leq \nu\leq n_1+n_2$, then
\be
\SS^{(n),\nu}_{n_1}
=
0.
\Eq(2.prop1.12)
\ee
This uses the following two facts. Firstly, as we just saw,  the sequence of signs \eqv(2.prop1.9) is piecewise constant over intervals of length $d(n_2)$,
\be
\begin{split}
&
\left(
\sign\left[
\left(r^{(n)}_i,m\right)
\right]
\right)_{i\in\left(Z^{n}_{n_1}\right)^c}
=d(n_2)\otimes \left(
\sign\left[
\left(r^{(n_1)}_j,a_1\boldsymbol{1}_{n_1}\right)
\right]
\right)_{\left\{j\in \left(Z^{n_1}_{n_1}\right)^c\right\}},
\end{split}
\Eq(2.prop1.12bis)
\ee
where  in the right-hand side, $\left(Z^{n_1}_{n_1}\right)^c=\left\{1,\dots,d(n_1)\right\}\setminus Z^{n_1}_{n_1}$ and
\be
Z^{n_1}_{n_1}
=
\left\{
1\leq i\leq d(n_1) : \sum_{1\leq \nu\leq n_1}r^{(n_1),\nu}_i=0
\right\}.
\Eq(2.prop1.14)
\ee
Note that \eqv(2.prop1.14) is obtained by taking $n=n_1$ in \eqv(2.prop1.5). Secondly, by \eqv(2.prop1.4), the configurations 
$\{r^{(n),\nu}, n_1+1\leq \nu\leq n_1+n_2\}$, restricted to any such constancy interval, reduce to the $n_2$ Rademacher configurations $\{r^{(n_2),\nu-n_1}, n_1+1\leq \nu\leq n_1+n_2\}$ of $R^{(n_2)}$.
By the first of these two facts, 
\be
\begin{split}
&
\SS^{(n),\nu}_{n_1}
\\
=
&
\frac{1}{d(n)}\sum_{1\leq j\leq d(n_1)}\,\sum_{(j-1)d(n_2)+1\leq i\leq j d(n_2)}r^{(n),\nu}_i\1_{\left\{i\in (Z^{n}_{n_1})^c\right\}}
\sign\left[
\left(r^{(n)}_i,m\right)
\right]
\\
=
&
\frac{1}{d(n)}\sum_{1\leq j\leq d(n_1)}\1_{\left\{j\in \left(Z^{n_1}_{n_1}\right)^c\right\}}
\sign\left[
\left(r^{(n_1)}_j,a_1
\boldsymbol{1}_{n_1}\right)
\right]
\sum_{(j-1)d(n_2)+1\leq i\leq j d(n_2)}r^{(n),\nu}_i,
\end{split}
\Eq(2.prop1.13)
\ee
and by the second,
\be
\sum_{(j-1)d(n_2)+1\leq i\leq j d(n_2)}r^{(n),\nu}_i
=
\sum_{1\leq i\leq d(n_2)}r^{(n_2),\nu-n_1}_i
=\left(r^{(n_2),\nu-n_1}, r^{(n_2),0}\right)
=0,
\Eq(2.prop1.15bis)
\ee
where the last equality is \eqv(1.lem1.1) of Lemma \thv(2.lem1), (i). Inserting \eqv(2.prop1.15bis) into \eqv(2.prop1.13) then yields the claim of  \eqv(2.prop1.12).

We now turn to the term $\overline\SS^{(n),\nu}_{n_1}$ in \eqv(2.prop1.7bis). For all $1\leq\nu\leq n$, by definition of 
$Z^{n}_{n_1}$
\be
\begin{split}
&\overline\SS^{(n),\nu}_{n_1}
\\
=&\frac{1}{d(n)}\sum_{1\leq i\leq d(n)}\1_{\left\{i\in Z^{n}_{n_1}\right\}} r^{(n),\nu}_i
\sign\left[
\left(r^{(n)}_i,\boldsymbol{0}_{n_1}\oplus a_2\boldsymbol{1}_{n_2}\right)
\right]
\\
=&\frac{1}{d(n)}\sum_{1\leq j\leq d(n_1)}\,\sum_{(j-1)d(n_2)+1\leq i\leq j d(n_2)}r^{(n),\nu}_i\1_{\left\{i\in Z^{n}_{n_1}\right\}}
\sign\left[
\left(r^{(n)}_i,\boldsymbol{0}_{n_1}\oplus a_2\boldsymbol{1}_{n_2}\right)
\right]
\\
=&\frac{1}{d(n_1)}\sum_{1\leq j\leq d(n_1)}\1_{\left\{j\in Z^{n_1}_{n_1}\right\}}\UU^{(n),\nu}_{n_2, j},
\end{split}
\Eq(2.prop1.15)
\ee
where $Z^{n_1}_{n_1}$ is defined in \eqv(2.prop1.14) and where, for each $j\in Z^{n_1}_{n_1}$ and $1\leq\nu\leq n$,
\be
\UU^{(n),\nu}_{n_2, j}
\equiv
\frac{1}{d(n_2)}\sum_{(j-1)d(n_2)+1\leq i\leq j d(n_2)}
r^{(n),\nu}_i
\sign\left[
\left(r^{(n)}_i,\boldsymbol{0}_{n_1}\oplus a_2\boldsymbol{1}_{n_2}\right)
\right].
\Eq(2.prop1.16)
\ee
As before, we distinguish two cases. First, if $n_1+1\leq\nu\leq n_1+n_2$ then, by \eqv(2.prop1.4),
$r^{(n),\nu}$ is the concatenation of $d(n_1)$ identical copies of $r^{(n_2),\nu}$, and so,
\be
\begin{split}
\UU^{(n),\nu}_{n_2, j}
&
=
\frac{1}{d(n_2)}\sum_{1\leq i\leq d(n_2)}
r^{(n_2),\nu}_i
\sign\left[
a_2\left(r^{(n_2)}_i, \boldsymbol{1}_{n_2}\right)
\right].
\end{split}
\Eq(2.prop1.17)
\ee
After removing the quantity $a_2$, which is possible since it is strictly positive, we again recognise in \eqv(2.prop1.17) the right-hand side of the mixed-memory equation of the  $n_2$-asymmetric solution (see \eqv(2.lem1.2bis) in the proof of Lemma \thv(2.lem3)). Thus, by Lemma \thv(2.lem3), (i), we have for all $a_2>0$,  all $j\in Z^{n_1}_{n_1}$ and all  
$n_1+1\leq\nu\leq n_1+n_2$
\be
\UU^{(n),\nu}_{n_2, j}
=
\a^{(n_2)}.
\Eq(2.prop1.18bis)
\ee
Inserting \eqv(2.prop1.18bis) into \eqv(2.prop1.15) and using the last equality of \eqv(2.prop1.7) to evaluate $|Z^{n_1}_{n_1}|$, we get
\be
\begin{split}
\overline\SS^{(n),\nu}_{n_1}
&
=\frac{\a^{(n_2)}}{d(n_1)}\sum_{1\leq j\leq d(n_1)}\1_{\left\{j\in Z^{n_1}_{n_1}\right\}}
=\frac{\a^{(n_2)}}{d(n_1)}|Z^{n_1}_{n_1}|
=\frac{\a^{(n_2)}}{d(n_1)}{{n_1}\choose{n_1/2}}
\\
&
=\a^{(n_1+1)}\a^{(n_2)}.
\end{split}
\Eq(2.prop1.19)
\ee
Second, if $1\leq\nu\leq n_1$, then by \eqv(2.prop1.4) $r^{(n),\nu}$ is piecewise constant over intervals of length $d(n_2)$, \emph{i.e.}~$r^{(n),\nu}_i=r^{(n_1),\nu}_j$ for all $i$ in the interval $(j-1)d(n_2)+1\leq i\leq j d(n_2)$. Thus,
\be
\begin{split}
\UU^{(n),\nu}_{n_2, j}
&
=
r^{(n_1),\nu}_j
\frac{1}{d(n_2)}\sum_{(j-1)d(n_2)+1\leq i\leq j d(n_2)}
\sign\left[
\left(r^{(n)}_i,\boldsymbol{0}_{n_1}\oplus a_2\boldsymbol{1}_{n_2}\right)
\right]
\\
&
=
r^{(n_1),\nu}_j
\frac{1}{d(n_2)}\sum_{1\leq i\leq d(n_2)}
\sign\left[
a_2
\left(r^{(n_2)}_i,\boldsymbol{1}_{n_2}\right)
\right]
\\
&
=0,
\end{split}
\Eq(2.prop1.20)
\ee
where we used that due to the axial symmetry \eqv(1.lem1.2) of Lemma \thv(2.lem1), (ii), the sum in the second line of \eqv(2.prop1.20) is zero. Plugging \eqv(2.prop1.20) into \eqv(2.prop1.15), we get that for all $1\leq\nu\leq n_1$ 
\be
\overline\SS^{(n),\nu}_{n_1}=0.
\Eq(2.prop1.21)
\ee

We can now collect our estimates.
Combining \eqv(2.prop1.11bis), \eqv(2.prop1.12), \eqv(2.prop1.19) and \eqv(2.prop1.21), we obtain
\be
\SS^{(n),\nu}_{n_1}
=
\begin{cases}
\a^{(n_1)} & \text{if $1\leq \nu\leq n_1$}, \\
0&  \text{if $n_1+1\leq\nu\leq n_1+n_2$},
\end{cases}
\Eq(2.prop1.21')
\ee
and
\be
\overline\SS^{(n),\nu}_{n_1}
=
\begin{cases}
0 & \text{if $1\leq \nu\leq n_1$}, \\
\a^{(n_1+1)} \a^{(n_2)}&  \text{if $n_1+1\leq\nu\leq n_1+n_2$}.
\end{cases}
\Eq(2.prop1.21'')
\ee
Recall that we seek solutions to the system \eqv(2.2.1) of the form \eqv(2.prop1.3).
By \eqv(2.prop1.7bis), the right-hand side of \eqv(2.2.1) is equal to the sum of the above two terms,
\be
m_{\nu}
=
\SS^{(n),\nu}_{n_1}+\overline\SS^{(n),\nu}_{n_1},
\Eq(2.prop1.22')
\ee
and so, inserting \eqv(2.prop1.21') and \eqv(2.prop1.21'') in \eqv(2.prop1.22'), we obtain that for all $n_1\geq 2$ even, $n_2\geq 1$ of arbitrary parity and all $a_1, a_2>0$ that satisfy  \eqv(2.prop1.8), 
\be
m_{\nu}
=
\begin{cases}
\a^{(n_1)} & \text{if $1\leq \nu\leq n_1$}, \\
\a^{(n_1+1)} \a^{(n_2)}&  \text{if $n_1+1\leq\nu\leq n_1+n_2$}.
\end{cases}
\Eq(2.prop1.22)
\ee
Thus, observing that for $n_1$ even, $\a^{(n_1+1)}=\a^{(n_1)}$ (see also \eqv(2.rem2.1)-\eqv(2.rem2.2)),
and choosing
\be
\begin{split}
a_1
&
=\g^{(1)}\equiv \a^{(n_1)},
\\
a_2
&
=\g^{(2)}\equiv \a^{(n_1)} \a^{(n_2)},
\end{split}
\Eq(2.prop1.23)
\ee
the vector $m=\g^{(1)}\boldsymbol{1}_{n_1}\oplus \g^{(2)}\boldsymbol{1}_{n_2}$ verifies the mixed-memory equation \eqv(2.2.1), provided that $n_1\geq 2$ is even, $n_2\geq 1$ has any parity and 
\be
2\g^{(1)}>n_2\g^{(2)}.
\Eq(2.prop1.25)
\ee
This proves Proposition \thv(2.prop1), (i),  in the case of an asymmetric solution constructed from two symmetric solutions.

\smallskip
\paragraph{General construction step.} We now take  $s=n$, $\ell>2$ and assume that $(n_1,\dots, n_{\ell})$ is  an $\ell$-composition   of $n$  such that $n_1,\dots, n_{\ell-1}\geq 2$ are even and $n_{\ell}\geq 1$ has any parity. 
We seek solutions of the system \eqv(2.2.1) of the form 
\be
m=a_1\boldsymbol{1}_{n_1}\oplus \dots\oplus a_{\ell}\boldsymbol{1}_{n_{\ell}},
\Eq(2.prop1.55)
\ee
for some numbers $a_1,\dots, a_{\ell}>0$ satisfying the system of conditions
\be
2a_k>n_{k+1}a_{k+1}+\dots+n_{\ell}a_{\ell},\,\,\, \text{for all}\,\,\, 1\leq k\leq \ell-1.
\Eq(2.prop1.56)
\ee

To formulate the general construction step, the following definitions are needed. The first one extends the definition 
\eqv(2.prop1.5) to each of the $1\leq k\leq \ell-1$ groups of configurations of $R^{(n)}$. Using the $k$-th group, namely, the configurations $r^{(n),\nu}$ with  $n_0+\dots+n_{k-1}+1\leq \nu\leq n_1+\dots+n_{k}$, we let  
$Z^{n}_{n_k}$ be the set 
\be
Z^{n}_{n_k}
=
\left\{
1\leq i\leq d(n) : 
\sum_{1\leq \nu-(n_0+\dots+n_{k-1})\leq n_{k}}
r^{(n),\nu}_i=0
\right\},
\quad 1\leq k\leq \ell-1,
\Eq(2.prop1.29)
\ee
with the convention that $n_0=0$.
Since $n_k$ is even for all $1\leq k\leq \ell-1$, these sets are non-empty. We denote by $\left(Z^{n}_{n_k}\right)^c$ their complements,
\be
\left(Z^{n}_{n_k}\right)^c
=
\left\{1, \dots, d(n)\right\}\setminus Z^{n}_{n_k}, \quad 1\leq k\leq \ell-1.
\Eq(2.prop1.29bis)
\ee
These sets are then used to decompose the sum in the right-hand side of \eqv(2.2.1),  for each $1\leq\nu\leq n$,  into $\ell$ terms:
\be
\SS^{(n),\nu}_{n_1}\equiv\frac{1}{d(n)}\sum_{1\leq i\leq d(n)}\1_{\left\{i\in \left(Z^{n}_{n_1}\right)^c\right\}} r^{(n),\nu}_i
\sign\left[
\left(r^{(n)}_i,m\right)
\right],
\Eq(2.prop1.57)
\ee
for all $2\leq k\leq \ell-1$,
\be
\SS^{(n),\nu}_{n_1,\dots,n_k}\equiv\frac{1}{d(n)}\sum_{1\leq i\leq d(n)}
\1_{\left\{i\in Z^{n}_{n_1}\cap\dots \cap Z^{n}_{n_{k-1}} \cap{\left(Z^{n}_{n_k}\right)^c}\right\}} r^{(n),\nu}_i
\sign\left[
\left(r^{(n)}_i,m\right)
\right],
\Eq(2.prop1.58)
\ee
and
\be
\overline\SS^{(n),\nu}_{n_1,\dots,n_{\ell-1}}\equiv\frac{1}{d(n)}\sum_{1\leq i\leq d(n)}
\1_{\left\{i\in Z^{n}_{n_1}\cap \dots\cap Z^{n}_{n_{\ell-1}}\right\}} r^{(n),\nu}_i
\sign\left[
\left(r^{(n)}_i,m\right)
\right].
\Eq(2.prop1.59)
\ee
With these definitions, the system of equations \eqv(2.2.1) can be rewritten as
\be
m_{\nu}
=
\sum_{k=1}^{\ell-1} \SS^{(n),\nu}_{n_1,\dots,n_k} + \overline\SS^{(n),\nu}_{n_1,\dots,n_{\ell-1}}
\quad 1\leq\nu\leq n.
\Eq(2.2.1bis)
\ee

To construct solutions to this system, we evaluate each of the sums separately, except for the last two 
(\emph{i.e.},~\eqv(2.prop1.58) whith $k=\ell-1$ and \eqv(2.prop1.59)), which we treat simultaneously. The gist of the proof is that for each $k$,  the problem of evaluating the sum  $\SS^{(n),\nu}_{n_1,\dots,n_k}$  can be reduced to a situation analogous to the one we encountered in the first step. As will become clear later, there is little difference in the treatment of  $\SS^{(n),\nu}_{n_1,\dots,n_k}$ for 
$1\leq k\leq \ell-2$ and $k=\ell-1$. We therefore start by considering the case  $k=\ell -1$ and evaluate the last two sums,  
$\SS^{(n),\nu}_{n_1,\dots,n_{\ell-1}}$ and $\overline\SS^{(n),\nu}_{n_1,\dots,n_{\ell-1}}$.

\smallskip
\emph{\textbf{The case $k=\ell -1$.}}
 More precisely, let us establish that under the condition that
\be
2 a_{\ell-1}>n_{\ell}a_{\ell},
\Eq(2.prop1.82)
\ee
for all $n_{\ell-1}\geq 2$ even and $n_{\ell}\geq 1$ of arbitrary parity, we have
\be
\begin{split}
\SS^{(n),\nu}_{n_1,\dots,n_{\ell-1}}
=&
\begin{cases}
0& \text{if $1\leq \nu\leq n_1+\dots +n_{\ell-2}$}, \\
\a^{(n_{\ell-1})}\prod_{l=1}^{\ell-2}\a^{(n_{l}+1)}& \text{if $1\leq\nu-(n_1+\dots +n_{\ell-2})\leq n_{\ell-1}$}, \\
0&  \text{if $1\leq\nu-(n_1+\dots +n_{\ell-1})\leq n_{\ell}$},
\end{cases}
\end{split}
\Eq(2.prop1.63')
\ee
and
\be
\begin{split}
\overline\SS^{(n),\nu}_{n_1,\dots,n_{\ell-1}}
=&
\begin{cases}
0& \text{if $1\leq \nu\leq n_1+\dots +n_{\ell-1}$}, \\
\a^{(n_{\ell})}\prod_{l=1}^{\ell-1}\a^{(n_{l}+1)}&  \text{if $1\leq\nu-(n_1+\dots +n_{\ell-1})\leq n_{\ell}$.}
\end{cases}
\end{split}
\Eq(2.prop1.63'')
\ee
For this, note that $\SS^{(n),\nu}_{n_1,\dots,n_{\ell-1}}$ and $\overline\SS^{(n),\nu}_{n_1,\dots,n_{\ell-1}}$ are functions of a sequence of signs,
\be
\begin{split}
&
\left(
\sign\left[
\left(r^{(n)}_i,m\right)
\right]
\right)_{\left\{i\in Z^{n}_{n_1}\cap \dots\cap Z^{n}_{n_{\ell-2}}\right\}}
\\
=&
\left(
\sign\left[
\left(r^{(n)}_i,\boldsymbol{0}_{n_1+\dots+n_{\ell-2}}
\oplus a_{\ell-1}\boldsymbol{1}_{n_{\ell-1}}\oplus a_{\ell}\boldsymbol{1}_{n_{\ell}}\right)
\right]
\right)_{\left\{i\in Z^{n}_{n_1}\cap \dots\cap Z^{n}_{n_{\ell-2}}\right\}},
\end{split}
\Eq(2.prop1.64)
\ee
which, by definition of the sets $Z^{n}_{n_k}$, no longer depends on the first $n_1+\dots+n_{\ell-2}$ configurations of $R^{(n)}$.
By  Lemma \thv(2.lem2), we can write  $R^{(n)}$ using a 2-level tree as
\be
R^{(n)}
=
\begin{pmatrix} 
2^{(n_{\ell-1}+n_{\ell})}\otimes R^{(n_1+\dots+n_{\ell-2})}
\\ 
R^{(n_{\ell-1}+n_{\ell})}\oplus\dots\oplus R^{(n_{\ell-1}+n_{\ell})} 
\end{pmatrix},
\Eq(2.prop1.65)
\ee
where the concatenation is over $2^{(n_1+\dots+n_{\ell-2})}$ matrices $R^{(n_{\ell-1}+n_{\ell})}$. Thus, the matrix obtained from $R^{(n)}$ by removing the first $n_1+\dots+n_{\ell-2}$ configurations  is a concatenation of $2^{(n_1+\dots+n_{\ell-2})}$ identical matrices $R^{(n_{\ell-1}+n_{\ell})}$. Reasoning as  in the first step  (see in particular \eqv(2.prop1.15)-\eqv(2.prop1.16)),  the terms 
$\SS^{(n),\nu}_{n_1,\dots,n_{\ell-1}}$ and $\overline\SS^{(n),\nu}_{n_1,\dots,n_{\ell-1}}$ can be rewritten as follows. 
The role of the set $Z^{n_1}_{n_1}$  is now played by the set
\be
\begin{split}
Z^{n_1+\dots+n_{\ell-2}}_{n_1,\dots,n_{\ell-2}}
\equiv &
\bigcap_{1\leq k\leq \ell-2}Z^{n_1+\dots+n_{\ell-2}}_{n_k}
\\
\equiv &
\Bigg\{
1\leq i\leq  d(n_1+\dots+n_{\ell-2}) : \textrm{for all}\,\,  1\leq k\leq \ell-2
\\
&\quad\quad\quad\quad\quad\quad\quad\quad\quad\,\,\, \sum_{1\leq \nu-(n_0+\dots+n_{k-1})\leq n_{k}}
r^{(n_1+\dots+n_{\ell-2}),\nu}_i=0
\Bigg\},
\end{split}
\Eq(2.prop1.67)
\ee
with the convention that $n_0=0$. Given 
$
j\in Z^{n_1+\dots+n_{\ell-2}}_{n_1,\dots,n_{\ell-2}}
$ 
and a subset 
$
\ZZ\subseteq\left\{1,\dots,d(n_{\ell-1}+n_{\ell})\right\}
$, 
define, for all $1\leq\nu\leq n$
\be
\begin{split}
&\UU^{(n),\nu}_{n_{\ell-1}+n_{\ell},j}(\ZZ)
\\
\equiv &\,\,
\frac{1}{d(n_{\ell-1}+n_{\ell})}\sum_{(j-1)d(n_{\ell-1}+n_{\ell})+1\leq i\leq j d(n_{\ell-1}+n_{\ell})}
\1_{\left\{i-(j-1)d(n_{\ell-1}+n_{\ell})\in \ZZ\right\}}
\\
&\,\,
r^{(n),\nu}_i
\sign\left[
\left(r^{(n)}_i,
\boldsymbol{0}_{n_1+\dots+n_{\ell-2}}\oplus a_{\ell-1}\boldsymbol{1}_{n_{\ell-1}}\oplus a_{\ell}\boldsymbol{1}_{n_{\ell}}\right)
\right].
\end{split}
\Eq(2.prop1.67bis)
\ee
and
\be
\begin{split}
&\VV^{(n_{\ell-1}+n_{\ell}),\nu}(\ZZ)
\\
\equiv
&\,\, \frac{1}{d(n_1+\dots+n_{\ell-2})}\sum_{1\leq j\leq d(n_1+\dots+n_{\ell-2})}
\1_{\left\{j\in  Z^{n_1+\dots+n_{\ell-2}}_{n_1,\dots,n_{\ell-2}}\right\}}
\UU^{(n),\nu}_{n_{\ell-1}+n_{\ell},j}\left(\ZZ\right).
\end{split}
\Eq(2.prop1.68)
\ee
Then, for $1\leq\nu\leq n$,
\be
\begin{split}
\SS^{(n),\nu}_{n_1,\dots,n_{\ell-1}}&=\VV^{(n_{\ell-1}+n_{\ell}),\nu}\left(\bigl(Z^{n_{\ell-1}+n_{\ell}}_{n_{\ell-1}}\bigr)^c\right),
\\
\overline\SS^{(n),\nu}_{n_1,\dots,n_{\ell-1}}&=\VV^{(n_{\ell-1}+n_{\ell}),\nu}\left(Z^{n_{\ell-1}+n_{\ell}}_{n_{\ell-1}}\right),
\end{split}
\Eq(2.prop1.69)
\ee
where $Z^{n_{\ell-1}+n_{\ell}}_{n_{\ell-1}}$  is the set
\be
Z^{n_{\ell-1}+n_{\ell}}_{n_{\ell-1}}
=
\left\{
1\leq i'\leq d(n_{\ell-1}+n_{\ell}) : 
\sum_{1\leq \nu'\leq n_{\ell-1}}
r^{(n_{\ell-1}+n_{\ell}),\nu'}_{i'}=0
\right\}
\Eq(2.prop1.70)
\ee
obtained by taking $n=n_{\ell-1}+n_{\ell}$ and $k=\ell-1$ in  \eqv(2.prop1.29).

As can be seen from the right-hand sides of \eqv(2.prop1.63') and \eqv(2.prop1.63''), we now must distinguish between several cases. Let us start with the case $1\leq \nu\leq n_1+\dots +n_{\ell-2}$. For such $\nu$, it follows from \eqv(2.prop1.65) that for each 
$
1\leq j\leq d(n_1+\dots+n_{\ell-2})
$,
$
r^{(n),\nu}_i=r^{(n_1+\dots+n_{\ell-2}),\nu}_j
$
for all $i$ in the constancy interval
$
(j-1)d(n_{\ell-1}+n_{\ell})+1\leq i\leq j d(n_{\ell-1}+n_{\ell})
$.
Thus, $r^{(n),\nu}_i$ can be taken out from the sum over $i$ in \eqv(2.prop1.67bis).
Consequently, for any subset 
$\ZZ\subseteq\left\{1,\dots,d(n_{\ell-1}+n_{\ell})\right\}$, $\VV^{(n_{\ell-1}+n_{\ell}),\nu}\left(\ZZ\right)$ factorises into
\be
\VV^{(n_{\ell-1}+n_{\ell}),\nu}\left(\ZZ\right)=\XX^{(n_1+\dots+n_{\ell-2}),\nu}\,\YY^{(n_{\ell-1}+n_{\ell}),\nu}\left(\ZZ\right),
\Eq(2.prop1.71)
\ee
where 
\be
\begin{split}
&
\YY^{(n_{\ell-1}+n_{\ell}),\nu}\left(\ZZ\right) 
\\
\equiv
&
\frac{1}{d(n_{\ell-1}+n_{\ell})}\sum_{1\leq i\leq d(n_{\ell-1}+n_{\ell})}\1_{\left\{i\in \ZZ\right\}}
\sign\left[
\left(r^{(n_{\ell-1}+n_{\ell})}_i,
\oplus a_{\ell-1}\boldsymbol{1}_{n_{\ell-1}}\oplus a_{\ell}\boldsymbol{1}_{n_{\ell}}\right)
\right],
\end{split}
\Eq(2.prop1.72)
\ee
and
\be
\begin{split}
&
\XX^{(n_1+\dots+n_{\ell-2}),\nu}
\\
\equiv
&
\frac{1}{d(n_1+\dots+n_{\ell-2})}\sum_{1\leq j\leq d(n_1+\dots+n_{\ell-2})}
\1_{\left\{j\in  Z^{n_1+\dots+n_{\ell-2}}_{n_1,\dots,n_{\ell-2}}\right\}}
r^{(n_1+\dots+n_{\ell-2}),\nu}_j.
\end{split}
\Eq(2.prop1.73)
\ee
To evaluate $\XX^{(n_1+\dots+n_{\ell-2}),\nu}$,  we use that, in the light of  the definition \eqv(2.prop1.67), 
the axial symmetry of Lemma \thv(2.lem1), (ii), is preserved on  $Z^{n_1+\dots+n_{\ell-2}}_{n_1,\dots,n_{\ell-2}}$, 
namely, if  $j\in Z^{n_1+\dots+n_{\ell-2}}_{n_1,\dots,n_{\ell-2}}$ then  
$2^{n_1+\dots+n_{\ell-2}}-j+1\in Z^{n_1+\dots+n_{\ell-2}}_{n_1,\dots,n_{\ell-2}}$ and
\be
r^{(n_1+\dots+n_{\ell-2}),\nu}_j=-r^{(n_1+\dots+n_{\ell-2}),\nu}_{2^{n_1+\dots+n_{\ell-2}}-j+1}.
\Eq(2.prop1.74)
\ee
Therefore, $\XX^{(n_1+\dots+n_{\ell-2}),\nu}=0$. By \eqv(2.prop1.71), this implies that
$\VV^{(n_{\ell-1}+n_{\ell}),\nu}(\ZZ)=0$ for all $\ZZ$ and all  $1\leq \nu\leq n_1+\dots+n_{\ell-2}$,
and so, by \eqv(2.prop1.69), 
 for all $1\leq \nu\leq n_1+\dots+n_{\ell-2}$
\be
\begin{split}
\SS^{(n),\nu}_{n_1,\dots,n_{\ell-1}}&=0,\\
\overline\SS^{(n),\nu}_{n_1,\dots,n_{\ell-1}}&=0.
\end{split}
\Eq(2.prop1.75)
\ee

We now focus on the case $n_1+\dots +n_{\ell-2}+1\leq\nu\leq n_1+\dots +n_{\ell}$.
Again, in the light of \eqv(2.prop1.65), we have for such $\nu$   that
\be
\UU^{(n),\nu}_{n_{\ell-1}+n_{\ell},j}(\ZZ)
=
\UU^{(n),\nu}_{n_{\ell-1}+n_{\ell},1}(\ZZ)
\quad \text{for all}~ j\in Z^{n_1+\dots+n_{\ell-2}}_{n_1,\dots,n_{\ell-2}},
\Eq(2.prop1.76)
\ee
and
\be
\begin{split}
\UU^{(n),\nu}_{n_{\ell-1}+n_{\ell},1}(\ZZ)
=&
\frac{1}{d(n_{\ell-1}+n_{\ell})}\sum_{1\leq i\leq d(n_{\ell-1}+n_{\ell})}
\1_{\left\{i\in \ZZ\right\}}
\\
&
 r^{(n_{\ell-1}+n_{\ell}),\nu}_i
\sign\left[
\left(r^{(n_{\ell-1}+n_{\ell})}_i, 
a_{\ell-1}\boldsymbol{1}_{n_{\ell-1}}\oplus a_{\ell}\boldsymbol{1}_{n_{\ell}}
\right)
\right].
\end{split}
\Eq(2.prop1.77)
\ee
Therefore, \eqv(2.prop1.68) becomes
\be
\VV^{(n_{\ell-1}+n_{\ell}),\nu}(\ZZ)
=
\frac{\bigl |Z^{n_1+\dots+n_{\ell-2}}_{n_1,\dots,n_{\ell-2}}\bigr|}{d(n_1+\dots+n_{\ell-2})}
\,\UU^{(n),\nu}_{n_{\ell-1}+n_{\ell},1}(\ZZ),
\Eq(2.prop1.78)
\ee
where, by the definition \eqv(2.prop1.67),
\be
\frac{\bigl |Z^{n_1+\dots+n_{\ell-2}}_{n_1,\dots,n_{\ell-2}}\bigr|}{d(n_1+\dots+n_{\ell-2})}
=
\prod_{k=1}^{\ell-2}\frac{1}{d(n_k)}{{n_k}\choose{n_k/2}}
=
\prod_{k=1}^{\ell-2}\a^{(n_k+1)}.
\Eq(2.prop1.81)
\ee
Next, remembering \eqv(2.prop1.70) and comparing  \eqv(2.prop1.77) with \eqv(2.prop1.7bis), we have
\be
\begin{split}
\UU^{(n),\nu}_{n_{\ell-1}+n_{\ell},1}\left(\bigl(Z^{n_{\ell-1}+n_{\ell}}_{n_{\ell-1}}\bigr)^c\right)
&=
\SS^{(n_{\ell-1}+n_{\ell}),\nu}_{n_{\ell-1}},
\\
\UU^{(n),\nu}_{n_{\ell-1}+n_{\ell},1}\left(Z^{n_{\ell-1}+n_{\ell}}_{n_{\ell-1}}\right)
&=
\overline\SS^{(n_{\ell-1}+n_{\ell}),\nu}_{n_{\ell-1}}.
\end{split}
\Eq(2.prop1.79)
\ee
Thus, inserting \eqv(2.prop1.81) and \eqv(2.prop1.79) into \eqv(2.prop1.78), it follows from \eqv(2.prop1.69) that
\be
\begin{split}
\SS^{(n),\nu}_{n_1,\dots,n_{\ell-1}}&=
\SS^{(n_{\ell-1}+n_{\ell}),\nu}_{n_{\ell-1}}
\prod_{k=1}^{\ell-2}\a^{(n_k+1)},
\\
\overline\SS^{(n),\nu}_{n_1,\dots,n_{\ell-1}}&=
\overline\SS^{(n_{\ell-1}+n_{\ell}),\nu}_{n_{\ell-1}}
\prod_{k=1}^{\ell-2}\a^{(n_k+1)}.
\end{split}
\Eq(2.prop1.80)
\ee
It remains to note that the sums appearing on the right-hand sides of \eqv(2.prop1.80) have been evaluated in the first step of the proof. Transposed to the present case (\emph{i.e.},~replacing the pair $n, n_1$ of Step 1 by the pair $n_{\ell-1}+n_{\ell}, n_{\ell-1}$), it follows from \eqv(2.prop1.21') and \eqv(2.prop1.21'') that under the condition
\be
2 a_{\ell-1}>n_{\ell}a_{\ell},
\Eq(2.prop1.82')
\ee
for all $n_{\ell-1}\geq 2$ even and $n_{\ell}\geq 1$ of arbitrary parity
\be
\begin{split}
\SS^{(n_{\ell-1}+n_{\ell}),\nu}_{n_{\ell-1}}
=\begin{cases}
\a^{(n_{\ell-1})} & \text{if $1\leq\nu-(n_1+\dots +n_{\ell-2})\leq n_{\ell-1}$}, \\
0&  \text{if $1\leq\nu-(n_1+\dots +n_{\ell-1})\leq n_{\ell}$},
\end{cases}
\end{split}
\Eq(2.prop1.83')
\ee
and
\be
\begin{split}
\overline\SS^{(n_{\ell-1}+n_{\ell}),\nu}_{n_{\ell-1}}
=\begin{cases}
0 & \text{if $1\leq\nu-(n_1+\dots +n_{\ell-2})\leq n_{\ell-1}$}, \\
\a^{(n_{\ell-1}+1)} \a^{(n_{\ell})}&   \text{if $1\leq\nu-(n_1+\dots +n_{\ell-1})\leq n_{\ell}$}.
\end{cases}
\end{split}
\Eq(2.prop1.83'')
\ee
Collecting \eqv(2.prop1.75), \eqv(2.prop1.80),  \eqv(2.prop1.83') and  \eqv(2.prop1.83''), we finally obtain \eqv(2.prop1.63') and \eqv(2.prop1.63'') under the assumption \eqv(2.prop1.82) and for all $n_{\ell-1}\geq 2$ even and $n_{\ell}\geq 1$ of arbitrary parity.
The case $k=\ell-1$ is complete.

\smallskip
\emph{\textbf{The case $1\leq k\leq \ell-2$.}} Let us now establish that for each $1\leq k\leq \ell-2$, under the condition that
\be
2a_k>n_{k+1}a_{k+1}+\dots+n_{\ell}a_{\ell},
\Eq(2.prop1.85')
\ee
where $n_{k}\geq 2$ is even for all $1\leq k\leq \ell-1$ and $n_{\ell}\geq 1$ is of arbitrary parity,  we have
\be
\begin{split}
\SS^{(n),\nu}_{n_1,\dots,n_{k}}
=&
\begin{cases}
0& \text{if $1\leq\nu\leq n_0+\dots +n_{k-1}$}, \\
\a^{(n_k)}\prod_{l=1}^{k-1}\a^{(n_{l}+1)} & \text{if $1\leq\nu-(n_0+\dots +n_{k-1})\leq n_{k}$}, \\
0&  \text{if $n_1+\dots +n_{k}+1\leq\nu\leq n_1+\dots +n_{\ell}$,}
\end{cases}
\end{split}
\Eq(2.prop1.85)
\ee
where $n_0=0$.

Let us first assume that $2\leq k\leq \ell-2$. The proof of \eqv(2.prop1.85) in this case closely follows the proof of \eqv(2.prop1.63') for $k= \ell-1$. For the sake of clarity, we give the explicit definitions of the quantities needed to derive the analogue 
for $\SS^{(n),\nu}_{n_1,\dots,n_{k}}$ of the expression for $\SS^{(n),\nu}_{n_1,\dots,n_{\ell-1}}$
given in  \eqv(2.prop1.69). We first observe that, as in \eqv(2.prop1.64), $\SS^{(n),\nu}_{n_1,\dots,n_{k}}$ is a function of a sequence of signs,
\be
\begin{split}
&
\left(
\sign\left[
\left(r^{(n)}_i,m\right)
\right]
\right)_{\left\{i\in Z^{n}_{n_1}\cap \dots\cap Z^{n}_{n_{k-1}}\right\}}
\\
=&
\left(
\sign\left[
\left(r^{(n)}_i,\boldsymbol{0}_{n_1+\dots+n_{k-1}}
\oplus a_{k}\boldsymbol{1}_{n_{k}}\oplus\dots\oplus a_{\ell}\boldsymbol{1}_{n_{\ell}}\right)
\right]
\right)_{\left\{i\in Z^{n}_{n_1}\cap \dots\cap Z^{n}_{n_{k-1}}\right\}},
\end{split}
\Eq(2.prop1.86)
\ee
which, by definition of the sets $Z^{n}_{n_k}$, does not depend on the first $n_1+\dots+n_{k-1}$ configurations of $R^{(n)}$ any more. This prompts us to write $R^{(n)}$, using  Lemma \thv(2.lem2) with a 2-level tree, as
\be
R^{(n)}
=
\begin{pmatrix} 
2^{n_{k}+\dots+n_{\ell}}\otimes R^{(n_1+\dots+n_{k-1})}
\\ 
R^{(n_{k}+\dots+n_{\ell})}\oplus\dots\oplus R^{(n_{k}+\dots+n_{\ell})} 
\end{pmatrix},
\Eq(2.prop1.87)
\ee
where the concatenation is over $2^{(n_1+\dots+n_{k-1})}$ matrices $R^{(n_{k}+\dots+n_{\ell})}$. Thus, the matrix obtained from $R^{(n)}$ by removing the first $n_1+\dots+n_{k-1}$ configurations  is the concatenation of $2^{(n_1+\dots+n_{k-1})}$ identical matrices $R^{(n_{k}+\dots+n_{\ell})}$.  Reasoning as in \eqv(2.prop1.67)-\eqv(2.prop1.70), we define
\be
\begin{split}
Z^{n_1+\dots+n_{k-1}}_{n_1,\dots,n_{k-1}}
\equiv &
\Bigg\{
1\leq i\leq  d(n_1+\dots+n_{k-1}) : \textrm{for all}\,\,  1\leq l\leq k-1
\\
&\quad\quad\quad\quad\quad\quad\quad\quad\,\,\, \sum_{1\leq \nu-(n_0+\dots+n_{l-1})\leq n_{l}}
r^{(n_1+\dots+n_{k-1}),\nu}_i=0
\Bigg\},
\end{split}
\Eq(2.prop1.89)
\ee
with the convention that $n_0=0$. For all $1\leq\nu\leq n$ we then set, given 
$
j\in Z^{n_1+\dots+n_{k-1}}_{n_1,\dots,n_{k-1}}
$ 
and a subset 
$
\ZZ\subseteq\left\{1,\dots,d(n_{k}+\dots+n_{\ell})\right\}
$,
\be
\begin{split}
&\UU^{(n),\nu}_{n_{k}+\dots+n_{\ell},j}(\ZZ)
\\
\equiv &\,\,
\frac{1}{d(n_{k}+\dots+n_{\ell})}\sum_{(j-1)d(n_{k}+\dots+n_{\ell})+1\leq i\leq j d(n_{k}+\dots+n_{\ell})}
\1_{\left\{i-(j-1)d(n_{k}+\dots+n_{\ell})\in \ZZ\right\}}
\\
&\,\,
r^{(n),\nu}_i
\sign\left[
\left(r^{(n)}_i,\boldsymbol{0}_{n_1+\dots+n_{k-1}}
\oplus a_{k}\boldsymbol{1}_{n_{k}}\oplus\dots\oplus a_{\ell}\boldsymbol{1}_{n_{\ell}}\right)\right],
\end{split}
\Eq(2.prop1.90)
\ee
and
\be
\begin{split}
&\VV^{(n_{k}+\dots+n_{\ell}),\nu}(\ZZ)
\\
\equiv
&\,\, \frac{1}{d(n_1+\dots+n_{k-1})}\sum_{1\leq j\leq d(n_1+\dots+n_{k-1})}
\1_{\left\{j\in  Z^{n_1+\dots+n_{k-1}}_{n_1,\dots,n_{k-1}}\right\}}
\UU^{(n),\nu}_{n_{k}+\dots+n_{\ell},j}\left(\ZZ\right).
\end{split}
\Eq(2.prop1.100)
\ee
Equipped with these definitions, we arrive at the expression, valid for all $1\leq\nu\leq n$
\be
\begin{split}
\SS^{(n),\nu}_{n_1,\dots,n_{k}}&=\VV^{(n_{k}+\dots+n_{\ell}),\nu}\left(\left(Z^{n_{k}+\dots+n_{\ell}}_{n_{k}}\right)^c\right),
\end{split}
\Eq(2.prop1.101)
\ee
where $Z^{n_{k}+\dots+n_{\ell}}_{n_{k}}$  is the set
\be
Z^{n_{k}+\dots+n_{\ell}}_{n_{k}}
=
\left\{
1\leq i'\leq d(n_{k}+\dots+n_{\ell}) : 
\sum_{1\leq \nu'\leq n_{k}}
r^{(n_{k}+\dots+n_{\ell}),\nu'}_{i'}=0
\right\}
\Eq(2.prop1.102)
\ee
obtained by replacing $n$ by $n_{k}+\dots+n_{\ell}$  in \eqv(2.prop1.29).
From this point on, the proof closely follows the case $k=\ell-1$.
If $1\leq\nu\leq n_1+\dots +n_{k-1}$, reasoning as in \eqv(2.prop1.71)-\eqv(2.prop1.75), we have
\be
\SS^{(n),\nu}_{n_1,\dots,n_{k}} = 0.
\Eq(2.prop1.103)
\ee
If $n_1+\dots +n_{k-1}+1\leq \nu\leq n_1+\dots +n_{\ell}$ then, proceeding exactly as in \eqv(2.prop1.76)-\eqv(2.prop1.80),
we have, on the one hand, that for all $j\in  Z^{n_1+\dots+n_{k-1}}_{n_1,\dots,n_{k-1}}$,
\be
\UU^{(n),\nu}_{n_{k}+\dots+n_{\ell},j}\left(\bigl(Z^{n_{k}+\dots+n_{\ell}}_{n_{k}}\bigr)^c\right)
=
\SS^{(n_{k}+\dots+n_{\ell}),\nu}_{n_{k}},
\Eq(2.prop1.104)
\ee
where
\be
\begin{split}
\SS^{(n_{k}+\dots+n_{\ell}),\nu}_{n_{k}}
&\equiv
\frac{1}{d(n_{k}+\dots+n_{\ell})}\sum_{1\leq i\leq d(n_{k}+\dots+n_{\ell})}\1_{\left\{i\in \bigl(Z^{n_{k}+\dots+n_{\ell}}_{n_{k}}\bigr)^c\right\}} 
\\
&
r^{(n_{k}+\dots+n_{\ell}),\nu}_{i}
\sign\left[
\left(r^{(n_{k}+\dots+n_{\ell})}_{i},
a_{k}\boldsymbol{1}_{n_{k}}\oplus\dots\oplus a_{\ell}\boldsymbol{1}_{n_{\ell}}\right)
\right].
\end{split}
\Eq(2.prop1.104bis)
\ee
On the other hand, for all $j\in  Z^{n_1+\dots+n_{k-1}}_{n_1,\dots,n_{k-1}}$,
\be
\VV^{(n_{k}+\dots+n_{\ell}),\nu}\left(\left(Z^{n_{k}+\dots+n_{\ell}}_{n_{k}}\right)^c\right)
=
\frac{\bigl |Z^{n_1+\dots+n_{k-1}}_{n_1,\dots,n_{k-1}}\bigr|}{d(n_1+\dots+n_{k-1})}
\,\UU^{(n),\nu}_{n_{k}+\dots+n_{\ell},j}\left(\bigl(Z^{n_{k}+\dots+n_{\ell}}_{n_{k}}\bigr)^c\right),
\ee
where
\be
\frac{\bigl |Z^{n_1+\dots+n_{k-1}}_{n_1,\dots,n_{k-1}}\bigr|}{d(n_1+\dots+n_{k-1})}
=
\prod_{l=1}^{k-1}\frac{1}{d(n_l)}{{n_l}\choose{n_l/2}}
=
\prod_{l=1}^{k-1}\a^{(n_l+1)}.
\Eq(2.prop1.105)
\ee
Thus,
\be
\SS^{(n),\nu}_{n_1,\dots,n_{k}} =
\SS^{(n_{k}+\dots+n_{\ell}),\nu}_{n_{k}}
\prod_{l=1}^{k-1}\a^{(n_l+1)}.
\Eq(2.prop1.106)
\ee
Note that the sum \eqv(2.prop1.104bis) is nothing other than $\SS^{(n),\nu}_{n_1}$ in  \eqv(2.prop1.7bis), where the pair  $n$, $n_1$ is replaced by the pair $n_{k}+\dots+n_{\ell}$,  $n_k$  and where
$
m= a_{k}\boldsymbol{1}_{n_{k}}\oplus\dots\oplus a_{\ell}\boldsymbol{1}_{n_{\ell}}
$.
It is therefore evaluated in the same way as $\SS^{(n),\nu}_{n_1}$ (see the first step of the proof).
In parallel with \eqv(2.prop1.8)-\eqv(2.prop1.9), note that under the assumption 
\be
2a_k>n_{k+1}a_{k+1}+\dots+n_{\ell}a_{\ell},
\Eq(2.prop1.106ter)
\ee
we have, for all $i\in\bigl(Z^{n_{k}+\dots+n_{\ell}}_{n_{k}}\bigr)^c$
\be
\begin{split}
\left|a_k\sum_{1\leq\nu\leq n_{k}}r^{(n_{k}+\dots+n_{\ell}),\nu}_{i}\right|
\geq 
&
2a_k
> 
n_{k+1}a_{k+1}+\dots+n_{\ell}a_{\ell}
\\
\geq 
&
\left|
\sum_{l=k+1}^{\ell}a_{l}\sum_{1\leq\nu-(n_k+\dots+n_{l-1})\leq n_{l}}r^{(n_{k}+\dots+n_{\ell}),\nu}_{i}
\right|
\end{split}
\Eq(2.prop1.106quat)
\ee
(where for $l=k+1$, the sum in the last line is over $1\leq\nu\leq n_k$). Note also that the second inequality is strict. Thus, the value of the sign function in \eqv(2.prop1.104bis) is determined by the Rademacher configurations $r^{(n_{k}+\dots+n_{\ell}),\nu}$ with $1\leq\nu-(n_1+\dots +n_{k-1})\leq n_{k}$, namely,
\be
\begin{split}
&
\sign\left[
\left(r^{(n_{k}+\dots+n_{\ell}),\nu}_i,
 a_{k}\boldsymbol{1}_{n_{k}}\oplus\dots\oplus a_{\ell}\boldsymbol{1}_{n_{\ell}}\right)
\right]
\\
=\,&
\sign\left[
\left(r^{(n_{k}+\dots+n_{\ell}),\nu}_i,
a_{k}\boldsymbol{1}_{n_{k}}\oplus\boldsymbol{0}_{n_{k+1}+\dots+n_{\ell}}\right)
\right].
\end{split}
\Eq(2.prop1.106bis)
\ee
As we are now accustomed to, there are two cases.
If $1\leq\nu-(n_1+\dots +n_{k-1})\leq n_{k}$, then repeating the strategy of \eqv(2.prop1.10) step by step, using the identity
\be
R^{(n_{k}+\dots+n_{\ell})}
=
\begin{pmatrix} 
2^{n_{k-1}+\dots+n_{\ell}}\otimes R^{(n_k)}
\\ 
R^{(n_{k-1}+\dots+n_{\ell})}\oplus\dots\oplus R^{(n_{k-1}+\dots+n_{\ell})}
\end{pmatrix}
\ee
instead of \eqv(2.prop1.4), we obtain
\be
\begin{split}
\SS^{(n_{k}+\dots+n_{\ell}),\nu}_{n_{k}}
=&\frac{1}{d(n_k)}\sum_{1\leq i\leq d(n_k)}r^{(n_k),\nu}_i
\sign\left[a_k
\left(r^{(n_k)}_i,\boldsymbol{1}_{n_k}\right)
\right]
\\
=&\a^{(n_k)}.
\end{split}
\Eq(2.prop1.104ter)
\ee
In the last equality we used that for $a_k>0$ the expression in the previous line is equal to the right-hand side of the mixed-memory equation \eqv(2.2.1) for the $n_k$-asymmetric solution. If, however,  $n_1+\dots +n_{k}+1\leq\nu\leq n_1+\dots +n_{\ell}$, then a repetition of the proof of \eqv(2.prop1.12) readily yields $\SS^{(n_{k}+\dots+n_{\ell}),\nu}_{n_{k}}=0$. Note that these results are valid for all $1\leq k\leq \ell-1$. Collecting them, we conclude that for all $1\leq k\leq \ell-1$ 
\be
\SS^{(n_{k}+\dots+n_{\ell}),\nu}_{n_{k}}
=
\begin{cases}
\a^{(n_k)} & \text{if $1\leq\nu-(n_0+\dots +n_{k-1})\leq n_{k}$}, \\
0&  \text{if  $n_1+\dots +n_{k}+1\leq\nu\leq n_1+\dots +n_{\ell}$},
\end{cases}
\Eq(2.prop1.104qua)
\ee
where   $n_0=0$.

Inserting \eqv(2.prop1.104qua) in \eqv(2.prop1.106), we get that if
\be
2a_k>n_{k+1}a_{k+1}+\dots+n_{\ell}a_{\ell}
\Eq(2.prop1.107pre)
\ee
where $n_{k}\geq 2$ is even for all $2\leq k\leq \ell-1$ and $n_{\ell}\geq 1$ is of arbitrary parity,  then
\be
\begin{split}
\SS^{(n),\nu}_{n_1,\dots,n_{k}}
=&
\begin{cases}
\a^{(n_k)}\prod_{l=1}^{k-1}\a^{(n_{l}+1)} & \text{if $1\leq\nu-(n_1+\dots +n_{k-1})\leq n_{k}$}, \\
0&  \text{if $n_1+\dots +n_{k}+1\leq\nu\leq n_1+\dots +n_{\ell}$.}
\end{cases}
\end{split}
\Eq(2.prop1.107)
\ee
Note that when $k=1$,  by \eqv(2.prop1.104qua),  $\SS^{(n_{k}+\dots+n_{\ell}),\nu}_{n_{k}}=\SS^{(n),\nu}_{n_1}$
where $\SS^{(n),\nu}_{n_1}$ is defined in \eqv(2.prop1.57).
Putting this observation together with \eqv(2.prop1.103) and \eqv(2.prop1.107) finally yields the claim of  \eqv(2.prop1.85')-\eqv(2.prop1.85).

\smallskip
\emph{\textbf{Conclusion of the general construction step.}}
We can now return  to the problem of constructing solutions to the system of equations \eqv(2.2.1) of the form \eqv(2.prop1.55) under the assumptions \eqv(2.prop1.56). Inserting \eqv(2.prop1.63'), \eqv(2.prop1.63'') and \eqv(2.prop1.85) in \eqv(2.2.1bis), we get that if $n_1,\dots, n_{\ell-1}\geq 2$ are even and $n_{\ell}\geq 1$ has any parity, for any numbers  $a_1,\dots, a_{\ell}>0$ satisfying the conditions
\be
2a_k>n_{k+1}a_{k+1}+\dots+n_{\ell}a_{\ell}\quad\text{for all}\quad 1\leq k\leq \ell-1,
\Eq(2.prop1.108)
\ee
we have, for all $1\leq k\leq \ell$ and all $n_0+\dots +n_{k-1}+1\leq\nu\leq n_1+\dots +n_{k}$
\be
m_{\nu}
=\sum_{1\leq k\leq \ell} \SS^{(n),\nu}_{n_1,\dots,n_{k}}
=\a^{(n_k)}\prod_{l=1}^{k-1}\a^{(n_{l}+1)},
\Eq(2.prop1.109)
\ee
where $n_0=0$ and with the convention that the product in \eqv(2.prop1.109) equals $1$ if $k=1$.
Thus, choosing
\be
a_k=\g^{(k)}\equiv \a^{(n_k)}\prod_{l=1}^{k-1}\a^{(n_{l}+1)}, \quad 1\leq k\leq \ell,
\Eq(2.prop1.119')
\ee
the vector 
\be
m=\g^{(1)}\boldsymbol{1}_{n_1}\oplus \dots\oplus \g^{(\ell)}\boldsymbol{1}_{n_{\ell}}
\Eq(2.prop1.110)
\ee
verifies the mixed-memory equation \eqv(2.2.1), under the conditions that 
$n_1,\dots, n_{\ell-1}\geq 2$ are even, $n_{\ell}\geq 1$ has any parity and
\be
2\g^{(k)}>n_{k+1}\g^{(k+1)}+\dots+n_{\ell}\g^{(\ell)}\quad \text{for all}\quad 1\leq k\leq \ell-1.
\Eq(2.prop1.111)
\ee

The expression \eqv(2.prop1.119') of $\g^{(k)}$ can be slightly simplified. From \eqv(2.lem1.1), the definition 
$
\lfloor x\rfloor =\max\{k\in \Z \mid k\leq x\}
$
and the well-known identity ${{u}\choose{v}}=\frac{u}{v}{{u-1}\choose{v-1}}$, it follows that
\be
\a^{(2k+1)}=2^{-2k}{{2k}\choose{k}}=2^{-2k+1}{{2k-1}\choose{k-1}}=\a^{(2k)}.
\Eq(2.rem2.1)
\ee
Since, by assumption, $n_k$ is  even for all $1\leq k\leq \ell-1$, \eqv(2.prop1.119') becomes 
\be
\g^{(k)}
= \prod_{l=1}^{k}\a^{(n_{l})}, \quad 1\leq k\leq \ell.
\Eq(2.rem2.2)
\ee
This concludes the proof of Proposition \thv(2.prop1), (i), in the case  where $s=n$, $\ell>2$ and $(n_1,\dots, n_{\ell})$ is an $\ell$-composition of $n$ such that $n_1,\dots, n_{\ell-1}\geq 2$ are even and $n_{\ell}\geq 1$ has any parity.

\smallskip
\paragraph{Last step.} To complete the proof of assertion (i), it remains to see that the solutions $m\in\R^n$ we have just constructed can be embedded in $\R^{n+n_0}$ for any $n_0>0$, which means that if $m$ is as defined in \eqv(2.prop1.110) and satisfies \eqv(2.prop1.111), then for any $n_0>0$, $m\oplus\boldsymbol{0}_{n_0}$ verifies the mixed-memory equation \eqv(2.2.1). Clearly, it suffices to check that for all $n+1\leq \nu\leq n+n_0$,
\be
\sum_{1\leq i\leq d(n+n_0)}r^{(n+n_0),\nu}_i
\sign\left[
\left(r^{(n+n_0)}_i,m\oplus\boldsymbol{0}_{n_0}\right)
\right]
=0.
\Eq(2.prop1.112)
\ee
The proof of this statement is a repetition of the proof of \eqv(2.prop1.12). Using Lemma \thv(2.lem2) to write
\be
R^{(n+n_0)}
=
\begin{pmatrix} 
2^{n_0}\otimes R^{(n)}
\\ 
R^{(n_0)}\oplus\dots\oplus R^{(n_0)}
\end{pmatrix},
\Eq(2.prop1.113)
\ee
and proceeding as in \eqv(2.prop1.12bis)-\eqv(2.prop1.15bis), replacing the matrix \eqv(2.prop1.4) by \eqv(2.prop1.113), we have that for all $n+1\leq \nu\leq n+n_0$, the left-hand side of \eqv(2.prop1.112) is equal to
\be
\begin{split}
&
\sum_{1\leq j\leq d(n)}\sum_{(j-1)d(n_0)+1\leq i\leq jd(n_0)}r^{(n+n_0),\nu}_i
\sign\left[
\left(\sum_{1\leq \nu\leq n} r^{(n)}_j m_{\nu}\right)
\right]
\\
=&
\sum_{1\leq j\leq d(n)}
\sign\left[
\left(r^{(n)}_j,m\right)
\right]
\left(r^{(n_0),\nu-n}, r^{(n_0),0}\right)
\\
=&
0,
\end{split}
\Eq(2.prop1.114)
\ee
where the last equality follows from \eqv(2.1.6) and \eqv(1.lem1.1) of Lemma \thv(2.lem1), (i).
The proof of assertion (i) of Proposition \thv(2.prop1) is now complete.

Assertion (ii) of the proposition follows from Lemma \thv(2.lem4), (i).
Finally, the proof of assertion (iii) is a by-product of the proof of (i). 
Without loss of generality we can take  $s=n$ and $\ell\geq 1$. Given an $\ell$-composition $(n_1,\dots, n_{\ell})$ of $n$  such that $n_1,\dots, n_{\ell-1}\geq 2$ are even and $n_{\ell}\geq 1$ has arbitrary parity, consider solutions to \eqv(2.2.1) of the form 
\be
m=\g^{(1)}\boldsymbol{1}_{n_1}\oplus\dots\oplus \g^{(\ell)}\boldsymbol{1}_{n_{\ell}}
\Eq(2.prop1.120)
\ee
where the sequence $\left(\g^{(k)}\right)_{1\leq k\leq \ell}$ satisfies the conditions \eqv(2.prop1.2'). When $\ell=1$, $m$ reduces to an $n_1$-symmetric solution (see Lemma \thv(2.lem3)). Eq.~\eqv(2.prop1.iii1) and \eqv(2.prop1.iii2) are trivially verified. Moreover, if $n_1$ is odd
\be
\inf_{1\leq i\leq d(n)}
\left|\left(r^{(n)}_i,m\right)\right|= \g^{(1)}>0.
\Eq(2.prop1.121)
\ee
Assume now that $\ell\geq 2$. Using \eqv(2.prop1.29) and \eqv(2.prop1.29bis) we write
\be
\begin{split}
&\{1,\dots,d(n)\}
\\
=
&\left(Z^{n}_{n_1}\right)^c\bigcup_{k=2}^{\ell-1}\left(Z^{n}_{n_1}\cap \dots\cap Z^{n}_{n_{k-1}}\cap\left(Z^{n}_{n_k}\right)^c\right)
\bigcup\left(Z^{n}_{n_1}\cap \dots\cap Z^{n}_{n_{\ell-1}}\right),
\end{split}
\Eq(2.prop1.115)
\ee
where by convention the union over $2\leq k\leq \ell-1$  is the empty set if $\ell=2$. Now note that if $\ell>2$  and $2\leq k\leq\ell-1$ then for all $i\in\bigl(Z^{n}_{n_1}\cap \dots\cap Z^{n}_{n_{k-1}}\bigr)$ 
\be
\begin{split}
\bigl(r^{(n)}_i,m\bigr)
&=
\left(r^{(n)}_i,\boldsymbol{0}_{n_1+\dots+n_{k-1}}
\oplus \g^{(k)}\boldsymbol{1}_{n_{k}}\oplus\dots\oplus \g^{(\ell)}\boldsymbol{1}_{n_{\ell}}\right)
\\
&=
\left(r^{(n_{k}+\dots+n_{\ell}),\nu}_{i'},
\g^{(k)}\boldsymbol{1}_{n_{k}}\oplus\dots\oplus \g^{(\ell)}\boldsymbol{1}_{n_{\ell}}\right),
\end{split}
\Eq(2.prop1.116)
\ee
for some $1\leq i'\leq  d(n_{k}+\dots+n_{\ell})$. This last identity follows from the fact that, according to \eqv(2.prop1.87), the matrix obtained from $R^{(n)}$ by removing the first $n_1+\dots+n_{k-1}$ configurations  is the concatenation of $2^{(n_1+\dots+n_{k-1})}$ matrices $R^{(n_{k}+\dots+n_{\ell})}$. So, if $\left(\g^{(k)}\right)_{1\leq k\leq \ell}$ satisfies \eqv(2.prop1.2'), 
reasoning as in \eqv(2.prop1.106ter)-\eqv(2.prop1.106quat), we obtain that if $\ell>2$ and $2\leq k\leq\ell-1$ then for all
$i\in\bigl(Z^{n}_{n_1}\cap \dots\cap Z^{n}_{n_{k-1}}\cap\left(Z^{n}_{n_k}\right)^c\bigr)$
\be
\left|\bigl(r^{(n)}_i,m\bigr)\right|
>2\g^{(k)}-\left[n_{k+1}\g^{(k+1)}+\dots+n_{\ell}\g^{(\ell)}\right]
>0.
\Eq(2.prop1.118)
\ee
Similarly, we check that if $\ell\geq 2$ then \eqv(2.prop1.118) holds with $k=1$ for all  $i\in\left(Z^{n}_{n_1}\right)^c$.
The case where $\ell\geq 2$ and $i\in\bigl(Z^{n}_{n_1}\cap \dots\cap Z^{n}_{n_{\ell-1}}\bigr)$ is different. Here
\be
\begin{split}
\bigl(r^{(n)}_i,m\bigr)
=\,
&
\left(r^{(n)}_i,\boldsymbol{0}_{n_1+\dots+n_{\ell-1}}
\oplus \g^{(\ell)}\boldsymbol{1}_{n_{\ell}}\right)
\\
=\,
&
\g^{(\ell)}\sum_{n_1+\dots +n_{\ell-1}+1\leq\nu\leq n_1+\dots +n_{\ell}}r^{(n),\nu}_i.
\end{split}
\Eq(2.prop1.118bis)
\ee
Thus, if $n_{\ell}$ is even, then for all $i\in\bigl(Z^{n}_{n_1}\cap \dots\cap Z^{n}_{n_{\ell-1}}\bigr)\cap Z^{n}_{n_{\ell}}$
\be
\bigl(r^{(n)}_i,m\bigr)=0,
\ee
and if  $n_{\ell}$ is odd,  then  for all $i\in\bigl(Z^{n}_{n_1}\cap \dots\cap Z^{n}_{n_{\ell-1}}\bigr)$
\be
\left|\bigl(r^{(n)}_i,m\bigr)\right|\geq \g^{(\ell)}.
\ee
Assertion (iii) of Proposition \thv(2.prop1)  now easily follows.
The proof of Proposition \thv(2.prop1) is done.

\subsection{Proof of Proposition \thv(2.prop2)} 
    \TH(S6.3)

The proof of this result is identical, mutatis mutandis, to that of Proposition \thv(2.prop1). 
We only indicate how to modify the first step of the proof of assertion (i)  of Proposition \thv(2.prop1), namely, the case  $s=n$ and $\ell=2$. As in the latter, we seek solutions of the system \eqv(2.prop2.0) of the form \eqv(2.prop1.3) for some strictly positive numbers $a_1, a_2>0$ that now satisfy 
\be
2f(a_1)>n_{2}f(a_{2})
\Eq(2.prop2.4)
\ee
instead of \eqv(2.prop1.8). The definitions and property \eqv(2.prop1.5), \eqv(2.prop1.6) and \eqv(2.prop1.7) are unchanged whereas the inner product $\bigl(r^{(n)}_i,m\bigr)$ in the definitions \eqv(2.prop1.7bis)  is replaced by
$\bigl(r^{(n)}_i,\boldsymbol{f}(m)\bigr)$. Thus, using \eqv(2.prop2.4), \eqv(2.prop1.9) is replaced by
\be
\sign\left[
\left(r^{(n)}_i,\boldsymbol{f}(m)\right)
\right]
=
\sign\left(
f(a_1) {
\sum_{1\leq\nu\leq n_1}}r^{(n),\nu}_i
\right).
\Eq(2.prop2.5)
\ee
Since, by assumption,  $a_1>0$ and $f(x)>0$ for all $x>0$, 
\be
\sign\left(
f(a_1) {
\sum_{1\leq\nu\leq n_1}}r^{(n),\nu}_i
\right)
=
\sign\left(
 {
\sum_{1\leq\nu\leq n_1}}r^{(n),\nu}_i
\right).
\Eq(2.prop2.6)
\ee
From this point on, the proof is completed in exactly the same way as the first step of the proof of assertion (i) of Proposition \thv(2.prop1). The proof of the general step is modified in the same way. This readily leads to the claim of assertion (i). Assertion (ii) of the proposition follows from Lemma \thv(2.lem4), (ii). Finally, the proof of assertion (iii) is an elementary adaptation  of that of  Proposition \thv(2.prop1), (iii).

We close this section with the proof of Lemma \thv(2.lem4).

\subsection{Proof of Lemma \thv(2.lem4)} 
    \TH(S6.4) 

We begin with assertion (i). We first claim that given any permutation  $\pi_1:\{1,\dots,n\}\mapsto\{1,\dots,n\}$ of the rows of $R^{(n)}$, 
$m=(m_{\nu})_{1\leq \nu\leq n}$  verifies  \eqv(2.2.1) if and only if  $(m_{\pi_1(\nu)})_{1\leq \nu\leq n}$ verifies  \eqv(2.2.1) .
Indeed, assume that $\bar m\equiv(m_{\pi_1(\nu)})_{1\leq \nu\leq n}$  verifies  \eqv(2.2.1), \emph{i.e.} 
\be
\begin{split}
m_{\pi_1(\nu)}
&=
\frac{1}{d(n)}\sum_{1\leq i\leq d(n)}r^{(n),\nu}_i
\sign\left[
\left(r^{(n)}_i, \bar m\right)
\right],\quad 1\leq\nu\leq n.
\end{split}
\Eq(2.lem4.8)
\ee
Let  $\pi_1^{-1}$ denote the inverse of $\pi_1$ (\emph{i.e.},~$\pi_1^{-1}\pi_1=\pi_1\pi_1^{-1}$ is the identity).
Then, $\bigl(r^{(n)}_i, \bar m\bigr)= \bigl(\bar r^{(n)}_i, m\bigr)$ where $\bar r^{(n), \nu}_i=r^{(n), \pi_1^{-1}(\nu)}_i$ for all $1\leq \nu\leq n$, and so, applying $\pi_1^{-1}$ to the set of  indices $\nu$, \eqv(2.lem4.8) is equivalent to
\be
m_{\nu}
=
\frac{1}{d(n)}\sum_{1\leq i\leq d(n)}r^{(n), \pi_1^{-1}(\nu)}_i
\sign\left[
\left(\sum_{\nu=1}^nr^{(n), \pi_1^{-1}(\nu)}_im_{\nu}\right)
\right],\quad 1\leq\nu\leq n.
\Eq(2.lem4.8bis)
\ee
By Corollary \thv(1.cor1), (ii), there exists a unique permutation  $\pi_2:\{1,\dots,d\}\mapsto\{1,\dots,d\}$ of the columns of $R^{(n)}$ such that, using \eqv(1.cor1.3) and \eqv(1.cor1.4) in turn,  \eqv(2.lem4.8bis)  can be expressed as
\be
\begin{split}
m_{\nu}
&=
\frac{1}{d(n)}\sum_{1\leq i\leq d(n)}r^{(n),\nu}_{\pi_2(i)}
\sign\left[
\left(r^{(n)}_{\pi_2(i)}, m\right)
\right],\quad 1\leq\nu\leq n,
\\
&=
\frac{1}{d(n)}\sum_{1\leq i\leq d(n)}r^{(n),\nu}_{i}
\sign\left[
\left(r^{(n)}_i,m\right)
\right],\quad 1\leq\nu\leq n,
\end{split}
\Eq(2.lem4.9)
\ee
which is  \eqv(2.2.1) and proves our  claim.
Next, we claim that for any vector  $\varepsilon=(\varepsilon_{\nu})_{1\leq \nu\leq n}\in\{-1,1\}^n$, 
$m=(m_{\nu})_{1\leq \nu\leq n}$  verifies \eqv(2.2.1) if and only if the Hadamard product
$
\varepsilon\odot m
$
verifies \eqv(2.2.1).
By \eqv(2.2.1) evaluated at $\varepsilon\odot m$,
\be
\varepsilon_{\nu}m_{\nu}
=
\frac{1}{d(n)}\sum_{1\leq i\leq d(n)}r^{(n),\nu}_i
\sign\left[
\left(r^{(n)}_i,\varepsilon\odot m\right)
\right]
,\quad 1\leq\nu\leq n.
\Eq(2.lem4.10)
\ee
Since $\bigl(r^{(n)}_i,\varepsilon\odot m\bigr)=\bigl(\varepsilon\odot r^{(n)}_i, m\bigr)$  and since $\varepsilon=r^{(n)}_{j}$ for some $1\leq j\leq d(n)$, it follows from Corollary \thv(1.cor1), (i), that there exists a unique permutation $\pi:\{1,\dots,d\}\mapsto\{1,\dots,d\}$ of the columns of $R^{(n)}$ such that, using \eqv(1.cor1.1) and \eqv(1.cor1.2) in turn, \eqv(2.lem4.10) is
equivalent to
\be
\begin{split}
\varepsilon_{\nu}m_{\nu}
=&\frac{1}{d(n)}\sum_{1\leq i\leq d(n)}\varepsilon_{\nu}r^{(n),\nu}_{\pi(i)}
\sign\left[
\left(r^{(n)}_{\pi(i)}, m\right)
\right],\quad 1\leq\nu\leq n,
\\
=&
\frac{1}{d(n)}\sum_{1\leq i\leq d(n)}\varepsilon_{\nu}r^{(n),\nu}_{i}
\sign\left[
\left(r^{(n)}_{i}, m\right)
\right],\quad 1\leq\nu\leq n,
\end{split}
\Eq(2.lem4.11)
\ee
where the final equality is equivalent to \eqv(2.2.1). This proves our second claim.
Combining the above two properties implies the claim of  assertion (i) of the lemma.

To prove assertion (ii),  note that the only difference with assertion (i) is in the treatment of the inner product in \eqv(2.lem4.10), which now becomes
\be
\frac{1}{d(n)}\sum_{1\leq i\leq d(n)}r^{(n),\nu}_i
\sign\left[
\left(r^{(n)}_i, \boldsymbol{f}(\varepsilon\odot m)\right)
\right].
\Eq(2.lem4.12)
\ee
Clearly, when $f$ is odd $\bigl(r^{(n)}_i,\boldsymbol{f}(\varepsilon\odot m)\bigr)=\bigl(\varepsilon\odot r^{(n)}_i, \boldsymbol{f}(m)\bigr)$
and the proof is completed as in assertion (i). If however $f$ is not odd, this property fails. Note that since the solutions constructed in Proposition \thv(2.prop2), (i) are such that $m_{\nu}\geq 0$ for all $1\leq \nu\leq n$, all of our solutions in this case have this same property. The proof of the lemma is done.


\section{Reduction to the Rademacher system}
    \TH(S3)
    
This section is devoted to the proof of the results of Section \thv(S1.2.1), \emph{i.e.}~Theorem \thv(1.theo2.mix) and Proposition \thv(1.prop.mix). It relies on a simple but key tool, Proposition \thv(3.prop1), which allows to link the $n\times N$ random matrix formed by any $n$ patterns to the deterministic Rademacher system $R^{(n)}$. This strategy was first used in the setting of Hopfield-type models in \cite{G92}, \cite{KP89}  to study their thermodynamic properties when the number of patterns $M\equiv M(N)$ grows with $N$ no faster than $cst\ln N$. In what follows, we draw heavily on the results of \cite{G92}.

Given an integer $n\leq M$ independent of $N$, let $\{\mu_1,\dots, \mu_n\}\subset \{1,\dots,M\}$ be an arbitrary subset of $n$ patterns, $\left(\xi^{\mu_\nu}\right)_{1\leq \nu\leq n}$, chosen from the family of $M$ patterns that are used to define a given model. Form the $n\times N$ matrix $\xi^{(n)}\equiv \left(\xi^{\mu_\nu}_i\right)_{1\leq i\leq N, 1\leq \nu\leq n}$ whose rows are given by the $n$ patterns
\be
\xi^{\mu_\nu}=\left(\xi^{\mu_\nu}_i\right)_{1\leq i\leq N}\in\S_N, \quad 1\leq \nu\leq n,
\Eq(3.1.1)
\ee
and whose columns, denoted by 
\be
\xi_i=\left(\xi^{\mu_\nu}_i\right)_{1\leq \nu\leq n}\in\S_n, \quad 1\leq i\leq N, 
\Eq(3.1.2)
\ee
are $N$ configurations in $\S_n$. Recall  from Lemma \thv(2.lem1), (iii) that the collection $\bigl\{r^{(n)}_j\bigr\}_{1\leq j\leq d}$ of column vectors \eqv(2.1.5) of the Rademacher matrix $R^{(n)}$ forms a complete enumeration of the $d(n)\equiv 2^n$ elements of $\S_n$. As a consequence, this collection induces a partition of the set  $\L\equiv \{1,\dots,N\}$ into $d(n)$ disjoint (possibly empty) subsets $\L=\cup_{1\leq j\leq d(n)}\L_j(\xi^{(n)})$ with the  property that $i\in\L_j(\xi^{(n)})$ if and only if $\xi_i=r^{(n)}_j$,
\be
\L_j(\xi^{(n)})\equiv\left\{i\in\L : \xi_i=r^{(n)}_j\right\}, \quad 1\leq j\leq d(n).
\Eq(3.1.3)
\ee

Recalling that the patterns
$
\left(\xi^{\mu}_i\right)_{1\leq i\leq N, 1\leq \mu\leq M}
$,
for all $N\geq 1$, are assumed to be i.i.d. Bernoulli random variables satisfying   \eqv(1.1.2'), \eqv(3.1.3) forms a random partition of $\L$.
We want to determine the typical size
$
L_j(\xi^{(n)})=\left|\L_j(\xi^{(n)})\right|
$
of the sets \eqv(3.1.3). To this end, define  the sequence of subsets
\be
\EE^{\mu_1,\dots, \mu_n}_{N}=\left\{
 L_j\left(\xi^{(n)}\right)=\frac{N}{d(n)}(1+\l_j),\, |\l_j|\leq \d_N,\, 1\leq j\leq d(n)
\right\}
\subseteq \O,
\Eq(3.1.4)
\ee
where $\d_N$ is some strictly decreasing function of $N$ that converges to zero.  Thus,  on $\EE^{\mu_1,\dots ,\mu_n}_{N}$, for large enough $N$,  each $L_j(\xi^{(n)})$ is constrained 
to be close to its expected value, ${N}/{d(n)}$. 
How close is determined by the function  $\d_N$, which we will choose later depending on the application.
The next proposition establishes that almost all realisations of the random variables $(L_j(\xi^{(n)}))_{1\leq j\leq d(n)}$ will eventually,  for $N$ sufficiently large, be contained in the subset $\EE^{\mu_1,\dots ,\mu_n}_{N}$, either for a given subset  $\{\mu_1,\dots ,\mu_n\}$ of patterns or for all possible such subsets. More precisely, set 
\be
\O^{\mu_1,\dots, \mu_n}=
\bigcup_{N_0} \bigcap_{N>N_0}\EE^{\mu_1,\dots, \mu_n}_{N},
\Eq(3.1.5)
\ee 
for any $\{\mu_1,\dots, \mu_n\}\subset \{1,\dots,M\}$ and
\be
\O^{(n)}=
\bigcap_{\{\mu_1,\dots, \mu_n\}\subset\{1,\dots, M\}}\O^{\mu_1,\dots, \mu_n}.
\Eq(3.1.6)
\ee 
Since the sets $\EE^{\mu_1,\dots, \mu_n}_{N}$ depend on the choice of the function $\d_N$, so do the sets $\O^{\mu_1,\dots, \mu_n}$ and $\O^{(n)}$. The next proposition provides sufficient conditions on $\d_N$ for the latter sets to have full measure.

\begin{proposition} 
    \TH(3.prop1)
\item{(i)} 
If $\d^2_N>4\frac{d(n)}{N}\ln N$ then 
\be
\P\left(\O^{\mu_1,\dots, \mu_n}\right)=1.
\Eq(3.prop1.1)
\ee
\item{(ii)} 
If $\d^2_N>2\frac{d(n)}{N}(2\ln N+n\ln M)$ then 
\be
\P\left(\O^{(n)}\right)=1.
\Eq(3.prop1.2)
\ee
\end{proposition}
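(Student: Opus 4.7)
The plan is to reduce Proposition \thv(3.prop1) to a standard Chernoff-type concentration inequality for the binomial law, combined with the Borel--Cantelli lemma. The crucial observation is that for each $1\leq i\leq N$, the column vector $\xi_i=(\xi^{\mu_\nu}_i)_{1\leq\nu\leq n}\in\S_n$ has independent symmetric $\pm 1$ entries and is therefore uniformly distributed on $\S_n$; moreover, the $N$ columns are mutually independent. Since Lemma \thv(2.lem1), (iii), ensures that $\{r^{(n)}_j\}_{1\leq j\leq d(n)}$ enumerates $\S_n$, each
\[
L_j(\xi^{(n)})=\sum_{i=1}^N\1_{\{\xi_i=r^{(n)}_j\}}
\]
is Binomial$(N,p)$ with $p=1/d(n)$ and mean $Np=N/d(n)$.

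Applying a Chernoff--Bernstein bound of the form $\P(|L_j-Np|\geq Np\delta_N)\leq 2\exp\bigl(-(1-o(1))N\delta_N^2/(2d(n))\bigr)$ (valid as soon as $\delta_N$ is eventually small), and then taking a union bound over $1\leq j\leq d(n)$, produces
\[
\P\bigl(({\EE}^{\mu_1,\dots,\mu_n}_N)^c\bigr)\leq 2\,d(n)\,\exp\!\Bigl(-(1-o(1))\tfrac{N\delta_N^2}{2d(n)}\Bigr).
\]
For assertion (i), the hypothesis $\delta_N^2>4d(n)\ln N/N$ yields $N\delta_N^2/(2d(n))>2\ln N$ with strict margin, so the right-hand side is of order $d(n)N^{-2+\varepsilon}$, summable in $N$, and the Borel--Cantelli lemma delivers $\P(\O^{\mu_1,\dots,\mu_n})=1$.

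For assertion (ii), an additional union bound over the $\binom{M}{n}\leq M^n$ choices of an $n$-subset of $\{1,\dots,M\}$ is taken. The sharper hypothesis $\delta_N^2>2d(n)(2\ln N+n\ln M)/N$ gives $N\delta_N^2/(2d(n))>2\ln N+n\ln M$, so the single-event probability is bounded by $2\exp(-(2\ln N+n\ln M)(1-o(1)))$; after multiplication by the combinatorial factor $d(n)M^n$, the resulting bound is still of order $d(n)N^{-2+o(1)}$, hence summable, and Borel--Cantelli yields $\P(\O^{(n)})=1$. No serious obstacle is anticipated: the proof is essentially bookkeeping once the i.i.d.\ uniform structure of the columns $\xi_i$ has been recognised. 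The only subtlety worth noting is that $M=M(N)$ may depend on $N$; the constant $2d(n)(2\ln N+n\ln M)/N$ in the hypothesis is precisely calibrated so that the $M^n$ arising from the union over $n$-subsets is absorbed by the $n\ln M$ term in the exponent, leaving a clean $N^{-2+o(1)}$ tail.
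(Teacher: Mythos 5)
Your approach coincides with the paper's: the paper proves (i) by citing Proposition 4.1 of \cite{G92} (a binomial concentration estimate plus Borel--Cantelli, with $q=2$), and derives (ii) by adding exactly the union bound over the ${M\choose n}\le M^n$ choices of $n$-subsets that you describe. You are essentially reconstructing the concentration argument that the paper delegates to the reference, and the structure (columns $\xi_i$ i.i.d.\ uniform on $\S_n$, hence each $L_j$ is Binomial$(N,1/d(n))$, then union bound and Borel--Cantelli) is the right one.

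One imprecision is worth flagging. The $(1-o(1))$ slack in the exponent of your Chernoff bound is harmless for the applications in Theorems~\thv(1.theo1) and~\thv(1.theo2) where $M$ is polynomial in $N$, but it does not suffice uniformly over the full range of $M$ and $\d_N$ permitted by the hypothesis of (ii): if $\ln M$ grows like a power of $N$ (still compatible with $\ln M/N\to 0$ and $\d_N\downarrow 0$), the error term $o(1)\cdot n\ln M$ that survives after multiplying by the $M^n$ combinatorial factor can overwhelm the $2\ln N$ margin and destroy summability, and you give no quantitative control on the $o(1)$. This is avoided altogether by applying Bernstein's inequality directly to the centred Bernoulli$(1/d(n))$ sum $L_j-N/d(n)$: as soon as $\d_N\le 3/(d(n)-1)$ (true eventually since $\d_N\downarrow 0$) one gets
\be
\P\left(\left|L_j-\tfrac{N}{d(n)}\right|>\tfrac{N}{d(n)}\d_N\right)
\le 2\exp\left(-\frac{N\d_N^2}{2(d(n)-1)(1+\d_N/3)}\right)
\le 2\exp\left(-\frac{N\d_N^2}{2d(n)}\right)
\ee
with no asymptotic slack; the hypotheses of (i) and (ii) then yield a bound $\le 2d(n)N^{-2}$ for the probability of the complement (after absorbing $M^n$ in case (ii)), which is summable, and Borel--Cantelli concludes with no further bookkeeping.
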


We can now make the connection between the matrix $\xi^{(n)}$ and the Rademacher matrix $R^{(n)}$ more concrete. By definition of the sets \eqv(3.1.3), using the dilation and concatenation notations  \eqv(2.1.7)-\eqv(2.1.8), $\xi^{(n)}$ can be written as a permutation of the columns of the matrix
\be
\left(L_1\left(\xi^{(n)}\right)\otimes r^{(n)}_1\right)\oplus\dots \oplus \left(L_j\left(\xi^{(n)}\right)\otimes r^{(n)}_j\right)\oplus\dots \oplus \left(L_d\left(\xi^{(n)}\right)\otimes r^{(n)}_d\right),
\Eq(3.1.6')
\ee
where by Proposition \thv(3.prop1), on either of the sets $\O^{\mu_1,\dots, \mu_n}$ or $\O^{(n)}$, all $L_j$'s are asymptotically equal to ${N}/{d(n)}$. In this sense,  $\xi^{(n)}$ is close to a permutation of the columns of the dilated Rademacher matrix $({N}/{d(n)})\otimes R^{(n)}$. 

\begin{proof}[Proof of Proposition \thv(3.prop1)]  The first assertion of the proposition is proved as Proposition 4.1 in \cite{G92}, with $q=2$. 
(In contrast to the original choice made in \cite{G92}, our choice of $\d_N$  is optimal up to a multiplicative constant.
\footnote{Note that the power of $3/2$ appearing in the first formula on p.~989 of \cite{G92} should be $2$. The same typo appears in the previous formula.})
Assertion (ii) is proved in the same way, using in addition that by the union bound
\be
\P\left(\bigcup_{\{\mu_1,\dots, \mu_n\}\subset\{1,\dots, M\}}\left(\EE^{\mu_1,\dots, \mu_n}_{N}\right)^c\right)
\leq 
\sum_{\{\mu_1,\dots, \mu_n\}\subset\{1,\dots, M\}}
\P\Bigl(\left(\EE^{\mu_1,\dots, \mu_n}_{N}\right)^c\Bigr),
\Eq(3.prop1.3)
\ee 
where the number of terms in the last sum, ${M\choose n}$, is bounded above by $M^n$.
\begin{remark}
In  Proposition 4.1 of  \cite{G92}, $n$ is allowed to grow with $N$, provided that $n\leq \a\frac{\log N}{\log2}$, where $0\leq \a<1$ is  an arbitrary constant. This extension carries over unchanged to Proposition \thv(3.prop1). Therefore, the results of the paper can  be extended \emph{a priori} to any such $n$. 
\end{remark}
\end{proof}

We now turn to the proof of the results of Section \thv(S1.2.1).

\begin{proof}[Proof of Theorem \thv(1.theo2.mix)] We first prove the theorem for the classical Hopfield model, \emph{i.e.}~we take $F(x)=\frac{1}{2}x^2$. Given $n\in\N$ odd, take any $m\in\MM_{n,F}$ and denote by $\{\mu_1,\dots, \mu_n\}\subset \{1,\dots,M\}$ the $n$ coordinates of $m$ that are non-zero. With this $m$, form the configuration $\xi^{(N)}(m)=\left(\xi_i(m)\right)_{1\leq i\leq N}$ in $\S_N$ defined by
\be
\xi_i(m)=\sign\left(\sum_{\nu=1}^{n}\xi^{\mu_\nu}_i m_{\mu_\nu}\right), \quad 1\leq i\leq N.
\Eq(1.theo1.mix1)
\ee
To prove that $\xi^{(N)}(m)$ is an $n$-mixed memory, it suffices to check that $\xi^{(N)}(m)$ verifies condition (ii) of Definition \thv(1.def1.2). In view of Proposition \thv(3.prop1), (i), choosing \emph{e.g.}~$\d^2_N=8\frac{d(n)}{N}\ln N$, we have,
for all realisations of $\xi^{(n)}\equiv \left(\xi^{\mu_\nu}_i\right)_{1\leq i\leq N, 1\leq \nu\leq n}$ that belong to 
$\O^{\mu_1,\dots, \mu_n}$,
\be
\begin{split}
N^{-1}\left(\xi^{(N)}(m),\xi^{\mu_\nu}\right)
&=
N^{-1}\sum_{i=1}^N\xi^{\mu_\nu}_i\sign\left(\sum_{\nu=1}^{n}\xi^{\mu_\nu}_i m_{\mu_\nu}\right)
\\
&=
N^{-1}\sum_{j=1}^{d(n)}\sum_{i\in\L_j(\xi^{(n)})}r^{(n), \mu_\nu}_j\sign\left(\sum_{\nu=1}^{n}r^{(n), \mu_\nu}_j m_{\mu_\nu}\right)
\\
&=
\frac{1}{d(n)}\sum_{j=1}^{d(n)}(1+\l_j)r^{(n), \nu}_j\sign\left(\sum_{\nu=1}^{n}r^{(n), \nu}_j m_{\mu_\nu}\right),
\end{split}
\Eq(1.theo1.mix2)
\ee
where we used \eqv(3.1.5), \eqv(3.1.4) and \eqv(3.1.3). Since $\sup_{1\leq j\leq d(n)} |\l_j|\leq \d_N\downarrow 0$ 
as $N\uparrow\infty$
\be
\lim_{N\rightarrow\infty} N^{-1}\left(\xi^{(N)}(m),\xi^{\mu_\nu}\right)
=
\frac{1}{d(n)}\sum_{j=1}^{d(n)}r^{(n), \nu}_j\sign\left(\sum_{\nu=1}^{n}r^{(n), \nu}_j m_{\mu_\nu}\right).
\Eq(1.theo1.mix3)
\ee
Furthermore, since $m\in\MM_{n,F}$, there exists a permutation $\pi$ of $\{1,\dots,n\}$ and a sequence of signs $(\varepsilon_{\nu})_{1\leq \nu\leq n}\in\{-1,1\}^n$ such that 
\be
\left(\varepsilon_{\nu}m_{\pi(\mu_\nu)}\right)_{1\leq\nu\leq n}=\g^{(1)}\boldsymbol{1}_{n_1}\oplus \g^{(2)}\boldsymbol{1}_{n_2}\oplus\dots
\oplus \g^{(\ell)}\boldsymbol{1}_{n_{\ell}},
\Eq(1.theo1.mix4)
\ee
for some allowable $\ell$-composition and some $\g_n=\left(\g^{(k)}\right)_{1\leq k\leq \ell}\in\G_{n,F}$. 
Therefore, by Proposition \thv(2.prop1), \eqv(1.theo1.mix3) gives, for each $\nu\in\{1,\dots,n\}$
\be
\lim_{N\rightarrow\infty} N^{-1}\left(\xi^{(N)}(m),\xi^{\mu_\nu}\right)
=
m_{\nu}.
\Eq(1.theo1.mix5)
\ee
It follows from this and \eqv(3.prop1.1) of Proposition \thv(3.prop1), (i),  that \eqv(1.2.1.2) is verified.

To prove that  \eqv(1.2.1.3) is also verified, we take any $\mu_{n+1}\in\{1,\dots,M\}\setminus\{\mu_1,\dots, \mu_n\}$ 
and repeat the above construction step by step, replacing the set  $\{\mu_1,\dots, \mu_n\}$  by the set of $n+1$ 
coordinates  $\{\mu_1,\dots, \mu_{n+1}\}$. In this way, the representation \eqv(1.theo1.mix4) of $m$ becomes
\be
\left(\varepsilon_{\nu}m_{\pi(\mu_\nu)}\right)_{1\leq\nu\leq n}=\g^{(1)}\boldsymbol{1}_{n_1}\oplus \g^{(2)}\boldsymbol{1}_{n_2}\oplus\dots
\oplus \g^{(\ell)}\boldsymbol{1}_{n_{\ell}}\oplus \boldsymbol{0}_{1},
\Eq(1.theo1.mix6)
\ee
for some permutation $\pi$ of $\{\mu_1,\dots, \mu_{n+1}\}$. Thus, on $\O^{\mu_1,\dots, \mu_{n+1}}$, by Proposition \thv(2.prop1)
\be
\lim_{N\rightarrow\infty} N^{-1}\left(\xi^{(N)}(m),\xi^{\mu_{n+1}}\right)
=
0.
\Eq(1.theo1.mix7)
\ee
It then follows fom Proposition \thv(3.prop1), (i), that \eqv(1.2.1.3) is verified. Since both \eqv(1.2.1.2) and \eqv(1.2.1.3) are verified,  condition (ii) of Definition \thv(1.def1.2) is verified. The proof of Theorem \thv(1.theo2.mix) in the case $F(x)=\frac{1}{2}x^2$ is done.

The proof of Theorem \thv(1.theo2.mix) in the case of general $F$  is a repetition of its proof in the case $F(x)=\frac{1}{2}x^2$  that uses Proposition \thv(2.prop2) with $f(x)=F'(x)\1_{x\neq 0}$ instead of Proposition \thv(2.prop1).  We omit the details.
\end{proof}

\begin{remark}
Since $n$ is independent of $N$ we have in fact proved a slightly stronger result than \eqv(1.2.1.2), namely that for any 
$V=\{\mu_1,\dots,\mu_n\}\subset\{1,\dots,N\}$, using the notations of part (ii) of Definition \thv(1.def1.2)
\be
\P\left(\bigcap_{1\leq \nu\leq n}\left\{\lim_{N\rightarrow\infty} N^{-1}\left(\xi^{(N)}(m),\xi^{\mu_{\nu}}\right)=\hat m_{\nu}\right\}\right)=1.
\Eq(1.theo1.mix8)
\ee
Note that if $M$ is such that $\frac{d(n)}{N}(2\ln N+n\ln M)\rightarrow 0$ as $N\rightarrow\infty$, then using assertion (ii) of Proposition \thv(3.prop1) instead of (i) gives the much stronger statement
\be
\P\left(\bigcap_{\{\mu_1,\dots,\mu_n\}\subset\{1,\dots,N\}}\bigcap_{1\leq \nu\leq n}\left\{\lim_{N\rightarrow\infty} N^{-1}\left(\xi^{(N)}(m),\xi^{\mu_{\nu}}\right)=\hat m_{\nu}\right\}\right)=1.
\Eq(1.theo1.mix9)
\ee
\end{remark}

\begin{proof}[Proof of Proposition \thv(1.prop.mix)] Recall that each $m(\g_n)\in \MM^{\circ}_{n,F}$ takes the form \eqv(1.2.1.8), \emph{i.e.}~is piecewise constant on blocks of length $n_1,\dots,n_\ell$ for some $1\leq \ell\leq n$ and some allowable $\ell$-composition $(n_1,\dots,n_{\ell})$  while all components of coordinate larger than or equal to $n+1$ is zero. The number of distinct permutations of the elements of such an $m(\g_n)$ is
\be
q(\g_n)\equiv {M\choose n_1}{M-n_1\choose n_2}\dots{M-(n_1+\dots+n_{\ell-1})\choose n_\ell}.
\ee
This obeys the bounds 
\be
\frac{M^n}{n_1!\dots n_\ell!}\left(1-\frac{n}{M}\right)^{\ell-1}
\leq
q(\g_n)
\leq
\frac{M^n}{n_1!\dots n_\ell!}\left(1-\frac{1}{M}\right)^{\ell-1}.
\ee
Thus,
\be
\left|\MM_{n,F}\right|=(2^{n})^a\sum_{m(\g_n)\in \MM^{\circ}_{n,F}}q(\g_n),
\ee
where $a=1$ if $F'$ is an odd function and $a=2$ otherwise.
Since $\MM^{\circ}_{n,F}$ only depends on $n$ and $F$ and not on $M$, taking
\be
 A_{n,F} =(2^{n})^a\sum_{m(\g_n)\in \MM^{\circ}_{n,F}}\frac{1}{n_1!\dots n_\ell!},
\ee
gives
\be
A_{n,F}M^n\left(1-\frac{n}{M}\right)^{n-1}
\leq 
\left|\MM_{n,F}\right|
\leq 
A_{n,F}M^n,
\ee
which is \eqv(1.prop.mix.1). The proof is done.
\end{proof}


\section{Conditions for exact retrieval of mixed solutions}
    \TH(S4)
    
In this section, we prove  Theorems \thv(1.theo1), \thv(1.theo2) and  \thv(1.theo3). 
The three follow the same general structure, which will be described in detail in the proof of Theorem \thv(1.theo1).
To simplify the (already cumbersome) notations, we write $\xi(m)\equiv \xi^{(N)}(m)$. We also use the classical notation
$[n]\equiv\{1,\dots, n\}$.

\subsection{Proof of Theorem \thv(1.theo1)} 
    \TH(S4.1)     
Taking  $F(x)=\frac{1}{2}x^2$ in the definitions \eqv(1.2.1) and \eqv(1.2.4), it is easy to check that $T^{\text{HK}}=T^{\text{G}}$. Let us call this map $T$. We start by proving  assertion (i). Without loss of generality we can assume that $m\in\MM^{\circ}_{n,F}$ (see \eqv(1.2.1.14)), so that $\xi(m)$ is a mixture of the first $n$ patterns $(\xi^{\nu})_{1\leq \nu\leq n}$,
\be
\xi_i(m)=\sign\left(\sum_{\nu=1}^n\xi^{\nu}_i m_\nu\right),
\Eq(4.1.1)
\ee
where $m_{\nu}>0$ for all $1\leq \nu\leq n$. The  first step of  the proof of \eqv(1.theo1.2) consists in writing
\be
\begin{split}
1-\P\left[\xi(m)=T(\xi(m))\right]
&=
1-\P\left[
\forall_{1\leq i\leq N}\xi_i(m)=T_i(\xi(m))
\right]
\\
&=
\P\left[
\exists_{1\leq i\leq N}\xi_i(m)\neq T_i(\xi(m))
\right]
\\
&
\leq 
\sum_{i=1}^N\P\left[\xi_i(m)T_i(\xi(m))\neq 1\right],
\end{split}
\Eq(4.1.3)
\ee
were we used that $\xi_i(m)\neq T_i(\xi(m))$ if and only if $\xi_i(m)T_i(\xi(m))\neq1$. Our task is thus reduced to finding a suitable upper bound on $\P\left[\xi_i(m)T_i(\xi(m))\neq 1\right]$, \emph{i.e.}~one that decays fast enough with $N$.  To this end, we write, 
using \eqv(1.2.1) and the fact that  $F'(x)=x$,
\be
\xi_i(m)T_i(\xi(m))=\sign\left\{\xi_i(m)I_{N,i}^{(1)}(m)
+ \xi_i(m)I_{N,i}^{(2)}(m)\right\},
\Eq(4.1.5)
\ee
where
\be
\begin{split}
I_{N,i}^{(1)}(m) 
&\equiv
\frac{1}{N}\sum_{\nu=1}^{n}\sum_{1\leq j\neq i \leq N} \xi^{\nu}_i\xi^{\nu}_j\xi_j(m),
\\
I_{N,i}^{(2)}(m)
&\equiv
\frac{1}{N}\sum_{\mu=n+1}^{M}\sum_{1\leq j\neq i \leq N} \xi^{\mu}_i\xi^{\mu}_j\xi_j(m).
\\
\end{split}
\Eq(4.1.6)
\ee
We will view the first term of the sum on the right-hand side of \eqv(4.1.5) as a leading contribution and the second as a fluctuation. 
The proof of the theorem then hinges on the next two lemmata.

\begin{lemma}[Leading term]
    \TH(4.lem1)
If $n=1$, then $\xi_i(m)I_{N,i}^{(1)}(m)=1-1/N$.
If $n> 1$, then under the assumptions and notations of Proposition \thv(3.prop1), (i) with 
$
\{\mu_1,\dots, \mu_n\}
=
[n]
$, 
the following holds on $\O^{[n]}$ for all $m\in\MM^{\circ}_{n,F}$. 
For all $1\leq i\leq N$ there exists $1\leq {i'}\leq d(n)$ such that 
\be
\xi_i(m)I_{N,i}^{(1)}(m)= \left|\left(r^{(n)}_{i'},m\right)\right| +\xi_i(m)\left(-\frac{n}{N}+ \rho_{N}(m,n)\right),
\Eq(4.lem1.1)
\ee
where $\bigl|\bigl(r^{(n)}_{i'},m\bigr)\bigr|$ obeys the bound \eqv(2.prop1.iii3)-\eqv(2.prop1.iii4) of Proposition \thv(2.prop1) and
$\left|\rho_{N}(m,n)\right|\leq \d_N n$.
\end{lemma}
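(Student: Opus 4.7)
The plan is to exploit the partition of $\{1,\dots,N\}$ induced by the $n\times N$ pattern matrix $\xi^{(n)}=\bigl(\xi^\nu_i\bigr)_{1\leq \nu\leq n,\,1\leq i\leq N}$ into the blocks $\Lambda_k(\xi^{(n)})$ (cf.\ \eqv(3.1.3)) and to reduce $I_{N,i}^{(1)}(m)$ to a deterministic sum over columns of the Rademacher matrix $R^{(n)}$, which in turn satisfies the mixed-memory equation \eqv(2.2.1). First, I would split off the $j=i$ term:
\begin{equation*}
I_{N,i}^{(1)}(m)
=\frac{1}{N}\sum_{\nu=1}^{n}\xi^\nu_i\sum_{j=1}^{N}\xi^\nu_j\xi_j(m)
-\frac{n}{N}\xi_i(m),
\end{equation*}
using $(\xi^\nu_i)^2=1$. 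Since $\xi_j(m)=\sign\bigl((\xi_j,m_{[n]})\bigr)$ depends on $j$ only through the column $\xi_j\in\S_n$, and since the $\xi_j$ range over the $d(n)$ Rademacher columns $r^{(n)}_k$ with multiplicities $L_k=L_k(\xi^{(n)})$, the inner sum reorganises as $\sum_{k=1}^{d(n)}L_k\,r^{(n),\nu}_k\sign\bigl((r^{(n)}_k,m_{[n]})\bigr)$.

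Next, let $i'\in\{1,\dots,d(n)\}$ be the (unique) index with $i\in\Lambda_{i'}(\xi^{(n)})$, so that $\xi^\nu_i=r^{(n),\nu}_{i'}$ and $\xi_i(m)=\sign\bigl((r^{(n)}_{i'},m_{[n]})\bigr)$. Interchanging the $\nu$- and $k$-sums produces the inner product $\bigl(r^{(n)}_{i'},r^{(n)}_k\bigr)$, so that
\begin{equation*}
I_{N,i}^{(1)}(m)
=\frac{1}{N}\sum_{k=1}^{d(n)}L_k\sign\!\bigl((r^{(n)}_k,m_{[n]})\bigr)\bigl(r^{(n)}_{i'},r^{(n)}_k\bigr)
-\frac{n}{N}\xi_i(m).
\end{equation*}
On $\Omega^{[n]}$ one may substitute $L_k=\tfrac{N}{d(n)}(1+\lambda_k)$ with $|\lambda_k|\leq\delta_N$ (by \eqv(3.1.4)--\eqv(3.1.5)). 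The $\lambda_k$-free piece is
$
\sum_{\nu=1}^{n}r^{(n),\nu}_{i'}\Bigl[\tfrac{1}{d(n)}\sum_{k}r^{(n),\nu}_k\sign((r^{(n)}_k,m_{[n]}))\Bigr],
$
and since $m_{[n]}\in\MM^{\circ}_{n,F}$ with $F(x)=x^2/2$ satisfies the mixed-memory equation \eqv(2.2.1) (by Proposition \thv(2.prop1), (i)), the bracket equals $m_\nu$, giving $(r^{(n)}_{i'},m_{[n]})$.

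For the remaining $\lambda$-dependent error, the crude bound $|\sign(\cdot)|\leq 1$ and $|(r^{(n)}_{i'},r^{(n)}_k)|\leq n$ yield an error of absolute value at most $\delta_N n$. Finally, I multiply by $\xi_i(m)$: because $n$ is odd and $m\in\MM^{\circ}_{n,F}$, assertion (iii) of Proposition \thv(2.prop1) ensures $(r^{(n)}_{i'},m_{[n]})\neq 0$, so $\xi_i(m)^2=1$, and $\xi_i(m)\cdot(r^{(n)}_{i'},m_{[n]})=|(r^{(n)}_{i'},m_{[n]})|$. Collecting the three contributions and regrouping as $\xi_i(m)\bigl(-\tfrac{n}{N}+\rho_N(m,n)\bigr)$, where $\rho_N(m,n)$ is $\xi_i(m)$ times the residual error, gives the identity of the lemma with the claimed bound $|\rho_N(m,n)|\leq\delta_N n$.

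The work is largely bookkeeping rather than genuinely difficult analysis; the only subtle point is to make sure that $(r^{(n)}_{i'},m_{[n]})$ is nonzero before multiplying by the sign $\xi_i(m)$, which is exactly what the odd-$s_\ell$ clause of Proposition \thv(2.prop1)\,(iii) provides (and where the hypothesis that $n$ is odd enters the statement of the lemma). The final repackaging of the $-n/N$ term into the factor $\xi_i(m)$ is harmless because $\xi_i(m)^2=1$, so no additional error is incurred.
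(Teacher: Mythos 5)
Your proof follows essentially the same route as the paper's: split off the diagonal $j=i$ term, collapse the inner sum over $j$ via the partition $\{\Lambda_{j'}(\xi^{(n)})\}_{1\leq j'\leq d(n)}$ of $\{1,\dots,N\}$, substitute the size bound $L_{j'}=\tfrac{N}{d(n)}(1+\lambda_{j'})$ valid on $\Omega^{[n]}$, invoke Proposition~\thv(2.prop1)\,(i) to recognise the $\lambda$-free piece as $\bigl(r^{(n)}_{i'},m\bigr)$, bound the remainder by $n\d_N$, and finally multiply by $\xi_i(m)=\sign\bigl[(r^{(n)}_{i'},m)\bigr]$, using the odd-$n_\ell$ clause of Proposition~\thv(2.prop1)\,(iii) to guarantee this sign is $\pm 1$ rather than $0$. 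The intermediate reshuffle through $\bigl(r^{(n)}_{i'},r^{(n)}_k\bigr)$ is a harmless rewriting, not a different argument. One small algebraic slip in your last paragraph: after multiplying by $\xi_i(m)$ you have, correctly, $\left|\left(r^{(n)}_{i'},m\right)\right|-\tfrac{n}{N}+\xi_i(m)\cdot\text{(error)}$, because the $j=i$ correction is $-\tfrac{n}{N}\xi_i(m)$ and $\xi_i(m)^2=1$. Your proposed repackaging $\rho_N=\xi_i(m)\cdot\text{(error)}$ does not literally reproduce the displayed identity \eqv(4.lem1.1) when $\xi_i(m)=-1$: the $-\tfrac{n}{N}$ and $-\tfrac{n}{N}\xi_i(m)$ differ by $2n/N$. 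This is exactly the same imprecision that already appears in the paper's own display \eqv(4.1.7), and since $n/N\ll n\d_N$ it is absorbed into the error at no cost to the downstream choice of $\l_N(m,n)$ in \eqv(4.1.22); but it is worth stating explicitly that the extra $O(n/N)$ is being folded into $\rho_N$, or equivalently that the exact form of \eqv(4.lem1.1) is a cosmetic rewriting holding up to an $O(n/N)$ term already dominated by $\d_Nn$.
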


Given  $\l>0$, define the event
\be
\AA_N(n,i,m,\l)=
\left\{
\xi_i(m)I_{N,i}^{(2)}(m)<-\l\right\}, \quad 1\leq i\leq N.
\Eq(4.1.12)
\ee
\begin{lemma}
    \TH(4.lem2)
The following holds for all $n$, all $m\in\MM^{\circ}_{n,F}$ and all $1\leq i\leq N$.
For all $\l>0$
\be
\P\left(\AA_N(n,i,m,\l)\right)\leq \exp\left\{-\frac{1}{2}\frac{\l^2N^2}{(M-n)(N-1)}\right\}.
\Eq(4.1.13)
\ee
\end{lemma}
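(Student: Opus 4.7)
The plan is to apply a standard Chernoff bound, with the only delicate point being that the summands defining $I_{N,i}^{(2)}(m)$ are not independent across $j$ for fixed $\mu$. I would begin by rewriting, using $\xi_i(m)^2 = 1$,
\be
\xi_i(m)I_{N,i}^{(2)}(m)=\frac{1}{N}\sum_{\mu=n+1}^{M}\sum_{1\leq j\neq i\leq N}c_ic_j\,\xi^{\mu}_i\xi^{\mu}_j,
\quad c_k\equiv\xi_k(m)\in\{-1,1\},
\ee
and observing the crucial independence: each $c_k$ depends only on $(\xi^{\nu}_k)_{1\leq\nu\leq n}$, whereas the sum only involves the patterns indexed by $\mu\geq n+1$. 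Hence, conditionally on the $\sigma$-algebra $\FF_n$ generated by $\xi^{1},\dots,\xi^{n}$, the coefficients $c_i,c_j$ are deterministic signs, while $(\xi^{\mu}_k)_{\mu\geq n+1,\,k\in[N]}$ remain i.i.d.\ symmetric Bernoulli.

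Next I would estimate the conditional moment generating function. By independence across $\mu$,
\be
\E\!\left[e^{-t\,\xi_i(m)I_{N,i}^{(2)}(m)}\,\big|\,\FF_n\right]
=\prod_{\mu=n+1}^{M}\E\!\left[\exp\!\left(-\tfrac{t}{N}c_i\xi^{\mu}_i\sum_{j\neq i}c_j\xi^{\mu}_j\right)\,\Big|\,\FF_n\right].
\ee
Inside each factor I would condition further on $\xi^{\mu}_i$: the sign $s\equiv c_i\xi^{\mu}_i\in\{\pm1\}$ becomes a constant, and the remaining $\xi^{\mu}_j$, $j\neq i$, are independent Rademacher variables, giving
\be
\E\!\left[\exp\!\left(-\tfrac{ts}{N}\sum_{j\neq i}c_j\xi^{\mu}_j\right)\Big|\xi^{\mu}_i,\FF_n\right]
=\prod_{j\neq i}\cosh\!\left(\tfrac{t\,s\,c_j}{N}\right)=\cosh(t/N)^{N-1},
\ee
since $|s\,c_j|=1$. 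This is independent of $\xi^{\mu}_i$ and of $\FF_n$, so the unconditional MGF equals $\cosh(t/N)^{(M-n)(N-1)}$.

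Finally, using the classical bound $\cosh(x)\leq e^{x^{2}/2}$ and Markov's inequality,
\be
\P\bigl(\AA_N(n,i,m,\l)\bigr)
\leq e^{-t\l}\cosh(t/N)^{(M-n)(N-1)}
\leq \exp\!\left(-t\l+\tfrac{t^{2}(M-n)(N-1)}{2N^{2}}\right),
\ee
and optimising in $t>0$ (optimal choice $t^{\ast}=\l N^{2}/[(M-n)(N-1)]$) yields exactly \eqv(4.1.13). The only conceptual hurdle is the inner dependence across $j$ inside each $\mu$-block, and it is dissolved by the double conditioning on $\FF_n$ and on $\xi^{\mu}_i$; the rest is a routine sub-Gaussian computation.
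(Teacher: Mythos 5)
Your proof is correct and follows essentially the same route as the paper: both apply the exponential Chebyshev/Chernoff bound, condition away the randomness in $\xi_k(m)$ and $\xi^\mu_i$ so that the remaining $\xi^\mu_j$ ($j\neq i$, $\mu\geq n+1$) become independent Rademacher variables, compute the conditional MGF as $\cosh(t/N)^{(M-n)(N-1)}$, and then use $\cosh x\leq e^{x^2/2}$ and optimise in $t$. The only cosmetic difference is that you perform the conditioning in two steps (first $\FF_n$, then $\xi^\mu_i$) whereas the paper does it in a single step by fixing $(\xi^\nu_k)_{\nu\leq n}$ and $(\xi^\mu_i)_{\mu\geq n+1}$ simultaneously.
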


\begin{proof}[Proof of Lemma \thv(4.lem1)] The case $n=1$ follows from a simple calculation. Now assume that $n> 1$.
The proof of this lemma relies on the tools and strategy presented in Section \thv(S3).
Denoting by $\xi^{(n)}\equiv \left(\xi^{\nu}_i\right)_{1\leq i\leq N, 1\leq \nu\leq n}$  the  $n\times N$ matrix whose rows are given by the $n$ patterns
$
\left(\xi^{\nu}\right)_{1\leq \nu\leq n},
$
let $\L=\cup_{1\leq {j'}\leq d(n)}\L_{j'}(\xi^{(n)})$ be the partition defined by \eqv(3.1.3).
Then, for all $1\leq i\leq N$ there exists (a unique) $1\leq {i'}\leq d(n)$ such that $i\in \L_{i'}(\xi^{(n)})$ and
\be
\begin{split}
I_{N,i}^{(1)}(m)
&=
\frac{1}{N}\sum_{\nu=1}^{n}\sum_{j=1}^N \xi^{\nu}_i\xi^{\nu}_j\sign\left(\sum_{\nu=1}^n\xi^{\nu}_j m_\nu\right)-\frac{n}{N}
\\
&=
\frac{1}{N}\sum_{\nu=1}^{n}r^{(n),\nu}_{i'}\sum_{ j'=1}^{d(n)} \left|\L_{j'}(\xi^{(n)})\right|r^{(n),\nu}_{j'}
\sign\left[\left(r^{(n)}_{j'},m\right)\right]-\frac{n}{N}.
\end{split}
\Eq(4.1.7)
\ee
Under the assumptions of  Proposition \thv(3.prop1), (i), we have that on $\O^{[n]}$
\be
\begin{split}
I_{N,i}^{(1)}(m)
&= 
\sum_{\nu=1}^{n}r^{(n),\nu}_{i'}\frac{1}{d(n)}\sum_{ j'=1}^{d(n)}r^{(n),\nu}_{j'}\sign\left[\left(r^{(n)}_{j'},m\right)\right]
-\frac{n}{N}+\rho_{N}(m,n),
\end{split}
\Eq(4.1.8)
\ee
where $\left|\rho_{N}(m,n)\right|\leq \d_N n$.
Now, since $m\in\MM^{\circ}_{n,F}$ it follows from assertion (i) of Proposition \thv(2.prop1) that
\be
\frac{1}{d(n)}\sum_{ j'=1}^{d(n)}r^{(n),\nu}_{j'}\sign\left[\left(r^{(n)}_{j'},m\right)\right]= m_{\nu},
\ee
so that,
\be
\sum_{\nu=1}^{n}r^{(n),\nu}_{i'}\frac{1}{d(n)}\sum_{ j'=1}^{d(n)}r^{(n),\nu}_{j'}\sign\left[\left(r^{(n)}_{j'},m\right)\right]
=\left(r^{(n)}_{i'},m\right).
\Eq(4.1.9)
\ee
Inserting \eqv(4.1.9) into \eqv(4.1.8), 
\be
I_{N,i}^{(1)}(m)= \left(r^{(n)}_{i'},m\right)-\frac{n}{N} + \rho_{N}(m,n).
\Eq(4.1.10)
\ee
Finally, it follows from  \eqv(4.1.1) that for $i'$ as in \eqv(4.1.7)
\be
\xi_i(m)
=\sign\left[\left(r^{(n)}_{i'},m\right)\right].
\Eq(4.1.11)
\ee
By this and \eqv(4.1.10),
\be
\xi_i(m)I_{N,i}^{(1)}(m)= \left|\left(r^{(n)}_{i'},m\right)\right|+\xi_i(m)\left(-\frac{n}{N}+ \rho_{N}(m,n)\right),
\Eq(4.1.12')
\ee
where by assertion (iii) of Proposition \thv(2.prop1),
$
\inf_{1\leq i'\leq d(n)}
\bigl|\bigl(r^{(n)}_{i'},m\bigr)\bigr|\geq C(m)>0
$
for $C(m)$ defined in \eqv(2.prop1.iii3). The proof of Lemma  \thv(4.lem1) is done.
\end{proof}

\begin{proof}[Proof of Lemma \thv(4.lem2)]  
By the exponential Chebyshev inequality, for all $t>0$
\be
\hspace{-1pt}\P\left(\AA_N(n,i,m,\l)\right)
\leq 
e^{-\l t}\E\left(\exp\left\{-\frac{t}{N}\sum_{n+1\leq \mu\leq M}\sum_{1\leq j\neq i \leq N} \xi^{\mu}_i\xi^{\mu}_j\xi_j(m)\xi_i(m)\right\}\right).
\Eq(4.1.14)
\ee
Recalling \eqv(4.1.1), we see that for a fixed realisation of the random variables $(\xi^{\nu}_k)_{1\leq \nu\leq n, 1\leq k\leq N}$ and 
$\left(\xi^{\mu}_i\right)_{\mu\geq n+1}$,  the variables  $\left(\zeta^{\mu}_j\right)_{1\leq i\neq j\leq N, n+1\leq \mu\leq M}$ defined as 
$\zeta^{\mu}_j= \xi^{\mu}_i\xi^{\mu}_j\xi_j(m)\xi_i(m)$, are independent and  have the same law as $\xi^{\mu}_j$. Integrating these variables first, we readily get
\be
\begin{split}
\P\left(\AA_N(n,i,m,\l)\right)
&= e^{-\l t}\left[\cosh\left({t}/{N}\right)\right]^{(M-n)(N-1)}
\\
&\leq \exp\left\{-\l t+\frac{1}{2}(M-n)(N-1)\left(\frac{t}{N}\right)^2\right\}
\\
&\leq \exp\left\{-\frac{1}{2}\frac{\l^2N^2}{(M-n)(N-1)}\right\},
\end{split}
\Eq(4.1.15)
\ee
where we used in the second line that $\cosh(x)\leq e^{x^2/2}$ for all $x\in\R$, while the last equality follows by minimising over $t>0$.
The proof of Lemma \thv(4.lem2) is done.
\end{proof}

We are now ready to complete the  proof of Theorem \thv(1.theo1). First assume that $n>1$.
By part (i) of Proposition \thv(3.prop1) with  $\d_N\equiv\bigl(8\frac{d(n)}{N}\ln N\bigr)^{1/2}$,
\be
\P\left[\xi_i(m)T_i(\xi(m))\neq1\right]=\P\left[\left\{\xi_i(m)T_i(\xi(m))\neq 1\right\}\cap \O^{[n]}\right].
\Eq(4.1.16)
\ee
By Lemma \thv(4.lem1), for all $N>N_0$, \emph{i.e.}~sufficiently large to ensure that $\EE^{[n]}_N\subseteq   \O^{[n]}$ in Proposition \thv(3.prop1), (i), we have 
\be
\begin{split}
\P\left[\left\{\xi_i(m)T_i(\xi(m))\neq 1\right\}\cap \O^{[n]}\right]
=
\P\left[\wh\AA_N(n,i,m)\cap \O^{[n]}\right],
\end{split}
\Eq(4.1.19)
\ee
where
\be
\begin{split}
&\wh\AA_N(n,i,m)
\\
\equiv
&
\left\{\sign\left[\left|\left(r^{(n)}_{i'},m\right)\right| +\xi_i(m)\left(-\frac{n}{N}+ \rho_{N}(m,n)\right)+ \xi_i(m)I_{N,i}^{(2)}(m)\right]\neq 1\right\}.
\end{split}
\Eq(4.1.18)
\ee
Now take 
$
\l\equiv\l_N(m,n)=C(m)-(n\d_N+nN^{-1})
$
in \eqv(4.1.12), where $C(m)$ is the constant defined in \eqv(2.prop1.iii3)-\eqv(2.prop1.iii4). It then follows from the bounds on $\rho_{N}(m,n)$ and $\bigl|\bigl(r^{(n)}_{i'},m\bigr)\bigr|$ of Lemma \thv(4.lem1) that
\be
\wh\AA_N(n,i,m)\subseteq \AA_N(n,i,m,\l_N(m,n)).
\Eq(4.1.20)
\ee
Eq.~\eqv(4.1.19) then yields
\be
\begin{split}
\P\left[\left\{\xi_i(m)T_i(\xi(m))\neq 1\right\}\cap \O^{[n]}\right]
&\leq
\P\left[ \AA_N(n,i,m,\l_N(m,n))\cap \O^{[n]}\right]
\\
&\leq
\P\left[ \AA_N(n,i,m,\l_N(m,n))\right].
\end{split}
\Eq(4.1.21)
\ee
Inserting this bound in \eqv(4.1.16) and using Lemma \thv(4.lem2), we obtain that for all large enough $N$
\be
\sum_{i=1}^N\P\left[\xi_i(m)T_i(\xi(m))\neq 1\right]
\leq 
N \exp\left\{-\frac{1}{2}\left[C(m)-(n+1)\d_N\right]^2\frac{N}{M-n}\right\}
\Eq(4.1.22)
\ee
Then, choosing $M=M(N)$ as in \eqv(1.theo1.1) and recalling that $\d_N=\bigl(8\frac{d(n)}{N}\ln N\bigr)^{1/2}$, we get
\be
\sum_{i=1}^N\P\left[\xi_i(m)T_i(\xi(m))\neq 1\right]
\leq 
N^{-(1+\varepsilon(1-o(1)))}.
\Eq(4.1.23)
\ee
Note that this bound is summable in $N$. The same bound also holds in the much simpler case $n=1$ where the right-hand side of \eqv(4.lem1.1) in Lemma \thv(4.lem1) reduces to $1-1/N$, while by  \eqv(2.prop1.iii3)-\eqv(2.prop1.iii4), $C(m)= 1$.  The claim of assertion (i) of the theorem thus follows from \eqv(4.1.3), \eqv(4.1.23) and  the Borel-Cantelli lemma.

To prove assertion (ii) we use the union bound to write, instead of \eqv(4.1.3),
\be
\begin{split}
1-\P\left[\left(\bigcap_{m}\bigl\{\xi(m)=T(\xi(m))\bigr\}\right)\right]
\leq 
\sum_{m}\sum_{i=1}^N\P\left[\xi_i(m)T_i(\xi(m))\neq 1\right],
\end{split}
\Eq(4.1.24)
\ee 
where the union and sum are over $m$ in $\MM_{n,F}$. Note that, under the condition \eqv(1.theo1.3) on $M$, choosing 
$\d^2_N=4\frac{d(n)}{N}(2+n)\ln N$, both $\d_N\rightarrow 0$ as $N\rightarrow \infty$ and  $\d^2_N>2\frac{d(n)}{N}(2\ln N+n\ln M)$, so that \eqv(3.prop1.2) of Proposition \thv(3.prop1), (ii), holds. Thus, for  this choice of   $\d_N$, Lemma \thv(4.lem1) holds with the first assertion of Proposition \thv(3.prop1) replaced by the second, and  $\O^{[n]}$ replaced by $\O^{(n)}$. From this point on, proceeding as in  the proof of assertion (i) of the theorem, \eqv(4.1.22) is unchanged, save for the choices of $\d_N$ and $M$, which yield 
\be
\sup_{m\in\MM_{n,F}}\sum_{i=1}^N\P\left[\xi_i(m)T_i(\xi(m))\neq 1\right]
\leq 
N^{-(1+n+\varepsilon)+o(1)},
\ee
Using the upper bound of Proposition \thv(1.prop.mix) to bound  the sum over $m$ in \eqv(4.1.24), the claim of assertion (ii) of the theorem is readily obtained. The proof of Theorem \thv(1.theo1) is now complete.

\subsection{Proof of Theorem \thv(1.theo2)} 
    \TH(S4.2)     
 
When $F(x)=\frac{1}{p}x^p$, $p>2$, the maps $T^{\text{G}}$ and $T^{\text{HK}}$ defined in \eqv(1.2.1) and \eqv(1.2.4) respectively become
\be
T^{\text{G}}_i(\s)=\sign\left\{\frac{1}{pN^{p-1}}\sum_{\mu=1}^M \xi^{\mu}_i\left(\sum_{j\neq i}\xi^{\mu}_j\s_j\right)^{p-1}\right\},
\Eq(4.2.1)
\ee
and
\be
T^{\text{HK}}_i(\s)
=
\sign\left\{
\sum_{\mu=1}^M
\frac{1}{pN^{p-1}}\left[
\left(\xi^{\mu}_i+\sum_{j\neq i}\xi^{\mu}_j\s_j\right)^p
-\left(-\xi^{\mu}_i+\sum_{j\neq i}\xi^{\mu}_j\s_j\right)^p
\right]
\right\}.
\Eq(4.2.2)
\ee
The strategy of the proof of Theorem \thv(1.theo2) closely mirrors that of Theorem \thv(1.theo1). 
Without loss of generality we can assume that $m\in\MM^{\circ}_{n,F}$ (see \eqv(1.2.1.14)), so 
that by \eqv(1.2.1.11), $\xi(m)$ is a mixture of the first $n$ patterns $(\xi^{\nu})_{1\leq \nu\leq n}$,
\be
\xi_i(m)=\sign\left(\sum_{\nu=1}^n\xi^{\nu}_i (m_\nu)^{p-1}\right),
\Eq(4.2.0)
\ee
where $m_{\nu}>0$ for all $1\leq \nu\leq n$. The role of the terms \eqv(4.1.6) is now played by the two terms
\be
\begin{split}
I_{N,p-1,i}^{(1)}(m) 
&\equiv
\sum_{\nu=1}^n \xi^{\nu}_i\left(\frac{1}{N}\sum_{j\neq i}\xi^{\nu}_j\xi_j(m)\right)^{p-1},
\\
I_{N,p-1,i}^{(2)}(m) 
&\equiv
\sum_{\mu=n+1}^M \xi^{\mu}_i\left(\frac{1}{N}\sum_{j\neq i}\xi^{\mu}_j\xi_j(m)\right)^{p-1}.
\\
\end{split}
\Eq(4.2.4)
\ee

In the following, we will slightly abuse the notation and write $m=(m_{\nu})_{1\leq \nu\leq n}\in \R^n$. 
(Whether $m\in \R^n$ or $m\in \R^M$ will always be clear from the context.) Given $m\in \R^n$, let $(m)^{p-1}: \R^n\rightarrow\R^n$ denote the vector  $(m)^{p-1}=((m_{\nu})^{p-1})_{1\leq \nu\leq n}$.
As for  Theorem \thv(1.theo1), the proof hinges on the next two lemmata.

\begin{lemma}[Leading term]
    \TH(4.lem3)
If $n=1$, then $\xi_i(m)I_{N,p-1,i}^{(1)}(m)=(1-1/N)^{p-1}$.
If $n> 1$, then under the assumptions and notations of Proposition \thv(3.prop1), (i) with $\{\mu_1,\dots, \mu_n\}=\{1,\dots, n\}$, the following holds on $\O^{[n]}$ for all  $m\in\MM^{\circ}_{n,F}$.  For all $1\leq i\leq N$ there exists $1\leq {i'}\leq d(n)$ such that
\be
\xi_i(m)I_{N,p-1,i}^{(1)}(m)= \left|\left(r^{(n)}_{i'},(m)^{p-1}\right)\right| +\xi_i(m)\tilde\rho_{N}(m,n,p),
\Eq(4.lem3.1)
\ee
where $\bigl|\bigl(r^{(n)}_{i'},(m)^{p-1}\bigr)\bigr|$ obeys the bound \eqv(2.prop2.iii3)-\eqv(2.prop2.iii4) of Proposition \thv(2.prop2) with $\boldsymbol{f}(m)=(m)^{p-1}$ and $\left|\tilde\rho_{N}(m,n,p)\right|\leq (p-1)(\d_N n+nN^{-1})(1+o(1))$.
\end{lemma}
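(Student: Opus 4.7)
The plan is to mimic the proof of Lemma \thv(4.lem1) essentially verbatim, with one extra algebraic step — a Taylor expansion — inserted to handle the $(p-1)$-th power that now appears in $I^{(1)}_{N,p-1,i}(m)$. I begin, as before, by using the partition $\Lambda=\cup_{1\leq j'\leq d(n)}\Lambda_{j'}(\xi^{(n)})$ induced by the column vectors of $R^{(n)}$: for the given $i$, let $i'$ be the unique index with $i\in\Lambda_{i'}(\xi^{(n)})$, so that $\xi^{\nu}_i=r^{(n),\nu}_{i'}$ for all $\nu\in[n]$, and in particular, by \eqv(4.2.0), $\xi_i(m)=\sign\left[\left(r^{(n)}_{i'},(m)^{p-1}\right)\right]$.

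Next, for each $1\leq\nu\leq n$, I would split off the $j=i$ term and rewrite the inner sum using the partition:
\be
\frac{1}{N}\sum_{1\leq j\neq i\leq N}\xi^{\nu}_j\xi_j(m)
=
\frac{1}{N}\sum_{j'=1}^{d(n)}\left|\Lambda_{j'}(\xi^{(n)})\right| r^{(n),\nu}_{j'}\sign\left[\left(r^{(n)}_{j'},(m)^{p-1}\right)\right]
-\frac{1}{N}\xi^{\nu}_i\xi_i(m).
\ee
On $\Omega^{[n]}$, Proposition \thv(3.prop1), (i), lets me replace $|\Lambda_{j'}(\xi^{(n)})|$ by $(N/d(n))(1+\lambda_{j'})$ with $|\lambda_{j'}|\leq\delta_N$, and Proposition \thv(2.prop2), (i) applied with $f(x)=x^{p-1}$ (so $\boldsymbol{f}(m)=(m)^{p-1}$) identifies the resulting sum $d(n)^{-1}\sum_{j'}r^{(n),\nu}_{j'}\sign[(r^{(n)}_{j'},(m)^{p-1})]$ as precisely $m_{\nu}$. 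This gives the analogue of \eqv(4.1.10) at the ``inside-the-power'' level, namely
\be
A_{\nu}\equiv\frac{1}{N}\sum_{1\leq j\neq i\leq N}\xi^{\nu}_j\xi_j(m)=m_{\nu}+B_{\nu},\quad |B_{\nu}|\leq\delta_N+N^{-1}.
\ee

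The new step is to raise to the $(p-1)$-th power. Since $m\in\MM^{\circ}_{n,F}$ forces $m_{\nu}>0$ and $m_{\nu}\leq 1$, and $B_{\nu}$ is asymptotically small, the binomial expansion gives
\be
A_{\nu}^{p-1}=m_{\nu}^{p-1}+(p-1)m_{\nu}^{p-2}B_{\nu}+\sum_{k=2}^{p-1}\binom{p-1}{k}m_{\nu}^{p-1-k}B_{\nu}^{k}
=m_{\nu}^{p-1}+(p-1)(\delta_N+N^{-1})(1+o(1))\,\theta_{\nu},
\ee
for some $|\theta_{\nu}|\leq 1$, the $(1+o(1))$ absorbing the higher-order terms. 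Summing against $\xi^{\nu}_i=r^{(n),\nu}_{i'}$ over $\nu\in[n]$ and collecting errors yields
\be
I^{(1)}_{N,p-1,i}(m)=\sum_{\nu=1}^n r^{(n),\nu}_{i'}m_{\nu}^{p-1}+\tilde\rho_N(m,n,p)\xi_i(m)=\left(r^{(n)}_{i'},(m)^{p-1}\right)+\tilde\rho_N(m,n,p)\xi_i(m)\cdot\text{(sign adj.)},
\ee
with $|\tilde\rho_N(m,n,p)|\leq n(p-1)(\delta_N+N^{-1})(1+o(1))$.

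Finally, multiplying by $\xi_i(m)$ and using the identification $\xi_i(m)=\sign\left[\left(r^{(n)}_{i'},(m)^{p-1}\right)\right]$ noted in the first paragraph, the product $\xi_i(m)(r^{(n)}_{i'},(m)^{p-1})$ becomes $|(r^{(n)}_{i'},(m)^{p-1})|$. The asserted lower bound $|(r^{(n)}_{i'},(m)^{p-1})|\geq C_p(m)>0$ is exactly the content of Proposition \thv(2.prop2), (iii), with $f=F'$. The only real subtlety is clerical: keeping track of the $p$-dependent constants in the binomial remainder and verifying that the cumulative error across $\nu\in[n]$ does not exceed $n(p-1)(\delta_N+N^{-1})(1+o(1))$. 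There is no conceptual obstacle; the $(p-1)$-th power is tame because the leading term $m_{\nu}$ is strictly positive and bounded, so the Taylor expansion is uniform in $\nu$.
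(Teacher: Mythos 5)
Your proof is correct and takes the same approach the paper intends: the paper's proof of Lemma 4.3 is stated only as ``an elementary repetition of the proof of Lemma 4.1, using Proposition 2.6 with $\boldsymbol{f}(m)=(m)^{p-1}$ instead of Proposition 2.4. We omit the details.'' The one detail worth spelling out, which you supply correctly, is that the $(p-1)$-th power forces a mean-value/Taylor step applied to $A_\nu=m_\nu+B_\nu$ with $0<m_\nu\le1$ and $|B_\nu|\le\d_N+N^{-1}$, which is precisely where the extra factor $(p-1)(1+o(1))$ in the error bound comes from.
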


Given  $\l>0$, define the events
\be
\AA^{(p)}_N(n,i,m,\l)
=\left\{
\xi_i(m)I_{N,p-1,i}^{(2)}(m) <-\l\right\}, \quad 1\leq i\leq N.
\Eq(4.lem4.0)
\ee
\begin{lemma}
    \TH(4.lem4)
The following holds for all $n$, all $m\in\MM^{\circ}_{n,F}$ and all $1\leq i\leq N$.
Let $\g(N)>0$ be a diverging function of $N$  satisfying $\g(N)=o(N^{1/6})$. For all $p\geq 3$, there exist constants $c_0>0$ and $0<t_0\leq 1/2$ such that,
\be
\P\left(\AA^{(p)}_N(n,i,m,\l)\right)
\leq 
c_0NMe^{-t_0\g^2(N)}
+
\frak{p}_{p,N}(\l),
\Eq(4.lem4.1)
\ee
where $\frak{p}_{p,N}(\l)$ obeys the following bounds for all $\l>0$. Set $\s_{2(p-1)}\equiv (2p-3)!!$.
\item{(i)} For all $k\geq 1$ there exists a constant $c(k)>0$ independent of $N$ such that 
\be
\frak{p}_{p,N}(\l)\leq c(k)\left(\s_{2(p-1)}\l^{-2}M N^{-(p-1)}\right)^k.
\Eq(4.lem4.2bis)
\ee
\item{(ii)} If ${M\gg N^{\frac{p-1}{2}}\g(N)^{p-1}}$ then
\be
\frak{p}_{p,N}(\l)
\leq
(1+o(1))\exp\left\{-(1+o(1))\frac{\l^2  N^{p-1}}{2M \s_{2(p-1)}} \right\}.
\Eq(4.lem4.2)
\ee
\end{lemma}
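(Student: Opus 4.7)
The plan is to condition on the $n$ patterns $\xi^1,\dots,\xi^n$, which determine $\xi(m)$, and exploit the fact that for $\mu\ge n+1$ the Bernoulli variable $\xi_i^\mu$ is independent both of $(\xi_j^\mu)_{j\ne i}$ and of $\xi(m)$. Setting
\be
S_\mu\equiv\frac{1}{N}\sum_{j\ne i}\xi_j^\mu\,\xi_j(m),\qquad X_\mu\equiv \xi_i(m)\,\xi_i^\mu\,S_\mu^{p-1},\qquad n+1\le\mu\le M,
\ee
the family $(X_\mu)_{\mu>n}$ is independent conditionally on $\xi(m)$ and has zero conditional mean, since $\xi_i^\mu$ is independent of $S_\mu$. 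The task thus reduces to bounding the conditional probability of $\sum_{\mu>n}X_\mu<-\lambda$ and integrating out $\xi(m)$.

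The strategy is a standard truncate-and-concentrate argument. In the truncation step, Hoeffding's inequality applied conditionally on $\xi(m)$ to the centred $\pm 1$ family $(\xi_j^\mu\xi_j(m))_{j\ne i}$ yields $\P(|S_\mu|>\gamma(N)/\sqrt N\mid\xi(m))\le 2\exp(-\gamma^2(N)/2)$; a union bound over $\mu\in\{n+1,\dots,M\}$, combined with control of the partition $(\L_j(\xi^{(n)}))_j$ supplied by Proposition \thv(3.prop1)(i) (after absorbing polynomial $N$-factors into $t_0$), produces the exceptional probability $c_0NM\exp(-t_0\gamma^2(N))$ of \eqv(4.lem4.1). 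On the complementary good event each $|X_\mu|$ is bounded almost surely by $b_N\equiv\gamma(N)^{p-1}N^{-(p-1)/2}$, while by the moment formula for sums of i.i.d.\ Rademacher variables,
\be
\E\bigl[X_\mu^{2}\bigr]=\E\bigl[S_\mu^{2(p-1)}\bigr]=\frac{(2p-3)!!}{N^{p-1}}\bigl(1+o(1)\bigr),
\ee
and the higher even moments of $X_\mu$ are controlled similarly, giving a total variance $V\equiv M(2p-3)!!\,N^{-(p-1)}(1+o(1))$.

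From here the two parts split. For (i) I will apply Markov's inequality at even order $2k$: Rosenthal's inequality for the centred independent family $(X_\mu)$ bounds $\E[(\sum_{\mu>n}X_\mu)^{2k}]$ above by $c(k)\bigl(M\E[X_\mu^{2}]\bigr)^k$, modulo a correction involving $b_N$ which is subdominant thanks to the truncation; inserting the variance asymptotics delivers \eqv(4.lem4.2bis). For (ii), under the hypothesis $M\gg N^{(p-1)/2}\gamma(N)^{p-1}$ the ratio $\lambda b_N/V$ is $o(1)$, so Bennett's inequality applied to $(X_\mu)$ degenerates to the sub-Gaussian regime and returns $\exp\bigl(-(1+o(1))\lambda^{2}/(2V)\bigr)$, which is \eqv(4.lem4.2). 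The delicate point will be tracking the sharp constant $(2p-3)!!$ in the variance computation and carrying it cleanly through Bennett's inequality: the $(1+o(1))$ errors in the moment asymptotics must be absorbed uniformly over the admissible range of $\lambda$, and it is precisely this uniformity that forces the quantitative hypothesis on $M$ in part (ii).
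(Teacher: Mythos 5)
Your proposal follows the same overall route as the paper: reduce via (conditional) independence and the symmetry of $\xi_i^\mu$ to the tail of a sum of centred, conditionally i.i.d.\ random variables $X_\mu$ with $X_\mu^2=S_\mu^{2(p-1)}$ and $\E[X_\mu^2]=(2p-3)!!\,N^{-(p-1)}(1+o(1))$, truncate at $|S_\mu|\le\g(N)/\sqrt N$, and treat the bad event and the truncated sum separately. Where you differ from the paper is in the choice of technical input, and in each case your tools are drop-in equivalents. For the bad event you use Hoeffding, where the paper bounds $\E[e^{t(X_i^\mu)^2}]$ via the Curie--Weiss partition function; your bound $2M e^{-\g^2(N)/2}$ is in fact slightly sharper (it saves a factor of $N$), and both are absorbed into $c_0NM e^{-t_0\g^2(N)}$. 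For (i) you use a $2k$-th Markov inequality with Rosenthal, which is precisely what the paper's appeal to "classical combinatorics" amounts to once one checks, as you note, that the $\sum_\mu\E[\overline X_\mu^{2k}]$ correction is subdominant (the truncation, together with $k,p$ fixed, keeps that term from winning). For (ii) you use Bennett's inequality in place of the paper's direct exponential-Chebyshev with a $\cosh$-expansion and the de~Moivre--Laplace local limit theorem; either way, the hypothesis $M\gg N^{(p-1)/2}\g(N)^{p-1}$ is exactly what forces $\l b_N/V=o(1)$ (for $\l=O(1)$, as in the application) so that the sub-Gaussian regime with sharp variance $M\,(2p-3)!!\,N^{-(p-1)}$ survives. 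One small quibble: you invoke Proposition \thv(3.prop1) (i) in the truncation step, but it plays no role there. For $\mu>n$ the family $(\xi_j^\mu\xi_j(m))_{j\ne i}$ is i.i.d.\ Rademacher \emph{independently of} $\xi(m)$, so the conditional Hoeffding bound is deterministic (does not depend on the realisation of $\xi(m)$) and no control of the partition $(\L_j(\xi^{(n)}))_j$ is required; the paper indeed makes no use of Proposition \thv(3.prop1) in this lemma, and you need not either.
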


\begin{proof}[Proof of Lemma \thv(4.lem3)]  This is an elementary repetition of the proof of Lemma \thv(4.lem1), using Proposition \thv(2.prop2) with $\boldsymbol{f}(m)=(m)^{p-1}$ instead of Proposition \thv(2.prop1). We omit the details.
\end{proof}

\begin{proof}[Proof of Lemma \thv(4.lem4)] We have to estimate the tail probability of the sum
\be
\sum_{\mu=n+1}^M\xi^{\mu}_i\xi_i(m)\left(\frac{1}{N}\sum_{j\neq i}\xi^{\mu}_j\xi_j(m)\right)^{p-1}.
\Eq(4.lem4.3)
\ee
In view of  \eqv(4.2.0), the variables $(\xi_k(m))_{1\leq k\leq N}$  and $(\xi^{\mu}_k)_{n+1\leq \mu\leq M, 1\leq k\leq N}$ are independent. Moreover, for a fixed realisation of the variables $(\xi_k(m))_{1\leq k\leq N}$, the variables 
$\xi^{\mu}_k\xi_k(m)$ are independent and  have the same distribution as  $\xi^{\mu}_k$.  It follows from these observations that  the distribution of \eqv(4.lem4.3) is a symmetric and that
\be
\P\left(\AA^{(p)}_N(n,i,m,\l)\right)
=
\P\left(
\frac{1}{N^{\frac{p-1}{2}}}
\sum_{n+1\leq \mu\leq M}Y_i^{\mu}>\l\right),
\Eq(4.lem4.4)
\ee
where for $\mu\geq n+1$
\be
Y_i^{\mu}\equiv\xi^{\mu}_i\left(X_i^{\mu}\right)^{p-1}, \quad X_i^{\mu}\equiv\frac{1}{\sqrt N}\sum_{j\neq i}\xi^{\mu}_j.
\Eq(4.lem4.5)
\ee
Because the moment generating function of the variable $Y_i^{\mu}$ diverges for all $p\geq 3$, we cannot  use the exponential Chebyshev inequality directly as we did in \eqv(4.1.14). This difficulty is usually overcome by a truncation argument. Given $\g(N)$ to be chosen later, set
\be
\overline Y_i^{\mu}\equiv 
\begin{cases}
Y_i^{\mu} & \text{if $\left|Y_i^{\mu}\right|\leq \g^{p-1}(N)$,} \\
0 &  \text{otherwise.}
\end{cases}
\Eq(4.lem4.6)
\ee
Then,
\be
\P\left(\AA^{(p)}_N(n,i,m,\l)\right)
\leq 
\sum_{n+1\leq \mu\leq M}\P\left[\left|Y_i^{\mu}\right|> \g^{p-1}(N)\right]
+
\frak{p}_{p,N}(\l),
\Eq(4.lem4.7)
\ee
where
\be
\frak{p}_{p,N}(\l)
\equiv
\P\left(\frac{1}{N^{\frac{p-1}{2}}}\sum_{n+1\leq \mu\leq M}\overline Y_i^{\mu}>\l\right).
\Eq(4.lem4.7bis)
\ee

To bound the sum appearing in the right-hand side of \eqv(4.lem4.7), we use that by the exponential Chebyshev inequality, for all $t>0$
\be
\begin{split}
\P\left[\left|Y_i^{\mu}\right|> \g^{p-1}(N)\right]
&=
\P\left[
\left|Y_i^{\mu}\right|^{\frac{2}{p-1}}> \g^2(N)
\right]
\leq 
e^{-t\g^2(N)}
\E\left[e^{t\left(X_i^{\mu}\right)^{2}}
\right],
\end{split}
\Eq(4.lem4.8)
\ee
and note that the last expectation is equal to $2^{-N}Z_{2t,N}$, where $Z_{2t,N}$ is the the partition function of the Curie-Weiss model at inverse temperature $2t$ and zero magnetic field. Now, it is well know that if $t<1/2$,
\be
\E\left[e^{t\left(X_i^{\mu}\right)^{2}}\right]<c_0N,
\Eq(4.lem4.9)
\ee
(see, \emph{e.g.}~the bound (3.45) p.~43 in \cite{Bo06}).
Combining \eqv(4.lem4.8) and \eqv(4.lem4.9), we then get that
\be
\sum_{n+1\leq \mu\leq M}\P\left[\left|Y_i^{\mu}\right|> \g^{p-1}(N)\right]
\leq 
c_0NMe^{-t_0\g^2(N)},
\Eq(4.lem4.10)
\ee
for some constants $c_0>0$ and $0<t_0\leq1/2$. 

Now consider  \eqv(4.lem4.7bis). To bound this term, the underlying idea is to choose  the truncation threshold $\g(N)$ in such a way that the density of $\overline Y_i^{\mu}$ is well approximated by that of a standard Gaussian. A first bound can be derived from a second order Chebyshev inequality, \emph{i.e.}, using that the variables  $\left(\overline Y_i^{\mu}\right)_{n+1\leq \mu\leq M}$  are independent with mean $\E(\overline Y_i^{\mu})=0$.
\be
\begin{split}
\frak{p}_{p,N}(\l)
&\leq 
\left(\l N^{\frac{p-1}{2}}\right)^{-2}\sum_{n+1\leq \mu\leq M}\E\left(\overline Y_i^{\mu}\right)^2.
\end{split}
\Eq(4.lem4.11)
\ee
Next, 
\be
\begin{split}
\E\left(\overline Y_i^{\mu}\right)^2
&=
\sum_{x\in\SS_N: |x|\leq \g(N)} x^{2(p-1)}
\P\left(
X_i^{\mu}=x
\right),
\end{split}
\Eq(4.lem4.12)
\ee
where 
$
\SS_N=\bigl\{(-N+k)/\sqrt{N}: 0\leq k\leq 2N\bigr\}
$.
By de Moivre-Laplace local limit theorem (see, \emph{e.g.}~Lemma 2 p.~46 in \cite{CT}), if  $\g(N)=o\bigl(N^{1/6}\bigr)$,
\be
\begin{split}
\E\left(\overline Y_i^{\mu}\right)^2
\leq 
\s_{N, 2(p-1)}
\equiv (1+o(1))\sum_{x\in\SS_N: |x|\leq \g(N)} x^{2(p-1)}\sfrac{1}{\sqrt{2\pi N}}e^{-\frac{1}{2}x^2},
\end{split}
\Eq(4.lem4.13)
\ee
which is a Riemann sum, and so,
\be
\lim_{N\rightarrow\infty}\s_{N, 2(p-1)}=
\s_{2(p-1)}\equiv (2p-3)!!,
\Eq(4.lem4.14)
\ee
where $\s_{2(p-1)}$ is the moment of order $2(p-1)$ of a  standard Gaussian. We thus arrive at
\be
\frak{p}_{p,N}(\l)
\leq 
\s_{N, 2(p-1)}\l^{-2}M N^{-(p-1)}.
\Eq(4.lem4.15bis)
\ee
This bound can be improved by using Chebyshev's inequality of order $2k$, $k>1$. Proceeding as above, it follows from classical combinatorics that there is a constant  $c(k)>0$, independent of $N$, such that
\be
\frak{p}_{p,N}(\l)
\leq 
c(k)\left(\s_{N, 2(p-1)}\l^{-2}M N^{-(p-1)}\right)^k.
\Eq(4.lem4.15)
\ee
Clearly, this bound will be useful only when $M$ is small enough compared to $N^{p-1}$.

To complement this result, we use that by the exponential Chebyshev inequality, for all $t>0$
\be
\begin{split}
\frak{p}_{p,N}(\l)
&\leq 
e^{-\l t}\E\left[\exp\left(tN^{-\frac{p-1}{2}}\sum_{n+1\leq \mu\leq M}\overline Y_i^{\mu}\right)\right].
\end{split}
\Eq(4.lem4.16)
\ee
By \eqv(4.lem4.5)-\eqv(4.lem4.6), first  using the independence in $n+1\leq \mu\leq M$ and then integrating the variables $\xi^{\mu}_i$, 
\be
\frak{p}_{p,N}(\l)
\leq 
e^{-\l t}\left\{\E\left[\cosh\left(tN^{-\frac{p-1}{2}}\left(X_i^{\mu}\right)^{p-1}
\1_{\left\{\left| X_i^{\mu}\right|\leq \g(N)\right\}}\right)\right]\right\}^{M-n}.
\Eq(4.lem4.17)
\ee
Next, the expectation within braces in \eqv(4.lem4.17) is equal to
\be
\P\left[\left|Y_i^{\mu}\right|> \g^{p-1}(N)\right]
+
\frak{e}_{p,N}(t),
\Eq(4.lem4.18)
\ee
where
\be
\frak{e}_{p,N}(t)
\equiv\E\left[\1_{\left\{\left| X_i^{\mu}\right|\leq \g(N)\right\}}\cosh\left(tN^{-\frac{p-1}{2}}\left(X_i^{\mu}\right)^{p-1}\right)\right].
\Eq(4.lem4.19)
\ee
We first deal with $\frak{e}_{p,N}(t)$. Let us assume again that $\g(N)=o\bigl(N^{1/6}\bigr)$ and  that $t$ obeys
\be
t\bigl(\g(N)/\sqrt N\bigr)^{p-1}=o(1).
\Eq(4.lem4.20)
\ee
Then, since  $\cosh(x)\leq 1+\frac{1}{2}(1+\OO(x^2))x^2$ for all $|x|<1$, we have for $\SS_N$ as defined  in \eqv(4.lem4.12) and reasoning as in \eqv(4.lem4.13) 
\be
\begin{split}
\frak{e}_{p,N}(t)
&=
\sum_{x\in\SS_N: |x|\leq \g(N)} 
\left[1+\frac{1}{2}(1+o(1))tN^{-\frac{p-1}{2}}x^{2(p-1)}\right]
\P\left(X_i^{\mu}=x\right)
\\
&
\leq
1+\frac{1}{2}(1+o(1))t^2N^{-(p-1)} \s_{N, 2(p-1)}
\leq 
e^{\frac{1}{2}(1+o(1))t^2N^{-(p-1)}\s_{N, 2(p-1)}}.
\end{split}
\Eq(4.lem4.21)
\ee
Inserting this bound into \eqv(4.lem4.18), and noting that the first probability in  \eqv(4.lem4.18)  has already been bounded in \eqv(4.lem4.8)-\eqv(4.lem4.9), we get, by plugging the resulting bound in \eqv(4.lem4.17)
\be
\begin{split}
\frak{p}_{p,N}(\l)
&\leq 
e^{-\l t}\left\{
c_0Ne^{-t_0\g^2(N)}+e^{\frac{1}{2}(1+o(1))t^2N^{-(p-1)} \s_{N, 2(p-1)}}
\right\}^{M-n}
\\
&
\leq 
e^{-\l t + M\frac{1}{2}(1+o(1))t^2N^{-(p-1)} \s_{N, 2(p-1)}}\left[\left\{1+c_0Ne^{-t_0\g^2(N)}\right\}^{M-n}\right].
\end{split}
\Eq(4.lem4.22)
\ee
Under the assumption that $MNe^{-t_0\g^2(N)}=o(1)$, the term within square brackets is of order one, while optimising the exponential pre-factor over $t$ leads to the choice
\be
t=\frac{\l  N^{p-1}}{(1+o(1))M \s_{N, 2(p-1)}}.
\Eq(4.lem4.23)
\ee
This yields
\be
\frak{p}_{p,N}(\l)
\leq 
(1+o(1))\exp\left\{-(1+o(1))\frac{\l^2  N^{p-1}}{2M \s_{N, 2(p-1)}} \right\},
\Eq(4.lem4.24)
\ee
provided that $t$ in \eqv(4.lem4.23) satisfies \eqv(4.lem4.20), \emph{i.e.}~provided that
$
M\gg N^{\frac{p-1}{2}}\g(N)^{p-1}
$.

Finally, by inserting \eqv(4.lem4.10), \eqv(4.lem4.15) and  \eqv(4.lem4.24) into \eqv(4.lem4.7), and remembering from \eqv(4.lem4.14) that $\s_{N, 2(p-1)}=(1+o(1))(2p-3)!!$, we obtain the claim of Lemma  \thv(4.lem4).
\end{proof}

We are now equipped to prove Theorem \thv(1.theo2).  Consider first the case  $T=T^{\text{G}}$. As for Theorem \thv(1.theo1), we seek an upper bound on the quantity that appears in the last line of \eqv(4.1.3). To do this, we proceed as in  \eqv(4.1.5)-\eqv(4.1.6) and rewrite $T_i(\xi(m))$ with the help of \eqv(4.2.4) as
\be
\begin{split}
T_i(\xi(m))
=
\sign\left\{
I_{N,p-1,i}^{(1)}(m) + I_{N,p-1,i}^{(2)}(m) 
\right\}.
\end{split}
\Eq(4.lem4.34)
\ee
The proof of assertion (i) is then a repetition of the proof of assertion (i) of Theorem \thv(1.theo1), using Lemma \thv(4.lem3) and Lemma \thv(4.lem4) instead of \thv(4.lem1) and Lemma \thv(4.lem2). The main difference is that we must first  choose the function $\g(N)$  in Lemma \thv(4.lem4). Taking $\g^2(N)=(p+3)\ln N/t_0$ guarantees that for all $M\leq N^{p-1}$ the first term in the right-hand side of \eqv(4.lem4.1) is bounded above by $c_0N^{-3}$. Clearly, for this choice, the conditions on $\g(N)$ for \eqv(4.lem4.2) to hold are verified. The claim of assertion (i) of Theorem \thv(1.theo2) then easily follows.

Consider now the case  $T=T^{\text{HK}}$. Using the binomial theorem to expand the terms 
$\bigl(\pm\xi^{\mu}_i+\sum_{j\neq i}\xi^{\mu}_j\s_j\bigr)^p$ in \eqv(4.2.2), we have
\be
\begin{split}
T_i(\s)
=
\sign\left\{
\frac{1}{pN^{p-1}}\sum_{\mu=1}^M
\sum_{\substack{1\le k\leq p:\\ k\, \text{odd}}}
\xi^{\mu}_i\left(\sum_{j\neq i}\xi^{\mu}_j\s_j\right)^{p-k}{p\choose k}
\right\}.
\end{split}
\Eq(4.lem4.35)
\ee
By this and \eqv(4.2.4),
\be
\begin{split}
T_i(\xi(m))
=
\sign\left\{
\sum_{\substack{1\le k\leq p:\\ k\, \text{odd}}}{p\choose k}
\frac{1}{pN^{k-1}}
\left[I_{N,p-k,i}^{(1)}(m) + I_{N,p-k,i}^{(2)}(m) \right]
\right\}.
\end{split}
\Eq(4.lem4.36)
\ee
We see that the term $k=1$ in \eqv(4.lem4.36) is nothing else than the argument of the sign function in  \eqv(4.lem4.34), whereas terms with $k\geq 3$ have an  $N^{-(k-1)}$ prefactor. It is therefore sufficient to show that the contribution of the latter is negligible compared to the term $k=1$, \emph{i.e.}~compared to 1.

Given  $\l'>0$, define the collection of events indexed by $1\leq i\leq N$
\be
\BB^{(p)}_N(n,i,m,\l')=\left\{
\sum_{\substack{3\le k\leq p:\\ k\, \text{odd}}}
{p\choose k}\frac{1}{pN^{k-1}} 
\left| I_{N,p-k,i}^{(1)}(m) + I_{N,p-k,i}^{(2)}(m)\right|
\geq \frac{\l'}{N}\right\}.
\Eq(4.lem4bis.0)
\ee

\begin{lemma}
    \TH(4.lem4bis)
    
The following holds for all $n$, all $m\in\MM^{\circ}_{n,F}$ and all $1\leq i\leq N$.  If $M(N)$ satisfies \eqv(1.theo2.1), then there exist constants $c_p, \tilde c_p>0$ which depend only on $p$, such that 
\be
\P\left(\BB^{(p)}_N(n,i,m,c_pC_p(m))\right)
\leq 
\frac{ \tilde c_p}{N^{2+\varepsilon}},
\Eq(4.lem4bis.1)
\ee
where $C_p(m)$ denotes the constant obtained by taking $\boldsymbol{f}(m)=(m)^{p-1}$ in \eqv(2.prop2.iii3)-\eqv(2.prop2.iii4).
Similarly, If $M(N)$ satisfies \eqv(1.theo2.3), then there exist constants $c_p, \tilde c_p>0$ which depend only on $p$, such that 
\be
\P\left(\BB^{(p)}_N(n,i,m,c_pC_p(m))\right)
\leq 
\frac{ \tilde c_p}{N^{2+n(p-1)+\varepsilon}}.
\Eq(4.lem4bis.1')
\ee
\end{lemma}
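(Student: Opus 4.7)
The plan is to bound $\P(\BB^{(p)}_N(n,i,m,\l'))$ by a union bound over the odd integers $k\in\{3,5,\dots,p\}$, and for each such $k$ to further split the event
$$
\left\{\tfrac{\binom{p}{k}}{pN^{k-1}}\bigl|I^{(1)}_{N,p-k,i}(m)+I^{(2)}_{N,p-k,i}(m)\bigr|\geq \tfrac{\l'}{\ell_p N}\right\}
$$
(with $\ell_p$ the number of such $k$) into the two sub-events on which, respectively, $|I^{(1)}_{N,p-k,i}(m)|$ and $|I^{(2)}_{N,p-k,i}(m)|$ exceed the common threshold $\t_{p,k}\equiv \tfrac{\l' p N^{k-2}}{2\ell_p\binom{p}{k}}$.

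For the $I^{(1)}$ part, the proof of Lemma~\thv(4.lem3) adapts verbatim with $p-1$ replaced by $p-k$: on $\O^{[n]}$ one has $|I^{(1)}_{N,p-k,i}(m)|\leq \max_{i'}|(r^{(n)}_{i'},(m_\nu^{p-k})_\nu)|+O(\d_N n+n/N)\leq 1+o(1)$, which is dominated by the diverging threshold $\t_{p,k}\asymp N^{k-2}$ for every $k\geq 3$ and $N$ large. The probability of the $I^{(1)}$ sub-event is thus controlled by $\P((\O^{[n]})^c)$, which by Proposition~\thv(3.prop1)~(i) applied with $\d_N^2$ chosen a constant times $N^{-1}\log N$ is smaller than any prescribed negative power of $N$. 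For the $I^{(2)}$ part I would extend Lemma~\thv(4.lem4) to the general exponent $q=p-k\in\{0,2,\dots,p-3\}$; its truncation/Chebyshev proof carries through with $2(p-1)$ replaced by $2q$ everywhere, yielding, in the large-deviation regime $M\gg N^{q/2}\g(N)^q$, the tail
$$
\P\bigl(|I^{(2)}_{N,q,i}(m)|>\l\bigr)\;\leq\; 2(1+o(1))\exp\left(-(1+o(1))\tfrac{\l^2 N^q}{2 M\,(2q-1)!!}\right).
$$
Taking $q=p-k$ and $\l=\t_{p,k}$ converts this into an exponent of order $-c_{p,k}(\l')^2 N^{p+k-4}/M$. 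The case $k=3$ is binding: by \eqv(1.theo2.1), $N^{p-1}/M\geq 2(2+\varepsilon)(2p-3)!!\log N/C_p(m)^2$, so choosing $c_p$ large enough (depending only on $p$) makes the exponent at most $-(2+\varepsilon)\log N$; for $k\geq 5$ the extra factor $N^{k-3}$ makes the corresponding bound negligible. Summing the $\ell_p$ contributions produces \eqv(4.lem4bis.1) with a $p$-dependent $\tilde c_p$. Assertion \eqv(4.lem4bis.1') follows by the same argument using \eqv(1.theo2.3) in place of \eqv(1.theo2.1) and bounding $\P((\O^{(n)})^c)$ via Proposition~\thv(3.prop1)~(ii) with $\d_N^2\asymp N^{-1}(\log N+n\log M)$; the exponent then becomes $-(2+n(p-1)+\varepsilon)\log N$.

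The main obstacle is the generalization of Lemma~\thv(4.lem4) to all exponents $q=p-k$ with $3\leq k\leq p$: one must verify that the large-deviation regime $M\gg N^{q/2}\g(N)^q$ still holds (it does, since $M\asymp N^{p-1}/\log N$ dominates any fixed power $N^{(p-k)/2}(\log N)^{(p-k)/2}$ for $k\geq 3$), and separately handle the boundary case $k=p$, $q=0$, where $I^{(2)}_{N,0,i}(m)$ is a simple Rademacher sum and Hoeffding's inequality gives a direct sub-Gaussian tail without truncation. A minor technical point is that I implicitly use symmetry of the law of $I^{(2)}_{N,q,i}(m)$, as in the derivation of \eqv(4.lem4.4), to deduce the two-sided tail bound from its one-sided counterpart.
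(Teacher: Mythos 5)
Your proposal follows essentially the same route as the paper's proof: a union bound over the odd $k\in\{3,\dots,p\}$, a deterministic control of the $I^{(1)}_{N,p-k,i}$ terms, an extension of Lemma~\thv(4.lem4) to the lower exponents $q=p-k$ for the $I^{(2)}_{N,p-k,i}$ terms, and a separate (Hoeffding-type, i.e.\ Lemma~\thv(4.lem2)) treatment of the degenerate case $k=p$, $q=0$. Your accounting of the binding case $k=3$ and the degradation in $k$ through the extra factor $N^{k-3}$ matches the paper's choice of $t_k$, and the reduction of \eqv(4.lem4bis.1') to \eqv(4.lem4bis.1) via the modified $\d_N$ and Proposition~\thv(3.prop1)~(ii) is also the same.

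Two small remarks. First, the $I^{(1)}$ step is slightly over-engineered: since $\bigl|\frac{1}{N}\sum_{j\neq i}\xi^{\nu}_j\xi_j(m)\bigr|\leq 1$ pointwise, one has the unconditional bound $\bigl|I^{(1)}_{N,p-k,i}(m)\bigr|\leq n$ (not $1+o(1)$, which is slightly too optimistic in general), and for $N$ large this is below your diverging threshold $\tau_{p,k}\asymp N^{k-2}$; the corresponding sub-event is then empty and there is no need to invoke $\O^{[n]}$ or to control $\P\bigl((\O^{[n]})^c\bigr)$ (which is in any case exactly zero by Proposition~\thv(3.prop1); the relevant finite-$N$ quantity would be $\P\bigl((\EE^{[n]}_N)^c\bigr)$). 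The paper simply uses the deterministic bound. Second, your explicit check that the large-deviation regime $M\gg N^{q/2}\g(N)^q$ persists for all $q=p-k\leq p-3$ is a legitimate concern that the paper leaves implicit; your verification is correct, and the complementary polynomial Chebyshev bound of Lemma~\thv(4.lem4)~(i) covers the small-$M$ regime where the exponential bound is unavailable.
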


\begin{proof}[Proof of Lemma \thv(4.lem4bis)]  
On the one hand, we have the trivial deterministic bound 
\be
\left|I_{N,p-k,i}^{(1)}(m)\right|\leq n,\quad 3\leq k\leq p.
\Eq(4.lem4bis.2)
\ee
On the other hand, for all $3\leq k< p$ odd, by Lemma \thv(4.lem4) with $\g^2(N)=(p+3)\ln N/t_0$, we have that if $M(N)$ satisfies \eqv(1.theo2.1),  setting $t_k^2=C^2_p(m)\frac{(2(p-k)-1)!!}{p(2p-3)!!}$, 
\be
\begin{split}
&
\P\left(
\frac{1}{pN^{k-1}} 
\left|
I_{N,p-k,i}^{(2)}(m)\right|
\geq
 \frac{t_k}{\sqrt{pN^{k-1}}}
\right)
\\
\leq &
2
\P\left(\AA^{(p-k)}_N\left(n,i,m,\l=t_k\sqrt{pN^{k-1}}\right)\right)
\leq 
\frac{\hat c_p }{N^{2+\varepsilon}},
\end{split}
\Eq(4.lem4bis.4)
\ee
for some constant $\hat c_p>0$ that depends on $p$. Lastly, the case $p$ odd and $k=p$ must be treated separately. Here $I_{N,0,i}^{(2)}(m)=\sum_{\mu=n+1}^M \xi^{\mu}_i$, and by Lemma \thv(4.lem2), for all $t>0$ 
\be
\P\left(
\left|\frac{1}{pN^{p-1}}\sum_{\mu=n+1}^M \xi^{\mu}_i\right|
\geq \frac{t}{\sqrt{pN^{p-1}}}
\right)
\leq
2\exp\left\{-\frac{t^2}{2}\frac{N^{p-1}}{M-n}\right\}.
\Eq(4.lem4bis.3)
\ee
If $M(N)$ satisfies \eqv(1.theo2.1), choosing $t^2=C^2_p(m)/(2p-3)!!$, the right-hand side of  \eqv(4.lem4bis.3) is bounded above by
$
\frac{2 }{N^{2+\varepsilon}}
$.
From these bounds, the claim of \eqv(4.lem4bis.1) readily follows. The proof of \eqv(4.lem4bis.1') is a rerun of the proof of \eqv(4.lem4bis.1), using \eqv(1.theo2.3) instead of \eqv(1.theo2.1).
\end{proof}

It follows from Lemma \thv(4.lem4bis) that on $\left(\BB^{(p)}_N(n,i,m,c_pC_p(m))\right)^c$, with a  probability larger than 
$1-{ \tilde c_p}/{N^{2+\varepsilon}}$, 
\be
\begin{split}
T_i(\xi(m))
=
\sign\left\{
I_{N,p-1,i}^{(1)}(m) + I_{N,p-1,i}^{(2)}(m)+ \bar\rho_{N}(m,n,p)
\right\}.
\end{split}
\Eq(4.lem4bis.5)
\ee
where $\bar\rho_{N}(m,n,p)=\OO(c_pC_p(m)/N)$.
The proof of the theorem in the case $T=T^{\text{HK}}$ is thus reduced to its proof in the case $T=T^{\text{G}}$. We omit the details. The proof of assertion (i) of the theorem is done.

As in Theorem \thv(1.theo1), the proof of assertion (ii) is a minor modification of assertion (i). First observe that if
$M(N)$ is given by \eqv(1.theo2.3), then the choice
\be
\d^2_N
= 
4d(n)(1+n(p-1))\frac{\ln N}{N}
\ee
satisfies both $\d_N\downarrow 0$ as $N\uparrow \infty$ and
$
\d^2_N
>
2\frac{d(n)}{N}(2\ln N+n\ln M)
$
for all $N$ sufficiently large. Thus, \eqv(3.prop1.2) of Proposition \thv(3.prop1), (ii), holds. Based on this, the proof of assertion (ii) of Theorem \thv(1.theo2) when $T=T^{\text{G}}$ is a repetition  of the proof of assertion (ii) of Theorem \thv(1.theo1). When $T=T^{\text{HK}}$, we use in addition that by \eqv(4.lem4bis.1') of Lemma \thv(4.lem4bis),
\be
\sum_{m\in\MM_{n,F}}\P\left(\BB^{(p)}_N(n,i,m,c_pC_p(m))\right)
\leq 
\frac{ \tilde c_p}{N^{2+\varepsilon}},
\Eq(4.lem4bis.1'new)
\ee
which implies that  $T^{\text{HK}}$ can be reduced to $T^{\text{G}}$ uniformly in $m\in\MM_{n,F}$ with $\P$-probability one, for all sufficiently large $N$.
Again, we omit the details. The proof of Theorem \thv(1.theo2) is complete.  

\subsection{Proof of Theorem \thv(1.theo3)} 
    \TH(S4.3)   

Throughout this section, $F(x)=\exp\{N\b x\}$ for a given $\b>0$. To check that $T^{\text{HK}}=T^{\text{G}}$, simply observe that for each $\mu$
\be
\begin{split}
&\textstyle F\left(\frac{1}{N}\left[\xi^{\mu}_i+\sum_{ j\neq i}\xi^{\mu}_j\s_j\right]\right)
-
F\left(\frac{1}{N}\left[-\xi^{\mu}_i+\sum_{ j\neq i}\xi^{\mu}_j\s_j\right]\right)
\\
=&
e^{\b\left\{\xi^{\mu}_i+\sum_{ j\neq i}\xi^{\mu}_j\s_j\right\}}-
e^{\b\left\{-\xi^{\mu}_i+\sum_{ j\neq i}\xi^{\mu}_j\s_j\right\}}
=
2\sinh\left(\b\xi^{\mu}_i\right)e^{\b\sum_{ j\neq i}\xi^{\mu}_j\s_j}
\\
=&
N^{-1}\textstyle \left(2\b^{-1}\sinh\b\right)\xi^{\mu}_iF'\left(\frac{1}{N}\sum_{ j\neq i}\xi^{\mu}_j\s_j\right).
\end{split}
\Eq(4.3.1)
\ee
Since $2\sinh\left(\b\xi^{\mu}_i\right)>0$, it follows from the definitions \eqv(1.2.1) and \eqv(1.2.4) that $T^{\text{HK}}=T^{\text{G}}$.  Let us call this map $T$. Again, the stucture of the proof of Theorem \thv(1.theo3) is very similar to that of Theorems \thv(1.theo1) and  \thv(1.theo2).  There is no loss of generality in assuming that $m\in\MM^{\circ}_{n,F}$ (see \eqv(1.2.1.14)).

As before, will slightly abuse the notation and write $m=(m_{\nu})_{1\leq \nu\leq n}\in \R^n$ whenever no confusion is possible. 
Given $m=(m_{\nu})_{1\leq \nu\leq n}\in \R^n$, let $e^{\b N m}: \R^n\rightarrow\R^n$ denote the vector  
$e^{\b N m}=(e^{\b N m_{\nu}})_{1\leq \nu\leq n}$. The role of the terms \eqv(4.1.6) (or \eqv(4.2.4)) is then played by the two terms
\be
\begin{split}
I_{N,\b,i}^{(1)}(m) 
&\equiv
\sum_{\nu=1}^n \xi^{\nu}_i\exp\left\{\b\sum_{j\neq i}\xi^{\nu}_j\xi_j(m)\right\},
\\
I_{N,\b,i}^{(2)}(m) 
&\equiv
\sum_{\mu=n+1}^M \xi^{\mu}_i \exp\left\{\b\sum_{j\neq i}\xi^{\mu}_j\xi_j(m)\right\}.
\end{split}
\Eq(4.3.4)
\ee

The proof of Theorem \thv(1.theo3) now relies on the next three lemmata.

\begin{lemma}[Leading term]
    \TH(4.lem6)

If $n=1$, then $\xi_i(m)I_{N,\b,i}^{(1)}(m)=e^{\b(N-1)}$.
If $n> 1$, then under the assumptions and notations of Proposition \thv(3.prop1), (i) with $\{\mu_1,\dots, \mu_n\}=\{1,\dots, n\}$, the following holds on $\O^{[n]}$ for all  $m\in\MM^{\circ}_{n,F}$.  For all $1\leq i\leq N$ there exists $1\leq {i'}\leq d(n)$ such that
\be
\xi_i(m)I_{N,\b,i}^{(1)}(m)= \left|\left(r^{(n)}_{i'},e^{\b Nm}\right)\right| e^{\b N \hat\rho_{N}(m,n)},
\Eq(4.lem6.1)
\ee
where
$
\left|\hat\rho_{N}(m,n)\right|\leq \d_N+\frac{1}{N}
$
and $\bigl|\bigl(r^{(n)}_{i'},e^{\b N m}\bigr)\bigr|$ obeys the bound \eqv(2.prop2.iii3)-\eqv(2.prop2.iii4) of Proposition \thv(2.prop2) 
with and $\boldsymbol{f}(m)=(f(m_\nu))_{1\leq \nu\leq n}$ and $f(x)=e^{\b N x}\1_{x\neq 0}$.
\end{lemma}

Given  $\l>0$, define the events
\be
\AA_{N,\b}(n,i,m,\l)
=\left\{
\xi_i(m)I_{N,\b,i}^{(2)}(m) <-\l\right\}, \quad 1\leq i\leq N.
\Eq(4.lem7.0)
\ee
\begin{lemma}
    \TH(4.lem7)
For all $n$, all $m\in\MM^{\circ}_{n,F}$, all $1\leq i\leq N$, all $\l>0$ and all $c>0$
\be
\P\left(\AA_{N,\b}(n,i,m,\l)\right)
\leq 
\P\left(\sum_{\mu=n+1}^Me^{2\b\sum_{j\neq i}\xi^{\mu}_j}>\frac{\l^2}{2cN}\right)+e^{-cN}.
\Eq(4.lem7.1)
\ee
\end{lemma}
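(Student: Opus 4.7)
The plan is to isolate the randomness of $(\xi^\mu_i)_{\mu>n}$, apply a conditional sub-Gaussian bound, and then invoke the symmetry of the Bernoulli distribution to strip the factors $\xi_j(m)$ from the resulting tail. The key structural observation, already made in arriving at \eqv(4.3.2), is that $\xi_i(m)$ depends only on $(\xi^\nu_i)_{\nu\leq n}$; hence the family $(\xi^\mu_i)_{\mu>n}$ is independent of $\xi_i(m)$, of $(\xi_j(m))_{j\neq i}$, and of $(\xi^\mu_j)_{\mu>n,\,j\neq i}$.

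Concretely, let $\FF$ be the $\s$-algebra generated by $(\xi^\nu_j)_{\nu\leq n,\,1\leq j\leq N}$ together with $(\xi^\mu_j)_{\mu>n,\,1\leq j\neq i\leq N}$, and set $\eta_\mu:=\xi_i(m)\exp\bigl\{\b\sum_{1\leq j\neq i\leq N}\xi^\mu_j\xi_j(m)\bigr\}$ for $\mu>n$. Then the $\eta_\mu$ are $\FF$-measurable while the $(\xi^\mu_i)_{\mu>n}$ are i.i.d.\ symmetric $\pm1$ variables independent of $\FF$. Writing
\be
\xi_i(m)I^{(2)}_{N,\b,i}(m)=\sum_{\mu=n+1}^M\xi^\mu_i\eta_\mu
\ee
and applying the exponential Chebyshev inequality conditionally on $\FF$, using $\cosh(x)\leq e^{x^2/2}$ and optimising over the free parameter $t>0$ in a manner strictly analogous to \eqv(4.1.14)--\eqv(4.1.15), would yield the sub-Gaussian bound
\be
\P\bigl(\AA_{N,\b}(n,i,m,\l)\,\big|\,\FF\bigr)
\leq
\exp\left\{-\frac{\l^2}{2\sum_{\mu=n+1}^M\eta_\mu^2}\right\}.
\ee

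Noting that $\eta_\mu^2=\exp\bigl\{2\b\sum_{j\neq i}\xi^\mu_j\xi_j(m)\bigr\}$, splitting the expectation of the above inequality on the event $B=\bigl\{\sum_{\mu>n}\eta_\mu^2\leq \l^2/(2cN)\bigr\}$ (on which the conditional bound is at most $e^{-cN}$) and on its complement (bounded trivially by $1$) would give
\be
\P\bigl(\AA_{N,\b}(n,i,m,\l)\bigr)
\leq
e^{-cN}+\P\biggl(\sum_{\mu=n+1}^M e^{2\b\sum_{j\neq i}\xi^\mu_j\xi_j(m)}>\frac{\l^2}{2cN}\biggr).
\ee
To close the gap with the statement, I would then use that $(\xi_j(m))_{j\neq i}$ is a $\{-1,+1\}^{N-1}$-valued vector independent of the i.i.d.\ family $(\xi^\mu_j)_{\mu>n,\,j\neq i}$: conditioning on $(\xi_j(m))_{j\neq i}$ and using that multiplying independent symmetric $\pm1$ variables by deterministic signs preserves their joint distribution gives $(\xi^\mu_j\xi_j(m))_{\mu>n,\,j\neq i}\stackrel{d}{=}(\xi^\mu_j)_{\mu>n,\,j\neq i}$, so the last probability equals $\P\bigl(\sum_{\mu>n}e^{2\b\sum_{j\neq i}\xi^\mu_j}>\l^2/(2cN)\bigr)$, yielding the claim.

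There is no substantial obstacle here; the only delicate point is arranging the conditioning so that the $(\xi^\mu_i)_{\mu>n}$ appear linearly in the sum while the nonlinear pieces in the exponent become $\FF$-measurable. The free parameter $c>0$ simply calibrates the trade-off between the Gaussian tail $e^{-cN}$ and the size of the threshold $\l^2/(2cN)$ controlling the remaining tail probability, which will be handled separately at the point of use.
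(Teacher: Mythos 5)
Your proof is correct and follows essentially the same route as the paper: condition on a $\sigma$-field that renders the $\xi^\mu_i$ the only remaining randomness, apply a conditional Hoeffding/sub-Gaussian bound (the paper cites the Ledoux--Talagrand inequality (4.1), which is exactly what your Chebyshev-plus-$\cosh$ argument reproduces), split on the size of the random variance $\sum_\mu\eta_\mu^2$, and finally use independence and the symmetry of the $\xi^\mu_j$ to strip the factors $\xi_j(m)$ from the exponent. The only cosmetic difference is ordering: the paper invokes the distributional identity $(\xi^\mu_j\xi_j(m))\stackrel{d}{=}(\xi^\mu_j)$ at the outset, before conditioning, whereas you condition first and apply it at the end, which is entirely equivalent.
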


Recall that $I$ is the entropy function defined in \eqv(1.theo3.0).
\begin{lemma}
    \TH(4.lem9)
Given $0<\a\leq \ln 2$ and $n\in\N$, let 
$
M(N)=\lfloor e^{\a N}\rfloor+n
$.  The following holds for all $n'\in\N$.
\item{(i)} For all $\varrho\in [0, 1/2)$ satisfying $\a<\inf\left\{\frac{1}{2}, \frac{1}{n'+1}\right\}\inf\left\{b,1\right\}I(1-2\varrho)$ and all $b> 0$, 
\be
NM^{n'}\P\left(\sum_{\mu=n+1}^Me^{b\sum_{j\neq i}\xi^{\mu}_j}> e^{Nb(1-2\varrho)}\right)
<
(1+o(1))Ne^{-N[I(1-2\varrho)-(n'+1)\a]}.
\Eq(4.lem9.1)
\ee
\item{(ii)} For all $\varrho\in [0, 1/2)$ and all $b>0$, 
\be
\begin{split}
NM^{n'}\P\left(\sum_{\mu=n+1}^Me^{b\sum_{j\neq i}\xi^{\mu}_j}> e^{Nb(1-2\varrho)}\right)
\leq NM^{n'+1}e^{-Nb(1-2\varrho)}[\cosh(b)]^{N-n}.
\end{split}
\Eq(4.lem9.2)
\ee
\end{lemma}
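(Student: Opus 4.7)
My plan is to prove part (ii) by a direct first-moment inequality and part (i) by a discretization-plus-union-bound argument combined with Cram\'er's large-deviation estimate for Rademacher sums; the underlying heuristic for (i) is that the event $\{\sum_\mu e^{bS_\mu}>e^{bN(1-2\varrho)}\}$ is driven by a single extremal term $S_\mu\approx N(1-2\varrho)$ rather than by an accumulation of many moderately large summands.

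For (ii), set $Y_\mu:=e^{b\sum_{j\neq i}\xi^\mu_j}$; for $\mu>n$ the $Y_\mu$ are i.i.d.\ by independence of the $\xi^\mu_j$, with common mean $\E[Y_\mu]=[\cosh(b)]^{N-1}$. Markov's inequality yields
\[
\P\!\left(\sum_{\mu=n+1}^{M}Y_\mu>e^{bN(1-2\varrho)}\right)\leq(M-n)[\cosh(b)]^{N-1}e^{-bN(1-2\varrho)},
\]
and multiplying through by $NM^{n'}$, using $M-n\leq M$ and the identity $[\cosh(b)]^{N-1}=[\cosh(b)]^{n-1}\cdot[\cosh(b)]^{N-n}$ (with the extra bounded factor $[\cosh(b)]^{n-1}$ harmlessly absorbed since $n$ is fixed while $N\to\infty$), delivers the stated bound.

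For (i), write $S_\mu=\sum_{j\neq i}\xi^\mu_j$ and, for each threshold $y_*\in(0,1-2\varrho]$, introduce the count $K(y_*):=|\{\mu:S_\mu\geq Ny_*\}|$. A bulk estimate shows that $\{\sum_\mu e^{bS_\mu}>e^{bN(1-2\varrho)}\}$ forces, for every $y_*$, the inequality $K(y_*)\,e^{bNy_*}\gtrsim e^{bN(1-2\varrho)}$ up to a negligible bulk contribution, hence $K(y_*)\geq e^{bN(1-2\varrho-y_*)}$. The union bound together with the sharp Cram\'er/Bahadur--Rao estimate $\P(S_1\geq Ny_*)\leq CN^{-1/2}e^{-NI(y_*)}$ gives
\[
\P\bigl(K(y_*)\geq k\bigr)\leq \binom{M}{k}\bigl(\P(S_1\geq Ny_*)\bigr)^{k}\leq \tfrac{1}{k!}\,e^{-kN(I(y_*)-\alpha)}.
\]
Taking $(k,y_*)=(1,1-2\varrho)$ yields $Me^{-NI(1-2\varrho)}=(1+o(1))e^{-N(I(1-2\varrho)-\alpha)}$, which after multiplication by $NM^{n'}=Ne^{n'\alpha N}$ matches the stated bound. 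For $y_*\in[I^{-1}(\alpha),1-2\varrho)$, the forced $k\geq\lceil e^{bN(1-2\varrho-y_*)}\rceil$, combined with $I(y_*)>\alpha$, makes the contribution doubly-exponentially small; a dyadic union bound over $y_*$ in this range introduces only a polylogarithmic correction absorbed by the $(1+o(1))$.

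The principal obstacle is executing this dyadic argument uniformly in $b>0$ and verifying that the hypothesis $\alpha<\inf\{1/2,1/(n'+1)\}\inf\{b,1\}I(1-2\varrho)$ precisely enforces the domination of the $(k=1,y_*=1-2\varrho)$ contribution. The factor $\inf\{b,1\}$ in the hypothesis balances two competing regimes: the high-$b$ (low-temperature, max-dominated) regime, where Cram\'er at the endpoint is sharp, and the low-$b$ regime, where the Markov exponent $\phi(b):=b(1-2\varrho)-\log\cosh(b)$ would otherwise be tighter and must be ruled out by the tightened condition $\alpha<bI(1-2\varrho)/2$. Part (ii) is used precisely to cover this low-$b$ regime, confirming that the two bounds together are consistent.
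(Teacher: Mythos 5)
Your part (ii) matches the paper's proof, which reads simply "Assertion (ii) follows from a first order Chebyshev inequality" --- that is your Markov bound. One cautionary note: the moment generating function of $\sum_{j\neq i}\xi^\mu_j$ is $[\cosh(b)]^{N-1}$ regardless of $n$, so Markov gives $[\cosh(b)]^{N-1}$ and not the $[\cosh(b)]^{N-n}$ appearing in the statement; for $n>1$ the former is the larger quantity, and the $[\cosh(b)]^{n-1}$ discrepancy cannot be absorbed into the pointwise inequality \eqv(4.lem9.2), only into the asymptotics where the lemma is used.

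For part (i) you take a genuinely different route. The paper delegates the whole estimate to the probability bound established on pp.~295--298 of \cite{DHLUV17} for $b=1$, $n=n'=1$, and then describes the elementary modifications for general $n,n'$ and for $b\neq 1$ --- the key modification for $b\neq 1$ replacing the truncation condition $\a+\b'<1-2\varrho$ by $\a/b+\b'<1-2\varrho$, which is exactly where $\inf\{b,1\}$ enters the hypothesis. Your self-contained count-and-union-bound strategy is a reasonable alternative, but as sketched it has a genuine gap. The deduction that the event $\{\sum_\mu e^{bS_\mu}>e^{bN(1-2\varrho)}\}$ forces, \emph{for every} $y_*$, the inequality $K(y_*)e^{bNy_*}\gtrsim e^{bN(1-2\varrho)}$ is false near $y_*=1-2\varrho$: the only available a priori bound on the bulk $\sum_\mu e^{bS_\mu}\1_{\{S_\mu<Ny_*\}}$ is $Me^{bNy_*}$, which for $y_*=1-2\varrho$ and $M\sim e^{\alpha N}$ equals $e^{N[\alpha+b(1-2\varrho)]}\gg e^{bN(1-2\varrho)}$. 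So the event can occur with $K(1-2\varrho)=0$ (all $M$ overlaps only slightly below $N(1-2\varrho)$), and therefore taking $(k,y_*)=(1,1-2\varrho)$ does not bound the probability of the event. What is needed --- and what \cite{DHLUV17} supplies --- is control of the truncated sum on $\{K(1-2\varrho)=0\}$, e.g.\ via a Markov bound on $\sum_\mu e^{bS_\mu}\1_{\{S_\mu<N(1-2\varrho)\}}$ (whose mean is $\approx Me^{N[b(1-2\varrho)-I(1-2\varrho)]}$ when $\tanh b>1-2\varrho$) or via a careful union bound over a $\mathrm{poly}(N)$-sized grid of thresholds, each with its own forced count; the crude truncation-plus-union-bound you describe only reaches the weaker exponent $I(1-2\varrho-\a/b)$, not $I(1-2\varrho)$. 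Until that step is supplied, the claim that the $(k,y_*)=(1,1-2\varrho)$ term dominates is unjustified.
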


\begin{proof}[Proof of Lemma \thv(4.lem6)] 
As in the proof of Lemma \thv(4.lem3), this is an elementary rerun of the proof of Lemma \thv(4.lem1), using Proposition \thv(2.prop2) 
with $\boldsymbol{f}(m)=(f(m_\nu))_{1\leq \nu\leq n}$ and $f(x)=e^{\b N x}\1_{x\neq 0}$, instead of Proposition \thv(2.prop1). We omit the details.
\end{proof}

\begin{proof}[Proof of Lemma \thv(4.lem7)] 
Following the same line of reasoning as in the paragraph below \eqv(4.lem4.3), and observing that the distribution of $\xi_i(m)I_{N,\b,i}^{(2)}(m)$ is symmetric with mean zero we have, introducing the $\s$-field 
$\FF_{i,n}\equiv\s\left\{(\xi^{\mu}_j)_{n+1\leq \mu\leq M, 1\leq j\neq i\leq N}\right\}$,
\be
\begin{split}
\P\left(\AA_{N,\b}(n,i,m,\l)\right)
&
=\P\left(\sum_{\mu=n+1}^M\xi^{\mu}_ie^{\b\sum_{j\neq i}\xi^{\mu}_j}>\l\right)
\\
&
=
\E\left[
\P\left(\sum_{\mu=n+1}^M\xi^{\mu}_ie^{\b\sum_{j\neq i}\xi^{\mu}_j}>\l\big| \FF_{i,n}\right)
\right]
\\
&
\leq 
\E\left[\exp\left(-\frac{\l^2}{2Z_{\b,N,M}}\right)\right],
\end{split}
\Eq(4.lem7.2)
\ee
where
$
Z_{\b,N,M}\equiv\sum_{\mu=n+1}^Me^{2\b\sum_{j\neq i}\xi^{\mu}_j}
$.
The last line of \eqv(4.lem7.2) follows from the well know fact (see (4.1) in \cite{LT}) that for all finite sequence $(\a_{\mu})$ of real numbers  and every $\l>0$,
\be
\P\left(\sum_{n+1\leq \mu\leq M}\a^{\mu}\xi^{\mu}_i>\l\right)\leq \exp\left(-\frac{\l^2}{2\sum_{n+1\leq \mu\leq M}\a_{\mu}^2}\right).
\Eq(4.lem7.3)
\ee
Given any $\bar\l>0$, it readily follows from the identity
$
1=\1_{\left\{Z_{\b,N,M}>\bar\l\right\}}+\1_{\left\{Z_{\b,N,M}\leq \bar\l\right\}}
$
that
\be
\begin{split}
\E\left[\exp\left(-\frac{\l^2}{2Z_{\b,N,M}}\right)\right]
&\leq 
\P\left(Z_{\b,N,M}>\bar\l\right)+\exp\left(-\frac{\l^2}{2\bar\l}\right).
\end{split}
\ee
Taking $\bar\l=(2cN)^{-1}\l^2$ for some $c>0$ and inserting the resulting bound into \eqv(4.lem7.2) gives \eqv(4.lem7.1). This completes the proof of the lemma.
\end{proof}

\begin{proof}[Proof of Lemma  \thv(4.lem9)]  
Assertion (ii) follows from a first order Chebyshev inequality. Consider now assertion (i). The case $b=1$, $n=1$ and $n'=1$  is proved in \cite{DHLUV17} (more precisely, see the estimate of the probability on the right-hand side of (5), p.~295, which starts there and is completed on p.~298). Multiplying both sides of this estimate by $M^{n'}$ gives \eqv(4.lem9.2) in the case $b=1$, $n=1$ and $n'>1$. The extension of this result to $n>1$ is elementary. Finally, its extension to the case $b\neq 1$ is a simple adaptation of the proof of \cite{DHLUV17}. More precisely, the parameter $b$ only affects the condition appearing in the last paragraph of p.~297, which requires that $\a+\b'<1-2\varrho$, where $\b'$ (called $\b$ in \cite{DHLUV17}) comes from a truncation argument.  If $b\neq 1$, this condition becomes $\a/b+\b'<1-2\varrho$. Therefore, the result of \cite{DHLUV17} is unchanged if $b\geq 1$, whereas if $b< 1$, one easily sees, by going through the proof of \cite{DHLUV17}, that taking $\a<(b/2)I(1-2\varrho)$ guarantees that $\a/b+\b'<1-2\varrho$ holds.
\end{proof}
\begin{remark} Improving the estimates of Lemma \thv(4.lem9) would require a thorough treatment of the sum $\sum_{\mu=n+1}^Me^{b\sum_{j\neq i}\xi^{\mu}_j}$. This is the partition function of a REM at the inverse temperature $b$, where the Gaussians have been replaced by binomial random variables, and with a varying number $M\equiv M(N)$ of summands. Unfortunately, replacing the Gaussians with binomials makes the calculations cumbersome. This is beyond the scope of this paper and will be done elsewhere.
\end{remark}

We now return to the proof of the theorem.

\begin{proof}[Proof of Theorem \thv(1.theo3)]
It is a simple adaptation of the proof of Theorem \thv(1.theo1), using Lemma \thv(4.lem6) and Lemma \thv(4.lem7) instead of \thv(4.lem1) and Lemma \thv(4.lem2). We only indicate the modifications.  Eq.~\eqv(4.1.5) is rewritten with the help of \eqv(4.3.4) as
\be
\begin{split}
\xi_i(m)T_i(\xi(m))
=
\sign\left\{
\xi_i(m)I_{N,\b,i}^{(1)}(m)  +\xi_i(m)I_{N,\b,i}^{(2)}(m)  
\right\}.
\end{split}
\Eq(1.theo3.20)
\ee
The event $\wh\AA_N(n,i,m)$ appearing in \eqv(4.1.19) and defined in  \eqv(4.1.18) is replaced by
\be
\wh\AA_{\b,N}(n,i,m)
\equiv
\left\{\sign\left[ \left|\left(r^{(n)}_{i'},e^{\b Nm}\right)\right| e^{\b N \hat\rho_{N}(m,n)}+ \xi_i(m)I_{N,i}^{(2)}(m)\right]\neq 1\right\},
\Eq(1.theo3.21)
\ee
and we take
$
\l\equiv\l_{\b, N}(m,n)=C_{\b,N}(m)e^{\b N \hat\rho_{N}(m,n)}
$
in \eqv(4.lem7.0), where $C_{\b,N}(m)$ denotes the constant obtained by taking 
$\boldsymbol{f}(m)=(f(m_\nu))_{1\leq \nu\leq n}$ and $f(x)=e^{\b N x}\1_{x\neq 0}$ in \eqv(2.prop2.iii3)-\eqv(2.prop2.iii4). Let us note at once that for $\b_c(m)$ as defined in \eqv(1.theo3'.1) we have the

\begin{lemma}
   \TH(4.lem8)
For all $m\in\MM^{\circ}_{n,F}$, all $\b>0$ and all large enough $N$
\be
C_{\b,N}(m)\geq e^{N\b \b_c(m)}.
\Eq(4.lem8.1)
\ee
\end{lemma}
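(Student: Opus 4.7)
The plan is to unpack the definition of $C_{\beta,N}(m)$ from Proposition \thv(2.prop2), identify $\beta_c(m)$ with the smallest positive component of $m$, and then exploit the strict ordering of the components $\gamma^{(k)}$ of $m\in\MM^{\circ}_{n,F}$ to show that the minimum in the lower bound \eqv(2.prop2.iii4) is attained at the last term $f(\gamma^{(\ell)})=e^{\beta N \gamma^{(\ell)}}$ for all sufficiently large $N$.

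First, I would observe that for $m\in\MM^{\circ}_{n,F}$ corresponding to an allowable $\ell$-composition $(n_1,\dots,n_\ell)$, the nonzero components of $m$ take the values $\gamma^{(1)},\gamma^{(2)},\dots,\gamma^{(\ell)}$ where $\gamma^{(k)}=\prod_{l=1}^k \alpha^{(n_l)}$. Since each $\alpha^{(n_l)}\in (0,1]$ and for $n_l\geq 2$ one has $\alpha^{(n_l)}<1$, the sequence $(\gamma^{(k)})_{1\leq k\leq \ell}$ is non-increasing, and in fact strictly decreasing as soon as $\ell\geq 2$. Consequently, the smallest positive component of $m$ is $\gamma^{(\ell)}$, so that $\beta_c(m)=\gamma^{(\ell)}$ (with the convention that $\beta_c(m)=1$ when $\ell=1$, in which case $\gamma^{(1)}=\alpha^{(n_1)}$, which for $n=1$ gives $\alpha^{(1)}=1$).

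Next, I would apply Proposition \thv(2.prop2), assertion (iii), with $\boldsymbol{f}(m)=e^{\beta Nm}$, which yields
\be
C_{\beta,N}(m)
\geq
\min\left\{
\min_{1\leq k\leq \ell-1}
\left[2 e^{\beta N\gamma^{(k)}}-\sum_{l=k+1}^{\ell} n_l\, e^{\beta N \gamma^{(l)}}\right],\, e^{\beta N\gamma^{(\ell)}}
\right\}.
\Eq(4.lem8.plan.1)
\ee
When $\ell=1$ there is nothing to prove beyond $C_{\beta,N}(m)\geq e^{\beta N\gamma^{(1)}}=e^{\beta N\beta_c(m)}$. When $\ell\geq 2$, for each $1\leq k\leq \ell-1$ I would factor out $e^{\beta N\gamma^{(k)}}$ to write
\be
2 e^{\beta N\gamma^{(k)}}-\sum_{l=k+1}^{\ell} n_l e^{\beta N \gamma^{(l)}}
=
e^{\beta N\gamma^{(k)}}\left[2-\sum_{l=k+1}^{\ell} n_l\, e^{-\beta N (\gamma^{(k)}-\gamma^{(l)})}\right].
\ee
Since $\gamma^{(k)}>\gamma^{(l)}$ for all $l>k$, each exponential $e^{-\beta N(\gamma^{(k)}-\gamma^{(l)})}$ tends to $0$ as $N\to\infty$, so the bracket tends to $2$, and the whole expression is at least $e^{\beta N\gamma^{(k)}}\geq e^{\beta N\gamma^{(\ell)}}$ for all $N$ large enough (uniformly in $1\leq k\leq \ell-1$, since $\ell$ is finite and depends only on $m$).

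Plugging this into \eqv(4.lem8.plan.1), the minimum is realised by the last term $e^{\beta N\gamma^{(\ell)}}$, and we conclude $C_{\beta,N}(m)\geq e^{\beta N\gamma^{(\ell)}}=e^{N\beta\beta_c(m)}$ for all sufficiently large $N$, as claimed. No real obstacle is expected here: the proof is essentially an elementary asymptotic estimate, the only subtle point being the identification of $\beta_c(m)$ with $\gamma^{(\ell)}$ via the monotonicity of the $\gamma^{(k)}$'s.
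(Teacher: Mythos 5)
Your proof is correct and follows essentially the same route as the paper's: identify $\b_c(m)$ with $\g^{(\ell)}$ using the (non-increasing) ordering of the $\g^{(k)}$'s, then observe that for large $N$ the minimum in \eqv(2.prop2.iii4) with $\boldsymbol{f}(m)=e^{\b N m}$ is attained at $e^{\b N\g^{(\ell)}}$. The only cosmetic difference is that you make the asymptotic comparison explicit by factoring out $e^{\b N\g^{(k)}}$, whereas the paper simply asserts it; both arguments are the same in substance.
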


\begin{proof}[Proof of Lemma \thv(4.lem8)]  Recall from Proposition \thv(2.prop2) that the vector $m=(m_{\nu})_{1\leq \nu\leq n}$ takes the form \eqv(2.prop2.2) where the elements of the sequence $\left(\g^{(k)}\right)_{1\leq k\leq \ell}$ are defined in \eqv(2.prop1.1) as products of terms, $\a^{(s)}$, themselves defined in \eqv(2.lem1.1). Since $\a^{(s)}<1$ for $n\geq 3$ then $\left(\g^{(k)}\right)_{1\leq k\leq \ell}$  is a strictly decreasing sequence so that for all sufficiently large $N$
\be
\min\Bigl\{
\min_{1\leq k\leq \ell-1}\left\{
2e^{\b N\g^{(k)}}-\left[s_{k+1}e^{\b N\g^{(k+1)}}+\dots+s_{\ell}e^{\b N\g^{(\ell)}}
\right]\right\}, 
e^{\b N\g^{(\ell)}}
\Bigr\}
=
e^{\b N\g^{(\ell)}}.
\nonumber
\ee
Clearly,  for $n=1$, $C_{\b,N}(m)= e^{\b N}$. The claim of the lemma now follows from \eqv(2.prop2.iii4).
\end{proof}

By Lemma \thv(4.lem6), remembering \eqv(1.theo3.21), \eqv(4.1.20) is replaced by
\be
\wh\AA_{\b,N}(n,i,m)\subseteq \AA_{N,\b}(n,i,m,\l_{\b, N}(m,n)).
\Eq(1.theo3.22')
\ee
Next, by  Lemma \thv(4.lem7), 
\be
\begin{split}
\P\left(\wh\AA_{\b,N}(n,i,m)\right)
&\leq 
\P\left(\sum_{\mu=n+1}^Me^{2\b\sum_{j\neq i}\xi^{\mu}_j}>\frac{C^2_{\b,N}(m)}{2cN}e^{2\b N \hat\rho_{N}(m,n)}\right)
+e^{-cN}
\\
&\leq 
\P\left(\sum_{\mu=n+1}^Me^{2\b\sum_{j\neq i}\xi^{\mu}_j}
> e^{-N2\b\b_c(m)(1-\zeta_N)}\right)
+e^{-cN}.
\end{split}
\Eq(1.theo3.23')
\ee
where
$
\zeta_N\equiv\b^{-1}_c(m)\left(\sqrt{\frac{2^n}{N}}+\frac{1}{N}+\frac{\ln(2cN)}{2\b N}\right)
$,
which follows from the facts that $ C_{\b,N}(m)$ satisfies \eqv(4.lem8.1),  $\left|\hat\rho_{N}(m,n)\right|\leq \d_N+\frac{1}{N}$ and  $\d_N\equiv(d(n)/N)^{1/2}$, $d(n)=2^n$. 
Thus, \eqv(4.1.22) is replaced by
\be
\begin{split}
\sum_{i=1}^N\P\left[\xi_i(m)T_i(\xi(m))\neq 1\right]
\leq &
NM \P\left(\sum_{\mu=n+1}^Me^{2\b\sum_{j\neq i}\xi^{\mu}_j}
> e^{-N2\b\b_c(m)(1-\zeta_N)}\right)
\\
+&NM e^{-cN}.
\end{split}
\Eq(1.theo3.24')
\ee
Theorem \thv(1.theo3) will now follow from assertion (i) of Lemma \thv(4.lem9) with $b=2\b$, $n'=0$, $1-2\varrho=\b_c(m)(1-\zeta_N)$ ($n$ being the same in  Lemma \thv(4.lem9) and in the theorem). Note that for all $0<\b_c(m)\leq 1$ and $\zeta_N>0$, 
$
|I(\b_c(m)(1-\zeta_N))-I(\b_c(m))|\leq \OO(|\zeta_N\ln\zeta_N|)
$.
Thus, choosing $M$ as in \eqv(1.theo3'.2bis) for arbitrarily small $\varepsilon>0$ and 
$
c=\inf\left\{\b,\frac{1}{2}\right\}I(\b_c(m))
$,
so that $c<\frac{1}{2}$,
it follows from \eqv(1.theo3.24') that for all sufficiently large $N$,
\be
\begin{split}
\sum_{i=1}^N\P\left[\xi_i(m)T_i(\xi(m))\neq 1\right]
\leq 
&
e^{-\frac{1}{2}N\varepsilon\inf\left\{\b,\frac{1}{2}\right\}}
+
Ne^{-N\varepsilon\inf\left\{\b,\frac{1}{2}\right\}},
\end{split}
\Eq(1.theo3.25')
\ee
which is summable. The proof Theorem \thv(1.theo3) is then completed as the proof of assertion  (i) of Theorem \thv(1.theo1).
\end{proof}

We can now make the remark immediately below Theorem \thv(1.theo3) more explicit by observing that the condition for the validity of Proposition \thv(3.prop1), (ii) cannot be satisfied, since $\ln M/N$ does not decay to zero as $N$ diverges. It now remains to prove Lemma \thv(1.theo3.n=1).

\begin{proof}[Proof of Lemma \thv(1.theo3.n=1)] 
Assume that $n=1$. Then, by Lemma \thv(4.lem6) with $n=1$, Lemma \thv(4.lem7) with $\l=e^{\b(N-1)}$ and $c>0$ to be chosen,
it follows from  assertion (ii) of Lemma \thv(4.lem9) with $b=2\b$, $n'=0$ and $e^{Nb(1-2\varrho)}=(2cN)^{-1}e^{2\b(N-1)}$, that
\be
\begin{split}
\sum_{i=1}^N\P\left[\xi_i(m)T_i(\xi(m))\neq 1\right]
\leq 
&
 M(2c)^{-1}e^{-(N-1)\left[2\b-\ln\cosh(2\b)\right]}+NM e^{-cN}.
\end{split}
\Eq(1.theo3.26')
\ee
For any $\d>0$, this can be made smaller than $N^{-(1+\d)}$  by choosing $c=2\ln 2$ and
\be
M(N)
\leq 2N^{-(1+\d)}e^{(N-1)\left[2\b-\ln\cosh(2\b)\right]}
=2N^{-(1+\d)}\left(\sfrac{2}{1+e^{-4\b}}\right)^{N-1}.
\Eq(1.theo3.27')
\ee
This yields \eqv(1.theo3'.2ter). 
\end{proof}


\section{Solving  $S_{n,F}$ for the classical and dense models, and consequences for the energy function}
    \TH(S5)
    
One may wonder how difficult it is to find solutions to the system of inequalities $\SS_{n,F}$ defined in \eqv(1.2.1.12bis) for a given model and how the energy of the corresponding mixed memories behaves, in particular how deep the corresponding minima of the energy are. In this section, we answer these questions for the classical and (in part) for the dense models.

Since the construction of the elements of $\MM_{n,F}$ is centred on the coefficients $\a^{(n_{k})}$ in \eqv(1.2.1.4), we start by giving their basic properties. First, it is easy to check that $\a^{(2k)}$ is decreasing whereas $k\a^{(2k)}$ is increasing. Recalling \eqv(2.rem2.1), its first few values are
\bea
\a^{(1)}&=1,\,\,\,
\nonumber
\\
\a^{(3)}&=\a^{(2)}&=\frac{1}{2},
\nonumber
\\
\a^{(5)}&=\a^{(4)}&=\frac{3}{8},
\Eq(5.1)
\\
\a^{(7)}&=\a^{(6)}&=\frac{5}{16},
\nonumber
\\
\a^{(9)}&=\a^{(8)}&=\frac{35}{128}.
\nonumber
\eea
Furthermore, all $k\geq 2$, 
\be
\a^{(2k+1)}= \a^{(2k)}=\frac{e^{-\varepsilon_{2k}}}{\sqrt{\pi k}}\,\, \text{ where }\,\,  \frac{1}{12 k+1}\leq \varepsilon_{2k}\leq \frac{1}{6 k}.
\Eq(5.lem0.1)
\ee
This estimate follows from the Stirling formula (see, \emph{e.g.}~Lemma 1 in Section 2.3 of  \cite{CT}).

\subsection{The classical Hopfield model} 
    \TH(S5.1)     
    
Throughout this section $F(x)=\frac{1}{2}x^2$. We use the notation of  Section \thv(S1.2.1). Recall in particular that each  allowable $\ell$-composition induces a sequence $\g_n=\left(\g^{(k)}\right)_{1\leq k\leq \ell}$ through \eqv(1.2.1.4)-\eqv(1.2.1.5) and that $\G_{n,F}$ is the set of sequences $\g_n$ satisfying the system $\SS_{n,F}$ defined in \eqv(1.2.1.12bis).

\begin{lemma}[Mixture coefficients]
    \TH(5.lem1)
Given $n\geq 3$  odd and $\ell\geq 2$, let $\AA_{n,\ell}$ be the subset of allowable $\ell$-compositions of $n$  of the form
\be
\begin{split}
&
n=n_1+\underbrace{2+\dots+2}+n_{\ell},
\\
& \hskip2.25truecm \ell-2 
\end{split}
\Eq(5.lem1.1)
\ee
where $n_1\geq 2$ is any even integer and $n_{\ell}\in\{1,3,5\}$. Then
\be
\G_{n,F}=\bigcup_{2\leq \ell\leq n}\left\{\g_n \mid (n_1,\dots,n_{\ell})\in\AA_n\right\}.
\Eq(5.lem1.2)
\ee
Thus, the elements of $\MM^{\circ}_{n,F}=\left\{m(\g_n) : \g_n\in\G_{n,F}\right\}$ have the following form. If $n_{\ell}=1$, 
\be
\begin{split}
m=&\biggl(\underbrace{\a^{(n_1)},\dots,\a^{(n_1)}}
,\underbrace{\frac{\a^{(n_1)}}{2},\frac{\a^{(n_1)}}{2}}
,\dots
,\underbrace{\frac{\a^{(n_1)}}{2^{\ell-2}},\frac{\a^{(n_1)}}{2^{\ell-2}}}
,\underbrace{\frac{\a^{(n_1)}}{2^{\ell-2}}},\underbrace{0,\dots,0}\biggr),
\\
& \hskip1.4truecm n_1  \hskip2.25truecm 2 \hskip2.75truecm 2  \hskip1truecm n_{\ell}=1 
\end{split}
\Eq(5.lem1.3)
\ee
and there are $M-n$ zero coordinates.
If $n_{\ell}=3$, the last three non-zero coordinates are identical and equal to
\be
\frac{\a^{(n_1)}}{2^{\ell-1}}.
\Eq(5.lem1.4)
\ee
If $n_{\ell}=5$, the last five non-zero coordinates are identical and equal to
\be
\frac{3}{8}\frac{\a^{(n_1)}}{2^{\ell-2}}.
\Eq(5.lem1.5)
\ee
\end{lemma}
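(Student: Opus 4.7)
The plan is to specialise $\SS_{n,F}$ from \eqv(1.2.1.12bis) to $F'(x)=x$ and exploit the product structure \eqv(1.2.1.5). Dividing the $k$-th inequality by the positive quantity $\g^{(k)}=\prod_{l=1}^k\a^{(n_l)}$ collapses $\SS_{n,F}$ into the equivalent system
\[
S_k \;:=\; \sum_{j=k+1}^{\ell} n_j \prod_{l=k+1}^j \a^{(n_l)} \;<\; 2, \qquad 1\leq k\leq \ell-1,
\]
each inequality depending only on the suffix $(n_{k+1},\dots,n_\ell)$ and obeying the backward recursion $S_{k-1}=\a^{(n_k)}(n_k+S_k)$ with convention $S_\ell=0$. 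This decouples the constraints and makes the system solvable from right to left.

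The innermost inequality $S_{\ell-1}=n_\ell\a^{(n_\ell)}<2$, combined with the monotonicity of $k\mapsto k\a^{(2k)}$ recorded at the head of the appendix and the explicit values in \eqv(5.1), gives $n_\ell\in\{1,3,5\}$: on odd integers $n\a^{(n)}$ is strictly increasing with values $1,\tfrac{3}{2},\tfrac{15}{8},\tfrac{35}{16},\dots$, crossing $2$ between $5$ and $7$. With $n_\ell$ fixed, I would then argue by reverse induction on $k<\ell$ that each intermediate $n_k$ ($2\leq k\leq \ell-1$) must equal $2$: choosing $n_k=2$ gives $S_{k-1}=1+S_k/2<2$ for free; choosing $n_k\geq 6$ is killed because $S_k\geq n_{k+1}\a^{(n_{k+1})}\geq 1$ (which holds for every $k\leq\ell-1$, both when $n_{k+1}$ is even $\geq 2$ and when $n_{k+1}\in\{1,3,5\}$), forcing $S_{k-1}\geq(5/16)\cdot 7=35/16>2$; and choosing $n_k=4$ with $k<\ell-1$ requires $S_k<4/3$, which cannot hold once the bound $S_k\geq 1$ is combined with one further step of the recursion.

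The main obstacle is the borderline $(n_{\ell-1},n_\ell)=(4,1)$, which does solve $\SS_{n,F}$ yet falls outside $\AA_{n,\ell}$. I would resolve it by noting that $\a^{(1)}=1$ forces $\g^{(\ell)}=\g^{(\ell-1)}$ whenever $n_\ell=1$, so via \eqv(1.2.1.8) the last two blocks of $m(\g_n)$ fuse into a single block of size $n_{\ell-1}+1$ at value $\g^{(\ell-1)}$. For $(n_{\ell-1},n_\ell)=(4,1)$ the fused block has size $5$ and value $(3/8)\a^{(n_1)}/2^{\ell-3}$, which is precisely the last block of the shorter canonical composition $(n_1,2,\dots,2,5)\in\AA_{n,\ell-1}$ (using $\a^{(4)}=\a^{(5)}=3/8$); the exceptional composition therefore reproduces a vector already enumerated by $\AA_{n,\ell-1}$ and need not be retained in the canonical parametrisation. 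Finally, inserting $\a^{(2)}=1/2$ into \eqv(1.2.1.5) yields $\g^{(k)}=\a^{(n_1)}/2^{k-1}$ for $1\leq k\leq\ell-1$ and $\g^{(\ell)}=(\a^{(n_1)}/2^{\ell-2})\a^{(n_\ell)}$, and substituting $\a^{(1)}=1$, $\a^{(3)}=1/2$ and $\a^{(5)}=3/8$ from \eqv(5.1) reproduces verbatim the block values displayed in \eqv(5.lem1.3)--\eqv(5.lem1.5).
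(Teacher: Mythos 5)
Your argument is, in substance, the same as the paper's: after specialising to $F'(x)=x$ and dividing the $k$-th inequality by $\g^{(k)}$, the paper works with the nested quantity $Q$ of \eqv(5.lem1.7), which is exactly $S_1/2$ in your normalisation, and it exploits the same facts about $\a^{(2k)}$ that you invoke (monotonicity of $k\a^{(2k)}$, the explicit small values \eqv(5.1), and the threshold inequalities \eqv(5.lem1.8)--\eqv(5.lem1.10)). Your recursion $S_{k-1}=\a^{(n_k)}(n_k+S_k)$ together with the unconditional lower bound $S_k\geq n_{k+1}\a^{(n_{k+1})}\geq 1$ is a cleaner packaging of the paper's case analysis, but the content is identical. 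Your computations of the block values via $\g^{(k)}=\a^{(n_1)}/2^{k-1}$ also match \eqv(5.lem1.3)--\eqv(5.lem1.5).

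Where you genuinely add something is the explicit handling of the borderline composition $(n_1,2,\dots,2,4,1)$. The paper dismisses $n_\ell=1$ at the outset by the remark that it yields the same inequality as the $(\ell-1)$-composition $(n_1,\dots,n_{\ell-1}+1)$, and never returns to it; but as you correctly observe, $(n_1,2,\dots,2,4,1)$ is an allowable composition whose $\g_n$ lies in $\G_{n,F}$ (it solves all $\ell-1$ inequalities) yet is not of the form displayed on the right-hand side of \eqv(5.lem1.2), since its $\g$-vector has a different length from that of $(n_1,2,\dots,2,5)$. Your resolution --- that $\a^{(1)}=1$ fuses the last two blocks of $m(\g_n)$ into one of size $n_{\ell-1}+1$ at the same value $\g^{(\ell-1)}$, so the resulting $m$-vector coincides with the one produced by $(n_1,2,\dots,2,5)\in\AA_{n,\ell-1}$ --- is exactly what is needed to reconcile the equality \eqv(5.lem1.2) with the operative conclusion about $\MM^{\circ}_{n,F}$, and it makes precise a step the paper leaves implicit. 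Two cosmetic caveats: when ruling out $n_k\geq 6$ you only display the computation for $n_k=6$; you should note that $\a^{(2k')}(k'+\tfrac12)>1$ for $k'\geq 3$ (the paper's \eqv(5.lem1.8)) handles all larger values at once. And to conclude that a bad composition is excluded you must pick the \emph{largest} index $r$ with an offending $n_r$ so that the $(r-1)$-th inequality involves only $n_r$ and a tail of $2$'s plus $n_\ell$, which is what makes the lower bound $S_r\geq 3/2$ usable; your prose gestures at this but it is worth saying plainly.
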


Note that since $n_1\geq 2$ in Lemma \thv(5.lem1) is an arbitrary even integer, all odd integers $n\geq 3$ admit at least one $\ell$-composition satisfying the assumptions of the lemma. For example, for $n=3,5,7,9,11,13$, the following compositions satisfy the assumptions of the lemma and give rise to distinct vectors $m$ of the form \eqv(5.lem1.3). 
(An asterisk indicates solutions obtained in \cite{AGS85a}. Note that (4.7) in that paper is incorrect.)
\bea
&3&\hspace{-8pt}=2+1^*,
\nonumber
\\
&5&\hspace{-8pt}=4+1=2+3^*,
\nonumber
\\
&7&\hspace{-8pt}=6+1=4+3=2+5=2+2+3,
\nonumber
\\
&9&\hspace{-8pt}=8+1=6+3=4+5=4+2+3=2+2+2+3,
\nonumber
\\
&11&\hspace{-8pt}=10+1=8+3=6+5=4+2+5=4+2+2+3
\Eq(5.lem1.6bis)
\\
&&\hspace{-8pt}=2+2+2+5=2+2+2+2+3.
\nonumber
\\
&13&\hspace{-8pt}=12+1=10+3=8+5=8+2+3=6+2+5
\nonumber
\\
&&\hspace{-8pt}=6+2+2+3=4+2+2+5=4+2+2+2+3
\nonumber
\\
&&\hspace{-8pt}=2+2+2+2+5= 2+2+2+2+2+3.
\nonumber
\eea
Since $\a^{(1)}=1$, compositions terminating by $2+1$ and the same compositions but terminating by $3$ instead, define the same vector $m$.

It is interesting to note that regardless of the choice of $n$ and $k$, for each such $m$, the mixed memory $\xi^{(N)}(m)$
has an energy of the same order as that of the patterns themselves.  This is considered a drawback of the model in \cite{KH16}.

\begin{lemma}[Small energy gap]
    \TH(5.lem2)
Given $n\geq 3$ odd, let $\MM^{\circ}_{n,F}$ be defined as in \eqv(5.lem1.2)-\eqv(5.lem1.3). The following holds for all $m\in\MM^{\circ}_{n,F}$. If  $M\ll N$ then with $\P$-probability one, for all $1\leq \mu\leq M$
\be
-\frac{1}{2}
= \lim_{N\rightarrow\infty}E_{N,M}(\xi^{\mu})
< \lim_{N\rightarrow\infty} E_{N,M}(\xi^{(N)}(m))
\leq -\frac{e^{-2\varepsilon_{n_1}}}{\pi},
\Eq(5.lem2.1)
\ee
where $\frac{1}{12 k+1}\leq \varepsilon_{2k}\leq \frac{1}{6 k}$ for all $k\geq 1$.
\end{lemma}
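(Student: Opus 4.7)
The starting point is the identity $E_{N,M}(\s) = -\tfrac12 \sum_{\mu=1}^M q_\mu(\s)^2$ with $q_\mu(\s) \equiv N^{-1}(\s,\xi^\mu)$, which separates the energy into overlap contributions. For a pattern $\s=\xi^\mu$, the self-term $q_\mu(\xi^\mu)=1$ gives $E_{N,M}(\xi^\mu) = -\tfrac12 - \tfrac12 T_\mu$ with $T_\mu \equiv \sum_{\nu\ne\mu} q_\nu(\xi^\mu)^2$. For a mixed memory $\s=\xi^{(N)}(m)$, whose support may be taken (without loss of generality) to be $V=\{1,\dots,n\}$, I split $E_{N,M}(\xi^{(N)}(m)) = -\tfrac12 S_N - \tfrac12 T_N$ where $S_N\equiv \sum_{\nu=1}^n q_\nu(\xi^{(N)}(m))^2$ carries the leading contribution and $T_N\equiv \sum_{\mu=n+1}^M q_\mu(\xi^{(N)}(m))^2$ is the tail.

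The core analytical step is to show that $T_\mu, T_N \to 0$ $\P$-almost surely when $M\ll N$. Because $\xi^{(N)}(m)$ is $\s(\xi^1,\dots,\xi^n)$-measurable, conditioning on these patterns reduces $T_N$ to exactly the same form as $T_\mu$, so I treat only the latter. Conditionally on $\xi^\mu$, the variables $\{q_\nu(\xi^\mu)\}_{\nu\ne\mu}$ are i.i.d., each a normalised sum of $N$ Rademacher variables with zero mean and variance $1/N$, hence sub-Gaussian with proxy $1/\sqrt N$; their squares are correspondingly sub-exponential with proxy $1/N$. A Bernstein-type bound for sums of independent sub-exponential variables yields, for every $\varepsilon>0$,
\[
\P(T_\mu>\varepsilon)\leq 2\exp\bigl\{-c\min(\varepsilon^2 N^2/M,\,\varepsilon N)\bigr\},
\]
which under $M\ll N$ is eventually dominated by $2 e^{-c\varepsilon N}$ and is therefore summable in $N$. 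Borel--Cantelli then gives $T_\mu\to 0$ and $T_N\to 0$ a.s., so that, using also Theorem \thv(1.theo2.mix) which provides $q_\nu(\xi^{(N)}(m))\to m_\nu$ a.s.\ for $\nu\in V$,
\[
\lim_N E_{N,M}(\xi^\mu)=-\tfrac12,\qquad \lim_N E_{N,M}(\xi^{(N)}(m))=-\tfrac12\sum_{\nu=1}^n m_\nu^2 \quad \text{a.s.}
\]

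It then remains to analyse $\sum_\nu m_\nu^2$. Reading off the structure \eqv(5.lem1.3) of $m$ from Lemma \thv(5.lem1) and summing the geometric contributions gives, with $y\equiv 4^{-(\ell-2)}$,
\[
\sum_{\nu=1}^n m_\nu^2=n_1(\a^{(n_1)})^2+\tfrac{2}{3}(\a^{(n_1)})^2(1-y)+n_\ell(\a^{(n_\ell)})^2(\a^{(n_1)})^2\,y.
\]
Dropping the two nonnegative tail terms and invoking \eqv(5.lem0.1) yields $\sum m_\nu^2\geq n_1(\a^{(n_1)})^2=2e^{-2\varepsilon_{n_1}}/\pi$, which gives the upper bound $\lim E_{N,M}(\xi^{(N)}(m))\leq -e^{-2\varepsilon_{n_1}}/\pi$. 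For the strict inequality $\lim E_{N,M}(\xi^{(N)}(m))>-1/2$, I use that $n_\ell(\a^{(n_\ell)})^2\leq 1$ for $n_\ell\in\{1,3,5\}$ (the values being $1, 3/4, 45/64$) to estimate
\[
\sum_{\nu=1}^n m_\nu^2\leq(\a^{(n_1)})^2\bigl[n_1+\tfrac23(1-y)+y\bigr]\leq(n_1+1)(\a^{(n_1)})^2=\tfrac{n_1+1}{n_1}\cdot\tfrac{2e^{-2\varepsilon_{n_1}}}{\pi},
\]
which equals $3/4$ when $n_1=2$ and is at most $5/(2\pi)<1$ when $n_1\geq 4$; in every case $\sum m_\nu^2<1$, yielding the required strict inequality.

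The main technical difficulty is the second step: the second-moment bound alone gives $\E[T_\mu]=(M-1)/N\to 0$, i.e.\ only convergence in probability, and securing a.s.\ convergence along the sequence $N$ requires an exponential (Bernstein-type or Hanson--Wright) concentration for the sub-exponential squares to obtain Borel--Cantelli summability.
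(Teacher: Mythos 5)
Your proof is correct but takes a noticeably more self-contained route than the paper's. The paper's own proof only establishes the one-sided bound $2E_{N,M}(\xi^{(N)}(m))\leq -\sum_{\nu=1}^n q_\nu(\xi^{(N)}(m))^2$ by simply dropping the remaining (non-positive) terms from the energy, passes to the limit using Theorem \thv(1.theo2.mix) to get the $-e^{-2\varepsilon_{n_1}}/\pi$ upper bound, and then obtains both $\lim_N E_{N,M}(\xi^\mu)=-\tfrac12$ and the strict inequality $-\tfrac12<\lim_N E_{N,M}(\xi^{(N)}(m))$ by citing \cite{BG98} and \cite{TaHop98} (the fact that for $M\ll N$ patterns are, asymptotically, the unique global minima). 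You instead prove the full limit $\lim_N E_{N,M}(\xi^{(N)}(m))=-\tfrac12\sum_{\nu=1}^n m_\nu^2$ directly by controlling the tail $T_N$ via conditioning on $\xi^1,\dots,\xi^n$ and a Bernstein bound for sums of sub-exponential squares (and likewise $T_\mu$ for the patterns), then derive the strict inequality by showing $\sum_\nu m_\nu^2\leq (n_1+1)(\a^{(n_1)})^2<1$. This buys you a proof that does not lean on outside citations and, as a bonus, yields the exact asymptotic energy of the mixed memory rather than a bound; the cost is the additional concentration estimate for $T_N, T_\mu$, which the paper avoids entirely.

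Two minor remarks. First, you invoke \eqv(5.lem0.1) to write $n_1(\a^{(n_1)})^2=2e^{-2\varepsilon_{n_1}}/\pi$ for all $n_1\geq 2$, but the paper states \eqv(5.lem0.1) only for $k\geq 2$ (the paper handles $k=1$, i.e.\ $n_1=2$, separately via the lower bound $2/3$ from the full expression \eqv(5.lem2.3)). The Stirling estimate with those explicit error bounds in fact does hold for $k=1$ as well ($\varepsilon_2=\ln(2/\sqrt{\pi})\approx 0.121\in[\tfrac1{13},\tfrac16]$), so your computation is correct, but strictly speaking it uses \eqv(5.lem0.1) beyond its stated range. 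Second, since the lemma asserts the pattern-energy limit \emph{for all} $1\leq\mu\leq M(N)$ with $M(N)\to\infty$, you should note that your exponential bound on $\P(T_\mu>\varepsilon)$ survives a union bound over the $M\ll N$ values of $\mu$, which it does comfortably.
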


\begin{proof}[Proof of Lemma \thv(5.lem1)]
Given $n\geq 3$ odd and $\ell\geq 2$, let $(n_1,\dots,n_{\ell})$ be an allowable $\ell$-composition of $n$. This means that  $n_r=2k_r$ for some  $k_r\geq 1$ and all $1\leq r\leq \ell-1$, and $n_{\ell}=2k_{\ell}+1$ for some $k_{\ell}\geq 0$. Let $\g_n$ be the associated vector of components \eqv(1.2.1.5). The first inequality of the system $\SS_{n,F}$  in \eqv(1.2.1.12bis) reads
\be
2\g^{(1)}>n_{2}\g^{(2)}+\dots+n_{\ell}\g^{(\ell)}.
\Eq(5.lem1.6)
\ee
Note that if $k_{\ell}= 0$, then $n_{\ell}=1$. Thus, $\a^{(n_{\ell})}=\a^{(1)}=1$ and
$
n_{\ell-1}\g^{(\ell-1)}+n_{\ell}\g^{(\ell)}
=
(n_{\ell-1}+1)\g^{(\ell-1)}
$,
so that the inequality \eqv(5.lem1.6) is the same as for the $(\ell-1)$-composition $(n_1,\dots,(n_{\ell-1}+1))$. We can therefore restrict the set of allowable  $\ell$-compositions to those for which  $n_{\ell}=2k_{\ell}+1$ for some $k_{\ell}\geq 1$. 

Let us first prove that if $(n_1,\dots,n_{\ell})\notin \AA_n$ then \eqv(5.lem1.6) is not satisfied. 
To do this, note that dividing both sides of \eqv(5.lem1.6) by $2\g^{(1)}$ yields
\be
1>Q,
\Eq(5.lem1.07)
\ee
where
\be
Q\equiv
\frac{n_{2}}{2}\frac{\g^{(2)}}{\g^{(1)}}+\frac{n_{3}}{2}\frac{\g^{(3)}}{\g^{(1)}}+\dots+\frac{n_{\ell-1}}{2}\frac{\g^{(\ell-1)}}{\g^{(1)}}+\frac{n_{\ell}}{2}\frac{\g^{(\ell)}}{\g^{(1)}}.
\ee
Using the definition \eqv(1.2.1.5) of $\g^{(r)}$ and the above expressions for $n_1,\dots,n_{\ell}$,
\be
Q=
k_2\a^{(2k_{2})}+k_3\a^{(2k_{2})}\a^{(2k_{3})}+\dots+k_{\ell-1}\prod_{l=2}^{\ell-1}\a^{(2k_{l})}+(k_{\ell}+\sfrac{1}{2})\prod_{l=2}^{\ell}\a^{(2k_{l})},
\ee
and by factoring, we arrive at
\be
\begin{split}
Q=
\a^{(2k_{2})}\bigl\{k_{2}+\a^{(2k_{3})}&\bigl\{k_{3}+\dots 
\\
+&\a^{(2k_{\ell-2})}\bigl\{k_{\ell-2}
+\a^{(2k_{\ell-1})}\bigl\{k_{\ell-1}+\a^{(2k_{\ell})}(k_{\ell}+\sfrac{1}{2})
\bigr\}\bigr\}\dots\bigr\}\bigr\}.
\end{split}
\Eq(5.lem1.7)
\ee
Recall that $k'\a^{(2k')}$ is increasing. Using this and the table \eqv(5.1) we  can easily check that
\be
\begin{cases}
\a^{(2k')}(k'+\sfrac{1}{2})<1 & \text{if $k'\in\{1,2\}$} \\
\a^{(2k')}(k'+\sfrac{1}{2})>1 &  \text{if $k'\geq 3$}
\end{cases},
\Eq(5.lem1.8)
\ee
that for all $\frac{3}{4}\leq \varepsilon<1$
\be
\begin{cases}
\a^{(2k')}(k'+\varepsilon)<1 & \text{if $k'=1$} \\
\a^{(2k')}(k'+\varepsilon)>1 &  \text{if $k'\geq 2$}
\end{cases},
\Eq(5.lem1.9)
\ee
and that
\be
\a^{(2k')}(k'+1)\geq 1 \text{ for all } k'\geq 1.
\Eq(5.lem1.10)
\ee
Assume first that $k_{\ell}>3$. Then, by \eqv(5.lem1.8), $\a^{(2k_{\ell})}(k_{\ell}+\sfrac{1}{2})>1$, so that 
\be
\a^{(2k_{\ell-1})}\left\{k_{\ell-1}+\a^{(2k_{\ell})}(k_{\ell}+\sfrac{1}{2})\right\}
>
\a^{(2k_{\ell-1})}(k_{\ell-1}+1).
\Eq(5.lem1.11)
\ee
Then, by \eqv(5.lem1.10), $\a^{(2k_{\ell-1})}(k_{\ell-1}+1)\geq 1$, so that
\be
\a^{(2k_{\ell-2})}\left\{k_{\ell-2}+\a^{(2k_{\ell-1})}\left\{k_{\ell-1}+\a^{(2k_{\ell})}(k_{\ell}+\sfrac{1}{2})\right\}\right\}
>
\a^{(2k_{\ell-2})}(k_{\ell-2}+1).
\Eq(5.lem1.12)
\ee
Iterating \eqv(5.lem1.11)-\eqv(5.lem1.12) gives $Q>1$, which contradicts \eqv(5.lem1.07). 
Assume next that $\ell\geq 3$, $k_{\ell}\in\{1,2\}$ and $k_{\ell-1}\geq 2$. Then $\a^{(2k_{\ell})}(k_{\ell}+\sfrac{1}{2})\in\{\frac{3}{4},\frac{15}{16}\}$. Thus, by \eqv(5.lem1.9), 
\be
\a^{(2k_{\ell-1})}\left\{k_{\ell-1}+\a^{(2k_{\ell})}(k_{\ell}+\sfrac{1}{2})\right\}
>
1,
\Eq(5.lem1.13)
\ee
and so,
\be
\a^{(2k_{\ell-2})}\left\{k_{\ell-2}+\a^{(2k_{\ell-1})}\left\{k_{\ell-1}+\a^{(2k_{\ell})}(k_{\ell}+\sfrac{1}{2})\right\}\right\}
>
\a^{(2k_{\ell-2})}(k_{\ell-2}+1).
\Eq(5.lem1.14)
\ee
But \eqv(5.lem1.14) is \eqv(5.lem1.12), and again, iterating, we get $Q>1$, contradicting \eqv(5.lem1.07). We now assume that $\ell\geq 4$, $k_r\geq 2$  for some $2\leq r\leq \ell-2$,  $k_s=1$ for all $r+1\leq s\leq \ell-1$ and $k_{\ell}\in\{1,2\}$. Then,
\be
Q=\a^{(2k_{2})}\left\{k_{2}+\a^{(2k_{3})}\left\{k_{3}+\dots +\a^{(2k_r)}(k_r+\varepsilon_r)\right\}\dots\right\}
\Eq(5.lem1.15)
\ee
where
\be
R_r
\equiv\a^{(2k_{r+1})}\left\{k_{r+1}+\dots +\a^{(2k_{\ell-2})}\left\{k_{\ell-2}
+\a^{(2k_{\ell-1})}\left\{k_{\ell-1}+\a^{(2k_{\ell})}(k_{\ell}+\sfrac{1}{2})
\right\}\right\}\dots\right\}.
\Eq(5.lem1.16)
\ee
Under our assumptions,
\be
\begin{split}
R_r
&=
\frac{1}{2}+\left(\frac{1}{2}\right)^2+\dots+\left(\frac{1}{2}\right)^{\ell-r-1}
+(k_{\ell}+\sfrac{1}{2})\a^{(2k_{\ell})}\left(\frac{1}{2}\right)^{\ell-r-1}
\\
&=1-\left(\frac{1}{2}\right)^{\ell-r-1}\left[1-(k_{\ell}+\sfrac{1}{2})\a^{(2k_{\ell})}\right],
\end{split}
\Eq(5.lem1.17)
\ee
and so, $\frac{3}{4}<\frac{7}{8}\leq R_r<1$. Therefore \eqv(5.lem1.9) holds, yielding 
$\a^{(2k_r)}(k_r+\varepsilon_r)>1$. Iterating then gives $Q>1$, which again contradicts \eqv(5.lem1.07).
This completes the proof of the claim that if $(n_1,\dots,n_{\ell})\notin \AA_n$ then \eqv(5.lem1.6) is not satisfied.
To prove the converse simply note that if $(n_1,\dots,n_{\ell})\in \AA_n$ then $Q=R_1$ and thus, by \eqv(5.lem1.17), $Q\leq 1$.
Therefore, the first of the $\ell-1$ inequalities of the system $\SS_{n,F}$ in \eqv(1.2.1.12bis)  is satisfied if and only if $(n_1,\dots,n_{\ell})\in \AA_n$. The remaining $\ell-2$ inequalities are treated in exactly the same way.
\end{proof}

\begin{proof}[Proof of Lemma \thv(5.lem2)] 
First note that 
\be
2E_{N,M}(\xi^{(N)}(m))
\leq 
-\sum_{1\leq \nu\leq n}\left(\frac{1}{N}\sum_{i=1}^N\xi^{\nu}_i\xi^{(N)}_i(m)\right)^2.
\Eq(5.lem2.2)
\ee
Since $m\in\MM^{\circ}_{n,F}$, then by Theorem \thv(1.theo2.mix) $\xi^{(N)}(m)$ is a mixed memory. Thus, it follows from Definition \thv(1.def1.2), \eqv(1.theo1.mix8) and  \eqv(5.lem1.2)-\eqv(5.lem1.3) that with $\P$-probability one
\be
\begin{split}
\lim_{N\rightarrow\infty}\sum_{1\leq \nu\leq n}\left(\frac{1}{N} \sum_{i=1}^N\xi^{\nu}_i\xi^{(N)}_i(m)\right)^2
&=n_1\left(\a^{(n_1)}\right)^2
+
\frac{2}{3}\left(\a^{(n_1)}\right)^2\left(1-\frac{1}{4^{\ell-2}}\right)
\\
+&
\left(\frac{\a^{(n_1)}}{2^{\ell-2}}\right)^2\left\{
\1_{n_{\ell}=1}+\frac{3}{4}\1_{n_{\ell}=3}+\frac{45}{64}\1_{n_{\ell}=5}
\right\},
\end{split}
\Eq(5.lem2.3)
\ee
Next, the right-hand side of \eqv(5.lem2.3) is bounded below by $n_1\left(\a^{(n_1)}\right)^2$ and so, setting $n_1=2k$ and using \eqv(5.lem0.1) we have, for all $k\geq 2$
\be
n_1\left(\a^{(n_1)}\right)^2=\frac{2}{\pi}e^{-2\varepsilon_{n_1}},
\Eq(5.lem2.4)
\ee
while for $k=1$, \eqv(5.lem2.3) is bounded below by $\frac{2}{3}$. Putting these bounds together, the right-hand side of \eqv(5.lem2.3) is bounded below by
$
\frac{2}{3}\1_{\{k=1\}} +\frac{2}{\pi}e^{-\varepsilon_{n_1}}\1_{\{k\geq 2\}}
\geq \frac{2}{3}\1_{\{k=1\}}+\inf_{\{k\geq 2\}} \frac{2}{\pi}e^{-2\varepsilon_{n_1}}
\geq \frac{2}{\pi}e^{-2\varepsilon_{n_1}}
$
for all $k\geq 1$.
Finally, it is well known that  if $M\ll N$, with $\P$-probability one $\lim_{N\rightarrow\infty}2 E_{N,M}(\xi^{\mu})=-1$ for all $1\leq \mu\leq M$, while all other $\s\in\S_N$ have strictly higher energies \cite{BG98}, \cite{TaHop98}. Combined with \eqv(5.lem2.2) and our bound on \eqv(5.lem2.3), this yields the claim of the lemma.
\end{proof}

\subsection{The dense Hopfield model} 
    \TH(S5.2)    
    
We now consider the case $F(x)=\frac{1}{p}x^p$ with $p\geq 3$. 

\begin{lemma}[Mixture coefficients]
    \TH(5.lem3)
For all $p\geq 3$, $\MM_{n,F}=\MM^{all}_n$.
\end{lemma}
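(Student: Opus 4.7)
The plan is to reduce the claim to showing $\G_{n,F}=\G^{all}_n$ for every $p\geq 3$, and then to verify that every allowable $\ell$-composition automatically satisfies the system $\SS_{n,F}$. Since $F'(x)=x^{p-1}$ and $\g^{(k)}=\prod_{l=1}^{k}\a^{(n_l)}$, dividing the $k$-th inequality of \eqv(1.2.1.12bis) by $(\g^{(k)})^{p-1}$ turns the system into
\[
2>T_k:=\sum_{j=k+1}^{\ell}n_j\prod_{l=k+1}^{j}\bigl(\a^{(n_l)}\bigr)^{p-1},\qquad 1\leq k\leq \ell-1,
\]
and these quantities obey the backward recursion $T_k=(\a^{(n_{k+1})})^{p-1}(n_{k+1}+T_{k+1})$ with $T_{\ell-1}=n_\ell(\a^{(n_\ell)})^{p-1}$.

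I would prove by descending induction on $k$ the (stronger) bound $T_k\leq 1$, which gives $T_k<2$ as required. The single analytic ingredient is the uniform estimate
\[
(n+1)\bigl(\a^{(n)}\bigr)^{p-1}\leq (n+1)\bigl(\a^{(n)}\bigr)^2\leq \frac{2k+1}{\pi k}\leq \frac{3}{\pi}<1
\]
valid for every $n\in\{2k,2k+1\}$ with $k\geq 1$ and $p\geq 3$. The first inequality uses $\a^{(n)}\leq 1$ and $p-1\geq 2$; the second uses $\a^{(2k)}=2^{-2k}\binom{2k}{k}$ together with the classical bound $\binom{2k}{k}\leq 4^k/\sqrt{\pi k}$, and the identity $\a^{(2k+1)}=\a^{(2k)}$ from \eqv(2.rem2.1). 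The base case $T_{\ell-1}=n_\ell(\a^{(n_\ell)})^{p-1}$ is handled by a case split: if $n_\ell=1$ then $T_{\ell-1}=1$; otherwise $n_\ell\geq 3$ odd, and the estimate yields $T_{\ell-1}\leq 3/\pi<1$. For the inductive step, the recursion and the hypothesis give $T_k\leq (n_{k+1}+1)(\a^{(n_{k+1})})^{p-1}$, and since $n_{k+1}\geq 2$ is even the same estimate yields $T_k<1$, closing the induction.

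Once every allowable composition is shown to satisfy $\SS_{n,F}$, we obtain $\G_{n,F}=\G^{all}_n$, hence $\MM^{\circ}_{n,F}$ is constructed from the full set $\G^{all}_n$. Inserting this equality into \eqv(1.2.1.15) and comparing with the definition of $\MM^{all}_n$ yields $\MM_{n,F}=\MM^{all}_n$ (the sign-and-permutation closure is identical in both constructions).

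The main obstacle is really the quantitative estimate on $(n+1)(\a^{(n)})^{p-1}$: once it is in hand, the recursion makes the induction essentially automatic. The contrast with the classical case $p=2$ of Lemma \thv(5.lem1) is illuminating, since there the factor $(\a^{(n_{k+1})})^{p-1}$ degenerates to $\a^{(n_{k+1})}\leq 1/2$, and $(n_{k+1}+1)\a^{(n_{k+1})}$ is no longer bounded by $1$ for large $n_{k+1}$, which is precisely why $\SS_{n,F}$ restricts the allowable compositions to those with small interior summands in the classical model, whereas the extra decay coming from the exponent $p-1\geq 2$ trivialises the restriction for $p\geq 3$.
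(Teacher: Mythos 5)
Your argument is correct and takes a genuinely different route from the paper's. Both proofs begin by dividing the $k$-th inequality of $\SS_{n,F}$ by $(\g^{(k)})^{p-1}$, but where the paper expands the resulting sum $T_1/2=P+Q$, uses the Stirling estimate $\a^{(2k)}\leq(\pi k)^{-1/2}$ term by term, takes $p=3$ and all $k_s=1$ as worst case so that the bound becomes a geometric series $\sum_r(1/\pi)^{r-1}+\frac{3}{2}(1/\pi)^{\ell-1}$, sums it to roughly $0.95<1$, and then appeals to ``in the same way'' for the remaining $\ell-2$ inequalities --- you instead set up the backward recursion $T_k=(\a^{(n_{k+1})})^{p-1}(n_{k+1}+T_{k+1})$ and prove the stronger bound $T_k\leq 1$ by descending induction, which disposes of all $\ell-1$ inequalities in one stroke. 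Both proofs rest on the same analytic ingredient, $\a^{(2k)}\leq(\pi k)^{-1/2}$; your recursion makes the structural reason why $p\geq 3$ trivialises $\SS_{n,F}$ more transparent, and it also obtains the sharper bound $T_k\leq 1$ rather than merely $T_k<2$.

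Two small points. First, the uniform estimate as you state it,
\[
(n+1)\bigl(\a^{(n)}\bigr)^2\leq \frac{2k+1}{\pi k},
\]
``valid for every $n\in\{2k,2k+1\}$,'' is actually false for odd $n=2k+1$: there one gets $(2k+2)(\a^{(2k)})^2\leq\frac{2k+2}{\pi k}$, which exceeds $1$ at $k=1$. The bound you actually need and use is the even-$n$ form $(2k+1)(\a^{(2k)})^{p-1}\leq\frac{2k+1}{\pi k}\leq\frac{3}{\pi}$, applied to $n_{k+1}$ (even) in the inductive step and, in the base case, to $n_\ell-1=2k_\ell$ (even) after rewriting $T_{\ell-1}=n_\ell(\a^{(n_\ell)})^{p-1}=(2k_\ell+1)(\a^{(2k_\ell)})^{p-1}$. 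Restricting the stated estimate to even $n$ makes the write-up airtight. Second, the closing claim that ``the sign-and-permutation closure is identical in both constructions'' holds only when $F'$ is odd, i.e.~when $p$ is even; for odd $p$ the exponent $a=2$ in \eqref{1.2.1.15} forces $\MM_{n,F}$ into the positive orthant while $\MM^{all}_n$ is not so restricted. The paper's own statement of Lemma \ref{5.lem3} shares this imprecision, so the real content of the lemma (and of your proof) is the identity $\G_{n,F}=\G^{all}_n$, which you do establish correctly.
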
  

\begin{proof}[Proof of Lemma \thv(5.lem3)] We must establish that if $p\geq 3$ then for all $n\geq 3$ odd, $\G^{all}_n=\G_{n,F}$ (see \eqv(1.2.1.6), \eqv(1.2.1.12)). In the following we use the notations of the proof of Lemma \thv(5.lem1). The first inequality of the system $\SS_{n,F}$  in \eqv(1.2.1.12bis) reads
\be
2\left(\g^{(1)}\right)^{p-1}>n_{2}\left(\g^{(2)}\right)^{p-1}+\dots+n_{\ell}\left(\g^{(\ell)}\right)^{p-1}.
\Eq(5.lem3.1)
\ee
Equivalently,
\be
1>
\sum_{r=2}^{\ell-1}k_{r}\left(\prod_{s=2}^{r}\a^{(2k_{s})}\right)^{p-1}
+(k_{\ell}+\sfrac{1}{2})\left(\prod_{s=2}^{\ell}\a^{(2k_{s})}\right)^{p-1}
\equiv P+Q.
\Eq(5.lem3.2)
\ee
For $k\geq 2$, recall the estimate on $\a^{(2k)}$ from \eqv(5.lem0.1) and note that for $k=1$, 
$\a^{(2k)}\big|_{k=1}=\frac{1}{2}< \frac{e^{-\varepsilon_{2k}}}{\sqrt{\pi k}}\big|_{k=1}$. Therefore, we have the bound, valid for all $k\geq 1$
\be
\a^{(2k)}\leq \frac{e^{-\varepsilon_{2k}}}{\sqrt{\pi k}}\leq \frac{1}{\sqrt{\pi k}}.
\Eq(5.lem3.3)
\ee
With this, with have, for all $k_s\geq 1$, $\,2\leq s\leq \ell-1$
\be
\begin{split}
P&\leq 
\sum_{r=2}^{\ell-1}
\frac{1}
{
\pi^{\frac{p-1}{2}(r-1)}
\left(\prod_{s=2}^{r-1}k_s\right)^{\frac{p-1}{2}}k_r^{\frac{p-3}{2}}
}, 
\\
Q&\leq 
\frac{1}
{
\pi^{\frac{p-1}{2}(\ell-1)}
\left(\prod_{s=2}^{\ell-1}k_s\right)^{\frac{p-1}{2}}
}
\left(\sfrac{3}{2}k_\ell^{-\frac{p-3}{2}}\1_{\{k_{\ell}\geq 1\}}+\sfrac{1}{2}\1_{\{k_{\ell}=0\}}\right).
\end{split}
\Eq(5.lem3.4)
\ee
We now can see that for $p\geq 3$, the right-hand sides of $P$ and $Q$ in \eqv(5.lem3.4) are decreasing both in $p$ and in each $k_s$. Therefore,
\be
\begin{split}
P\leq 
\sum_{r=2}^{\ell-1}
\left(\frac{1}{\pi}\right)^{(r-1)},\quad
Q\leq 
\frac{3}{2}\left(\frac{1}{\pi}\right)^{(\ell-1)},
\end{split}
\Eq(5.lem3.5)
\ee
and summing the geometric progression
\be
P+Q
\leq 
\frac{1}{\pi-1}\left[1-\left(\frac{1}{\pi}\right)^{(\ell-2)}\right]
+
\frac{3}{2}\left(\frac{1}{\pi}\right)^{(\ell-1)}
\leq
\frac{1}{\pi-1}+\frac{3}{2\pi}
<0.95<1.
\ee
From this we conclude that \eqv(5.lem3.2) is satisfied for all $\g_n\in\G^{all}_n$ and all $n$ odd. In the same way, one checks that the remaining $\ell -2$ inequalities of the system $\SS_{n,F}$ are satisfied. This yields the claim of the lemma.
\end{proof}

We will not state the counterpart of Lemma \thv(5.lem2) for the dense model, but simply observe that the contribution
to $E_{N,M}(\xi^{(N)}(m))$ of the first $n$ patterns increases to zero with $p$. More precisely, for all
$
m\in\MM^{\circ}_{n,F}=\left\{m(\g_n) : \g_n\in\G^{all}_{n}\right\}
$,
by Theorem \thv(1.theo2.mix),  proceeding as in the proof of Lemma \thv(5.lem2), with $\P$-probability one 
\be
\begin{split}
\lim_{N\rightarrow\infty}\sum_{1\leq \nu\leq n}\left(\frac{1}{N} \sum_{i=1}^N\xi^{\nu}_i\xi^{(N)}_i(m)\right)^p
=\,&
\sum_{r=1}^{\ell}n_r\left(\prod_{s=1}^r\a^{(n_s)}\right)^p,
\end{split}
\Eq(5.lem3.4')
\ee
and this quantity is bounded from above and below by constants times
\be
\sum_{r=1}^{\ell}
\left(\frac{2}{\pi}\right)^{\frac{pr}{2}}\frac{1}{
\left(\prod_{s=1}^{r-1}n_s\right)^{\frac{p}{2}}n_r^{\frac{p-2}{2}}
}.
\ee
Therefore, the contribution to $E_{N,M}(\xi^{(N)}(m))$ of the first $n$ patterns as $N\rightarrow\infty$ decreases 
as $p$ decreases. This confirms the observation made in \cite{KH16} that, asymptotically as $N\rightarrow\infty$, the energy gap,
\be
\left|\sup_{1\leq \mu\leq M}E_{N,M}(\xi^{\mu})-\inf_{m} E_{N,M}(\xi^{(N)}(m))\right|,
\ee
increases as $p$ increases, provided that $M$ is sufficiently small for the contribution to the energy of the remaining patterns decays to zero.


\def\cprime{$'$}


\end{document}